\documentclass{amsart}

\usepackage{amsmath,bbold,mathrsfs,amscd,lineno,hyperref,amsthm}
\usepackage{mathtools,afterpage}
\usepackage{tikz}
\usepackage{mathrsfs}
\usepackage{yfonts,amsbsy} 

\usepackage{amsfonts,subcaption}
\usepackage{amssymb}
\usepackage{enumitem}
\usepackage{tikz-cd}

\usepackage{float}
\usepackage{tabulary}
\usepackage{booktabs}
\usepackage{mathtools} 
\usepackage[normalem]{ulem}



\usepackage{tipa}

\usepackage[utf8]{inputenc}

\usetikzlibrary{matrix, patterns,snakes,calc}
\usetikzlibrary{math,arrows,shapes}


\newcommand{\proj}{{\operatorname{proj}}}

\renewcommand{\sp}{{\ \ }}

\newcommand{\hide}[1]{}

\newcommand{\blambda}{{\boldsymbol\lambda}}

\newcommand{\bkappa}{{\boldsymbol\kappa}}

\everymath{\displaystyle}

\newcommand{\reminder}[1]{\textsl{$\lhd\rhd$#1$\lhd\rhd$}\marginpar{$\lhd\rhd\lhd\rhd\lhd\rhd$}}

\newcommand{\upbullet}[1]{\overset{\bullet}{ #1}{}}

\newcommand{\ext}{{\operatorname{ext}}}
\renewcommand{\div}{{\operatorname{div}}}

\newcommand{\Fam}{{\mathcal F}}
\newcommand{\Width}{{\mathcal W}}

\newcommand{\stab}{{\text{stab}}}
\newcommand{\bistab}{{\text{bistab}}}

\newcommand\LL{{\mathcal L}}

\makeatletter
\newcommand{\setword}[2]{%
  \phantomsection
  #1\def\@currentlabel{\unexpanded{#1}}\label{#2}%
}



\newcommand\TT{\mathcal{T}}

\newcommand\HH{\mathcal{H}}

\newcommand{\bbv}{{\mathbf{v}}}

\newcommand{\bA}{{\mathbf{A}}}

\newcommand\RR{\mathcal{R}}

\newcommand\XX{\mathcal{X}}




\newcommand\FF{\mathcal{F}}

\newcommand\intr{\operatorname{int}}

\newcommand{\bUpsilon}{{\mathbf \Upsilon}}

\newcommand{\bM}{{\mathbf M}}

\newcommand{\wI}{{\widehat I}}
\newcommand{\wZ}{{\widehat Z}}

\newcommand{\bL}{{\mathbf L}}

\newcommand{\bT}{{\mathbf T}}

\newcommand{\bK}{{\mathbf K}}

\newcommand{\bbm}{{\mathbf m}}

\newcommand{\bbs}{{\mathbf s}}

\newcommand{\bbw}{{\mathbf w}}

\newcommand{\bbt}{{\mathbf t}}

\newcommand{\Level}{\operatorname{Level}}

\newcommand{\qq}{\mathfrak{q}}
\renewcommand{\tt}{\mathfrak{t}}

\newcommand{\length}{{\mathfrak l}}

\newcommand\inn{{\operatorname{inn}}}

\newcommand\DD{\mathcal{D}}

\newcommand\PP{\mathcal{P}}

\newcommand{\grnd}{{\operatorname{grnd}}}

\newcommand\new{{\operatorname{new}}}
\newcommand\New{{\operatorname{New}}}

\newcommand\ver{{\operatorname{ver}}}
\newcommand\per{{\operatorname{per}}}



\newtheorem{claim}{Claim}

\newcommand{\wC}{\widehat{\mathbb{C}}}
\newcommand{\C}{\mathbb{C}}
\newcommand{\Q}{\mathbb{Q}}
\newcommand{\R}{\mathbb{R}}
\newcommand{\N}{\mathbb{N}}

\DeclareFontFamily{U}{mathb}{\hyphenchar\font45}
\DeclareFontShape{U}{mathb}{m}{n}{ <5> <6> <7> <8> <9> <10> gen * mathb <10.95> mathb10 <12> <14.4> <17.28> <20.74> <24.88> mathb12 }{}
\DeclareSymbolFont{mathb}{U}{mathb}{m}{n}
\DeclareFontSubstitution{U}{mathb}{m}{n}
\DeclareMathSymbol{\selfmap}{3}{mathb}{"FD}

\newcommand{\Z}{\mathbb{Z}}

\newcommand{\Disk}{\mathbb{D}}

\newcommand{\Dbb}{{\mathfrak{D}}}

\newtheorem{thm}{Theorem}[section]

\newtheorem{cor}[thm]{Corollary}
\newtheorem{lem}[thm]{Lemma}

\newtheorem{prop}[thm]{Proposition}
\newtheorem{rem}[thm]{Remark}

\newtheorem{caliblem}[thm]{Calibration Lemma}
\newtheorem{ampthm}[thm]{Amplification Theorem}

\newtheorem{lairlmm}[thm]{Snake-Lair Lemma}

\newtheorem{mainthm:bounds}[thm]{Uniform Bounds Theorem}

\newtheorem{conj}[thm]{Conjecture}

\newtheorem{mainthm:sector_bounds}[thm]{Sector Bounds Theorem}
\newtheorem{mainthm:radial_str}[thm]{Star-Like Theorem}
\newtheorem{mainthm:psi_ql}[thm]{QL Siegel bounds Theorem}

\theoremstyle{remark}

\numberwithin{equation}{section}

\theoremstyle{definition}
\newtheorem{defn}[thm]{Definition}
\font\nt=cmr7

\def\be{\begin{equation}}


\newcommand{\filled}{{\mathcal {K} }}





\newcommand{\bnd}{{\mathrm{bnd}}}

\renewcommand{\sec}{\mathrm{sec}}


\usepackage{color,todonotes,comment}




\newcommand{\dist}{\operatorname{dist}}

\renewcommand{\mod}{\operatorname{mod}}

\newcommand{\orb}{\operatorname{orb}}




\newcommand{\Fjord}{{\mathfrak F}}

\newcommand{\CP}{{\operatorname{CP}}}

\newcommand{\CC}{{\mathcal C}}

\newcommand{\bdelta}{{\boldsymbol{ \delta}}}

\newcommand{\bchi}{{\boldsymbol{ \chi}}}

\newcommand{\RN}[1]{\MakeUppercase{\romannumeral#1}}%

\def\note#1
{\marginpar
{\nt $\leftarrow$
\par
\hfuzz=20pt \hbadness=9000 \hyphenpenalty=-100 \exhyphenpenalty=-100
\pretolerance=-1 \tolerance=9999 \doublehyphendemerits=-100000
\finalhyphendemerits=-100000 \baselineskip=6pt
#1}\hfuzz=1pt}

\makeatletter
\setlength{\@fptop}{0pt}
\makeatother

\DeclareFontFamily{U}{mathb}{\hyphenchar\font45}
\DeclareFontShape{U}{mathb}{m}{n}{ <5> <6> <7> <8> <9> <10> gen * mathb <10.95> mathb10 <12> <14.4> <17.28> <20.74> <24.88> mathb12 }{}
\DeclareSymbolFont{mathb}{U}{mathb}{m}{n}
\DeclareFontSubstitution{U}{mathb}{m}{n}
\DeclareMathSymbol{\righttoleftarrow}{3}{mathb}{"FD}

\begin{document}

\title[Uniform \emph{a priori} bounds for $\psi^\bullet$-ql Siegel maps]{Uniform \emph{a priori} bounds  \\ for neutral renormalization.\\ Variation~\RN{2}: $\psi^\bullet$-ql Siegel maps.}
\author{Dzmitry Dudko}
\author{Yusheng Luo}
\author{Mikhail Lyubich}

\date{\today}
\begin{abstract}
We extend uniform pseudo-Siegel bounds for neutral quadratic polynomials to $\psi^\bullet$-quadratic-like Siegel maps. In this form, the bounds are compatible with the $\psi$-quadratic-like renormalization theory and are easily transferable to various families of rational maps. 
The main theorem states that the degeneration of a Siegel disk is equidistributed among combinatorial intervals. This provides a precise description of how the $\psi^\bullet$-quadratic-like structure degenerates around the Siegel disk on all geometric scales except on the ``transitional scales'' between two specific combinatorial levels.
\end{abstract}
\maketitle

\setcounter{tocdepth}{1}

\tikzset{%
  block/.style    = {draw, thick, rectangle, minimum height = 3em,
    minimum width = 3em},
  sum/.style      = {draw, circle, node distance = 2cm}, 
  input/.style    = {coordinate}, 
  output/.style   = {coordinate} 
}

\tableofcontents

\section{Introduction}\label{apendixp:C} Let  $f_\theta: z\mapsto e^{2\pi i \theta} z + z^2$ be a quadratic polynomial. We assume that its rotation number 
\[\theta\in \Theta_\bnd=\{\theta=[0;a_1,a_2,\dots] \mid a_i \le M_\theta\}\subsetneq \Theta\coloneqq  (\R\setminus \Q)/\Z\]
 is of bounded-type; i.e., $f_\theta$ has a quasi-conformal closed Siegel disk $\overline Z_\theta$ at the origin with critical point on the boundary of $\overline Z_\theta$. In~\cite{DL22}, it was established uniform pseudo-Siegel bounds controlling oscillations of $\partial Z_\theta$ in all scales. Consequently, pseudo-Siegel bounds endure under taking the closure \[\Theta_\bnd \leadsto \overline \Theta= \{\theta= [0;a_1,a_2,\dots] \mid a_i \in \N_{\ge 1}\cup \{\infty\}\}\supset \Theta\] and provide uniform \emph{a priori} bounds for all neutral quadratic polynomials.

In this paper, we extend pseudo-Siegel bounds to $\psi^\bullet$-quadratic-like maps  (``$\psi^\bullet$-ql maps'').  In such a form, pseudo-Siegel bounds can be easily transferred to various classes of rational maps using $\psi^\bullet$-ql renormalization; see~\cite{DL:HypComp} and Examples in \S\ref{ss:inflat}. 

Pseudo-Siegel bounds for quadratic polynomials are recalled in~\S\ref{ss:quadr_polynom}. We state the pseudo-Siegel bounds for quadratic-like (``ql'') maps in Theorems~\ref{thm:combinatorial main} and~\ref{thm:main:ql maps}, where the latter is a refinement of the former. The case of $\psi^\bullet$-ql maps is similar and is stated as Theorem~\ref{thm:main:psi*-ql maps}.

We remark that Equidistribution Theorem~\ref{thm:main:psi*-ql maps} has a stronger form than parallel results for quadratic-like renormalization~\cite{K,molecules,DL:Feigen}. Consequently, Theorem~\ref{thm:main:psi*-ql maps} provides a satisfactory control on how wide families cross (iterated preimages of) Siegel disks which is important in applications \cite{DL:HypComp};~see also a short discussion in~\S\ref{ss:Application}.

\subsection{Pseudo-Siegel bounds for quadratic polynomials} \label{ss:quadr_polynom} Following~\cite[Definition 1.4 and~\S11.0.2]{DL22},  a quadratic polynomial $f_\theta$ with $\theta\not\in \Q$ has \emph{pseudo-Siegel bounds}, if it has a sequence of nested disks 
\begin{multline}
\label{eq:dfn:wZ^m:intro}\qquad
\wZ^{-1}_{f_\theta}\supseteq \wZ^{0}_{f_\theta}\supseteq \wZ^1_{f_\theta}\supseteq \dots \supseteq H_{f_\theta}\ \ni 0, \\ f_\theta\mid \wZ^m_{f_\theta} \text{ is injective, }\sp\sp \bigcap_{m\ge -1} \wZ^m_{f_\theta} =H_{f_\theta},\qquad
\end{multline}
where $H_{f_\theta}$ is the \emph{Mother Hedgehog} (the filled postcritical set) of ${f_\theta}$  such that
\begin{itemize}
\item $\wZ^m_{f_\theta}$ is almost $f_\theta^{\qq_{m+1}}$-invariant pseudo-Siegel disk (see~\S\ref{sss:iota peripheral}), where $\qq_{m+1}$ is the first closest return~\eqref{eq:p_n q_n};
\item $\partial \wZ^m_{f_\theta}$ has a nest of tiling $\TT \left(\wZ^m_{f_\theta}\right)$ with essentially bounded geometry independent of $m$.
\end{itemize}
If the geometric bounds on  $\TT \left(\wZ^m_{f_\theta}\right)$ are independent of $\theta$, then $f_\theta$ has \emph{uniform pseudo-Siegel bounds}. Such uniform bounds imply that $\wZ^{-1}_{f_\theta}$ are uniformly qc while $\wZ^{m}_{f_\theta}$ become uniformly qc $\wZ^{-1}_{f_{n,\theta}}$ under appropriate sector sector renormalization $\RR^n_\sec\colon f_{\theta}\mapsto f_{n,\theta},$ where $n=n(m,\theta)\le m$, see~\cite{DL:sector bounds}.

A pseudo-Siegel disk $\wZ^m_{f_\theta}$ can be constructed out of $H_{f_\theta}$ by adding parabolic fjords bounded by $H_{f_\theta}$ and hyperbolic geodesics connecting appropriately specified points of $H_{f_\theta}$. In this case, $\wZ^m_{f_\theta}$ is called a \emph{geodesic} pseudo-Siegel disk. Note that the image of a geodesic pseudo-Siegel disk is not geodesic: images of fjords will be bounded by ``quasi-geodesics''. If $\theta\in\Theta_\bnd$, then $H_{f_\theta}=\overline Z_{f_\theta}$.

All neutral quadratic polynomials $f_\theta$ have uniform pseudo-Siegel bounds by the main result in \cite{DL22}. Various consequences, such as sectorial bounds, are stated in~\cite{DL:sector bounds}.

\subsection{Quadratic-like maps} 
\label{ss:into:ql maps}
Let us now discuss the generalization of pseudo-Siegel bounds for quadratic-like maps.
Let $f\colon X\to Y$ be a quadratic-like map, i.e., $X, Y$ are Jordan domains with $\overline{X} \subseteq Y$, and $f\colon X\to Y$ is a degree $2$ branched cover.
Let $\mathcal{K}(f)$ be the filled Julia set of $f$. 
We suppose that there exists a Siegel disk $Z_f \subseteq \mathcal{K}(f)$ at its $\alpha$-fixed point with bounded-type rotation number. 
Denote by \[K_f=\Width(f)\coloneqq \Width\big(Y\setminus \overline Z_f\big)\] the {\em degeneration} of $f$ around its Siegel disk. 
Here $\Width\big(Y\setminus \overline Z_f\big)$ is the extremal width of the vertical family, equivalently, the reciprocal of the modulus, of the annulus $Y\setminus \overline Z_f$.

Note that $f: \partial Z_f \longrightarrow \partial Z_f$ is topologically conjugate to the rigid rotation $R_{\theta}: S^1 \longrightarrow S^1$. Let $\theta = [0; a_1,a_2,...]$ be the continued fraction expansion. 
Note that $\theta$ is of {\em bounded-type} if $a_n$ are bounded; and is {\em eventually-golden-mean} if $a_n = 1$ for all sufficiently large $n$.
Consider the sequence of best approximations of $\theta$
\begin{equation}
\label{eq:p_n q_n}
\frac{\mathfrak{p}_n}{\mathfrak{q}_n} := \begin{cases} 
      [0;a_1,a_2,..., a_n] & \text{if } a_1 > 1 \\
      [0;a_1,a_2,..., a_{n+1}] & \text{if } a_1 = 1
   \end{cases}\; ,
\end{equation}
and set $\mathfrak{q}_0 :=1$. Then $f^{\mathfrak{q}_0}, f^{\mathfrak{q}_1},...$ is the sequence of combinatorially closest returns of $f|{\partial Z_f}$, and we denote
$$
\length_n:= \dist(f^{\mathfrak{q}_n}(x), x), \sp \sp x\in \partial Z_f
$$
where $\dist (x,y) = \dist_{\R/\Z}(h(x), h(y))$ is the combinatorial distance defined by means of the conjugacy $h: \partial Z_f \longrightarrow S^1$ to the rotation.
By convention, we set $\length_{-1} = 1$.

Let $I \subseteq \partial Z_f$ be an interval.
Let $\mathcal{W}_{\lambda}^+(I)$ be the extremal width of the family of curves $\gamma \in Y - Z_f$ connecting $I$ to $\partial Y \cup (\partial Z_f - \lambda I)$, where $\lambda > 1$ and $\lambda I$ is the enlargement of $I$ by attaching two intervals of (combinatorial) length $\frac{\lambda-1}{2} |I|$ on either side of $I$.
The {\em vertical degeneration} $\mathcal{W}^{+, \ver}(I)$ is the extremal width of the family of curves $\gamma \in Y - Z_f$ connecting $I$ to $\partial Y$, while the {\em peripheral $\lambda$-degeneration} $\mathcal{W}_{\lambda}^{+, \per}(I)$ is the extremal width of the family of curves $\gamma \subset Y - Z_f$ connecting $I$ to $\partial Z_f - \lambda I$.
Note that
$$
\mathcal{W}_{\lambda}^+(I) = \mathcal{W}^{+, \ver}(I) + \mathcal{W}_{\lambda}^{+, \per}(I)-O(1).
$$
These definitions also generalize to {\em pseudo-Siegel disks} (see \cite[\S 5]{DL22} and \S\ref{sss:intervalpsuedoSiegel}).
An interval $I \subseteq \partial Z_f$ is called level $m$ {\em combinatorial} if $|I| = |h(I)|_{\R/\Z} = \length_m$. Every such level $m$ combinatorial interval is of the form $I=[x, f^{\qq_{m+1}}(x)]$.

\begin{thm}\label{thm:combinatorial main}
    Let $f: X \longrightarrow Y, \ X\Subset Y$ be a quadratic-like map with a bounded-type Siegel disk $Z_f$ at its $\alpha$-fixed point.
    Let $K_f = W(f)$ be the degeneration of $f$ around its Siegel disk. Then for any level $m$ combinatorial interval $I$, we have
    $$
    \mathcal{W}_3^+(I) = O(\length_mK_f+1).
    $$

    Moreover, there is an absolute constant $\bK\gg 1$ such that if $\length_m K_f \ge \bK$, then 
    $$
    \mathcal{W}_3^+(I) \asymp  \length_m K_f.
    $$
    for all $f$ and $m$ as above.
\end{thm}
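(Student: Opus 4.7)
The plan is to decompose $\mathcal{W}_3^+(I) = \mathcal{W}^{+,\ver}(I) + \mathcal{W}_3^{+,\per}(I) - O(1)$ via the relation already recalled, and handle each part separately. For the peripheral part, I expect the ql-adapted pseudo-Siegel structure (built from the polynomial bounds of~\cite{DL22}, e.g.\ via hybrid equivalence with a quadratic polynomial $f_\theta$) to provide a nest $\wZ^m \supseteq \overline{Z_f}$ inside $Y$ together with bounded-geometry tilings $\TT(\wZ^m)$. Since the level-$m$ combinatorial interval $I$ together with its peripheral $3$-neighborhood spans only a bounded number of tiles of $\TT(\wZ^m)$, one gets $\mathcal{W}_3^{+,\per}(I) = O(1)$, accounting for the additive $O(1)$ in the upper bound.

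The substance of the theorem is then equidistribution of the vertical width. I would prove that the widths $\mathcal{W}^{+,\ver}(I_j)$ for the forward iterates $I_j := f^j(I)$, $j = 0, \ldots, \qq_{m+1}-1$, are mutually comparable up to a constant depending only on the bounded-type constant $M_\theta$. Since the $I_j$ form a combinatorial cover of $\partial Z_f$ with bounded overlap multiplicity (by bounded-type), additivity of extremal width over partitions of the inner boundary of the annulus $Y \setminus \overline{Z_f}$ yields
\[
\sum_{j=0}^{\qq_{m+1}-1} \mathcal{W}^{+,\ver}(I_j) = K_f + O(1).
\]
Combined with $\qq_{m+1} \asymp 1/\length_m$, this gives $\mathcal{W}^{+,\ver}(I) \asymp \length_m K_f + O(1)$, producing both the upper bound and the lower bound (with the hypothesis $\length_m K_f \geq \bK$ absorbing the $O(1)$ correction).

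The comparability $\mathcal{W}^{+,\ver}(I) \asymp \mathcal{W}^{+,\ver}(I_j)$ is the technical crux. I would transport extremal vertical metrics between annular neighborhoods of $I$ and $I_j$ using inverse branches of $f^j$ chosen to respect the tiles of $\TT(\wZ^{m'})$ for suitable $m' \leq m$. These branches have uniformly bounded degree thanks to the near $f^{\qq_{m+1}}$-invariance of $\wZ^m$, so Koebe-type distortion estimates from the pseudo-Siegel bounds keep the transported metrics admissible up to multiplicative constants.

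The main obstacle is this distortion control: a priori $f^j$ has high degree and extremal metrics can concentrate pathologically, but the pseudo-Siegel tiling decomposes $f^{\qq_{m+1}}$-dynamics into near-rotations on bounded-geometry fundamental pieces, rescuing the comparison. A secondary subtlety in the ql setting (versus the polynomial case) is that tiles of $\TT(\wZ^{m'})$ may come close to $\partial Y$ when $K_f$ is large, so one must verify that the transport branches can be realized inside $Y \setminus \overline{Z_f}$ without losing uniform control---this is precisely why the bound is linear in $K_f$ rather than a pure constant.
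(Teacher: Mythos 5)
Your decomposition into vertical and peripheral parts, and your recognition that equidistribution of the vertical widths over the orbit $\{I_j\}$ is the heart of the theorem, are on target. But two of the steps you treat as available are in fact the main content of the paper, and neither can be obtained the way you suggest.

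First, you cannot get the ql pseudo-Siegel nest $\wZ^m$ with uniformly bounded geometry by hybrid-conjugating $f$ to a quadratic polynomial: the dilatation of the straightening degrades as $\mod(Y\setminus X)\to 0$, i.e.\ precisely as $K_f\to\infty$, so the tilings $\TT(\wZ^m)$ of \cite{DL22} do not transfer with constants independent of $K_f$. The theorem's constants must be absolute, which forces the pseudo-Siegel theory to be rebuilt directly in the ql (and $\psi^\bullet$-ql) plane -- this is Theorems~\ref{thm:main:psi-ql maps} and~\ref{thm:main:psi-ql maps:extra}, where the new vertical families $\Fam^{+,\ver}$ (absent for polynomials, since there $\partial V=\infty$) force new alternatives in the Amplification Theorem and Calibration Lemma. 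Second, the identity $\sum_j\mathcal{W}^{+,\ver}(I_j)=K_f+O(1)$ is not a parallel-law consequence: the parallel law gives only one direction, $K_f=\Width\big(\Fam^{+,\ver}(\partial Z)\big)\le\sum_j\mathcal{W}^{+,\ver}(I_j)$, while the matching upper bound $\sum_j\mathcal{W}^{+,\ver}(I_j)=O(K_f)$ is not true for a general annulus with a bounded-multiplicity cover of its inner boundary -- it is equivalent to the equidistribution being proved, so invoking it is circular. Likewise, mutual comparability of the $\mathcal{W}^{+,\ver}(I_j)$ via bounded-degree inverse branches plus Koebe does not come for free: the simple transformation rule controls only the full family $\Fam_\tau(I_j)$ with a degree loss, and extracting its vertical component requires the uniform peripheral and diving bounds -- again the very pseudo-Siegel estimates that were supposed to be inputs. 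In the paper, the equidistribution at the transitional and shallow levels is established by contradiction via the Combinatorial Localization Lemma~\ref{lem:replication} and Propositions~\ref{prop:equidcombinbatorial}--\ref{prop:equidcombinbatorial:shallow_levels}, riding on the induction of Proposition~\ref{prop:deepandtransitional}; Theorem~\ref{thm:combinatorial main} is then deduced by approximating the bounded-type rotation number by eventually-golden-mean ones and passing to a limit.
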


In the following, we formulate a refined theorem accounting for all intervals. The statement gives a precise description of how $Y\setminus \overline Z$ degenerates in all geometric scales except for intervals whose length is between $\length_{\bbm_f}$ and $\length_{\bbm_f+1}$, where $\bbm_f$ is the ``special transitional level'' defined as follows.

Let $\bK\gg 1$ be a sufficiently big threshold (satisfying, in particular, Theorem~\ref{thm:combinatorial main}).
We define the {\em special transition level} $\bbm_f$ for $f$ with respect to the threshold $\bK$ as follows. 
\begin{itemize}
    \item If $K_f\le \bK$, we set $\bbm_f\coloneqq -2$;
    \item Otherwise, we set $\bbm_f$ to be the level satisfying 
\[ \frac{1}{\length_{\bbm_f}} < \frac{K_f}{\bK} \le \frac{1}{\length_{\bbm_f\ +1}} \sp\sp\sp  \begin{array}{c}\text{ or, }\\ \text{equivalently,}\end{array}\sp\sp\sp \sp   \begin{array}{c}
\length_{\bbm_f} K_f>\bK, \text{ and}\vspace{0.2cm}\\
\length_{\bbm_f\ +1} K_f\le \bK.
\end{array} \]
\end{itemize}

We refer to $\ell_m K_f$ as the \emph{average vertical degeneration at level $m$.}  Similarly, if $\eta\in (0,1)$ is any number representing the combinatorial length, then the \emph{average weight of intervals $J$ with $|J|=\eta$} is $\eta K_f=|J|K_f$. Such an interval $J$ is \emph{grounded rel $\wZ^m$} if its endpoints are away from ``inner buffers''; see~\S\ref{sss:intervalpsuedoSiegel}. Geometrically, grounded intervals are ``visibly'' from the external boundary $\partial Y$. If $|J|\ge \length_m$, then any such $J$ is ``essentially'' grounded.

\begin{thm}[Equidistribution, see Theorems~\ref{thm:main:psi-ql maps} and~\ref{thm:main:psi-ql maps:extra}]\label{thm:main:ql maps} There exists an absolute constant $\bK\gg 1$ so that the following holds.

Let $f: X \longrightarrow Y,\ X\Subset Y$ be a quadratic-like map with an eventually-golden-mean Siegel disk $Z_f$ at its $\alpha$-fixed point.
Let $K_f = W(f)$ be the degeneration of $f$ around its Siegel disk, and let $\bbm_f$ be the special transition level with respect to the threshold $\bK$.
Then there is a nested sequence of geodesic pseudo-Siegel disks as in~\S\ref{sss:geodes:wZ} \[\wZ^m,\sp  m\ge -1,\hspace{1cm}\text{ where }\sp \wZ^{\bbm_f}= \wZ^{\bbm_f-1}=\dots =\wZ^{-1}\] such that for every grounded interval $J\subset \partial Z$ with $\length_{m+1}< |J|\le \length_{m}$, the following holds for the projection $J^m$ of $J$ to $\wZ^m$:
\begin{enumerate}[ label=(\Alph*)]
    \item \label{thm:apbs:A} if $m> \bbm_f$, then \[\Width_{\wZ^m}^{+,\ver} (J^m)=O(1) \sp\sp\text{ and }\sp\sp \Width_{3,\wZ^m}^{+,\per} (J^m)\asymp 1,\]
    \item\label{thm:apbs:B} if $m= \bbm_f$, then 
    \[\Width_{\wZ^m}^{+,\ver} (J^m)= O(\length_m K_f)  \sp\sp\text{ and }\sp\sp \Width_{3,\wZ^m}^{+,\per} (J^m)=O\big(\sqrt{\length_m K_f }\big),\]
  \item \label{thm:apbs:C} if $m< \bbm_f$, then 
    \[\Width_{\wZ^m}^{+,\ver} (J^m)\asymp |J|K_f  \sp\sp\text{ and }\sp\sp \Width_{3,\wZ^m}^{+,\per} (J^m)=O(1),\]
\end{enumerate}

Moreover, all pseudo-Siegel disks $\wZ^m$ are constructed with explicit combinatorial thresholds~\eqref{eq:dfn:bM}, and $\wZ^{-1}$ is $\mu(K_f)$-qc disk; i.e.~the dilatation of $\wZ^{-1}$ is bounded in terms of $K_f$.
\end{thm}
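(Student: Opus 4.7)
The plan is to refine the combinatorial bound of Theorem~\ref{thm:combinatorial main} by constructing, for each $m$, a geodesic pseudo-Siegel disk $\wZ^m$ whose boundary absorbs the peripheral degeneration at combinatorial scales $\length_k$ with $k \le m$. Below the transition level, no further refinement is possible without destroying the uniform qc bound, so we freeze $\wZ^m = \wZ^{-1}$ for $m \le \bbm_f$; above the transition level, we add progressively finer geodesic arcs following the construction of~\cite{DL22}.

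First, to construct $\wZ^{-1}$ I would add to $H_f$ parabolic fjords and hyperbolic geodesic arcs at each combinatorial scale $\length_k$ with $-1 \le k \le \bbm_f$. By the definition of $\bbm_f$, the average weight $\length_k K_f$ at each such scale is at most $\bK$, so Theorem~\ref{thm:combinatorial main} yields scale-by-scale width bounds of $O(\bK)$; combined with the standard dilatation estimate for geodesic pseudo-Siegel disks, this gives the $\mu(K_f)$-qc bound on $\wZ^{-1}$. For $m > \bbm_f$, one refines $\wZ^{-1}$ by adding geodesic arcs at scales $\length_k$ with $\bbm_f < k \le m$; here $\length_k K_f < \bK$, so each successive refinement contributes only $O(1)$ width, and $m$-dependent qc bounds are permitted.

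Next, I verify the three cases. For case~\ref{thm:apbs:A} ($m > \bbm_f$), since $\length_m K_f < \bK$, Theorem~\ref{thm:combinatorial main} gives total width $O(1)$ at scale $\length_m$; the level-$m$ refinement of $\wZ^m$ separates adjacent combinatorial intervals, so $\Width_{\wZ^m}^{+,\ver}(J^m) = O(1)$, and the existence of a definite peripheral channel (guaranteed by the groundedness of $J$) supplies the matching lower bound $\Width_{3,\wZ^m}^{+,\per}(J^m) \asymp 1$. For case~\ref{thm:apbs:C} ($m < \bbm_f$), the upper bound $\Width_{\wZ^{-1}}^{+,\ver}(J^m) = O(|J| K_f)$ follows by covering $J$ with about $|J|/\length_{\bbm_f}$ level-$\bbm_f$ subintervals, applying Theorem~\ref{thm:combinatorial main} to each, and invoking parallel-law additivity of extremal width. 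The matching lower bound — the equidistribution statement — comes from the lower inequality in Theorem~\ref{thm:combinatorial main} together with the near-isometric action of $f^{\qq_{\bbm_f+1}}$ on $\partial Z$, which rules out concentration of the degeneration onto a single combinatorial interval. The peripheral bound $O(1)$ holds because the arcs added at scale $\length_{\bbm_f}$ have already absorbed the peripheral obstruction at levels $\le \bbm_f$.

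Case~\ref{thm:apbs:B} ($m = \bbm_f$) is the delicate transitional scale and the main obstacle. The vertical bound $O(\length_m K_f)$ is immediate from Theorem~\ref{thm:combinatorial main}, but the peripheral $O(\sqrt{\length_m K_f})$ requires a parallel/serial decomposition: one splits the peripheral family from $J^m$ to $\partial Z - 3J^m$ into sub-families traversing buffers of comparable combinatorial width, controls each sub-family by Theorem~\ref{thm:combinatorial main}, and optimizes the scale at which the decomposition occurs. The optimal balance between vertical crossings (cost $\sim \length_m K_f$) and peripheral passages (cost $\sim 1$ per buffer, with $\sim 1/\sqrt{\length_m K_f}$ buffers in the optimal split) produces the geometric mean $\sqrt{\length_m K_f}$, a Cauchy–Schwarz phenomenon typical of transitional regimes. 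Avoiding $m$-dependent losses in the constants is where the construction of $\wZ^{\bbm_f}$ must be tuned precisely using the explicit combinatorial thresholds alluded to in the statement, and verifying this tuning is where the bulk of the technical work lies.
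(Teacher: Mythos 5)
The proposal has a fundamental logical problem and, independent of that, mischaracterizes the mechanism behind the key estimates.

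The proposal treats Theorem~\ref{thm:combinatorial main} as a given ingredient and builds Theorem~\ref{thm:main:ql maps} on top of it. But in the paper the dependency goes the other way: Theorem~\ref{thm:main:ql maps} is the quadratic-like specialization of Theorems~\ref{thm:main:psi-ql maps} and~\ref{thm:main:psi-ql maps:extra}, which are proved by the central induction of Proposition~\ref{prop:deepandtransitional}, and Theorem~\ref{thm:combinatorial main} is then \emph{deduced} from those via the approximation $\theta_n = [0;a_1,\dots,a_n,1,1,\dots]\to\theta$ in \S\ref{subsec:shallow}. So starting from Theorem~\ref{thm:combinatorial main} is circular unless you supply an independent proof of it, which the proposal does not attempt.

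Even setting aside the circularity, Theorem~\ref{thm:combinatorial main} bounds only the \emph{total} outer width $\Width_3^+(I)$ of combinatorial intervals; it carries no information about the vertical/peripheral split $\Fam^{+}=\Fam^{+,\ver}\cup\Fam^{+,\per}$, and none of the lower bounds in~\ref{thm:apbs:A},~\ref{thm:apbs:C} can be extracted from it. In case~\ref{thm:apbs:C} the claim $\Width^{+,\ver}(J^m)\asymp|J|K_f$ needs, for its lower bound, the genuine equidistribution dichotomy: a priori all the degeneration at level $\bbm_f$ could be peripheral, and ``the near-isometric action of $f^{\qq_{\bbm_f+1}}$'' does not rule that out. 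The paper handles this with Calibration Lemma~\ref{lmm:CalibrLmm} (semi-equidistribution of the vertical part when the diving-plus-vertical part is large) and Lemma~\ref{lem:replication} (Equidistribution or Combinatorial Localization), invoked in Propositions~\ref{prop:equidcombinbatorial} and~\ref{prop:equidcombinbatorial:shallow_levels}. The lower bound $\Width^{+,\per}_{3}(J^m)\asymp 1$ in case~\ref{thm:apbs:A} is also not an automatic consequence of groundedness; it is a feature of the \emph{constructed} $\wZ^m$, forced by the explicit combinatorial thresholds of Theorem~\ref{thm:main:psi-ql maps:extra}.

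In case~\ref{thm:apbs:B} the proposed parallel/serial ``Cauchy--Schwarz'' optimization does not produce the $\sqrt{\length_m K_f}$ bound and is not the mechanism in the paper. The square root comes from the exponential boost in the Log-Rule machinery: Proposition~\ref{prop:deepandtransitional} shows $\Width^{+,\per}_{\blambda,\ext,m}(I)=O(\sqrt{\bK_m})$ because, if that external width were $\gg\sqrt{\bK_m}$ and the regularization of Theorem~\ref{thm:regul} failed, then Lemma~\ref{thm:center:rect} would produce degeneration of order $a^{\sqrt{\bK_m}}\gg\bK_m$ at a deeper level, violating the already-established induction hypothesis. This is an exponential argument, not a geometric-mean trade-off, and the whole induction diagram of Figure~\ref{fig:main pattern} (Amplification Theorem~\ref{thm:SpreadingAround}, Calibration Lemma, Regularization Theorem, the ``contradiction box $\bA_m\gg\bA_m$'') is the actual content of the proof, none of which your proposal captures.
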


We remark that in Cases \ref{thm:apbs:C} and \ref{thm:apbs:B}, we have
$ |J|K_f, \ \length_m K_f \  \succeq\  1  $. We also remark that in all three cases, we have: \[\Width^+_3(I) =O(\length_m K_f+1)=O\left(\frac{K_f}{\qq_{m+1}}+1\right), \sp\sp\text{ where }|I|=\length_m.\]

In short, Case \ref{thm:apbs:A} says that on deep scales, the local geometry of $f$ is uniformly bounded, and the estimates (see~\S\ref{subsec:deeptransitional}) are almost equivalent to the case of quadratic polynomials. Case \ref{thm:apbs:C} says that on shallow scales, vertical degeneration prevails over peripheral and is uniformly distributed over all (combinatorial or not) intervals. Consequently, there can not be any regularizations on such scales: $\wZ^{-1}=\wZ^0=\dots =\wZ^{\bbm_f}$. The Transitional Case~\ref{thm:apbs:B} is uncertain: there might be intervals $J$ so that $\Width_{\wZ^m}^{+,\ver} (J^m)$ is big while $\Width_{\wZ^m}^{+,\per} (J^m)$ is bounded, and there might be intervals $J$ with the opposite behavior. The bound on $\Width_{3,\wZ^m}^{+,\per}$ stated in Case~\ref{thm:apbs:B} is a useful ingredient for applications (e.g., for rational maps). Other ingredients should come from ``global external input'', see~\cite{DL:HypComp} and~\S\ref{ss:comblocal}.

The combinatorial threshold~\eqref{eq:dfn:bM} for $\wZ^m$ in Theorem~\ref{thm:main:psi-ql maps} is universal on all levels except transitional. Such an explicit combinatorial threshold implies that $\partial \wZ^m$ possesses a nest of tilings $\TT(\partial \wZ^m)$ obtained by projecting the diffeo-tiling of $\partial Z$ (see \cite[\S2.1.6]{DL22}) onto $\partial \wZ^m$ so that $\TT(\partial \wZ^m)$ has essentially bounded geometry independent of $K_f$ for all $m>\bbm_f$.

\subsection{$\psi^\bullet$-ql maps}\label{ss:into:psi ql maps}
Given a rational map $g\colon \wC\selfmap$ with a periodic Siegel disk $\overline Z=g^p(\overline Z)$, one may want to construct an appropriate renormalization around $\overline Z$ to identify the dynamics of $f\coloneqq g^p$ around $\overline Z$ with the associated quadratic polynomial. There are two major challenges:
\begin{itemize}
  \item The formalism of polynomial-like maps is usually unavailable beyond polynomials or ``Sierpinski-type'' rational maps. For instance, matings of neutral quadratic polynomials do not admit quadratic-like structures. 
    \item Even when there is a quadratic-like restriction $f\colon X\to Y$ around $\overline Z$, there is no natural choice of the domain $Y$ so that $\Width(Y\setminus \overline Z)$ is optimal.
 \end{itemize}
To handle the second challenge, a $\psi$-ql renormalization was introduced in \cite{K} by J. Kahn. By extending or ``inflating'' $f\colon X\to Y$ along all possible paths outside of a forward-invariant set containing the postcritical set of $g$ and the filled Julia set of $f\colon X\to Y$, we obtain a $\psi$-ql map $F=(f,\iota)\colon U\rightrightarrows V$, where
\begin{itemize}
\item $f\colon U\to V$ is a branched covering extending $f\colon X\to Y$; and
\item  $\iota$ is an immersion extending the inclusion $X\subset Y$. 
\end{itemize}

To overcome the first challenge, we consider any proper covering $f\colon A\overset{2:1}{\longrightarrow} B$ around $\overline Z$ and relax the containment requirement ``$A\Subset B$'' into a homotopy equivalence between $\partial A$ and $\partial B$ rel an appropriate enlargement of the postcritical set, see Figure~\ref{Fig:UnwindingSiegelFull} and Definition~\ref{dfn:SiegelPreNorm}. By inflating $f\colon A\overset{2:1}{\longrightarrow} B$, see Proposition~\ref{prop:g:unwind}, we obtain a  $\psi^\bullet$-ql map $F\colon U \rightrightarrows V$ as in Definition~\ref{dfn:psi:Siegel:ql}. In particular, the $\psi^\bullet$-ql formalism is applicable to matings of neutral quadratic polynomials; see~\S\ref{exm:C1}. Then, the Sector Renormalization can be used as a substitute for the Douady-Hubbard straightening map.

\begin{thm}[$\psi^\bullet$-case, see Theorems~\ref{thm:main:psi-ql maps} and~\ref{thm:main:psi-ql maps:extra}]\label{thm:main:psi*-ql maps}
Let $F\colon U\rightrightarrows V$ be a $\psi^\bullet$-ql map with an eventually-golden-mean Siegel disk $Z_f$ at its $\alpha$-fixed point.
Let $K_F = W^\bullet(F)$ be the degeneration of $f$ around its Siegel disk.
Then there is a nested sequence of geodesic pseudo-Siegel disks $\wZ^m,\  m\ge -1$ such that ~\ref{thm:apbs:A} ~\ref{thm:apbs:C} ~\ref{thm:apbs:B} hold.

Moreover, all pseudo-Siegel disks $\wZ^m$ are constructed with explicit combinatorial thresholds~\eqref{eq:dfn:bM}, and $\wZ^{-1}$ is $\mu(K_F)$-qc disk; i.e.~the dilatation of $\wZ^{-1}$ is bounded in terms of $K_F$.
\end{thm}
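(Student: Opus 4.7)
The plan is to deduce Theorem~\ref{thm:main:psi*-ql maps} by running the same blueprint that yields Theorem~\ref{thm:main:ql maps}, but with the $\psi^\bullet$-ql inflation used in place of the genuine inclusion $X\Subset Y$. The key observation is that all of the combinatorial data and the relevant extremal-width quantities live intrinsically on $V\setminus\overline Z_f$, so once one sets them up carefully, the proofs of cases \ref{thm:apbs:A}, \ref{thm:apbs:B}, \ref{thm:apbs:C} transfer with only cosmetic changes, reducing ultimately to the uniform pseudo-Siegel bounds for quadratic polynomials of \cite{DL22} via sector renormalization.

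First I would fix the intrinsic objects on $F=(f,\iota)\colon U\rightrightarrows V$. Near the postcritical set the immersion $\iota$ is an embedding, so the conjugacy $h\colon\partial Z_f\to S^1$, the continued-fraction expansion $\theta=[0;a_1,a_2,\dots]$, the closest-return lengths $\length_m$, and level-$m$ combinatorial intervals are the same as for the associated quadratic polynomial $f_\theta$. The degeneration $K_F=\Width^\bullet(V\setminus\overline Z_f)$ and the weights $\Width^{+,\ver}_{\wZ^m}(J^m)$, $\Width^{+,\per}_{3,\wZ^m}(J^m)$ are then defined exactly as in \S\ref{ss:into:ql maps} with the curve families taken inside $V\setminus\overline Z_f$ and pulled back through $\iota$ when necessary.

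Second I would construct the nested geodesic pseudo-Siegel disks $\wZ^m$ by attaching parabolic fjords and hyperbolic geodesics to the Mother Hedgehog $H_{f_\theta}$ at combinatorial thresholds dictated by \eqref{eq:dfn:bM}; since the threshold depends only on $\theta$ and the universal constant $\bK$, the identical prescription works for $F$ and is independent of $K_F$. By the main result of \cite{DL22}, each $\wZ^m$ carries a nest of tilings $\TT(\wZ^m)$ of essentially bounded geometry independent of $m$, while $\wZ^{-1}$ is $\mu(K_F)$-qc. The three-case dichotomy is then obtained by combining an equidistribution/packing argument for the vertical degeneration against the total weight $K_F$, a Gr\"otzsch-type lower bound on combinatorial intervals, and the sector renormalizations $\RR^{n}_\sec$ with $n=n(m,\theta)\le m$ from \cite{DL:sector bounds}. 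Shallow case \ref{thm:apbs:C} ($m<\bbm_f$) is forced by the mass $|J|K_F$ exceeding all peripheral contributions; deep case \ref{thm:apbs:A} ($m>\bbm_f$) reduces after $n(m)$ sector renormalizations to the polynomial estimate; transitional case \ref{thm:apbs:B} ($m=\bbm_f$) follows by a Cauchy--Schwarz type splitting that produces the $O(\sqrt{\length_m K_F})$ bound on the peripheral piece.

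The main obstacle is controlling the ``extra'' extremal-width contribution that the inflation $\iota$ could a priori create in $V\setminus\overline Z_f$ beyond what is seen by a genuine quadratic-like approximation. The resolution, paralleling \cite{K,DL:HypComp}, is that the $\psi^\bullet$-ql inflation is performed relative to a forward-invariant neighborhood of the postcritical set, so any vertical curve in $V\setminus\overline Z_f$ that passes through a sheet created by $\iota$ must still be anchored at $\partial Z_f$ and at $\partial V$; pulling back through $\iota$ and using the combinatorial equidistribution on $\partial Z_f$ shows these contributions are absorbed in the same bounds that govern the ql case, with all excess controlled by the sector renormalization. This rigidity is precisely what makes the $\psi^\bullet$-ql formalism the correct category for transferring Siegel bounds to classes such as matings of neutral quadratics (\S\ref{exm:C1}), and it is what allows the statement of Theorem~\ref{thm:main:psi*-ql maps} to be identical to that of Theorem~\ref{thm:main:ql maps}.
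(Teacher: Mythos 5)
Your proposal takes a fundamentally different route from the paper, and that route has genuine gaps.

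The central issue is your plan to \emph{reduce} the $\psi^\bullet$-ql case to the quadratic polynomial estimates of~\cite{DL22} via sector renormalization $\RR^n_\sec$. The paper does not do this and, as far as I can tell, this reduction is not available. The pseudo-Siegel disks $\wZ^m$ are constructed inside $V$, the codomain of the inflation, using the hyperbolic geodesics of $V\setminus\overline Z$ (and more precisely, of the coastal zones $\CC^k_s$ of the iterated domains $U^k$ defined in~\S\ref{subsec:coastal}); they are not obtained by porting over disks from the quadratic polynomial's plane. You propose to ``attach parabolic fjords and hyperbolic geodesics to the Mother Hedgehog $H_{f_\theta}$ at combinatorial thresholds dictated by~\eqref{eq:dfn:bM}; since the threshold depends only on $\theta$ and the universal constant $\bK$, the identical prescription works for $F$ and is independent of $K_F$.'' This is wrong on both counts: the geometry of the geodesics is intrinsic to $V\setminus\overline Z$ and hence to $F$, and the threshold~\eqref{eq:dfn:bM} is emphatically \emph{not} independent of $K_F$ --- $\bM_m$ is $\bM$ for $m>\bbm_F$, $H(\length_m K_F)$ at the transitional level $m=\bbm_F$, and $\infty$ for $m<\bbm_F$, and $\bbm_F$ itself is defined from $K_F$. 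That $K_F$-dependence is precisely what makes~\ref{thm:apbs:C} say there are no regularizations on shallow scales.

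The second gap is that your argument does not engage with what the paper identifies as the genuinely new phenomenon: the vertical component of the degeneration. For $\psi^\bullet$-ql maps (unlike for quadratic polynomials, where $\partial V=\infty$ is far away in the polynomial setting), $\Fam^{+,\ver}$ can carry unbounded width, and the entire machinery of the paper is rebuilt around this. The decomposition $\Fam^+=\Fam^{+,\ver}+\Fam^{+,\per}_{\ext,m}+\Fam^{+,\per}_{\div,m}$ in~\eqref{eq:Fam^+} is new; the Amplification Theorem~\ref{thm:SpreadingAround} acquires the extra Alternative~\ref{item:2:thm:SpreadingAround} ($K_F\succeq_\blambda K\qq_{m+1}$); the Calibration Lemma~\ref{lmm:CalibrLmm} acquires the semi-equidistribution Alternative~\ref{Cal:Lmm:Concl:a}; and the main induction of Proposition~\ref{prop:deepandtransitional} has a ``contradiction box'' $\bA_m\ge\dots\gg\bA_m$ that is entirely absent in the quadratic case. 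Your remark that ``pulling back through $\iota$ and using the combinatorial equidistribution on $\partial Z_f$ shows these contributions are absorbed in the same bounds that govern the ql case'' glosses over this: the equidistribution of vertical degeneration is a \emph{conclusion} (Propositions~\ref{prop:equidcombinbatorial} and~\ref{prop:equidcombinbatorial:shallow_levels}, fed by Lemma~\ref{lem:replication}), not an input you can invoke to tame the new curves.

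Finally, sector renormalization does not appear in the paper's proof at all. The statement you cite from~\S\ref{ss:quadr_polynom} --- that $\wZ^m$ become uniformly qc disks under sector renormalization --- is a \emph{consequence} of uniform pseudo-Siegel bounds, not a tool for proving them, and the ``Cauchy--Schwarz type splitting'' you invoke for the transitional case is not what produces the $O(\sqrt{\length_m K_F})$ estimate; that estimate comes from the explicit $\bK_m=\bK/\bT+\bL\cdot\bA_m$ running through the induction. The correct approach, as the paper carries out, is a self-contained induction from deep to shallow levels in $V$ itself, with each of the building blocks (Welding/Log-Rule, Amplification, Calibration, Combinatorial Localization) re-proved in the $\psi^\bullet$-setting using coastal zones and bistable regions to make $\iota$-pullbacks well defined.
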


\subsection{Outline of the paper} Let us here briefly outline the organization of the paper. We remark that all sections have short summaries at the beginning.

In Section~\ref{s:Defn}, we will introduce the notions of $\psi^\bullet$-ql Siegel maps (see Definition~\ref{dfn:psi:Siegel:ql}) and discuss how they naturally appear as inflations of rational maps around Siegel disks (see Figure~\ref{Fig:UnwindingSiegelFull} and Proposition~\ref{prop:g:unwind}).

In Section~\ref{s:Degener_of_psi_bullet}, we prepare degeneration tools to analyze $\psi^\bullet$-maps. The full families $\Fam$, the outer families $\Fam^+$, and various subtypes of the former play a central role in the paper, see \S\ref{subsec:familyofarcs}. Unlike the quadratic case, the first natural splitting of $\Fam^{+}$ is into the vertical $\Fam^{+,\ver}$ and the peripheral $\Fam^{+,\per}$ components, see~\eqref{eq:Fam^+}.

In Section~\ref{sec:parabolicfjords}, we extend the description of parabolic fjords from~\cite[\S4]{DL22} to the $\psi^\bullet$-setting. The analysis is similar to the case of quadratic polynomials and relies on shifts of wide rectangles (see~Items~\ref{shift:to:v} and~\ref{shift:to:w} in~\S\ref{ss:BalSubrect}). Key results are centered around the Log-Rule stated in Theorem~\ref{thm:parabolicfjords}. Pathological behaviors usually lead to Exponential Boosts in degeneration, see
~\S\ref{sss:ExponBoost}.
\cite[Corollary 7.3]{DL22} on the conditional regularization $\wZ^{m+1}\leadsto \wZ^m$ is restated as Theorem~\ref{thm:regul}. The argument is non-dynamical (relies on Log-Rules) and thus is applicable to the $\psi^\bullet$-setting.

Amplification Theorem~\ref{thm:SpreadingAround} extends \cite[Theorem 8.1]{DL22} to the $\psi^\bullet$-case. The statement and proof take into account a possible vertical degeneration; this leads to an additional Alternative~\ref{item:2:thm:SpreadingAround} in Theorem~\ref{thm:SpreadingAround}.

Calibration Lemma~\ref{lmm:CalibrLmm} extends~\cite[Lemma 9.1]{DL22} to the $\psi^\bullet$-setting. Since $\psi^\bullet$-maps can have vertical degeneration, Lemma~\ref{lmm:CalibrLmm} has an additional Alternative~\ref{Cal:Lmm:Concl:a} asserting a semi-equidistribution of such a vertical degeneration. This semi-equidistribution is refined in Lemma~\ref{lem:replication}, ``Equidistribution or Combinatorial Localization'', into equidistribution. The latter is a key preparation tool for Cases~\ref{thm:apbs:B} and~\ref{thm:apbs:C} of main Theorems~\ref{thm:main:psi-ql maps}~\ref{thm:main:psi*-ql maps}.

In Section~\ref{s:Proof}, we prove Theorem~\ref{thm:main:psi*-ql maps} restated as Theorems~\ref{thm:main:psi-ql maps} and~\ref{thm:main:psi-ql maps:extra}. The central induction is in the proof of Theorem~\ref{thm:main:psi-ql maps} claiming the equidistribution property. The induction is from deeper to shallower levels and is quite similar to the quadratic case: compare the diagram in Figure~\ref{fig:main pattern} with that in \cite[Figure 27]{DL22}. The key difference is the ``contradiction box $\bA_m \gg \bA_m$'' describing a possibility that an unexpected big vertical degeneration is developed. Theorem~\ref{thm:main:psi-ql maps:extra} establishes explicit combinatorial thresholds for regularization $\wZ^{m+1}\leadsto \wZ^m$; the proof of Theorem~\ref{thm:main:psi-ql maps:extra} is in the \emph{a posteriori} regime relative to Theorem~\ref{thm:main:psi-ql maps}.

\section{$\psi^\bullet$-ql Siegel maps}
\label{s:Defn}
In \S\ref{ss:psi bullet:defn}, we define the class of $\psi^\bullet$-ql Siegel maps; their iterations are discussed in ~\S\ref{subsec:iter}. Items~\ref{dfn:psib-ql:1} -- \ref{dfn:psib-ql:4} of Definition~\ref{dfn:psi:Siegel:ql} are similar to the case of $\psi^\bullet$-quadratic-like maps with a bush being replaced with a Sigel disk $\overline Z$, see~\cite[\S~3.4.2]{DL:Feigen}. Items~\ref{dfn:psib-ql:5} and \ref{dfn:psib-ql:6} are new requirements; they are naturally satisfied in applications (see Proposition~\ref{prop:g:unwind}) and are used in defining the pullback operation $F^*=\iota_*\circ f^*$ within zones, see the discussion at the beginning of Section~\ref{s:Degener_of_psi_bullet}.

In~\S\ref{ss:inflat} and~\S\ref{ss:exms}, we show how $\psi^\bullet$-Siegel maps naturally arise from rational dynamics by inflating a Siegel prerenormalization, see Definition~\ref{dfn:SiegelPreNorm}, Proposition~\ref{prop:g:unwind}, and Remark~\ref{rem:prop:g:unwind}.

\subsection{Definition of $\psi^\bullet$-ql maps}\label{ss:psi bullet:defn}
A map $\iota\colon A\to B$ between open Riemann surfaces is called an \emph{immersion} if every $x\in X$ has a neighborhood $U_x$ such that $\iota\colon  U_x\to \iota(U_x)$ is a conformal isomorphism, see an example on Figure~\ref{Fig:psibullet maps}. A compact subset $S\Subset B$ is called \emph{$\iota$-proper} if $\iota\colon \iota^{-1}(S)\to S$ is a homeomorphism. In this case, we often \emph{identify} $S\simeq \iota^{-1}(S)$. 

We say that an immersion $\iota\colon A\to B$ is a \emph{covering embedding rel $S\subset B$} if 
\begin{itemize}
  \item $S$ is $\iota$-proper;
  \item $\iota\mid A\setminus \iota^{-1}(S)$ is a covering onto the image; i.e., 
  \[\iota \colon \ A\setminus \iota^{-1}(S)\longrightarrow \iota(A)\setminus S\]
is a covering map. 
\end{itemize}

\begin{defn}
\label{dfn:psi:Siegel:ql}
A {\em  pseudo${}^\bullet$-quadratic-like Siegel map} (``$\psi^\bullet$-ql Siegel map'') 
is a pair of holomorphic  maps
\begin{equation}
\label{eq:dfn:psib-ql}
F=(f,\iota)\colon \sp ( U, \overline Z_U)  \rightrightarrows (V, \overline Z ),\hspace{0.5cm}\text{ so }\sp  \overline Z_U \subseteq f^{-1}(\overline Z) \cap  \iota^{-1}(\overline Z) 
\end{equation} between two conformal  disks $U$, $V$
with the following properties:

\begin{enumerate}[label=\text{(\Roman*)},font=\normalfont,leftmargin=*]
\item\label{dfn:psib-ql:1}  $f\colon U\to V$ is a double branched covering with a unique critical point $c_0\in \partial Z_U$;

\item \label{dfn:psib-ql:2} $\overline Z$ is $\iota$-proper; in particular, $\iota \colon \overline Z_U\ \overset{\simeq}\longrightarrow \ \overline Z;$

\item \label{dfn:psib-ql:5}
 there exist neighborhoods $X_U\supset \overline Z _U$ and $X \supset \overline Z$ with the following property: $\iota: X_U \to X$ is a conformal isomorphism such that
\begin{equation}\label{eq:dfn:ql b domain}
f_X\coloneqq f\circ \big(\iota\mid X_U\big)^{-1} : X \to f(X_U)\eqqcolon Y 
\end{equation} 
 is a \emph{Siegel map}: $\overline Z \Subset X\cap Y$ is a closed qc Siegel disk around the fixed point $\alpha \in Z=\intr \overline Z$ with bounded-type rotation number;

\end{enumerate}
Define inductively $K_0\coloneqq \overline Z$, $K_{1, U}\coloneqq f^{-1}(\overline Z)$, $K_{1}\coloneqq\iota(K_{1, U})$,  and, for $n\ge 1$   
\begin{equation}
\label{eq:C:dfn:K_n}
\begin{matrix} K_{n,U}\coloneqq f^{-1}(K_{n-1})=f^{-1}\circ \big(\iota \circ f^{-1}\big)^{n-1} (\overline Z), \vspace{0.1cm}\\ K_n \coloneqq \iota(K_{n,U})=\big(\iota \circ f^{-1}\big)^n (\overline Z);\hspace{1.2cm} \end{matrix}
\end{equation}
\begin{enumerate}[label=\text{(\Roman*)},font=\normalfont,leftmargin=*,start=4]
\item  \label{dfn:psib-ql:4} for all $n\ge 0$, the restriction
$\iota \colon \ K_{n,U} \overset{\simeq}{\longrightarrow} K_n $ is a homomorphism;

\item \label{dfn:psib-ql:5} $\iota\colon U\to V$ is a covering embedding rel $K_1$;
\end{enumerate}

We remark that $F\colon U\rightrightarrows V$ can be naturally iterated $F^n\colon U^n\rightrightarrows V$, see~\S\ref{subsec:iter}. We also require that
\begin{enumerate}[label=\text{(\Roman*)},font=\normalfont,leftmargin=*,start=6]
\item \label{dfn:psib-ql:6} for all $n\ge 1$, the iterates $\iota^n\colon U^n \to V$ are covering embedding rel $K_n$.
\end{enumerate}
\end{defn}

Since $\iota$ is a conformal isomorphism in a neighborhood of $\overline Z$, we will below identify 
\begin{equation}\label{eq:Z is Z_U}
\overline Z\simeq \overline Z_U\equiv \overline Z_F\sp\sp \text{ and write }\sp F\colon ( U, \overline Z)  \rightrightarrows (V, \overline Z) \sp\sp\text{ or }\sp F\colon U \rightrightarrows V.
\end{equation}
Similarly, we identify $K_{n}\simeq K_{n,U}$.  Item~\ref{dfn:psib-ql:4} is illustrated on Figure~\ref{Fig:Item4}.

\begin{figure}
   \includegraphics[width=13cm]{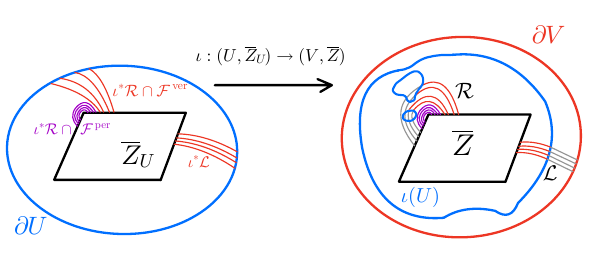}
   \caption{An example of an immersion $\iota$, see also~\S\ref{ss:push forw curves}. A vertical lamination $\LL$ always lifts to a vertical lamination $\iota^*\LL$. The lift of a peripheral lamination $\RR$ can have vertical and peripheral components (\S~\ref{subsec:familyofarcs}).}
   \label{Fig:psibullet maps}
\end{figure}

The \emph{width} of $F$ is 
\[\Width^\bullet(F)\coloneqq \Width(V\setminus \overline Z),\]
where $\Width(V\setminus \overline Z) = \frac{1}{\mod(V\setminus \overline Z)}.$
If $\Width^\bullet(F)\le K$, then $X_U,~X$ in Item~\ref{dfn:psib-ql:5} can be selected so that
\begin{equation}
\label{eq:app:good X}
\mod(X\setminus \overline Z)\ge \varepsilon(K). 
\end{equation}

\subsection{Iteration ${F^k\colon U^k\rightrightarrows V}$}\label{subsec:iter}
We recall from~\cite[\S2.2.2]{K},~\cite[\S 3.4.6]{DL:Feigen}  that $F\colon U\rightrightarrows V$ has the natural \emph{fiber product} (also known as the $\psi$-\emph{restriction}, or \emph{pullback}, or the \emph{graph}) 
denoted by 
\[F=(f,\iota)\colon U^2\rightrightarrows  U^1=U,\sp\sp \text{ where }\sp U^2=\{(x,y)\in U\times U\mid\sp f(y)=\iota(x)\},\]
where $f,\iota\colon U^2\rightrightarrows U$ are component-wise projections:
\[\begin{tikzpicture}
\node (A00) at (0,0){$ (x,y)\in U^2 $}; 
\node (A01) at (0,-1.2){$ y\in U $};
\node (A10) at (7,0){$ x\in U $}; 
 \node (A11) at (7,-1.2){$ f(y)=\iota(x)\in V .$}; 
\draw (A00) edge[->] node[right]{$\iota\sp\sp (\psi-\text{restriction})$} (A01);
\draw (A10) edge[->] node[right]{$\iota$} (A11);
\draw (A00) edge[->] node[above]{$f\sp\sp (\psi-\text{restriction}$)} (A10);
\draw (A01) edge[->] node[below]{$f$} (A11);

\end{tikzpicture}
\]

Repeating the construction, we obtain the sequence
\[F\colon U^k\rightrightarrows U^{k-1},\hspace{1cm} n\ge 1, \sp U^0=V ,\sp U^1=U,\]
together with induced iterations denoted by
\begin{equation}
\label{eq:F^k}
 F^k = \big(f^k, \iota^k\big) \colon U^k \to V.
\end{equation}  
Note that by construction, each $U^k$ is conformally a disk, and $f: U^k \longrightarrow U^{k-1}$ is a degree $2$ branched covering.

Note that $\iota\mid U^{k}$ is the lift of $\iota\mid U^{k-1}$ under the covering $f$:
$$
\begin{tikzcd}
& U^k\setminus  K_{k} \arrow[dl, ->, "f"'] \arrow[dr, ->, "\iota"] & \\
U^{k-1}\setminus  K_{k-1} \arrow[dr, ->, "\iota"'] & & \iota\big( U^{k}\setminus  K_{k}\big) \arrow[dl, ->, "f"] \\
& \iota\big( U^{k-1}\setminus  K_{k-1}\big) &
\end{tikzcd}
$$

Therefore, Items~\ref{dfn:psib-ql:4} and~\ref{dfn:psib-ql:5} for $\iota\mid U$ from~\S\ref{ss:psi bullet:defn} imply the following properties for $\iota\mid U^k$:
\begin{enumerate}[label=\text{(\Roman*${}^k$)},font=\normalfont,leftmargin=*,start =4]

\item  \label{dfn:psib-ql:4:4} for all $n\ge 0$ and all $k>0$, the immersion $\iota\colon U^k\to U^{k-1}$ restricts to a homemorphism from $K_{n, U^k} \simeq K_n$ onto $K_{n, U^{k-1}}\simeq K_n$;
\end{enumerate}
\begin{enumerate}[label=\text{(\Roman*${}^k$)},font=\normalfont,leftmargin=*,start =5]
\item \label{dfn:psib-ql:2:+} $\iota\colon U^{k+1}\to U^{k}$ is a covering embedding rel $K_{k+1}$.
\end{enumerate}
Similarly, Item~\ref{dfn:psib-ql:6} implies a more general property:
\begin{enumerate}[label=\text{(\Roman*${}^k$)},font=\normalfont,leftmargin=*,start =6]
\item \label{dfn:psib-ql:6:+} for all $n\ge 1$ and $k\ge 0$, the iterats $\iota^n\colon U^{n+k}\to U^{k}$ are covering embedding rel $K_{n+k}$.
\end{enumerate}
In other words,  $F\colon U^k\rightrightarrows  U^{k-1}$ is almost a $\psi^\bullet$-ql map where Item~\ref{dfn:psib-ql:5} is replaced by Item~\ref{dfn:psib-ql:2:+}.

\begin{figure}
   \includegraphics[width=13cm]{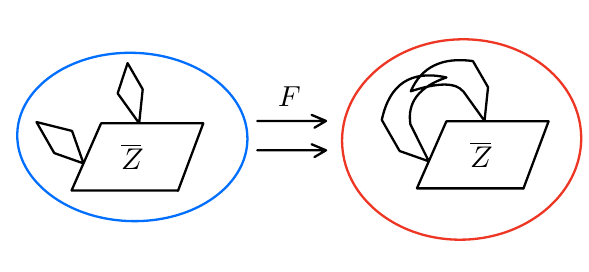}
   \caption{Item \ref{dfn:psib-ql:4} in~\S\ref{ss:psi bullet:defn} prevents overlapping of immersed limbs as illustrated on this figure.}
   \label{Fig:Item4}
\end{figure}

\subsection{Inflation of rational maps} \label{ss:inflat} In this subsection, we will consider a more general setting of polynomial-like maps (``pl maps'') and demonstrate that $\psi^\bullet$-pl Siegel maps naturally emerge by inflation of Siegel prerenormalizations of rational maps; see Proposition~\ref{prop:g:unwind} below. A key assumption about the Siegel prerenormalization (Definition~\ref{dfn:SiegelPreNorm}) is the existence of a partial self-covering restriction~\eqref{eq:part:self-cover}.

In degree $d=2$, Proposition~\ref{prop:g:unwind} constructs a $\psi^\bullet$-ql Siegel map satisfying Definition~\ref{dfn:psi:Siegel:ql}. We will not provide an axiomatization of $\psi^\bullet$-pl Siegel maps in degree $d>2$; Proposition~\ref{prop:g:unwind} should be taken as a guidance, see Remark~\ref{rem:prop:g:unwind}.

Assume $g\colon \wC\selfmap$ is a rational map with a periodic bounded-type Siegel disk $Z$. By~\cite{Zha11},  $\overline Z$ is a closed qc disk containing at least one critical point on the boundary. Let us fix an iterate 
\begin{equation}\label{eq:dfn:f:exm}
 f\coloneqq g^p,\hspace{1cm} f(\overline Z)=\overline Z,
\end{equation} 
preserving the Siegel disk; we do not require $p$ to be the minimal period.

Let us next assume that there is a forward invariant set $\bUpsilon=f(\bUpsilon)$ containing the postcritical set of $f$ (which is equal to the postcritical set of $g$) such that $\overline Z$ is a connected component of $\bUpsilon$. Then 
\begin{equation}
\label{eq:part:self-cover}
(f,\ \hookrightarrow)\ \colon\ \wC\setminus f^{-1}(\bUpsilon) \ \rightrightarrows\ \wC\setminus \bUpsilon
\end{equation}
is a \emph{partial self-covering}: $f$ represents a covering from a smaller set to a bigger set.

Let $B$ be any open Jordan neighborhood of $\overline{Z}$ such that $\bUpsilon \cap B=\overline Z$. Let $A$ be a unique component of $f^{-1}(B)$ that contains $\overline Z$. 
If 
\begin{itemize}
    \item $\bUpsilon \cap A= \overline Z,$ and
    \item $\partial A$ and $\partial B$ are homotopic rel $\bUpsilon$,
\end{itemize}
then we say that $\bUpsilon$ is \emph{locally saturated at $\overline Z$;} this notion is independent of the choice of $B$. This implies that $A$ is also a Jordan neighborhood of $\overline Z$ and we have a branched covering map
\begin{equation}
\label{eq:f:BtoA:pre} f=g^p\ \colon A\overset{d:1}\longrightarrow B
\end{equation} restricting to a covering $f\colon A\setminus K_1\to B\setminus \overline Z$, where $K_1\coloneqq [f\mid A]^{-1}(\overline Z)$.

\begin{defn}[Siegel prerenormalization]\label{dfn:SiegelPreNorm} Assume a rational map $g\colon \wC\selfmap$ is endowed with  the following data:
\begin{itemize}
\item an iterate~\eqref{eq:dfn:f:exm} around a Siegel disk $\overline Z$,
\item a partial self-covering restriction~\eqref{eq:part:self-cover} so that $\bUpsilon$ is locally saturated at $\overline Z$.
\end{itemize} 
Then the map
\begin{equation}
\label{eq:f:BtoA} f=g^p\ \colon A\overset{d:1}\longrightarrow B
\end{equation} a \emph{Siegel prerenormalization} of $f$ at $\overline Z$.
\end{defn}

The \emph{inflation of~\eqref{eq:f:BtoA} by means of~\eqref{eq:part:self-cover}} is the extension of $f\colon A\to B$ along all paths in $\wC\setminus \bUpsilon$; see~\S\ref{sss:dfn:unwinding} for details.  We have:

\begin{prop}[Inflation of~\eqref{eq:f:BtoA}, see Figure~\ref{Fig:UnwindingSiegelFull}]\label{prop:g:unwind} Consider a rational map $g\colon\wC\selfmap$ with a Siegel prerenormalization as in Definition~\ref{dfn:SiegelPreNorm}. Then the inflation  of~\eqref{eq:f:BtoA} by means of~\eqref{eq:part:self-cover} is a correspondence   
\begin{equation}
\label{eq:prop:g:unwind}
F=(f,\iota)\colon U\rightrightarrows V
\end{equation}
where
\begin{enumerate}[label=\text{(\Alph*)},font=\normalfont,leftmargin=*]
\item\label{item:A:prop:g:unwind} $f\colon U\overset{d:1}\longrightarrow V$ is a proper branched covering extending $f\colon A\overset{d:1}\longrightarrow  B$; and 
\item\label{item:B:prop:g:unwind} $\iota\colon U\longrightarrow V$ is a covering embedding rel $K_1$ extending $A\cap B \ \overset{\simeq}{\longrightarrow}\  A\cap B$. \\ (In particular, $K_1$ is $\iota$-proper.)
\end{enumerate}

Moreover, the iterate \[F^k=(f^k,\iota^k)\colon U^{n+k}\rightrightarrows U^n,\hspace{1cm} U^0=V\] is the inflation of \[f^k\ \colon A^{n+k}\ \overset{d^k:1}{\longrightarrow}\ A^n,\hspace{1cm} A^0=B\]
by means of 
\[(f^k,\ \hookrightarrow)\ \colon\ \wC\setminus f^{-n-k}(\bUpsilon) \ \rightrightarrows\ \wC\setminus  f^{-n}(\bUpsilon).\]
Here $A^n$ is the unique component of $f^{-n}(B)$ containing $\overline Z$.

If $d=2$, i.e., $K_1$ contains a unique simple critical point, then~\eqref{eq:prop:g:unwind} is a $\psi^\bullet$-ql Siegel map satisfying Definition~\ref{dfn:psi:Siegel:ql}.
\end{prop}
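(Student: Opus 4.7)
The plan is to construct $V$ (resp.\ $U$) as an inflation of $B$ (resp.\ of $A$) along paths in $\wC\setminus\bUpsilon$, using the partial self-covering~\eqref{eq:part:self-cover} as the lifting device, and then read off all the claimed properties directly from the construction.

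\textbf{Construction of $V$ and $U$.} I would build $V$ by successively attaching half-neighborhoods to $B$ along boundary arcs sitting in $\wC\setminus\bUpsilon$, and iterating this attachment along all immersed continuation paths in $\wC\setminus\bUpsilon$. Formally, $V$ is the Riemann surface equipped with a canonical immersion $j_V\colon V\to\wC$ characterized by a distinguished embedded copy $B\hookrightarrow V$ on which $j_V$ restricts to the inclusion, together with the property that $j_V\colon V\setminus\overline Z\to\wC\setminus\bUpsilon$ is a covering of its image. Since $B$ is a disk and each attaching arc is simply connected, $V$ is a topological disk. I would construct $U$ in parallel starting from $A$: each attaching arc $\gamma\subset\wC\setminus\bUpsilon$ used in the construction of $V$ lifts through~\eqref{eq:part:self-cover} to $d$ attaching arcs in $\wC\setminus f^{-1}(\bUpsilon)$, each emanating from one of the $d$ preimages in $A$ of $\gamma(0)$.

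\textbf{The maps $f$, $\iota$, and verification of \ref{item:A:prop:g:unwind}--\ref{item:B:prop:g:unwind}.} The branched cover $f\colon U\to V$ is defined sheet-by-sheet: a sheet of $U$ over a path $\tilde\gamma$ maps to the sheet of $V$ over $f\circ\tilde\gamma$. This extends $f\colon A\to B$, is proper of degree $d$, and is branched only at $K_1$, giving~\ref{item:A:prop:g:unwind}. The immersion $\iota\colon U\to V$ is the \emph{identity on sheets}: a point of $U$ on a sheet over a path $\tilde\gamma\subset\wC\setminus\bUpsilon$ emanating from $A\cap B\setminus\overline Z$ is mapped to the point of $V$ on the sheet over the \emph{same} path $\tilde\gamma$. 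Both $V\setminus\overline Z$ and $U\setminus K_1$ are by construction covers of $\wC\setminus\bUpsilon$, and $\iota$ is the canonical map between them; hence $\iota$ is locally an isomorphism, extends $\mathrm{id}_{A\cap B}$, and $K_1$ is $\iota$-proper with $\iota\colon U\setminus\iota^{-1}(K_1)\to V\setminus K_1$ a covering of its image, yielding~\ref{item:B:prop:g:unwind}.

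\textbf{Iteration and the $\psi^\bullet$-ql axioms.} The ``moreover'' claim follows by induction on $k$, using that the fiber product operation defining $U^{k+1}$ in \S\ref{subsec:iter} corresponds, via a universal property of inflation, to composition of the underlying partial self-coverings: one checks directly that $U^{k+1}$ is again an inflation, namely of $A^{n+k+1}\to A^{n+k}$ along $(f,\hookrightarrow)\colon\wC\setminus f^{-(n+k+1)}(\bUpsilon)\rightrightarrows\wC\setminus f^{-(n+k)}(\bUpsilon)$. For the degree-$2$ claim, axioms (I), (II) of Definition~\ref{dfn:psi:Siegel:ql} are immediate; (III) is achieved by choosing $X$ to be a small qc Jordan neighborhood of $\overline Z$ in $A\cap B$ on which $g^p$ acts as a genuine Siegel map (possible since $\bUpsilon\cap(A\cap B)=\overline Z$); (IV) holds because each $K_n$ lies in the central embedded copy of $A^n\subset U^n$ where $\iota^n$ acts by the identity; and the covering-embedding axioms (V), (VI) are exactly the $n=1$ and the general cases of the iteration claim. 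The main subtlety is axiom (IV) --- the no-overlap of immersed limbs (Figure~\ref{Fig:Item4}) --- which is ultimately forced by the local-saturation hypothesis on $\bUpsilon$ at $\overline Z$, ensuring that attaching arcs for $V$ stay uniformly away from $\overline Z$ so that their $f^{-n}$-lifts used to build $U^n$ do not collide at the central $K_n$.
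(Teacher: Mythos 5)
Your proposal follows essentially the same route as the paper: construct $V$ and $U$ by extending $B$ and $A$ along all paths in $\wC\setminus\bUpsilon$ (resp.\ $\wC\setminus f^{-1}(\bUpsilon)$), define $f$ as the lift of the covering $A\to B$ and $\iota$ as the lift of the inclusion, deduce $\iota$-properness of $K_1$ from local saturation, and handle the iteration claim by naturality of the inflation. The paper packages the ``attach half-disks along paths'' step as gluing $B$ to the universal covering $\mathbb{A}^0$ of $\wC\setminus\bUpsilon$ rel $B'=B\setminus\partial Z$ (so that $V\setminus\overline Z$ actually covers all of $\wC\setminus\bUpsilon$, not merely its image, which is a small imprecision in your description), and it treats axioms (I)--(V) uniformly as immediate while you single out (IV) as the main subtlety; these are only stylistic differences and the mathematical content is the same.
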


The proof of Proposition~\ref{prop:g:unwind} is given in~\S\ref{sss:dfn:unwinding}.

\begin{rem}[Variations of Proposition~\ref{prop:g:unwind}]\label{rem:prop:g:unwind} As we have already mentioned, Proposition~\ref{prop:g:unwind} should be viewed as a guidance in degree $d>2$. Let us mention two possible modifications of the setting.

First, the local saturation condition at $\overline Z$ can be relaxed into a local saturation condition at $K_m$ for some $m\ge 1$ (i.e., $\bUpsilon$ may intersect $K_m$ but not $K_{m+1}\setminus K_m$). 

Second, instead of dealing with the first return map, one can develop the concept of ``$\psi^\bullet$-correspondences with multiple connected components''
\begin{equation}
    \label{eq:rem:prop:g:unwind}
F=(f,\iota)\colon \ \Big(\bigcup_{i} U_i\Big)\ \rightrightarrows \ \Big(\bigcup_{i} V_i\Big)\ \hspace{1cm} f\colon U_i\to V_{f(i)}, \sp \iota\colon U_i\to V_{i},
\end{equation}
where the $U_i$ are chosen to contain the original critical points of $g$. (The first return map can significantly increase the number of critical points.) Framework~\eqref{eq:rem:prop:g:unwind} is convenient for developing the associated renormalization theory.
\end{rem}

\begin{figure}
   \includegraphics[width=13.5cm]{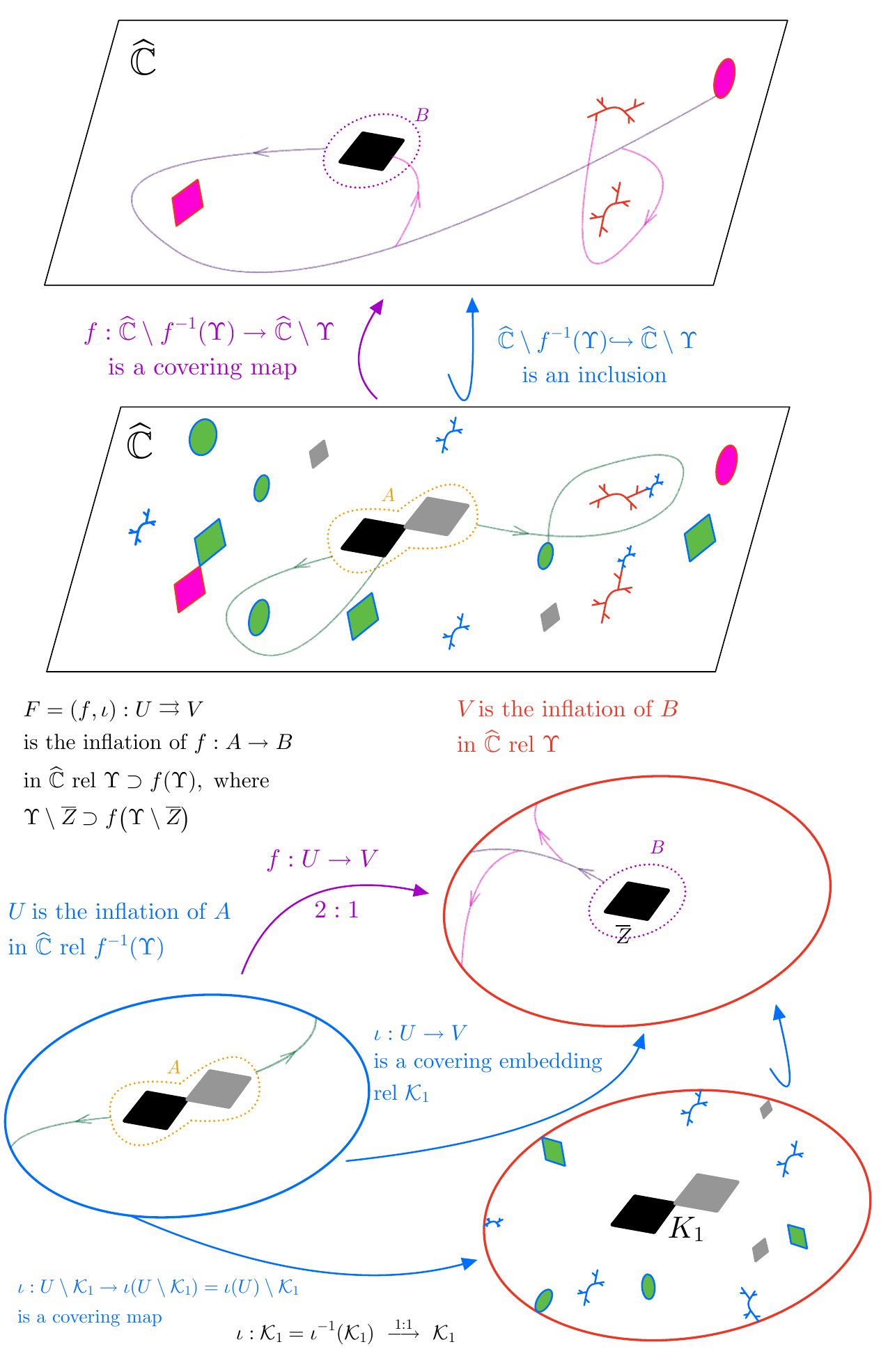}
   \caption{Illustration to Proposition~\ref{prop:g:unwind}: the inflation of $f\colon A\to B$ in $\wC$ around a Siegel disk $\overline Z$
    assuming that $\bUpsilon$ controls the postcritical set; see~\S\ref{sss:Fig:UnwindingSiegelFull} for details regarding the figure.}
   \label{Fig:UnwindingSiegelFull}
\end{figure}

\subsubsection{Inflation: detailed description} \label{sss:dfn:unwinding} Let $V$ be the inflation of $B$ in $\wC$ rel $\bUpsilon$; the inflation is obtained by extending $B$ along all paths in $\wC\setminus \bUpsilon$ defined as follows. Let $\mathbb{A}^0$ be the universal covering of $\wC \setminus \bUpsilon$ rel $B'\coloneqq B\setminus \partial Z$. This means that $\mathbb{A}^0\to\wC \setminus \bUpsilon$ is a covering that opens up all loops outside of $B'$. In particular,  $\mathbb{A}^0$ is annulus with $\pi_1(\mathbb{A}^0) \simeq \pi_1(B')$ via $\mathbb{A}^0\to\wC \setminus \bUpsilon$. Note that $\mathbb{A}^0$ has a neighborhood boundary component that is conformally identified with $B'$. The inflation of $B$ in $\wC$ rel $\bUpsilon$ is obtained by gluing $\mathbb{A}^0$ and $B$ along $B'$
$$
V = \mathbb{A}^0 \cup B/\text{\Small $B'$} \simeq \mathbb{A}^0 \sqcup \overline Z.
$$

Similarly, let $U$ be the inflation of $A$ in $\wC$ rel $f^{-1}(\bUpsilon)$:
$$
V = \mathbb{A}^1 \cup A/\text{\Small $A'$} \simeq \mathbb{A}^1 \sqcup K_1,
$$
where $A'\coloneqq A\setminus K_1$ and $\mathbb{A}^1$ is the universal covering of $\wC\setminus f^{-1}(\bUpsilon)$ rel $A'$. Since $A\setminus K_1$ is an annulus, $U\setminus K_1$ is an annulus.

The covering map $f\colon A \overset{d:1}{\longrightarrow} B$ lifts to a degree $d$ branched covering map $f: U \longrightarrow V$, and the inclusion map $\iota:\wC \setminus f^{-1}(\bUpsilon) \xhookrightarrow{} \wC \setminus \bUpsilon$ lifts to a covering embedding $\iota: U \longrightarrow V$ rel $K_1$.

Since $\bUpsilon$ is locally saturated at $\overline Z$, the set $K_1$ is $\iota$-proper.

\begin{proof}[Proof of Proposition~\ref{prop:g:unwind}] 
Item~\ref{item:A:prop:g:unwind} has been justified above. 

The local saturation Condition~\ref{eq:part:self-cover} implies that, up to homotopy $B$ rel $\bUpsilon$, we may assume that $K_1\subset A\cap B$. This implies Item~\ref{item:B:prop:g:unwind} by a routine argument. 

The claim concerning the iterate is standard; it follows from the naturality of the definition of the inflation.
 
Assume that $d=2$. Items~\ref{dfn:psib-ql:1}--\ref{dfn:psib-ql:5} of Definition~\ref{dfn:psi:Siegel:ql} are immediate. Item~\ref{dfn:psib-ql:6} of Definition~\ref{dfn:psi:Siegel:ql}  follows from Item~\ref{item:B:prop:g:unwind} of Proposition~\ref{prop:g:unwind} applied to an iterate of $f$.
\end{proof}

\subsubsection{Comments to Figure~\ref{Fig:UnwindingSiegelFull}}\label{sss:Fig:UnwindingSiegelFull} 

The upper half of the figure illustrates a partial self-covering \eqref{eq:part:self-cover}. The Siegel disk $\overline Z$ from~\eqref{eq:dfn:f:exm} is colored in black; its preimage $K_1$ under $f\colon A\to B$ is colored in black and gray. Components of $\bUpsilon\setminus \overline Z$ are colored in red, while components of $f^{-1}(\bUpsilon)\setminus (\bUpsilon\cup K_1)$ are colored in green and blue.

The inflation of $f\colon A\to B$ by means of~\eqref{eq:part:self-cover} is illustrated in the lower half of the figure. The set $V$ is obtained by extending $B$ along all paths in $\wC\setminus \bUpsilon$; example paths are colored in purple and red. The set $U$ is obtained from by extending $A$ along all paths in $\wC\setminus f^{-1}(\bUpsilon)$; example paths are colored in green.

\subsection{Examples}\label{ss:exms} Let us discuss how $\psi^\bullet$-maps emerge from the rational dynamics. In~\S\ref{exm:C1}, we consider the hyperbolic component $ \mathcal H^\text{rat}_{z^2}$ of $z\mapsto z^2$. The most complicated part of its boundary $\partial H^\text{rat}_{z^2}$ is where both attracting fixed points become neutral. We give an explicit extraction of $\psi^\bullet$-ql maps; the $\iota$-map is, in fact, an inclusion in this case. General hyperbolic components are discussed in~\S\ref{ss:exm:general} and in~\S\ref{sss:disj:type}.

\subsubsection{Matings of neutral quadratic polynomials}\label{exm:C1} Let us illustrate the setting of~\S\ref{ss:inflat} for matings of quadratic polynomials with neutral $\alpha$-fixed points. Such matings arise on the boundary of the hyperbolic component $\mathcal H^\text{rat}_{z^2}$ of $z\mapsto z^2$ in the space of quadratic rational maps.

Consider $g\in \partial \mathcal H^\text{rat}_{z^2}$ with two neutral fixed points at $0$ and $\infty$ that have rotation numbers $\theta_0,\theta_\infty\in \R\setminus \Q$ with $\theta_0\not= -\theta_\infty$. Assume first that  $\theta_0,\theta_\infty\in \Theta_\bnd$ are of bounded type. Then $g$ has closed Siegel qc-disks $\overline Z_0, \overline Z_\infty$ at $0$ and $\infty$. We set:
\begin{itemize}
\item $f_0=f_\infty\coloneqq g$,
\item $\bUpsilon_{f_0}=\bUpsilon_{f_\infty}=\bUpsilon\coloneqq \overline Z_0 \cup \overline Z_\infty$; then the partial self-covering map~\eqref{eq:part:self-cover} has the form
\begin{equation}
\label{eq:matings:Upsilon} (f_0=f_\infty,=g, \ \ \hookrightarrow)\ \colon\ \wC\setminus g^{-1}(\bUpsilon) \ \rightrightarrows\ \wC\setminus \bUpsilon,
\end{equation}
where $g^{-1}(\bUpsilon) = g^{-1}(\overline Z_0) \cup g^{-1}(\overline Z_\infty)$ consists of $\bUpsilon$ and two prefixed (immediate preimages) Siegel disks;
\item $B_0$ and $B_\infty$ are open Jordan neighborhoods of $\overline Z_0$ and $\overline Z_\infty$ such that
\[B_0\cap  \bUpsilon =\overline Z_0 \hspace{1cm}\text{ and } \hspace{1cm} B_\infty\cap  \bUpsilon =\overline Z_\infty,\]
\item $A_0\coloneqq g^{-1}(B_0)$ and $A_\infty\coloneqq g^{-1}(B_\infty)$ so that 
\begin{equation}
\label{eq:matings:quadr} f_0\equiv g \colon A_0\ \overset{2:1}{\longrightarrow} \ B_0 \hspace{1cm}\text{ and } \hspace{1cm}    f_\infty\equiv g \colon A_\infty\ \overset{2:1}{\longrightarrow} \ B_\infty,
\end{equation}
\end{itemize}

Then the inflations of maps in~\eqref{eq:matings:quadr} by means of~\eqref{eq:matings:Upsilon} have the following forms:
\[F_0= (f_0=g,\ \hookrightarrow )\  \colon\  (U_0, \overline Z_0)\rightrightarrows (V_0,  \overline Z_0), \sp\sp V_0=\wC \setminus \overline Z_\infty, \ U_0=g^{-1}(V_0),\]
\[F_\infty= (f_\infty=g, \ \hookrightarrow )\  \colon\  (U_\infty, \overline Z_\infty)\rightrightarrows (V_\infty,  \overline Z_\infty), \sp\sp V_\infty=\wC \setminus \overline Z_0, \ U_\infty=g^{-1}(V_\infty)\]
where the immersion $\iota=$``$\hookrightarrow$'' is an embedding. Both $F_0, F_\infty$ are $\psi^\bullet$-ql maps.

 We have:
\begin{equation}
\label{eq:Width:F_0}
\Width^\bullet(F_0) =\Width^\bullet(F_\infty)=\Width\big(\wC\setminus [\overline Z_0\cup \overline Z_\infty]\big).
\end{equation}
By~\cite{DL:HypComp},  $\Width^\bullet(G_0) $ is bounded in terms of the (combinatorial) distance between $\theta_0$ and $-\theta_\infty$ and, consequently, such bound for $\Width^\bullet(F_0)=\Width^\bullet(F_\infty)$ persists for all  $\theta_0,\theta_\infty \in \R\setminus \Q$. Consequently, correspondences $F_0$ and $F_\infty$ can be defined for all such irrational $\theta_0\not= -\theta_\infty$. For the original $g$, the $0$ and $\infty$ have disjoint (possibly Cremer) Mother Hedgehogs canonically identified with the Mother Hedgehogs of the associated quadratic polynomials.

\subsubsection{Neutral Dynamics at boundaries of hyperbolic components}\label{ss:exm:general} We expect that the setting of~\S\ref{exm:C1} can be extended, with appropriate refinements and modifications, to all hyperbolic component $\HH$ in the space of rational maps with connected Julia sets. Namely, every such $\HH$ has a unique postcritically finite (PCF) center $f_{pcf}\in \HH$ encoding the combinatorics of $\HH$. For $f\in \HH$, let us denote by $\mathcal C_f$ the centers of Fatou components containing either a critical or a postcritical point. If $f=f_{pcf}$, then $\mathcal C_{f_{pcf}}$ it is its critical and postcritical set. For $\tau\in \mathcal C_f$, let $F_\tau$ be the Fatou component centered at $\tau$. We naturally have maps
\begin{equation}
\label{eq:BlCl:f_pcf}
\big[f_{pcf}\colon F_\tau\ \overset{d_\tau:1}{\longrightarrow}\ F_{f(\tau)}\big] \ \ \overset{\text{RM}}{\simeq}\  \ \big[ z\mapsto z^{d_\tau}\ \colon \ \Disk \overset{d_\tau:1}{\longrightarrow}\ \Disk\big],
\end{equation}
where $f(\tau)=f_{pcf}(\tau)$ is the image of $\tau$, the identification ``$\ \overset{\text{RM}}{\simeq}\ $'' represents a marked Riemann map between $(F_\tau, \tau)$ and the unit disk $(\Disk, 0)$, and the ``$z\mapsto z^{d_\tau}$'' represents the Blaschke class of $f_{pcf}\mid F_\tau$.

After introducing a finite set of markings for $\HH$ to resolve its orbifold points, the set $\mathcal C_f$ moves holomorphically with $f\in \HH$; i.e., $\mathcal C_f$ is canonically identified between maps in $\mathcal H$. Equation~\eqref{eq:BlCl:f_pcf} then takes form
\begin{equation}
\label{eq:BlCl:f}
\big[f\colon F_{\tau,f}\ \overset{d_\tau:1}{\longrightarrow}\ F_{f(\tau),f}\big] \ \ \overset{\text{RM}}{\simeq}\  \ \big[ B_{\tau,f}\ \colon \ \Disk \overset{d_\tau:1}{\longrightarrow}\ \Disk\big],
\end{equation}
where $B_{\tau,f}$ is the Blaschke class of $f$ at $\tau$. The class $B_{\tau,f}$ is trivial if $d_\tau=1$. The set of all Blaschke classes $(B_{\tau,f})$ provide a natural parametrization of $\HH$. At the boundary $\partial \HH$, some of the points in $\mathcal C_f$ may become neutral. 

\begin{conj}[Mother Hedgehog is well defined] \label{conj:MH}If a periodic $\tau\in \mathcal C_f$ becomes neutral non-parabolic in the dynamical plane of $f\in \partial \HH$, then there is a well-defined periodic Mother Hedgehog $H_{\tau,f}$ centered at $\tau\equiv \tau(f)$. The periodic cycle of $H_{\tau,f}$ contains at least one critical point of $f$. Sector Renormalization provides an $f$-equivariant qc identification of $H_f$ with a Mother Hedgehog of some polynomial.
\end{conj}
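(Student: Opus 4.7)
The strategy is to realize a neighborhood of $\tau$ as a $\psi^\bullet$-ql Siegel map via the inflation of a Siegel prerenormalization of $f$, and then to transfer the conclusions of Theorem~\ref{thm:main:psi*-ql maps} to $f$. Concretely, let $p$ be the period of $\tau$ and set $\widehat f \coloneqq f^p$. First, produce a forward-invariant set $\bUpsilon$ containing the postcritical set such that the periodic cycle $\{f^j(\tau)\}$ lies in $\bUpsilon$ and $\bUpsilon$ is locally saturated at $\tau$ in the sense of Definition~\ref{dfn:SiegelPreNorm} (or, by Remark~\ref{rem:prop:g:unwind}, at $K_m$ for some $m\ge 1$). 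The natural candidate is the closure of the grand orbit under $\widehat f$ of the remaining markings in $\mathcal C_f$, suitably thickened so that saturation holds in a neighborhood of $\tau$. Picking an open Jordan neighborhood $B$ of $\tau$ and letting $A$ be the component of $\widehat f^{-1}(B)$ containing $\tau$ then yields a Siegel prerenormalization $\widehat f\colon A\to B$.

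Next, inflate $\widehat f\colon A\to B$ along $\wC\setminus \bUpsilon$ by Proposition~\ref{prop:g:unwind} (in degree $2$) or by its multi-component analogue~\eqref{eq:rem:prop:g:unwind} (in higher degree), producing $F_\tau=(f_\tau,\iota_\tau)\colon U\rightrightarrows V$. For $f\in\HH$ the Siegel datum is trivial; approaching $f\in\partial \HH$, the Blaschke class at $\tau$ degenerates, and $B_{\tau,f}$ acquires a neutral fixed point at the origin. When the rotation number $\theta_\tau$ is of bounded type, apply Theorem~\ref{thm:main:psi*-ql maps}: the inflation $F_\tau$ is a genuine $\psi^\bullet$-ql Siegel map with a qc closed Siegel disk $\overline Z_\tau$, and the uniform pseudo-Siegel bounds supply the nest $\wZ^m$ of geodesic pseudo-Siegel disks. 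Item~\ref{dfn:psib-ql:1} of Definition~\ref{dfn:psi:Siegel:ql} places a critical point of $F_\tau$, hence of $f$, on $\partial Z_\tau$, and the $p$-periodicity of $\tau$ forces the full cycle $\{f^j(\tau)\}$ to carry at least one critical point.

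For general irrational $\theta_\tau$ (including the Cremer regime), approximate $f$ by $f_n\to f$ in $\partial \HH$ with bounded-type rotation number $\theta_\tau(f_n)$. The uniform bounds of Theorem~\ref{thm:main:psi*-ql maps} are independent of $\theta_\tau$, and the outer disks $\wZ^{-1}_{F_{\tau,f_n}}$ are $\mu(K)$-qc with $K$ controlled by $\Width^\bullet(F_{\tau,f})$; the latter stays bounded in $n$ by the boundedness principle underlying~\eqref{eq:Width:F_0} and its generalization in~\S\ref{ss:exm:general}. Passing to Hausdorff limits of filled postcritical sets of $f_n$ inside $\wZ^{-1}_{F_{\tau,f_n}}$ produces a well-defined periodic set $H_{\tau,f}$, which is $f$-equivariant by equivariance along the sequence. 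The final qc identification with a polynomial Mother Hedgehog follows by composing the $\psi^\bullet$-ql sector renormalization of $F_\tau$ (available through the pseudo-Siegel bounds) with the identifications $\wZ^{-1}_{F_\tau}\to \wZ^{-1}_{f_{n,\theta}}$ recalled in~\S\ref{ss:quadr_polynom} and supplied by~\cite{DL:sector bounds}; these descend on the nested intersection $\bigcap_m \wZ^m$ to a qc map of $H_{\tau,f}$ onto the Mother Hedgehog of the limiting quadratic polynomial.

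The principal obstacle is the first step: arranging $\bUpsilon$ so that local saturation holds at $\tau$, which is a global statement about how the postcritical set accumulates there. For the matings in~\S\ref{exm:C1} and similar ``disjoint-type'' components this is immediate, but in general one may only achieve saturation at some $K_m$, $m\ge 1$, and must carefully track the holomorphic motion of the markings on $\mathcal C_f$ across $\overline{\HH}$ to ensure the inflation is well-defined up to the boundary. A secondary difficulty, present only in degrees $d>2$, is upgrading Theorem~\ref{thm:main:psi*-ql maps} to the multi-component correspondences of~\eqref{eq:rem:prop:g:unwind}, where extra critical points can interact with the Siegel disk through the inflated immersion and must be kept away by the combinatorics of $\HH$.
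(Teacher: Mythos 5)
This statement is labeled \emph{Conjecture} in the paper and is not proved there: the authors write only that they ``believe that the theory of $\psi^\bullet$-ql maps can be developed to handle Conjecture~\ref{conj:MH}.'' What you have written is therefore not a proof to be compared against a paper proof; it is a research plan, and you in fact concede as much in your final paragraph by naming the ``principal obstacle'' and a ``secondary difficulty'' that remain unresolved.

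The gaps you yourself flag are genuine and they are exactly where the argument would fail at present. First, the local saturation of $\bUpsilon$ at $\tau$ (or at $K_m$) is a global statement about the accumulation of the postcritical set on the neutral orbit; the paper establishes this only in the special settings of~\S\ref{exm:C1} and~\S\ref{sss:disj:type}, and your proposed ``natural candidate'' (the closure of the grand orbit of the markings, suitably thickened) is not shown to satisfy the saturation condition for a general hyperbolic component. Second, and more seriously, your bounded-type-to-general limiting argument depends on $\Width^\bullet(F_{\tau,f_n})$ staying bounded along the approximating sequence $f_n\to f$. You cite the boundedness underlying~\eqref{eq:Width:F_0} and the discussion in~\S\ref{ss:exm:general}, but the paper only asserts such a bound for matings (via~\cite{DL:HypComp}) and expresses it as an \emph{expectation} in the general setting; without this input the $\mu(K)$-qc control on $\wZ^{-1}_{F_{\tau,f_n}}$ is vacuous and the Hausdorff-limit construction of $H_{\tau,f}$ does not close. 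Third, for degree $d>2$ the multi-component correspondence framework~\eqref{eq:rem:prop:g:unwind} is explicitly left undeveloped in the paper (``Proposition~\ref{prop:g:unwind} should be taken as a guidance''), so Theorem~\ref{thm:main:psi*-ql maps} is not available there, and the final step invoking sector renormalization \`a la~\cite{DL:sector bounds} has no $\psi^\bullet$-analogue established in the text. In short, your outline correctly identifies the intended route — inflate, regularize, take a bounded-type limit, and straighten via sector renormalization — but each of the load-bearing steps is currently an open problem rather than a citable fact, which is why the paper states this as a conjecture.
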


We believe that the theory of $\psi^\bullet$-ql maps can be developed to handle Conjecture~\ref{conj:MH}.

\subsubsection{Disjoint type hyperbolic components} \label{sss:disj:type}If $\mathcal{H}$ is a hyperbolic component of disjoint type, i.e., any map $f \in \mathcal{H}$ has exactly $2d-2$ attracting cycles, then the multipliers of non-repelling cycles uniquely parametrize $\mathcal H$:
\[ \mathcal H \ \simeq \ \Disk^{2d-2};\]
in particular, Blaschke classes in~\eqref{eq:BlCl:f} are uniquely determined by the multipliers.

Let $g \in \partial \mathcal{H}$ be a rational map with non-repelling periodic cycles $\mathcal{C}_j$, $j = 1,..., 2d-2$ so that the multiplier $\rho_j$ is either $|\rho_j| < 1$ or $\rho_j = e^{2\pi i \theta_j}$ with $\theta_j \in \R/\Q$ of bounded type.
Let $\Upsilon$ be the closure of the union of periodic Siegel disks and valuable parts of the periodic attracting components. (Roughly, the latter are bounded by equipotentials though the critical value.) Let $\overline Z\subset \bUpsilon$ be a Siegel disk with minimal period $p$. Then Proposition~\ref{prop:g:unwind} constructs a $\psi^\bullet$-ql Siegel map $F=(f, \iota)\colon U \rightrightarrows V$ by inflation $f=g^p$ restricted to a neighborhood of $\overline Z$ rel $\bUpsilon$ (c.f. \cite[\S3]{DL:HypComp}).

\section{Degenerations of $\psi^\bullet$-ql maps}\label{s:Degener_of_psi_bullet}

Recall that a $\psi^\bullet$ Siegel map $F=(f,\iota)\colon U\rightrightarrows V$ consists of a branched covering $f$ and a covering embedding $\iota\colon U\to V$ rel $K_1$. In~\S\ref{subsec:coastal}, we deduce from Item~\ref{dfn:psib-ql:5} of Definition~\ref{dfn:psi:Siegel:ql} that $\iota$ is, in fact, an embedding on the {\em coastal zone} of $K_1$ -- an open disk bounded by the hyperbolic geodesic. Similarly, using Item~\ref{dfn:psib-ql:6:+}, an iterate $\iota^m\colon U^m\to V$ is an embedding in the {\em coastal zone} of $K_m$. 

In~\S\ref{subsec:familyofarcs}, we introduce full families $\Fam$, outer families $\Fam^+$, and various subtypes of the latter, see~\eqref{eq:Fam^+}. These families play a central role in our analysis of the degeneration of Siegel disks; see ~\S\ref{sss:occ:Fam} and~\S\ref{sss:occ:Fam^+} for a brief overview.

We then define the pull-back and push-forward operations on families of arcs and study their properties in \S~\ref{subsec:pullback} and \S~\ref{ss:push forw curves}, respectively. Note that in $F=f\circ \iota^{-1}$, the $\iota$ is more delicate. In particular, the pushforward $F_*=f_*\circ \iota^*$ is well-defined because $f$ is a branched covering; see Figure~\ref{Fig:psi:PushForward}. On the other hand, within coastal zones, the pullback $F^*=\iota_* \circ f^*$ is essentially just $f^*$, which works similarly as for quadratic polynomials. 

Finally, we introduce and discuss some modifications required of pseudo-Siegel disks for $\psi^\bullet$ maps in \S\ref{sss:iota peripheral}, \S\ref{ss: propertyimmersion}, and \S\ref{sss:SpreadAround:rel:wZ}.

\hide{\begin{itemize}
\item Recall that a $\psi^\bullet$ Siegel map $F=(f,\iota)\colon U\rightrightarrows V$ consists of a branched covering $f$ and a covering embedding $\iota\colon U\to V$ rel $K_1$. 

    \item In~\S\ref{subsec:pullback}, we show that $\iota$ is in fact an embedding on the coastal zone of $K_1$ -- an open disk bounded by the hyperbolic geodesic. Similarly, an iterate $\iota^m\colon U^m\to V$ is an embdedding in the coastal zone of $K_m$.
    \item In \S~\ref{sss:iota peripheral}, we review the notion of a pseudo-Siegel disk. It is obtained by filling in fjords... All the fillings are naturally done in coastal zones.
\end{itemize}}

\subsection{Coastal zones}\label{subsec:coastal}
Let $s \geq k$. Consider $K_{s, U^k} \subseteq U^k$.
We define 
$$
\DD^k_s = \iota^{-k}(V \setminus K_{s, V}) \subseteq U^k \setminus K_{s, U^k} \subseteq U^k \setminus K_{k, U^k}
$$
Since $\iota: U^k \setminus K_{k, U^k} \longrightarrow \iota\big( U^{k}\setminus  K_{k, U^k}\big)$ is a covering map by Item~\ref{dfn:psib-ql:2:+} in \S~\ref{subsec:iter}, and $\iota^{-1}(\iota(\DD^k_s)) = \DD^k_s$ we conclude that
$$
\iota: \DD^k_s \longrightarrow \iota(\DD^k_s) \subseteq \DD^{k-1}_s
$$
is a covering map.
We note that $\DD^0_s = V \setminus K_{s, V}$. 
By Item \ref{dfn:psib-ql:6} in Definition~\ref{dfn:psi:Siegel:ql}, we have
\begin{equation}\label{eq:D^k_k}
    \DD^k_k = U^k \setminus K_{k, U^k}.
\end{equation}

Let $\gamma^k_s$ be the hyperbolic closed geodesic of $\DD^k_s$ representing the simple closed curve around boundary of $K_{s, U^k}$.
We call the disk in $\CC^k_s \subseteq U^k$ bounded by $\gamma^k_s$ the {\em coastal zone}. We remark that $\CC^0_k \subset \CC^0_m$ if $k \leq m$.

\begin{lem}[$\iota\colon \CC^k_s \hookrightarrow \CC^{k-1}_s$] \label{lem:iota|CC_k}
For $s \geq k$, the restriction $\iota: \CC^k_s \longrightarrow \CC^{k-1}_s$ is injective.
Consequently,  $\iota^k \colon \CC^k_s \hookrightarrow \CC^0_s\subset V$ is injective.
\end{lem}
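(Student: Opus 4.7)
The plan is to use the covering $\iota\colon\DD^k_s\to\iota(\DD^k_s)$ established just above together with Item~\ref{dfn:psib-ql:4:4} of \S\ref{subsec:iter} to force the degree of the cover on the component containing $A^k\coloneqq\CC^k_s\setminus K_{s,U^k}$ to be one, and then to place the image in $\CC^{k-1}_s$ via a Schwarz--Pick comparison.

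Since $\iota$ is an immersion and Item~\ref{dfn:psib-ql:4:4} makes $\iota|_{K_{s,U^k}}$ a homeomorphism onto $K_{s,U^{k-1}}$, a standard compactness argument produces an open neighborhood $N\supseteq K_{s,U^k}$ on which $\iota$ is injective. Let $\Dbb$ be the connected component of $\DD^k_s$ containing $A^k$; the covering $\iota|_\Dbb\colon\Dbb\to\iota(\Dbb)$ has some constant degree $d\ge 1$. Take $y\in\iota(\Dbb)$ tending to a point $y_\infty\in\partial K_{s,U^{k-1}}$. By the defining identity $\DD^k_s=\iota^{-k}(V\setminus K_{s,V})$ one checks that $\DD^k_s\cap\iota^{-1}(K_{s,U^{k-1}})=\emptyset$; hence neither $A^k$ nor $\gamma^k_s$ contains a preimage of $y_\infty$. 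Since $A^k\cup\gamma^k_s\subset\overline{\CC^k_s}$ is compact in $U^k$, preimages of $y$ lying in $A^k$ can accumulate in $\overline{A^k}$ only at $\partial K_{s,U^k}$, where Item~\ref{dfn:psib-ql:4:4} picks out a single point. Hence, for $y$ close to $y_\infty$, all preimages in $A^k$ lie in $N$ and coincide. Local constancy of the covering fiber over $\iota(\Dbb)$ then forces $d=1$, so $\iota|_\Dbb$ is a biholomorphism onto $\iota(\Dbb)\subseteq\DD^{k-1}_s$. Combined with $\iota|_{K_{s,U^k}}$ being a homeomorphism onto $K_{s,U^{k-1}}$, which is disjoint from $\iota(A^k)\subset\DD^{k-1}_s$, we conclude that $\iota|_{\CC^k_s}$ is injective.

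For the containment $\iota(\CC^k_s)\subseteq\CC^{k-1}_s$, the biholomorphism $\iota|_\Dbb$ is an isometry of intrinsic hyperbolic metrics, so $\iota(\gamma^k_s)$ is the hyperbolic geodesic of $\iota(\Dbb)$ around the inner end $\partial K_{s,U^{k-1}}$. The Schwarz--Pick inequality applied to the inclusion $\iota(\Dbb)\subseteq\DD^{k-1}_s$, which preserves this inner end, places $\iota(\gamma^k_s)$ on the $K_{s,U^{k-1}}$-side of the geodesic $\gamma^{k-1}_s$ of $\DD^{k-1}_s$; hence $\iota(\CC^k_s)\subseteq\CC^{k-1}_s$. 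The ``consequently'' statement follows by iterating the main conclusion at indices $j=k,k-1,\ldots,1$ and composing the resulting injections $\CC^j_s\hookrightarrow\CC^{j-1}_s$. The principal obstacle I anticipate is the hyperbolic step: one must verify that the inner end of $\DD^{k-1}_s$ at $\partial K_{s,U^{k-1}}$ behaves like an annular end so that Schwarz--Pick yields the expected monotonicity of the inner-end geodesic under shrinking the domain; otherwise a more careful conformal-geometry argument (e.g.\ comparing moduli of collars or passing to the universal cover of $\DD^{k-1}_s$) is needed.
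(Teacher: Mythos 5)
Your plan follows the same route as the paper's proof: argue that the covering $\iota\colon\DD^k_s\to\iota(\DD^k_s)$ has degree one on the relevant component, then place the image of the coastal zone inside $\CC^{k-1}_s$ by comparing geodesics of nested domains. However, your degree-counting step has a genuine gap. You show that as $y\to y_\infty\in\partial K_{s,U^{k-1}}$ the preimages of $y$ lying \emph{in $A^k$} must collapse into the injectivity neighborhood $N$, hence coincide, and then invoke local constancy of the covering fiber to conclude $d=1$. But $d$ is the cardinality of the full fiber $\iota^{-1}(y)\cap\Dbb$, not of $\iota^{-1}(y)\cap A^k$: nothing in the argument prevents the remaining $d-1$ preimages from sitting in $\Dbb\setminus\overline{A^k}$ and accumulating on the outer portion of $\partial\Dbb$, or on other preimages of $y_\infty$ under the immersion $\iota$, as $y\to y_\infty$. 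To close the gap you must also use that $\Dbb$ has a \emph{single} annular end at $\partial K_{s,U^k}$; then the restriction of the covering to the preimage of a small collar of $\partial K_{s,U^{k-1}}$ is one annulus covering another, whose degree equals the global degree $d$, and injectivity on $N$ forces $d=1$.

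The paper avoids this point-counting entirely by tracking homotopy classes: $\iota$ is a homeomorphism near $K_{s,U^k}$, so it sends the primitive simple-loop class about $K_{s,U^k}$ to the primitive class about $K_{s,U^{k-1}}$; the geodesic $\gamma^k_s$ therefore maps homeomorphically onto the geodesic $\widetilde\gamma^{k-1}_s$ of $\iota(\DD^k_s)$, and the resulting proper map $\iota\colon\CC^k_s\to\widetilde\CC^{k-1}_s$ has degree one on the boundary, hence globally. For the last step you flag, the paper does not run a bare Schwarz--Pick estimate either: it notes that $\iota(\DD^k_s)$ and $\DD^{k-1}_s$ actually coincide inside $\widetilde\CC^{k-1}_s$ (a point $z\in\DD^{k-1}_s\cap\widetilde\CC^{k-1}_s$ equals $\iota(w)$ for some $w\in\CC^k_s$ with $\iota^k(w)=\iota^{k-1}(z)\notin K_{s,V}$, so $w\in\DD^k_s$), reducing to the clean ``same inner collar, larger outer domain'' situation where monotonicity of the geodesic disk is standard; this also disposes of your concern about the shape of the inner end.
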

\begin{proof} 
Let $\widetilde\gamma^{k-1}_s$ be the hyperbolic closed geodesic in $\iota(\DD^k_s)$ representing the curve surrounding $K_{k, U^{k-1}} = \iota(K_k) \subseteq U^{k-1}$.
Let $\widetilde\CC^{k-1}_s$ be the disk bounded by $\widetilde\gamma^{k-1}_s$.

Since $\iota: \DD^k_s \longrightarrow \iota(\DD^k_s) = \iota(U^k) \setminus K_{s, U^k}$ is a covering map, $\iota: \gamma^k_s \longrightarrow \widetilde\gamma^{k-1}_s$ is a homeomorphism, and $\iota: \CC^{k}_s \longrightarrow \widetilde\CC^{k-1}_s$ is an conformal isomorphism.

Since $\iota(\DD^k_s) = \subseteq \DD^{k-1}_s$, and $\widetilde\CC^{k-1}_s \cap \iota(\DD^k_s) = \widetilde\CC^{k-1}_s \cap \DD^{k-1}_s$, we conclude that $\widetilde\CC^{k-1}_s \subseteq \CC^{k-1}_s$.
Therefore, $\iota: \CC^k_s \longrightarrow \CC^{k-1}_s$ is injective.
\end{proof}

\subsection{Families of curves}\label{subsec:familyofarcs} Recall that degeneration of Riemann surfaces is encoded and analyzed by the extremal width of various families. Following~\cite{DL22}, we denote by $\Fam$ and $\Fam^+$ the full (i.e., in $V$) and outer (i.e., in $V\setminus \overline Z$ or in $V\setminus \widehat Z^m$) families of curves emerging from certain intervals, see~\S\ref{sss:full_outer} for details. See~\S\ref{sss:occ:Fam} and~\S\ref{sss:occ:Fam^+} for a short discussion on how these families are used.

Unlike the case of quadratic polynomials, the outer family $\Fam^+$ is further decomposed into the vertical $\Fam^{+,\ver}$ and peripheral $\Fam^{+,\per}$ parts, see~\S\ref{sss:Fam:VertPer}. Finally, in \S\ref{subsubsec:externaldiving}, we decompose the peripheral part  $\Fam^{+,\per}$ rel $\filled_{m}$ into the external and diving components. The last decomposition is similar to \cite[\S~2.3.2]{DL22} with the understanding that curves in $\Fam^{+,\per}$ that lift to vertical under $\iota^{\qq_{m+1}}$ are in $\Fam^{+,\per}_{\div,m}$.

All together, we have
\begin{multline}
\label{eq:Fam^+}
    \qquad\qquad\Fam^+ = \Fam^{+,\ver}+\Fam^{+, \per} - O(1)\\ = \Fam^{+,\ver}+\Fam^{+,\per}_{\ext, m}+\Fam^{+,\per}_{\div, m} - O(1).\qquad
\end{multline}
Decomposition~\eqref{eq:Fam^+} is only relevant for parabolic fjords; see~\S\ref{sss:occ:Fam^+}.

\begin{figure}
   \includegraphics[width=10cm]{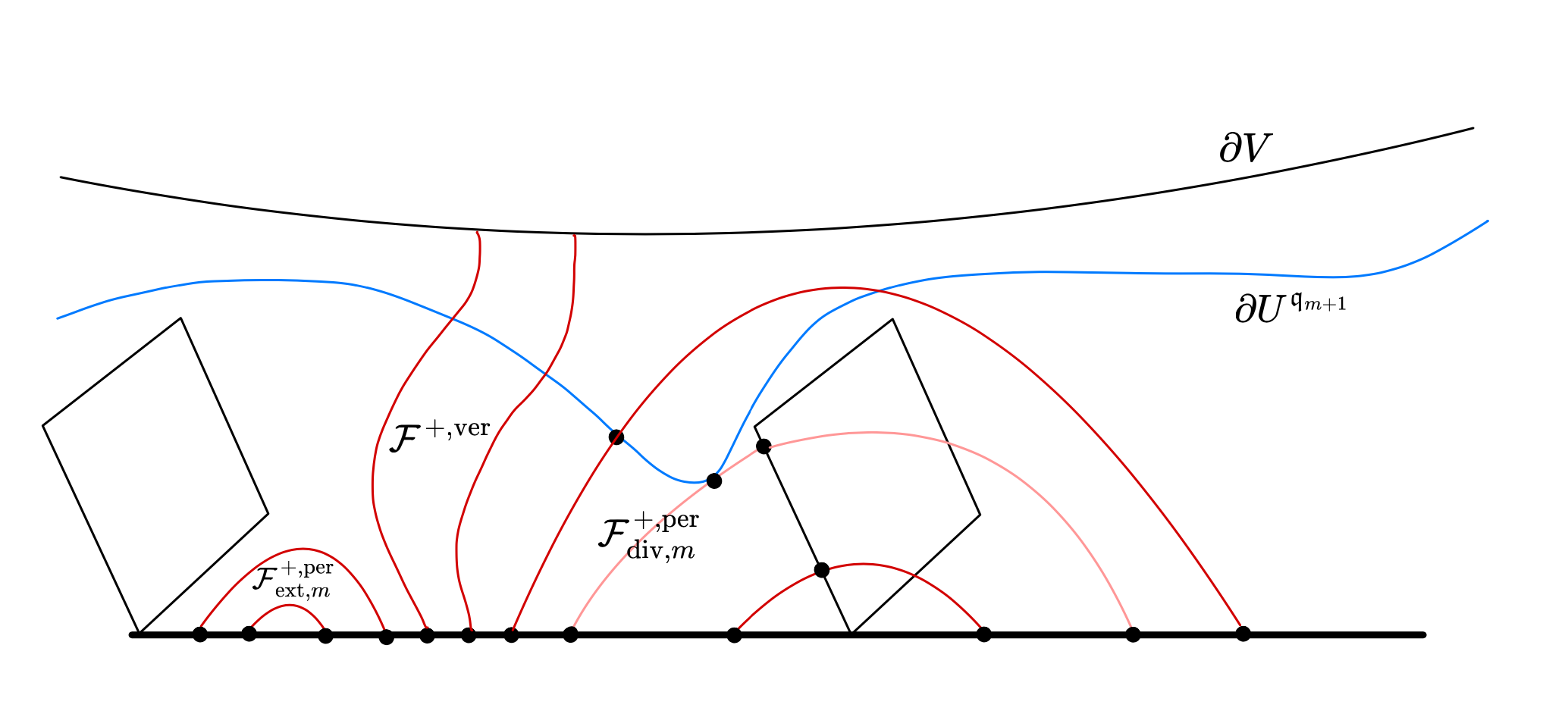}
   \caption{An illustration of the vertical and peripheral families of curves. The peripheral families are further decompose into external and diving components. The diving family may intersect either $\partial U^{\qq_{m+1}}$ or $\mathcal{K}_m$.}
   \label{Fig:SplitRec}
\end{figure}

\subsubsection{Full and outer families} \label{sss:full_outer}
Following the notations in \cite[\S~2.3]{DL22}, we consider the following families.
Let $I$ be an interval in $\partial Z$. Let $\lambda > 1$, and $\lambda I$ be the enlargement of $I$ by attaching two intervals of length $\lambda |I|$ on either side of $I$. 
We write
$$
\Fam^{+}_\lambda(I) = \Fam^{+}_{\lambda, Z}(I) := \{\gamma \subset V \setminus \overline{Z}: \gamma \text{ connects $I$ and $\partial V \cup (\partial Z \setminus \lambda I)$}\}.
$$
Similarly, we write 
$$
\Fam_\lambda(I) = \Fam_{\lambda, Z}(I)
:= \{\gamma \subset V \setminus (I \cup (\partial Z \setminus \lambda I)): \gamma \text{ connects $I$ and $\partial V \cup (\partial Z \setminus  \lambda I)$}\}.
$$
Their widths are denoted by $\Width^+_{\lambda}(I)$ and $\Width_{\lambda}(I)$ respectively.

Let $I$ be an interval in $\partial Z$ and $J$ be an interval in $\partial Z \cup \partial V$. We also write
$$
\Fam^{+}(I, J):= \{\gamma \subset V \setminus \overline{Z}: \gamma \text{ connects $I$ and $J$}\},
$$
and its width by $\Width^{+}(I, J)$.

\subsubsection{Appearance of full families $\Fam$}\label{sss:occ:Fam} The full families $\Fam$ are the main objects in the proof of Amplification Theorem~\ref{thm:SpreadingAround}. There, a wide outer family $\Fam^+$ is spread around in Lemma~\ref{lem:SpredAroundWidth} (an application of the Covering Lemma); the result is a set of wide full families based on the intervals $I^m_s$; see Items~\ref{item:1:lem:SpredAroundWidth} and~\ref{item:2:lem:SpredAroundWidth} of Lemma~\ref{lem:SpredAroundWidth}. In Snake Lair Lemma~\ref{lem:Hive Lemma}, this set of wide families is traded back into an outer wide family with amplification. 

It is important that the argument of Theorem~\ref{thm:SpreadingAround} is applied to pseudo-Siegel disks $\wZ^m$ as the boundaries of non-regularized Siegel disks $\overline Z$ can have uncontrollable oscillations (Cremer phenomenon in the limit). In general, $\wZ^m$ are not $\iota$-proper under $\iota^k\colon U^k\to V$; this subtlety is resolved in Lemma~\ref{lem:liftingviaiota} (see also discussion before the lemma).

\subsubsection{Vertical and peripheral families}\label{sss:Fam:VertPer}
Let $\gamma$ be an proper arc $ V - \overline{Z}$.
We say it is {\em vertical} if $\gamma$ connects a point in $\partial Z$ and a point in $\partial V$. It is {\em peripheral} if $\gamma$ connects two points in $\partial Z$.

The family $\Fam^+_\lambda(I)$ is naturally decomposed into vertical and peripheral subfamilies, and we denote them by
\begin{align*}
\Fam^{+, \ver}(I) = \Fam^{+, \ver}_{Z}(I) &:= \{\gamma \in \Fam^{+}_\lambda(I): \gamma \text{ connects $I$ and $\partial V$}\}, \text{ and }\\
\Fam^{+, \per}_{\lambda}(I) = \Fam^{+, \per}_{\lambda, Z}(I)  &:= \{\gamma \in \Fam^{+}_\lambda(I): \gamma \text{ connects $I$ and $\partial Z \setminus \lambda I$}\}.
\end{align*}
Their extremal widths are denoted by
$$
\Width^{+, \ver}(I)  \text{ and } \Width^{+, \per}_{\lambda}(I^m)  \text{ respectively}.
$$
Note that we have
$$
\Fam^{+, \ver}(I) = \Fam^{+}(I, \partial V) \text{ and } \Fam^{+, \per}_{\lambda}(I) = \Fam^+(I, \partial Z\setminus \lambda I).
$$

The subfamilies $\Fam^{\ver}(I), \Fam^{\per}_{\lambda}(I)$ of $\Fam_\lambda(I)$ and their widths $\Width^{\ver}(I), \Width^{\per}_{\lambda}(I)$ are defined similarly.

\subsubsection{External and Diving families}\label{subsubsec:externaldiving}
Let $I \subset \partial Z$ and $J \subset \partial V \cup \partial Z$ be two intervals.
Fix a level $m$, the family $\Fam^+(I,J)$ can be naturally decomposed into two subfamilies:
\begin{itemize}
    \item the {\em external family} relative to level $m$, denoted by $\Fam^+_{\ext, m}(I,J)$: the set of arcs in $\Fam^+(I,J)$ that can be lifted under $\iota^{\qq_{m+1}}$ to arcs in $U^{\qq_{m+1}}\setminus \filled_m$ still connecting $I$ and $J$ (see \S~\ref{subsec:pullback}); 
    \item the diving family $\Fam^+_{\div, m}(I,J) = \Fam^+(I,J)\setminus \Fam^+_{\ext, m}(I,J)$ otherwise.
\end{itemize}
Their widths are denoted by $\Width^+_{\ext,m}(I,J), \Width^+_{\div, m}$ respectively.
We first remark that 
\[ \Width^+(I,J)=\Width^+_{\ext,m}(I,J) + \Width^+_{\div, m}(I,J)+O(1).\]
We also use the notation $\Fam^{+, \per}_{\lambda, \ext, m}(I), \Fam^{+, \per}_{\lambda, \div, m}(I)$ for $\Fam^+_{\ext, m}(I,J), \Fam^+_{\div, m}(I,J)$ where $J = \partial Z \setminus \lambda I$.

Note that we have
\begin{equation}\label{eqn:decomposition}
    \Width^+_\lambda(I)=\Width^{+,\per}_{\lambda, \ext,m}(I) + \Width^{+,\per}_{\lambda, \div,m}(I)+\Width^{+,\ver}(I) + O(1)
\end{equation}

\subsubsection{Appearance of outer families $\Fam^+$}\label{sss:occ:Fam^+} Outer families $\Fam^+$ are the main objects in the analysis of geometries in fjords in Section~\ref{sec:parabolicfjords} and~\ref{sec:calibration}. Roughly, the external component $\Fam^{+,\per}_{\ext, m}$ is handled by Theorem~\ref{thm:parabolicfjords} (Log-Rule) while the diving component $\Fam^{+,\per}_{\div, m}$ is handled by Calibration Lemma~\ref{lmm:CalibrLmm}. A ``global external input'' is required to genuinely handle the vertical component $\Fam^{+,\ver}$; see~\S\ref{ss:comblocal} for details.

The theory of snakes~\cite[\S6]{DL22} allows us to convert a full into an outer family. Such a convergence should be executed rel $\wZ^m$ as the boundaries of non-regularized Siegel disks $\overline Z$ can have uncontrollable oscillations (Cremer phenomenon in the limit). It is important that outer families based on grounded intervals can be easily converted between $\overline Z$ and $\wZ^m$, see~\S\ref{sss:Outer:wZ}.

\subsubsection{Family and width in $U^k$} \label{sss:famU^k} 
Let $k \leq 10\qq_{m+1}$. We denote $Z_{U^k} \subset U^k$ be the Siegel disk in $U^k$, i.e., the subset so that $\iota^k \mid Z_{U^k}: Z_{U^k} \longrightarrow Z$ is a homeomorphism.
Let $I \subset Z_{U^k}$. The definition of various families of arcs and their width naturally extends in this case. We add $U^k$ in the subscript to denote the objects in $U^k$.

For example, we use the notation $\Fam^{+}_{\lambda, U^k}(I) = \Fam^{+}_{\lambda, Z_{U^k}, U^k}(I)$ to denote the family of arcs $\gamma \subset U^k \setminus Z_{U^k}$ connecting $I$ and $\partial U^k \cup (\partial Z_{U^k} \setminus \lambda I)$.

\subsubsection{External family in $V$ vs $U^{\qq_{m+1}}$} \label{subsubsec:efVvsU}
By definition, $\iota^{\qq_{m+1}}$ lifts univalently the family $\Fam^+_{\ext, m}(I,J)$ into $\Fam^+_{U^{\qq_{m+1}}\setminus \filled_m}(I,J)$ -- the family of curves in $U^{\qq_{m+1}}\setminus \filled_m$ connecting $I$ and $J$. 

Since any limb of $\filled_m\setminus \overline Z$ contains a fundamental interval of the abelian covering \[f^{\qq_{m+1}}\colon U^{\qq_{m+1}}\setminus \filled_m \to V\setminus \overline Z,\]
wide families can not ``bypass'' any limb. 
Therefore, the following families in $ U^{\qq_{m+1}}\setminus \filled_m $ have width $O(1)$:
\begin{itemize}
    \item connecting two different interval $T_1,T_2\in \Dbb_m$;
    \item any winding family from $T\in \Dbb_m$ to itself.
\end{itemize}

Therefore, the relevant case is when $I, J \subseteq T = [v,w]\in \Dbb_m$. By replacing the family by vertical foliations of geodesic rectangles $\RR_m(I,J)\subset U^{\qq_{m+1}}\setminus \filled_m$ connecting $I, J$, we have:
\[\Fam^+_{U^{\qq_{m+1}}\setminus \filled_m}(I,J)- O(1) \subseteq \CC^{\qq_{m+1}}_{\qq_{m+1}},\]
where $\CC^{\qq_{m+1}}_{\qq_{m+1}}$ is the coastal zone (see \S~\ref{subsec:pullback}).
By Lemma \ref{lem:iota|CC_k}, $\iota^{\qq_{m+1}}$ is injective on $\CC^{\qq_{m+1}}_{\qq_{m+1}}$. Therefore, $\iota^{\qq_{m+1}}$ maps conformally the geodesic rectangle $\RR_m(I,J)$ to its image, and we have
$$
\Width^+_{U^{\qq_{m+1}}\setminus \filled_m}(I,J) = \Width^+_{\ext, m}(I,J) + O(1).
$$

\subsection{Bistable regions $\boldsymbol {\Xi^k_m\supset \overline Z}$}\label{subsec:pullback} In this subsection, we will employ properties of coastal zones to justify univalent iterates near $\overline Z$. 

In~\S\ref{sss:F^*}, we will adapt the discussion of fjords from~\cite[\S2.1.7]{DL22} to the $\psi^\bullet$-setting.  Lemma~\ref{cor:pullback} describes pullbacks within fjords. In~\ref{sss:pullbackrect}, we discuss pullbacks of rectangles under $f^k: U^k\setminus K_{k, U^k} \longrightarrow V\setminus \overline{Z}$ assuming that rectangles are within coastal zones.

In~\S\ref{sss:F:stab}, we define the $k$th stable part of a fjord under pullbacks. In~\S\ref{sss:F:bistab}, we define the $k$th bistable part of a fjord as the $k$th pullback of the $2k$ stable part under the first return map. Bistable regions $\Xi_m^k \supset \overline Z$ are defined in \S\ref{sss:biXi}.

\subsubsection{Fjord for $\psi^\bullet$-maps and pullbacks $F^*=\iota_*\ \circ f^*$}\label{sss:F^*} Below, we adopt the discussion of~\cite[\S2.1.7]{DL22}. As in~\cite{DL22}, we write 
\[\filled_m\equiv K_{\qq_{m+1}}=f^{-\qq_{m+1}}(\overline Z).\]

Consider an interval $T=[v,w]\in \Dbb_m, \sp m\ge -1$ in the diffeo-tiling \cite[\S2.1.6]{DL22}. We also consider intervals $[v',w']=f^{\qq_{m+1}}(T)$ and $T'=[v',w]$ as in~\cite[\S2.1.7]{DL22}.
We use the same standing assumption that $f^{\qq_{m+1}}|_T$ moves points clockwise towards $w$.

For an interval $J\subset T$, the level $m$-fjord $\Fjord_m(J) = \Fjord_{m, V}(J) \subset V$ on $J$ is the closed disk bounded by $J$ and the by the hyperbolic geodesic $\gamma^m_J$ of $V\setminus\filled_m$ with end points $\partial J$ such that $\gamma^m_J$ is homotopic to $J$ in $V\setminus\filled_m$.

Similarly, for an interval $S\subset [v',w']$, the fjords $\Fjord(S) = \Fjord_V(S)\subset V$ are defined as the closed disk bounded by $S$ and the by the hyperbolic geodesic $\gamma_S$ of $V\setminus\overline{Z}$ with end points $\partial S$ such that $\gamma_S$ is homotopic to $S$ in $V\setminus\overline{Z}$.

We note that $\Fjord_m(J)$ are contained in the coastal zone $\CC^0_m$. Thus, the following lemma follows immediately from Lemma \ref{lem:iota|CC_k}.

\begin{lem}[{\cite[Lemma~2.4]{DL22} for $\psi^\bullet$-ql maps}]\label{cor:pullback}
Using the above notations,\\ $F^{-\qq_{m+1}}_T=\iota^{\qq_{m+1}}\circ [f\mid T]^{-\qq_{m+1}}\colon [v',w']\to T$ extends to an injective branch
\begin{equation}
\label{eq:0:cor:pullbacks for psi bullet}
F^{-\qq_{m+1}}_T\colon\ \Fjord([v',w'])\ \hookrightarrow \ \Fjord_m(T). 
\end{equation}

If $J\subset T'$ is a subinterval such that \[F_T^{-t\qq_{m+1}}(J)\subset T'\sp\text{ for all }\ t\in \{ 0,1,\dots, s-1\},\sp \sp \text{ then: }\] 
\begin{equation}
\label{eq:cor:pullbacks for psi bullet}
  F_T^{- s\qq_{m+1}} \big(\Fjord(J)\big)\subset\Fjord_m(T);\end{equation}
i.e., $\Fjord(J)$ can be lifted under $F^{ s\qq_{m+1}}$.
\end{lem}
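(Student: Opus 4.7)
The plan is to factor $F^{-\qq_{m+1}}_T = \iota^{\qq_{m+1}} \circ [f\mid T]^{-\qq_{m+1}}$ and verify the required univalent extension and image-containment for each factor separately. First I would extend the inverse dynamical branch to the fjord: every critical value of $f^{\qq_{m+1}}\colon U^{\qq_{m+1}}\to V$ is a forward iterate of $f(c_0)\in\overline Z$ under $f\mid\overline Z$, so all critical values lie in $\overline Z$. Since $\Fjord([v',w'])\subset V\setminus\overline Z$ is simply connected, the branch of $(f^{\qq_{m+1}})^{-1}$ taking $[v',w']$ to $T$ extends uniquely and univalently to all of $\Fjord([v',w'])$. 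Denote its image by $\Fjord'\subset U^{\qq_{m+1}}\setminus K_{\qq_{m+1},U^{\qq_{m+1}}}=\DD^{\qq_{m+1}}_{\qq_{m+1}}$ (cf.\ \eqref{eq:D^k_k}); it is a topological disk bounded by $T$ and by the lift $\delta'$ of the hyperbolic geodesic $\gamma_{[v',w']}$.

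Next I would show $\Fjord'\subset \CC^{\qq_{m+1}}_{\qq_{m+1}}$. Because the covering $f^{\qq_{m+1}}\colon \DD^{\qq_{m+1}}_{\qq_{m+1}}\to V\setminus\overline Z$ is a local hyperbolic isometry, $\delta'$ is itself a hyperbolic geodesic in $\DD^{\qq_{m+1}}_{\qq_{m+1}}$, with ideal endpoints on $\partial K_{\qq_{m+1},U^{\qq_{m+1}}}$ and homotopic to $T$ rel those endpoints. A geodesic in this homotopy class cannot cross the closed geodesic $\gamma^{\qq_{m+1}}_{\qq_{m+1}}$, because crossing would force it to wind around $K_{\qq_{m+1},U^{\qq_{m+1}}}$; this is the standard geodesic convexity of the coastal zone. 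Hence $\Fjord'\subset\CC^{\qq_{m+1}}_{\qq_{m+1}}$, and Lemma~\ref{lem:iota|CC_k} then implies $\iota^{\qq_{m+1}}$ is injective on $\Fjord'$, producing an embedded disk $\iota^{\qq_{m+1}}(\Fjord')\subset V$ bounded by $T$ and by $\iota^{\qq_{m+1}}(\delta')\subset V\setminus\filled_m$. A parallel convexity argument (now in $V\setminus\filled_m$, comparing $\iota^{\qq_{m+1}}(\delta')$ to the hyperbolic geodesic $\gamma^m_T$ via the Schwarz lemma for the inclusion $\iota^{\qq_{m+1}}(\DD^{\qq_{m+1}}_{\qq_{m+1}})\subset V\setminus\filled_m$) places $\iota^{\qq_{m+1}}(\Fjord')\subset\Fjord_m(T)$, yielding~\eqref{eq:0:cor:pullbacks for psi bullet}.

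For the iterated statement~\eqref{eq:cor:pullbacks for psi bullet}, I would induct on $s\geq 1$, each step invoking the already-proved one-step extension applied to the subinterval $F_T^{-t\qq_{m+1}}(J)\subset T'\subset[v',w']$. The chain of inclusions
\[
F_T^{-(t+1)\qq_{m+1}}\bigl(\Fjord(F_T^{-t\qq_{m+1}}(J))\bigr)\ \subset\ \Fjord_m\bigl(F_T^{-(t+1)\qq_{m+1}}(J)\bigr)\ \subset\ \Fjord_m(T),
\]
where the second inclusion uses monotonicity of fjords in nested subintervals of $T$, combined with iterated application of $F_T^{-\qq_{m+1}}$ to $\Fjord(J)$, finishes the proof. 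The main technical obstacle is the final convexity step: the curve $\iota^{\qq_{m+1}}(\delta')$ is only a priori locally geodesic in the potentially proper subdomain $\iota^{\qq_{m+1}}(\DD^{\qq_{m+1}}_{\qq_{m+1}})\subset V\setminus\filled_m$, so one must use a Schwarz-type comparison of hyperbolic metrics together with careful bookkeeping of the ideal-boundary endpoints on $\partial\filled_m$ to conclude that this curve stays inside the actual geodesic fjord bounded by $\gamma^m_T$.
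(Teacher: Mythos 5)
Your overall plan matches the paper's: extend the inverse branch of $f^{\qq_{m+1}}$ over the fjord (using that $f^{\qq_{m+1}}\colon U^{\qq_{m+1}}\setminus\filled_m\to V\setminus\overline Z$ is an unbranched covering since all critical values lie in $\overline Z$), show the resulting region sits in the coastal zone $\CC^{\qq_{m+1}}_{\qq_{m+1}}$, and invoke Lemma~\ref{lem:iota|CC_k} for injectivity of $\iota^{\qq_{m+1}}$. The genuine gap is exactly where you flag it: your one-shot Schwarz comparison does not close. The map $\iota^{\qq_{m+1}}\colon\DD^{\qq_{m+1}}_{\qq_{m+1}}\to V\setminus\filled_m$ is a composition of coverings onto progressively smaller subdomains $\iota(\DD^k_{\qq_{m+1}})\subsetneq\DD^{k-1}_{\qq_{m+1}}$, so it is \emph{not} a local hyperbolic isometry onto its image; in particular $\iota^{\qq_{m+1}}(\delta')$ is not a geodesic of $\iota^{\qq_{m+1}}(\DD^{\qq_{m+1}}_{\qq_{m+1}})$, and there is nothing to compare with $\gamma^m_T$. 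The paper avoids this entirely by applying $\iota$ one factor at a time: each $\iota\mid\DD^k_{\qq_{m+1}}$ \emph{is} a covering onto its image, so it sends the geodesic $\gamma_{T,U^k}$ to a geodesic of a subdomain of $\DD^{k-1}_{\qq_{m+1}}$, which by domain monotonicity lies on the fjord side of $\gamma_{T,U^{k-1}}$; hence $\iota\big(\Fjord_{m,U^k}(T)\big)\subseteq\Fjord_{m,U^{k-1}}(T)$, and iterating $\qq_{m+1}$ times gives \eqref{eq:0:cor:pullbacks for psi bullet}. This stepwise nesting is the missing piece, not an additional technicality to be waved at.

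Your treatment of the iterated claim~\eqref{eq:cor:pullbacks for psi bullet} also does not quite work as stated. The one-step result places $F_T^{-\qq_{m+1}}(\Fjord(J))$ inside $\Fjord_m(F_T^{-\qq_{m+1}}(J))$, which strictly contains $\Fjord(F_T^{-\qq_{m+1}}(J))$, so the naive induction cannot be fed back into the one-step statement. The clean argument pulls $\Fjord(J)$ back by $f^{s\qq_{m+1}}$ in one go: the branch extending $[f\mid T]^{-s\qq_{m+1}}$ is well-defined exactly because $F_T^{-t\qq_{m+1}}(J)\subset T'$ for $t<s$; the preimage is bounded by a geodesic of $U^{s\qq_{m+1}}\setminus K_{s\qq_{m+1}}=\DD^{s\qq_{m+1}}_{s\qq_{m+1}}$, hence lies in $\CC^{s\qq_{m+1}}_{s\qq_{m+1}}$; and the same factor-by-factor nesting for $\iota^{s\qq_{m+1}}$ places the image in the fjord over $J_s$ of $V\setminus K_{s\qq_{m+1},V}\subset V\setminus\filled_m$, hence in $\Fjord_m(T)$.
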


\subsubsection{Pullbacks of rectangles under $f^k: U^k\setminus K_{k, U^k} \longrightarrow V\setminus \overline{Z}$}\label{sss:pullbackrect}
Let $T = [v, w]\in \Dbb_m$ be an interval in the diffeo-tiling, and $T' = f^{\qq_{m+1}}(T)$.
Let $I \subseteq T'$ and $J \subseteq \partial Z$ be an interval disjoint from $I$.
Let $\RR \subseteq V\setminus Z$ be a rectangle with horizontal sides $I, J$.
Let $\RR_{\text{geod}}(I,J)$ be the geodesic rectangle between $I$ and $J$, i.e., whose vertical sides are geodesics in $V \setminus \overline{Z}$ connecting the corresponding endpoints $\partial I, \partial J$.
Up to $O(1)$ of width, we may assume that $\RR$ is contained in $\RR_{\text{geod}}(I,J)$.

Fix an iterate $f^k$ with $k \leq \qq_{m+1}$.
Let $I_{-k} \subseteq \overline{Z}$ so that $f^k(I_{-k}) = I$.
Since $f^k: U^k\setminus K_{k, U^k} \longrightarrow V\setminus \overline{Z}$ is a covering, let $\RR_{-k}$ be the lift of the rectangle $\RR$ in $U^k\setminus K_{k, U^k}$ that emerges from $I_{-k}$.
Note $\RR_{-k}$ is contained in the geodesic rectangle of $\DD^k_k = U^k\setminus K_{k, U^k}$ between $\partial^{h,0}\RR_{-k}, \partial^{h,1}\RR_{-k}$.
Thus, $\RR_{-k}$ is contained in the coastal zone $\CC^k_k$.
By Lemma~\ref{lem:iota|CC_k}, $\iota^k:\CC^k_k \longrightarrow \CC^0_k$ is conformal.
Let $(F^k)^*(\RR) :=\iota^k(\RR_{-k}) \subset V\setminus K_{k,V}$, and we call $(F^k)^*(\RR)$ the pullback of $\RR$ under $F^k$.

\subsubsection{Stable parts $\mathfrak F^k_\stab(T)$ of fjords $\mathfrak F(T)$} \label{sss:F:stab} As before, we assume that $f^{\qq_{m+1}}|_T$ moves points clockwise towards $w$, where $T=[v,w]\in \Dbb_m, \sp m\ge -1$.

Assume that $\length_{m}\ge (k+2)\length_{m+1}$. Set $v_k=v\boxplus k\length_{m+1}=f^{k\qq_{m+1}}(v)$ and observe that $[v_k,w]\subset T$. We set 
\[ \mathfrak F_{\stab}^k(T)\coloneqq  \mathfrak F([v_k,w]) \ \subset \ \mathfrak F(T)\]
and call it \emph{the $k$-stable part} of the fjord $\mathfrak F(T)$.

If $\length_{m}< (k+2)\length_{m+1}$, then we set $\mathfrak F_{\stab}^k(T) \coloneqq \emptyset$. By construction,
\begin{multline*}\qquad \mathfrak F(T) \equiv\mathfrak F_{\stab}^0(T)\supset \mathfrak F_{\stab}^1(T)\supset \mathfrak F_{\stab}^2(T)\supset \dots \supset \mathfrak F_{\stab}^t(T)=\emptyset \\ \text{if }t> \frac{\length_m}{\length_{m+1}}-2. \quad
\end{multline*}

Let us recall that for every $i\le \qq_{m+1}$ there is an interval $T_{-i}\in \Dbb_m$ such that $[f\mid \partial Z]^{-i}$ maps $T$ almost into $T_{-i}$. More precisely, write $T_{-i}=[a,b]$, where $a$ is on the left of $b$ so that $f^{\qq_{m+1}}\mid _{T_{-i}}$ moves points clockwise towards $b$. Write also $[\tilde v,\tilde w]=[f\mid \partial Z]^{-i}(T)$. Then 
\begin{itemize}
    \item either $\tilde v=a$ or $\tilde v = a\boxminus \length_{m+1}$;
    \item either $\tilde w=b$ or $\tilde w = b\boxminus \length_{m+1}$.
\end{itemize}
Similarly, writing $\tilde v_k=[f\mid \partial Z]^{-i}(v_k)$, we have
\begin{itemize}
  \item either $\tilde v_k=a_k$ or $\tilde v_k = a_k\boxminus \length_{m+1}=a_{k-1}$.
\end{itemize}

Set 
\[\widetilde {\mathfrak F}^k_\stab (T)\coloneqq F_T^{-i}\Big(\mathfrak F^k_\stab (T)\Big),\qquad\qquad F_T^{-i}\equiv \iota^{i}\circ f_T^{-i} ,\]
where the branch of $F_T^{-1}$ is taken along $f^{-i}( T)\approx T_{-i}$. It follows that $\widetilde {\mathfrak F}^k_\stab (T)$ is bounded by the hyperbolic geodesic of $\iota^i\big(U^i\setminus K_i\big)$ and $[\tilde v_k, \tilde w]$. Since such a hyperbolic geodesic of $\iota^i\big(U^i\setminus K_i\big)$ is below the assoicated hyperbolic geodesic of $V\setminus \overline Z$, we obtain that

\begin{equation}\label{eq:1:sss:F:stab}
\widetilde {\mathfrak F}^k_\stab (T) \subset \mathfrak F ([\tilde v_k,\tilde w_k])\subset  {\mathfrak F}^{k-1}_\stab (T_{-i}).    
\end{equation}

Inclusion~\eqref{eq:1:sss:F:stab} can be generalized as follows. Consider $i$
 with \[(j-1)\qq_{m+1}< i\le j \qq_{m+1},\qquad \text{ where }\quad 0< j\le k.\]
Then 
\begin{equation}\label{eq:2:sss:F:stab}
 F^{-i}_T \Big(\mathfrak F^k_\stab(T)\Big)\ \subset \  \mathfrak F^{k-j}_\stab(T_{-i}),\qquad T_{-i} \approx (f\mid_{\partial Z}) ^{-i}(T),
\end{equation}
where $ F^{-i}_T$ is a branch extending $T_{-i} \approx (f\mid_{\partial Z}) ^{-i}(T)$.

\subsubsection{Bistable parts $\Xi_T^k$ of fjords $\mathfrak F(T)$} \label{sss:F:bistab}Using~\eqref{eq:2:sss:F:stab}, we define the $k$th bistable part of a fjord $ \mathfrak (T)$ as
\[ \Xi^k_T\coloneqq F^{-k\qq_{m+1}}_T \Big( \mathfrak F_{\stab}^{2k}(T)\Big).\]
Clearly, 
\begin{equation}
    \label{eq:0:sss:F:bistab} \Xi^k_T\subset \mathfrak F^{k}_\stab (T)\subset \mathfrak F(T).
\end{equation}

Note that $ \Xi^k_T$ is empty if and only if $\length_{m}<(2k+2)\length_{m+1}$. It follows from~\eqref{eq:2:sss:F:stab} that for every $i\in \Z$ with 
\[(j-1)\qq_{m+1}< |i|\le j \qq_{m+1},\qquad \text{ where }\quad 0< j\le k,\]
we have 
\begin{equation}\label{eq:1:sss:F:bistab}
 F^{i}_T \Big(\Xi^k_T\Big)\ \subset \  \Xi^{k-j}_{T_i},\qquad T_{i} \approx (f\mid_{\partial Z}) ^{i}(T),
\end{equation}
where $ F^{i}_T$ is a branch extending $T_{i} \approx (f\mid_{\partial Z}) ^{i}(T)$.

The stable part  $\Xi_T^k$ can also be explicitly defined as follows. In addition to $v_k$ specified in \S\ref{sss:F:stab}, set $w_k \coloneqq w\boxminus k \length_{m+1}$ so that $f^{\qq_{m+1}k}(w_k)=w$. Observe that $v< v_k<w_k<w$. Let $\gamma$ be the hyperbolic geodesic of $\iota^{k\qq_{m+1}}( U^{k\qq_{m+1}}\setminus K_{k\qq_{m+1}})$ connecting $v_k$ and $w_k$ that is homotopic to $[v_k,w_k]$ in that space. Then $\Xi^k_T$ is the component bounded by $\gamma \cup [v_k,w_k]$.

\subsubsection{Bistable regions $\Xi_m^k \supset \overline Z$}\label{sss:biXi} Given $m\ge -1$, we denote the {\em $k$-th bistable region} by
$$
\Xi^k_m:= \overline Z \cup \bigcup_{n \geq m}\bigcup_{I \in \Dbb_n} \Xi^k_I,\qquad\qquad \Xi^1_m\equiv \Xi_m.
$$

Equation~\eqref{eq:1:sss:F:bistab} implies the following. For all $i$ with \[(j-1)\qq_{m+1}< |i|\le j \qq_{m+1},\qquad \text{ where }\quad 0< j\le k,\] we have the following induced map along $F^{i}\colon \overline Z \ \overset {1:1}{\longrightarrow}\   \overline Z$:
\begin{equation}
    \label{eq:F_biXi} F^{i}_{\Xi_m^k}\equiv \big(f \circ \iota^{-1}\big)^{i}\ \colon \Xi^k_{m} \ \overset {1:1}{\longrightarrow}\  F^{i}_{\Xi_m^k}(\Xi_m^k) \ \subset \ \Xi_m^{k-j}. 
\end{equation}

\subsection{Transformation rules and univalent push-forward}\label{ss:push forw curves} In this subsection, we state several rules for pushing-forward degeneration along 
\begin{equation}
\label{eq:transformation_pattern}
     (F^k)_*=(f^k)_*\circ (\iota^k)^*\ \colon \ (V,\overline Z)\ \leadsto\ (U^k,\overline Z)\ \leadsto \ (V,\overline Z).
\end{equation}

Univalent pushforward \S\ref{subsubsec:univalentpush} deals with degenerations outside of the Siegel disk; such a pushforward is applied to laminations in (or emerging from) parabolic fjords.

A simple transformation rule \S\ref{subsubsec:simplepush} is obtained by selecting a cutting arc $\gamma$ that connects $\partial Z$ and $\partial V$ and pulling back the associated open disk along $f^k$ to obtain a finite-degree restriction for~\eqref{eq:transformation_pattern}, see Figure~\ref{Fig:psi:PushForward}. This is also a recipe for a near-degenerate transformation rule relying on the Covering Lemma, see~\S\ref{subsubsec:near-degeneratepush}. (In the case of quadratic polynomials, $\gamma$ is selected to connect $\wZ^m$ and $\infty$, see \cite[\S 8.1.2]{DL22}.)

The simple and near-degenerate transformation rules deal with full degenerations and should be applied for $(V, \wZ^m)$ instead of $(V, \overline Z)$. An appropriate modification is presented in~\S\ref{sss:CL:wZ^m}; see~\eqref{eq:transformation_pattern:wZ} and compare it with~\eqref{eq:transformation_pattern}

\subsubsection{Univalent push-forward}\label{subsubsec:univalentpush}
We now explain the construction of univalent push-forward of a rectangle. We refer the readers to \cite[\S 2.3.3]{DL22} for the quadratic polynomial setting. 
Let $T=[v,w]\in \Dbb_m, \sp m\ge -1$ in the diffeo-tiling \cite[\S2.1.6]{DL22}.
Consider a rectangle $\RR \subseteq V \setminus Z$ with $\partial^{h,0} \RR \subset T$ and $\partial^{h,1} \RR \subseteq \partial Z \cup \partial V$.
Let us fix an iterate $f^i$ with $i \leq \qq_{m+1}$.
We will now describe the univalent push-forward under $f^i$.

Let $\FF(\RR)$ be the lamination of the vertical arcs in $\RR$. Let $\FF_{U^i}$ be the lift of $\FF(\RR)$ under $\iota^i$ that emerges from $\overline{Z}$
Let $\FF_{U^i}'$ be the restriction of $\FF_{U^i}$ in $U^{i} - K_{i, U^i}$: the family $\FF_{U^i}'$ consists of subarcs $\gamma'$ of $\gamma\in \FF_{U^i}$ such that every $\gamma'$ is the shortest subarc of $\gamma$ connecting $\partial^{h, 0}\RR$ to $\partial K_{i, U^i} \cup \partial U^i$. Note that arcs in $\RR_{U^i}'$ form three different types: left, right or vertical. Thus, we obtain at most three rectangles by considering the region bounded by the leftmost and rightmost curves (see \cite[A.1.8]{DL22}). Denote them by $\RR_{l, U^i}, \RR_{r, U^i}, \RR_{\ver, U^i}$.
Let $\widetilde\RR_{l, U^i}, \widetilde\RR_{r, U^i}, \widetilde\RR_{\ver, U^i}$ be the sub-rectangles obtained from $\RR_{l, U^i}, \RR_{r, U^i}, \RR_{\ver, U^i}$ by removing the $1$-buffers on every side.
Let $\widetilde\RR_{U^i} = \widetilde\RR_{l, U^i} \cup \widetilde\RR_{r, U^i}\cup\widetilde\RR_{\ver, U^i}$.
Note that $\Width(\widetilde\RR_{U^i}) \geq \Width(\RR) - 12$.
Then by \cite[Lemma~A.8]{DL22}, we have that the map
\begin{equation}\label{eq:univalentpushforward}
    f^i: \widetilde\RR_{U^i} \longrightarrow f^i(\widetilde\RR_{U^i}) \subset V \setminus Z 
\end{equation}
is injective. 

We refer to \eqref{eq:univalentpushforward} the univalent push-forward of $\RR$. We also denote the image by $F^i_*(\RR) = f^i_*\circ(\iota^k)^*(\RR) = f^i(\widetilde\RR_{U^i})$.

\subsubsection{Simple transformation rule of $\Width(\LL)$}\label{subsubsec:simplepush}
We now discuss a different construction suitable for the application of the covering lemma. 

Let $I \subset \partial Z$, and  $\tau > 1$.
Let $\LL$ be a lamination consisting of arcs in $V \setminus (I \cup (\partial Z \setminus \tau I))$ connecting $I$ to $\partial V \cup (\partial Z \setminus \tau I)$. 
We will now describe the simple transfer of the width family $\LL$ under $f^k$.

Let $I_k = (F|_{\partial Z})^k(I)$. 
Let $\gamma_k \subseteq V$ be a curve connecting $\partial Z \setminus \tau I_k$ to $\partial V$ (see Figure~\ref{Fig:psi:PushForward}).
Denote $Y := V \setminus (\gamma_k \cup (\partial Z \setminus \tau I_k))$. Note that $Y$ is a disk.
Let $X$ be the component of $f^{-k}(Y) \subseteq U^k$ that contains $I$, and let $\iota^k(X)$. Let $\LL'$ be the restriction in $\iota^k(X)$ of the family $\LL$ emerging from $I$. Then we can lift the lamination $\LL'$ via $\iota^k$ and obtain a lamination $\widetilde{\LL}$ in $X$ connecting $I$ to $\partial X$.
Note that $\Width(\widetilde{\LL}) \geq \Width(\LL)$. Thus, this gives a lower bound for the modulus of the annulus $X\setminus I \subseteq U^k$
\begin{equation}\label{eqn:annulusbound}
    \Width(X \setminus I) \geq \Width(\LL).
\end{equation}

Now suppose that 
\begin{enumerate}
    \item\label{cond:gamma_k} the width of the family of arcs connecting $I_k$ to $\gamma_k$ in $V\setminus(I_k \cup \gamma_k \cup (\partial Z \setminus \tau I_k))$ is $O(1)$, which can be achieved by choosing $\gamma_k$ carefully under mild modifications, see \cite[\S 8.1.5]{DL22}; in particular, such a $\gamma_k$ can be selected if $f(\tau I_k)$ is disjoint from $\tau I_k$;
    \item \label{cond:degreebound} the degree of the branched covering $f^k: X \longrightarrow Y$ is $D$, which can be bounded in terms of $\tau$ (see \cite[\S 8.1.2]{DL22})
\end{enumerate}

\begin{rem}\label{rem:gamma_k} The disjointness assumption between $f(\tau I_k)$ and $\tau I_k$ in~\eqref{cond:gamma_k} leads to the combinatorial assumption ``$|I|\le |\theta_0|/(2\blambda_\bbt)$'' in Amplification Theorem~\ref{thm:SpreadingAround}, where $\tau =\blambda_k$ is big.

In the proof of Lemma~\ref{lem:replication}, we will apply the simple transformation rule with $\tau=3$ (after projecting the degeneration to $\wZ^m$). There, the disjointness assumption can be relaxed into the disjointness between $f^i(\tau I_k)$ and $\tau I_k$ for $i\le 7$. If there are less than $7$ combinatorial intervals, then the selection of $\gamma_k$ is not required.
\end{rem}

Let $F_\tau(I_k)$ be the family of arcs in $V \setminus (I_k \cup (\partial Z \setminus \tau I_k))$ connecting $I_k$ to $\partial V \cup (\partial Z \setminus \tau I_k)$.
Then we get
$$
\Width_\tau(I_k):= \Width(F_\tau(I_k)) \geq \Width(Y\setminus I_k) - O(1) \geq \frac{\Width(X \setminus I)}{D} - O(1) \geq \frac{\Width(\LL)}{D} - O(1).
$$
In particular, if $\Width(\LL)$ is big, then we also obtain a large lamination $\LL_k$ of $F_\tau(I_k)$ whose width is at least by $\frac{\Width(\LL)}{D} - O(1)$.
We call such a construction the {\em simple transformation rule} of $\Width(\LL)$.

\subsubsection{Near-degenerate transformation rule of $\Width(\LL)$}\label{subsubsec:near-degeneratepush}
Let us assume the setup of \S~\ref{subsubsec:simplepush}.
Suppose $\tau \gg 1$, Then the degree $D$ in Condition~\ref{cond:degreebound} is also large. In this setting, we use the following modification.  We define $B:= V \setminus (\gamma_k \cup (\partial Z \setminus 10 I_k))$, and let $A$ be the component of $f^{-k}(B)$ that contains $I$.
Then $f^k:(X, A, I) \longrightarrow (Y, B, I_k)$ is a branched covering between nested Jordan disks.
This sets up the application of the covering lemma (see \cite[\S 8.1, Lemma 8.5]{DL22} and \cite{KL} for more details). 
We call such a construction that involves the covering lemma the {\em near-degenerate transformation rule} of $\Width(\LL)$.

\begin{figure}
   \includegraphics[width=13cm]{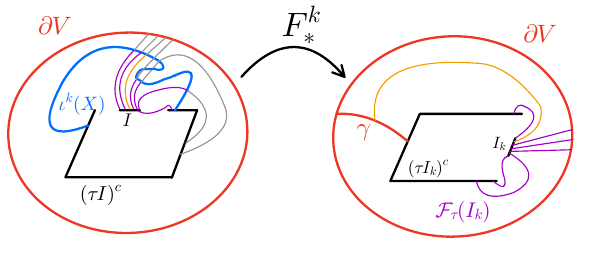}
   \caption{Pushforward of $\Fam_\tau(I)$ under the branched covering. Observe that $Y\coloneqq V\setminus \big[\gamma \cup (\tau I_k)^c \big]$ is a disk.}
   \label{Fig:psi:PushForward}
\end{figure}

\subsection{Pseudo-Siegel disks}\label{sss:iota peripheral}
The definition of level $m$ pseudo-Sigel disk $\wZ^m$ for a $\psi^\bullet$-ql Siegel map is the same as for quadratic polynomials (see~\cite[Definition 5.1]{DL22}) with the understanding that all auxiliary objects at level $m$ (such as {\em channels, dams, collars} and {\em protections}) defined in~\cite[\S~5.1]{DL22} (see Figure \ref{Fig:Welding}, or \cite[Figure~16]{DL22}) are contained in the bistable region $\Xi_m\equiv \Xi^1_m$ sepcified in~\S\ref{sss:biXi}.

Given a peripheral curve $\beta \subset V\setminus Z$ with endpoints in $I$, where $I\in \mathcal D_m$, set $\widehat{\mathfrak F}_\beta$ to be the closure of the connected component of $V\setminus (Z\cup \beta)$ enclosed by $\beta \cup I$. If $\widehat{\mathfrak F}_\beta$ is peripheral rel $\overline Z$, then we call $\widehat{\mathfrak F}_\beta$ the \emph{parabolic fjord} bounded by $\beta$; see Figure \ref{Fig:Welding}. We will refer to $\beta$ as the \emph{dam} of $\widehat{\mathfrak F}_\beta$. If $\beta$ is a hyperbolic geodesic of $V\setminus \overline Z$, then we say that the fjord $\widehat{\mathfrak F}_\beta$ is \emph{geodesic}.

Let us state a definition of pseudo-Siegel disks with slightly simplified notations.  In particular, outer protections $\XX_I$ are considered below rel $\overline Z$ instead of $\widehat Z^{m+1}$ as it was in \cite[Assumption 6]{DL22}. (Protecting rectangle rel $\overline Z$ can be easily projected into a protecting rectangle rel $\wZ^{m+1}$ and vise-versa, see \cite[ Lemma 5.9]{DL22}.) 
\begin{defn}
\label{defn:pseudo-Siegel disks} A \emph{pseudo-Siegel disk} 
$\widehat Z^m \subseteq \Xi_m \subset V$ of $m\ge -1$ and its \emph{territory} $\wZ^m \subset \XX (\wZ^m) \subset \Xi_m$ are disks inductively constructed as follows (from bigger $m$ to smaller ones):
\begin{enumerate}
\item $\widehat Z^m = \overline{Z}$ and $\XX(\widehat Z^m) = \overline{Z}$ for all sufficiently large $m \gg 0$,
\item either \[\widehat Z^m := \widehat Z^{m+1}\text{ and } \XX(\widehat Z^m) := \XX(\widehat Z^{m+1}),\] 
or for every interval $I\in \mathfrak D_m$ there is 
\begin{itemize}
    \item a parabolic fjord $\widehat{\mathfrak F}_I\equiv \widehat{\mathfrak F}_{\beta_I}$ bounded by its dam $\beta_I$ with endpoints in $I$; and
    \item a rectangle $\XX_I$ protecting $\widehat{\mathfrak F}_I$, i.e.,  $\widehat{\mathfrak F}_I\subset \upbullet\XX_I$,
\end{itemize}
 such that    
   \begin{equation}\label{eq:defn:pseudo-Siegel disks}
      \begin{aligned}
     \widehat Z^m &:= \widehat Z^{m+1} \cup \bigcup_{I\in \mathfrak{D}_m} \widehat{\mathfrak F}_{I}\\ 
         \XX \big(\widehat Z^m\big) &:= \XX(\wZ^{m+1})\cup \bigcup_{I\in \mathfrak{D}_m} \upbullet \XX_I 
    \end{aligned}\end{equation}
and such that $\wZ^m$ and $\XX \big(\widehat Z^m\big)$ satisfy $7$ compatibility condition stated in \cite[\S~5.1]{DL22}. 
\end{enumerate}
\end{defn}

Here $\upbullet \XX$ the union of $\XX$ and the closure of the connected component of $V\setminus (\XX\cup \overline Z)$ enclosed by $\partial^v \XX \cup \overset{\circ} {I}$, where $\overset{\circ} {I}$ is $I$ without its endpoints. (Removing the endpoints is relevant if $m=-1$.) 

It follows that $\widehat{\mathfrak F}_I\subset \Xi_m$ that $\widehat{\mathfrak F}_I\subset \Xi_I\subset \mathfrak F(I)$, see~\eqref{eq:0:sss:F:bistab}.

\subsubsection{Bistability of $\wZ^m$}\label{sss:dfn:bistable} Given a pseudo-Siegel disk $\wZ^m$, its bistability $\bistab(\wZ^m)$ is the smallest number $k$ so that $\XX(\wZ^m)\subset \Xi_m^{k+1}$. It follows that for every $i$ with $|i|\le \qq_{m+1}k$, the induced image 
\begin{equation}
\label{eq:dfn:wZ_*i}
\wZ^m_{*i} \coloneqq F^i_{\Xi_m^k}(\wZ^m),\qquad\qquad   F^i_{\Xi_m^k}\mid_{\Xi_m^k}\ \ \text{is in ~\eqref{eq:F_biXi}}    
\end{equation}
 is also a pseudo-Siegel disk satisfying Definition~\ref{defn:pseudo-Siegel disks} with territory \[\XX(\wZ^m_{*i})= F^i_{\Xi_m^k}\big(\XX(\wZ^m)\big)\subset \Xi_m.\]

\subsubsection{Geodesic pseudo-Siegel disks}\label{sss:geodes:wZ} We say that a pseudo-Siegel disk $\wZ^m$ is \emph{geodesic} if
\begin{itemize}
    \item  all the auxiliary objects are hyperbolic geodesics in either $V \setminus \overline{Z}$ or $\wZ^m$; we refer the readers to \cite[\S~5.1.9]{DL22} for more details; in particular, $\wZ^m$ is obtained by adding geodesic fjords;
    \item $\bistab(\wZ^m)\ge 10$.
\end{itemize}

\subsubsection{Remarks about stability and bistability}\label{sss:rem:bistab} In~\cite[\S~5.1.8]{DL22}, the notion of stability $\stab(\wZ^m)$ was introduced for pseudo-Siegel disks $\wZ^m$ in terms of combinatorial distances between the endpoints of $I$ and $\XX_I\cap I$. \cite[Lemma 5.4]{DL22} implies the stability of $\wZ^m$ under pulling-back.

In the current paper, we find it more natural to consider bistability instead of stability. Consequently, the notion of geodesic pseudo-Siegel disks in \S\ref{sss:geodes:wZ} is slightly stronger than the associated notion in \cite[\S~5.1.9]{DL22}.

Exponentially big bistability occurs inside a deep part of the parabolic fjords $\Fjord([\tilde x, \tilde y])$, see Lemma~\ref{lem:geodesicpseudoSiegel}. Consequently, we will show in \S\ref{sss:big:bistability} that pseudo-Siegel disks $\wZ^m$ constructed in Theorem~\ref{thm:main:psi*-ql maps} have bistability $\ge e^{\sqrt \bK}$, where $\bK\gg 1$ is a big constant representing $O(1)$ in Item~\ref{thm:apbs:A} of Theorem~\ref{thm:main:ql maps}. In other words, pseudo-Siegel disks naturally come with arbitrary large (by increasing $\bK$) bistability.

\subsubsection{Types of intervals on $\partial\wZ^m$}\label{sss:intervalpsuedoSiegel}
An interval $I^m \subseteq \partial \wZ^m$ is {\em regular} if its endpoints are contained in $\partial \wZ^m \cap \partial Z$, and is {\em grounded} if it is regular and its endpoints are away from the inner-buffers (see \cite[\S~5.2.3]{DL22}). 
Let $I\subset \partial Z$ be an interval. It is  {\em regular rel $\wZ^m$} if its endpoints are contained in $\partial \wZ^m \cap \partial Z$.
For a regular interval $I$, its {\em projection} $I^m \subseteq \partial \wZ^m$ is the interval on $\partial \wZ^m$ with the same end point and the same orientation as $I$. For a general interval $I \subset\partial Z$, its projection is the projection of the smallest regular interval that contains $I$.
An interval $I$ is {\em grounded rel $\wZ^m$} if its projection $I^m \subseteq \partial \wZ^m$ is grounded. The {\em well-grounded} intervals can be defined in the exact same way as \cite[\S~5.2.2]{DL22}.

We note that grounded intervals are used in the formalization of the Log-Rules for $\intr(\wZ^m)$~\cite[\S5.3, \S5.4]{DL22} and of the snake theory~\cite[\S6]{DL22}.

\subsubsection{Families of curves for pseudo-Siegel disks}
We remark that the definition in \S~\ref{subsec:familyofarcs} naturally extends to the pseudo-Siegel disks, by taking projections.
For example, let $I$ be a regular interval in $\partial Z$, with projection $I^m \subset \wZ^m$.
We denote by $\Fam^{+}_\lambda(I^m) = \Fam^{+}_{\lambda, \wZ^m}(I^m)$ the family of arcs in $V \setminus \wZ^m$ that connects $I^m$ and $\partial V \cup (\partial \wZ^m \setminus (\lambda I)^m)$.
The vertical, peripheral, external and diving subfamilies are defined accordingly.

\subsubsection{Outer geometry of $\wZ$}\label{sss:Outer:wZ}
Under certain minor conditions, outer families rel $\partial Z$ and $\partial \wZ^m$ can be easily converted into each other, see~\eqref{eq:conver:I_J},~\eqref{eq:conver:I_V}, and~\eqref{eq:conver:I_lambda} below. The conversion is based on~\cite[\S5.2]{DL22}; here we summarize details. Below, we will compare the geometries of $V\setminus \overline Z$ and $V\setminus \wZ^m$. A somewhat similar comparison between $U^k\setminus \wZ^m$ and $\widehat U^k\setminus \wZ^m$ is discussed in Lemma~\ref{lem:liftingviaiota}.

Given an interval $I^m\subset \partial \wZ^m$, let $I^{m,\grnd}\subset I^m$ be the biggest grounded subinterval within $I^m$. Then $I^m\setminus I^{m,\grnd}$ has at most two components; each of these components is protected by an annulus with a definite modulus. It easily follows (see~\cite[Lemma 5.18]{DL22}) that if $\dist(I^m,J^m)\ge \length_m$, then
\[\Width(I^m,J^m)=\Width(I^{m,\grnd},J^{m,\grnd})+O(1).\]

Given $I\subset \partial Z$, let $I^\grnd\subset I$ be the biggest regular subinterval of $I$ such that the projection $I^{m,\grnd}$ of $I^\grnd$ onto $\partial \wZ^m$ is grounded. Using~\cite[Lemma 5.10]{DL22}, we obtain:
\begin{multline}
\label{eq:conver:I_J}
  \qquad  \Width^+_{Z}(I,J) =\Width^+_{Z}(I^{\grnd},J^\grnd)+O(1)=\\ \Width^+_{\wZ^m}(I^{m,\grnd},J^{m,\grnd})+O(1), \qquad
\end{multline}
and 
\begin{multline}\label{eq:conver:I_V}
  \qquad  \Width^+_{Z}(I,\partial V) =\Width^+_{Z}(I^{\grnd},\partial V)+O(1)=\\ \Width^+_{\wZ^m}(I^{m,\grnd},\partial V)+O(1). \qquad
\end{multline}

Consequently, 
\begin{multline}\label{eq:conver:I_lambda}
  \qquad  \Width^+_{\lambda, Z}(I) =\Width^+_{Z}\big(I, (\lambda I)^c\big)+\Width^+_{Z}(I,\partial V)-O(1)= \\ \Width^+_{\wZ^m}\left(I^{m,\grnd},\ \Big(\lambda I)^c\Big)^{m,\grnd}\right)+\Width^+_{\wZ^m}(I^{m,\grnd},\partial V)+O(1). \qquad
\end{multline}

To simplify notations, let us denote the second line in~\eqref{eq:conver:I_lambda} as
\begin{multline} \label{eq:dfn:proj:Fam}   \qquad \Width\big(\proj^{+,\grnd}_{\lambda,V, \wZ^m}(I) \big) \equiv  \Width\Big(\proj^\grnd_{V,\wZ^m} \big(\Fam^+_{\lambda, Z}(I)\big) \Big) \coloneqq \\\Width^+_{\wZ^m}\left(I^{m,\grnd},\ \Big((\lambda I)^c\Big)^{m,\grnd}\right)+\Width^+_{\wZ^m}(I^{m,\grnd},\partial V),\qquad
\end{multline}
where $\proj_{V,\wZ^m} \big(\Fam^+_{\lambda, Z}(I)\big)$ is the \emph{projection} of the family $\Fam^+_{\lambda, Z}(I)$ into $V\setminus \overline Z$. Then~\eqref{eq:conver:I_lambda} takes form
\begin{equation}
    \label{eq:conver:I_lambda:2}\Width^+_{\lambda, Z}(I) \ =\ \Width\big(\proj^{+,\grnd}_{\lambda, V, \wZ^m}(I) \big) +O(1)
\end{equation}

We remark that the class of grounded intervals in~\eqref{eq:conver:I_J},~\eqref{eq:conver:I_V},~\eqref{eq:conver:I_lambda} is selected for convenience. For instance, instead of $I^{m,\grnd}\subset I^m$, one can specify a combinatorially smaller (more ``optimal'') subinterval $I'^m\subset I^{m,\grnd}$ by considering the biggest subinterval $I'^m\subset I^m$ whose endpoints are outside of (the boundary of) any fjord $\Fjord^m_I$ appearing in~\eqref{eq:defn:pseudo-Siegel disks}.

\subsection{Lifting from $(V,\widehat Z^m)$ to $(U^k,\widehat Z^m_{U^k})$ under $\iota^k$}\label{ss: propertyimmersion}
As we have already mentioned, in the near-degenerate regime, $\psi^\bullet$-ql maps are quite similar to ql maps.  Figure~\ref{Fig:psibullet maps} illustrates an immersion $\iota\colon U\to V$ defined around a $\iota$-proper disk  $\overline Z$. The $\iota$-proper condition implies that laminations $\LL$ emerging from $\partial Z$ can be ``injectively'' lifted  into 
\[\iota^*\LL\sp\sp \text{ with }\sp\sp \Width(\iota^*\LL)\ge \Width(\LL);\]
this is quite similar to the case when $\iota$ is an inclusion $U\hookrightarrow V$. See \S~\ref{subsubsec:univalentpush} below for more details.

Note that $\wZ^m$ is not $\iota$-proper under $\iota^k\colon U^k\to V$. The following lemma resolves this subtlety by comparing   $\iota^k\colon U^k\to V$ with $\iota^k\colon \widehat U^k\to V$, see \eqref{eq:dfn:wU^k}, \eqref{eq:wU^k:proper}, and by arguing that $(\widehat U^k, \wZ^m)$ and $(U^k, \wZ^m)$ are geometrically close (protections of $\wZ^m\setminus \overline Z$ lift to protections of $\widehat U^k\setminus U^k$). Consequently, pushforwards $F^k_*=f^k_*\circ (\iota^k)^*$ are defined naturally for laminations emerging from $\partial Z$ or from $\partial \wZ^m$ for $k\le \qq_{m+1}$ and have similar properties as in the case of ql-maps; see \cite{K,KL2,molecules,DL:Feigen}.

\begin{lem}\label{lem:liftingviaiota}
Let $I^m \subset \wZ^m$ be a grounded interval, and $\tau > 1$. Let $k \leq 10\qq_{m+1}$.
Then there exists a (small) $\varepsilon \in (0,0.1)$
$$
\Width_{\tau, U^k}(I^m) \ge (1-\varepsilon) \Width_{\tau}(I^m) - O(1),
$$
where $\Width_{\tau}(I^m) = \Width_{\tau, V}(I^m)$.
\end{lem}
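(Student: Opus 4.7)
The plan is to push the optimal lamination of $\Fam_{\tau,V}(I^m)$ through $\iota^k$ into $U^k$, losing only a small multiplicative factor and an additive $O(1)$. Since $\wZ^m$ is geodesic, its bistability is $\ge 10$, so $\XX(\wZ^m)\subset\Xi_m^{11}$, which matches the hypothesis $k\le 10\qq_{m+1}$ precisely.

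First, I would construct a homeomorphic lift $\wZ^m_{U^k}\subset U^k$ of $\wZ^m$ together with its territory, by iterating the backward maps from~\eqref{eq:F_biXi} applied to $\XX(\wZ^m)\subset\Xi_m^{11}$, combined with the coastal injectivity of Lemma~\ref{lem:iota|CC_k}. An intermediate disk $\widehat U^k$ is then characterized by the property that $\wZ^m_{U^k}$ becomes $\iota^k$-proper: namely, $\iota^k\colon\widehat U^k\to V$ is a covering embedding rel $\wZ^m_{U^k}$, built by inflation in the style of~\S\ref{sss:dfn:unwinding}. Any lamination in $V\setminus\wZ^m$ emerging from $I^m$ therefore lifts univalently through $\iota^k$ into $\widehat U^k\setminus\wZ^m_{U^k}$ with width preserved.

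The central step is to compare $\widehat U^k$ with $U^k$. Both are disks: $U^k$ is a covering embedding rel $K_k$, whereas $\widehat U^k$ is one rel $\wZ^m_{U^k}$. The bistability $\wZ^m\subset\Xi_m^{11}$ forces $K_k\cap\wZ^m=\overline Z$, so the components of $\widehat U^k\setminus U^k$ correspond to the limbs of $K_k$ that sit in $V\setminus\wZ^m$. Each such limb is separated from $\partial\wZ^m$ by an outer protection rectangle $\XX_I$ provided by Definition~\ref{defn:pseudo-Siegel disks}, and these rectangles lift through $\iota^k$ to annuli of definite modulus $m_0>0$ that insulate $U^k$ from $\widehat U^k\setminus U^k$ inside $\widehat U^k$.

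Finally, I would truncate the lifted lamination using these protecting annuli: any lifted arc escaping $U^k$ is cut at one of them, and standard modulus estimates show that the surviving sub-lamination in $U^k$ still connects $I^m_{U^k}$ to $\partial U^k\cup(\partial\wZ^m_{U^k}\setminus \tau I^m_{U^k})$ and retains at least $(1-\varepsilon)\Width_\tau(I^m)-O(1)$ of the original width, with $\varepsilon\in(0,0.1)$ determined by $m_0$. The main obstacle I anticipate is Step~3: verifying uniformly in $k\le 10\qq_{m+1}$ that the limbs of $K_k$ lying outside $\wZ^m$ are genuinely shielded by the protections $\XX_I$, which will require careful use of the iterated-fjord machinery of~\S\ref{sss:F:stab}--\S\ref{sss:biXi}.
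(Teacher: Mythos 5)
Your strategy — lift $\Fam_{\tau}(I^m)$ to an intermediate disk on which $\wZ^m$ becomes $\iota^k$-proper, then absorb a small multiplicative loss using the protection rectangles $\XX_I$ and a modulus estimate — is the same framework the paper uses, but the intermediate disk and the difference region are misidentified, and the argument as written does not close.

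The paper takes $\widehat U^k := \iota^{-k}(V\setminus\wZ^m)\cup\wZ^m_{U^k}$, which is a \emph{subset} of $U^k$: one \emph{removes} from $U^k$ the rogue components of $\iota^{-k}(\wZ^m)$ that are not $\wZ^m_{U^k}$. These rogue components project under $\iota^k$ onto fjords $\widehat{\mathfrak F}_j$ of $\wZ^m\setminus\overline Z$, and each lifts together with its protection $\XX_j$ (which lies in the coastal zone and so lifts univalently by Lemma~\ref{lem:iota|CC_k}); the lifted protections $\XX^k_i$ separate the rogue pieces from $\wZ^m_{U^k}$ and the cost of re-attaching them (passing from $\widehat U^k$ back to $U^k$) is bounded by~\cite[Lemma~5.6]{DL22}, giving the $(1-\varepsilon)$ factor. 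Your proposal instead builds $\widehat U^k$ ``by inflation,'' obtaining $\widehat U^k\supset U^k$, and then identifies $\widehat U^k\setminus U^k$ with ``limbs of $K_k$ that sit in $V\setminus\wZ^m$.'' This reverses the containment and misidentifies the objects: the relevant error set is not made of limbs of $K_k$ but of extra $\iota^k$-preimages of $\wZ^m$'s parabolic fjords. Moreover, the rectangles $\XX_I$ in Definition~\ref{defn:pseudo-Siegel disks} protect the fjords $\widehat{\mathfrak F}_I$ (they satisfy $\widehat{\mathfrak F}_I\subset\upbullet\XX_I$), not the limbs of $K_k$ outside $\wZ^m$, so there is no provision in the pseudo-Siegel structure that shields those limbs. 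Finally, your intermediate claim that bistability forces $K_k\cap\wZ^m=\overline Z$ is asserted without justification and is not used (or stated) in the paper; $\wZ^m$'s fjords can perfectly well contain pieces of limbs of $K_k$, and the lemma does not need this. If you redo the comparison with $\widehat U^k\subset U^k$ as the deletion of rogue fjord-preimages and apply the lifted protections $\XX^k_i$ plus~\cite[Lemma~5.6]{DL22}, your argument becomes the paper's proof.
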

\begin{proof}
   Set 
\begin{equation}
  \label{eq:dfn:wU^k}
   \widehat U^k =  \iota^{-k}(V \setminus \wZ^m) \cup \wZ^m_{U^k}\subset U^k
\end{equation}   
      to be $U^k$ minus $\iota^{-k}(\wZ^m)\setminus \wZ^m_{U^k}$; i.e., we remove from $U^k$ components of $\iota^{-k}(\wZ^m)$ attached to its (external) boundary. Consequently, $\wZ^m$ is $\iota$ proper under $\iota^k\colon \widehat U^k\to V$:
      \begin{equation}
          \label{eq:wU^k:proper} \big[\iota^k\mid \widehat U^k\big]^{-1} (\wZ^m_V)=\wZ^m_{U^k}
      \end{equation}

  Recall that the pullback of a family $\Fam$ under an immersion $\iota^k$ consists of two steps: restriction followed by a lift.  Both operation can only increase the width. Thus,
   $$
   \Width_{\tau, \widehat U^k}(I^m) \geq \Width_\tau(I^m).
   $$
Let $\LL$ be a union of finitely many rectangles representing $\Fam_{\tau, \widehat U^k}(I^m)$ up to $O(1)$.

Let us now discuss the difference between $U^k$ and $\widehat U^k$. By construction,  if $\widehat{\mathfrak F}^k_i$ is a connected component of $U^k\setminus \widehat U^k$, then $\widehat{\mathfrak F}_j\coloneqq \iota^k(\widehat{\mathfrak F}^k_i)$ is a component of $\wZ^m\setminus \overline Z$. We recall that $\widehat{\mathfrak F}_j$ is bounded by a dam $\beta_j$ with an extra protection $\XX_j$ separating $\widehat{\mathfrak F}_j$ from $\partial V$. Since $\XX_j$ is in the coastal zone, it has a lift $\XX_i^k$ under $\iota^{k}$ such that $\XX_i^k$ separated $\widehat{\mathfrak F}^k_i$ from the pseudo-Siegel disk $\wZ^m$. Since $\XX_i^k$ are sufficiently wide, \cite[Lemma 5.6]{DL22} implies that components $\widehat{\mathfrak F}^k_i$ affect the width of $\LL$ by a multiplicative factor $(1-\varepsilon)$, i.e., $\Width_{\tau, U^k}(I^m) \ge (1-\varepsilon) \Width(\LL)$.
This implies the main estimate.  
\end{proof}

\subsubsection{Modification of transformation rules for $(V, \wZ^m_{-k*})\leadsto (V, \wZ^m)$}
\label{sss:CL:wZ^m}
Consider now a pseudo-Siegel disk $\wZ^m$ and recall that $\wZ^m_{-k*}=F^{-k}_{\Xi_m^k}(\wZ^m)$ is the induced pullback, see~\eqref{eq:dfn:wZ_*i}. Then simple and near-degenerate transformation rules for $\wZ^m$ follow the following modification~\eqref{eq:transformation_pattern}: 
\begin{equation}
\label{eq:transformation_pattern:wZ}
     (F^k)_*=(f^k)_*\circ (\hookrightarrow)_*\circ (\iota^k)^*\colon (V,\wZ^m_{-k*})\leadsto(\widehat U^k,\wZ^m)\leadsto(U^k,\wZ^m)\leadsto (V,\wZ^m),
\end{equation}
where the composition
\[(\hookrightarrow)_*\circ (\iota^k)^*\colon \ (\widehat U^k,\wZ^m)\leadsto(U^k,\wZ^m)\leadsto (V,\wZ^m)\]
is controlled by Lemma~\ref{lem:liftingviaiota}.
Below, we provide more details with notations.

Let $I^m\subset \partial \wZ^m$ and $\tau > 1$. Let $\LL$ be a lamination consisting of arcs in $V \setminus (I^m \cup (\partial \wZ^m \setminus \tau I^m))$ connecting $I^m$ to $\partial V \cup (\partial \wZ^m \setminus \tau I^m)$.
Let $X, Y$ as constructed in \ref{subsubsec:simplepush} relative to $\wZ^m$. Let $k \leq 10\qq_{m+1}$.

By Lemma~\ref{lem:liftingviaiota}, the simple transformation rule and near-degenerate transformation rule in \S~\ref{subsubsec:simplepush} and \S~\ref{subsubsec:near-degeneratepush} are defined similarly in the psuedo-Siegel disk setting by replacing \eqref{eqn:annulusbound} at the expense of $(1-\varepsilon)$:
\begin{equation}\label{eqn:psuedoannulusbound}
\Width(X\setminus I^m) \ge \Width_{\tau, U^k}(I^m) \ge (1-\varepsilon) \Width_{\tau}(I^m) - O(1) \ge (1-\varepsilon) \Width(\LL) - O(1).
\end{equation}

\subsection{Spreading around a pseudo-Siegel disk}\label{sss:SpreadAround:rel:wZ} 
Consider a combinatorial interval $I\subset \partial Z$ witnessing big outer degeneration 
\[ K\coloneqq \Width^+_{\lambda, \overline Z}(I)\gg 1.\]
Assume $\wZ^m$ is a pseudo-Siegel disk with bistablislity at least up to the first return, \S\ref{sss:dfn:bistable}. To spread around the degeneration $\Fam^+_{\lambda, \overline Z}(I)$ into all $I_k=f^k(I), k <l\qq_{m+1}$ for some $l \leq \bistab(\wZ^m)$,  
\begin{itemize}
    \item we first project $\Fam^+_{\lambda, \overline Z}(I)$ into $\wZ^m_{-k*}$, see~\eqref{eq:dfn:wZ_*i}; this step has a small affect on $K$, see~\eqref{eq:dfn:proj:Fam} and~\eqref{eq:conver:I_lambda:2};
    \item then we apply either a simple or near-degenerate transformation rule along $(V, \wZ^m_{-k*})\leadsto (V, \wZ^m)$ as in~\S\ref{sss:CL:wZ^m}, see also~\eqref{eq:transformation_pattern:wZ}.
\end{itemize}
The associated family is transformed as follows:
\begin{equation}
    \label{eq:SpreadAround:wZm}  \Fam^+_{\lambda, \overline Z}(I)\ \leadsto \ \proj^\grnd_{V,\wZ^m_{-k*}} \big(\Fam^+_{\lambda, Z}(I)\big) \ \overset{(F^k)_*}{\leadsto} \ \proj^\grnd_{V,\wZ^m} \big(\Fam_{\lambda, Z}(I_k)\big).
\end{equation}
where $\proj^\grnd_{V,\wZ^m} \big(\Fam_{\lambda, Z}(I_k)\big)$ represents the full family in $(V,\wZ^m)$ from $I_k^{m,\grnd}$ to $\big((\lambda I_k)^c\big)^{m,\grnd}\cup \partial V.$

In Section~\ref{sss:AppC:proof:Sect8} (just as in~\cite[\S8.2]{DL22}), we will apply the spreading around under the assumption that one of the endpoints of $I$ is in $\CP_m$. With this assumption, the intervals $I_k$ are grounded rel $\wZ^m$ and $\wZ^m_{-k*}$ and the notations involving ``$\proj$'' are not necessary.

On the other hand, in the proof of Lemma~\ref{lem:replication}, we will apply spreading around using notation from~\eqref{eq:SpreadAround:wZm} to all $I_k=f^k(I), k <2\qq_{m+1}$.

\section{Parabolic fjords and their welding with $\wZ^m$} \label{sec:parabolicfjords}

In this section, we discuss the geometry of parabolic fjords. Let us fix an interval $T=[v,w]\in \Dbb_m$ in the level $m$ diffeo-tiling. We assume that $T$ consists of many level $m+1$ combinatorial intervals. Figure~\ref {Fig:ParabolicF} illustrates the geometry of wide families in the associated parabolic fjord. Roughly, we first select the outermost parabolic external rectangle of fixed but big width; we assume that such a rectangle exists and connects $[\tilde x, x]$ and $[y,\tilde y]$. Moreover, we can assume that this rectangle is balanced, see \S \ref{ss:parabolicRectangle} and \S \ref{ss:BalSubrect}. Below the rectangle, the Log-Rule (see Theorem~\ref{thm:parabolicfjords}) is applicable. Outside of the rectangle, wide families are either diving or vertical.

We remark that the interval $[\tilde x,\tilde y]$ is \emph{a priori} specified geometrically. Therefore, the Log-Rule Theorem depends on the following two parameters: \[M=\frac{\dist(\tilde x, x)}{\length_{m+1}}=\frac{\dist(\tilde y, y)}{\length_{m+1}} \qquad \text{ and }\qquad N=\frac{\dist(\tilde y, w)}{\length_{m+1}}, \] see~\eqref{eq:dfn:M} and~\eqref{eq:dfn:N}. If $N\gg_{\lambda}\ M$, then there is a non-external family emerging from $E\subset [\tilde y, w]$, see Case~\ref{item:3:thm:parabolicfjords} in Theorem~\ref{thm:parabolicfjords}. This leads to an \emph{exponential boost} (an application of the Log-Rule, see~\S\ref{sss:ExponBoost}) stated in Lemma~\ref{thm:center:rect}, Case~\ref{item:3:ParabolicF}. In the a posteriori setting, $[\tilde x, \tilde y]$, $N$, and $M$ can be assumed specified combinatorially, see Theorem~\ref{thm:main:psi-ql maps:extra} and the discussion before.

 In~\S\ref{ss:WeildingLmm}, using the Log-Rule, we show that either there is a regularization of the level $m+1$ pseudo-Siegel disk $\wZ^{m+1}\ \leadsto \ \wZ^{m}$, or there is an {\em exponential boost} in the degeneration (see Theorem~\ref{thm:regul}).

 As it has been mentioned in~\S\ref{sss:occ:Fam^+}, this section mostly focuses on the external component $\Fam^{+,\per}_{m,\ext}$ of outer families $\Fam^+$. In~\S\ref{ss:parabolicRectangle} we recall the notion of parabolic rectangles from~\cite{DL22}.  Lemma~\ref{lem:Balanced subrectangle} says that any wide external parabolic rectangle is balanced. After Lemma~\ref{lem:Balanced subrectangle}, the arguments proceed as in~\cite{DL22}. A new addition is Lemma~\ref{lem:geodesicpseudoSiegel} stating that exponentially big bistability occurs inside a deep part of the parabolic fjords $\Fjord([\tilde x, \tilde y])$.

\subsection{Parabolic Rectangles}\label{ss:parabolicRectangle}

Parabolic rectangles encode wide families over intervals with substantial length, i.e.,~\eqref{eq:parab rect} holds; see a rectangle connecting $A$ and $B$ on Figure~\ref{Fig:ParabolicF}. Parabolic rectangles occur withhin parabolic fjords; their width is described by Log-Rule in Theorem~\ref{thm:parabolicfjords}.

Let $T\in \Dbb_m$ be an interval in the diffeo-tiling. Recall Fjords $\Fjord$ and $\Fjord_m$ are defined in \S~\ref{sss:F^*}. A rectangle $\RR \subseteq V \setminus Z$ based on an interval $S\subset T$ is called \emph{parabolic based on $S$} (of level $m$) if
\begin{equation}
\label{eq:parab rect}
\dist_{T}(\partial^{h,0}\RR,  \partial^{h,1}\RR ) \ge 6\min \{ | \partial^{h,0}\RR|,\sp | \partial^{h,1}\RR| \} +3\length_{m+1}
\end{equation}
i.e.~the gap between $ \partial^{h,0}\RR$ and $\partial^{h,1}\RR$ is bigger than the minimal horizontal side of $\RR.$ We say that a parabolic rectangle $\RR$ is \emph{balanced} if $|\partial^{h,0} \RR|=|\partial^{h,1} \RR|$. 

For our application, we state following corollary, which is a reformulation of Lemma~\ref{cor:pullback} with simplified notations. 
\begin{cor}[Pullbacks of external parabolic rectangles]\label{cor:pullbacks for psi bullet} If $\RR\subset V\setminus Z$ is a wide external parabolic rectangle based on $T'$, then, after removing the outermost $1$-buffer from $\RR$, the new rectangle $\RR^\new$ is in $\Fjord[v',w']$ and can be iteratively pullbacked under~\eqref{eq:cor:pullbacks for psi bullet} towards $v$.
\end{cor}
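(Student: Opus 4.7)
The statement is essentially a reformulation of Lemma~\ref{cor:pullback} in the language of parabolic rectangles, so the plan is to reduce everything to that lemma by (i) verifying the geometric containment $\RR^\new\subseteq \Fjord([v',w'])$, and (ii) iterating the second assertion of Lemma~\ref{cor:pullback}.

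For step (i), I would argue as follows. Since $\RR$ is external parabolic based on $T'=[v',w]$, its two horizontal sides $\partial^{h,0}\RR, \partial^{h,1}\RR$ lie in $T'$ and, by the parabolic condition~\eqref{eq:parab rect}, are separated by a gap that dominates the minimum of their lengths (plus $3\length_{m+1}$). After removing the outermost $1$-buffer from $\RR$, the vertical sides of $\RR^\new$ retract inward by a definite amount; this combined with the fact that $\RR^\new$ is still wide forces each vertical fiber of $\RR^\new$ to be homotopic, rel endpoints in $V\setminus\overline Z$, to an arc lying on the $\overline Z$-side of the hyperbolic geodesic $\gamma_{[v',w']}$ bounding $\Fjord([v',w'])$. (Geodesic fjords are the deepest region a peripheral rectangle with base in $[v',w']$ can occupy; a wide rectangle whose buffer has been stripped cannot cross the bounding geodesic without incurring a large modular cost, contradicting its width.) This gives $\RR^\new\subseteq \Fjord([v',w'])$.

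For step (ii), take $J\subseteq T'$ to be the smallest subinterval of $T'$ containing both horizontal sides of $\RR^\new$; then $\RR^\new\subseteq \Fjord(J)\subseteq \Fjord([v',w'])$. By the second part of Lemma~\ref{cor:pullback}, the inclusion~\eqref{eq:cor:pullbacks for psi bullet}
\[
F_T^{-s\qq_{m+1}}\bigl(\Fjord(J)\bigr)\subset \Fjord_m(T)
\]
holds for every $s\ge 1$ such that $F_T^{-t\qq_{m+1}}(J)\subset T'$ for all $t\in\{0,1,\dots,s-1\}$. Each application of $F_T^{-\qq_{m+1}}$ shifts $J$ by one unit of length $\length_{m+1}$ within $T$ away from $w$; thus the hypothesis is satisfied precisely until the shifted interval reaches the left endpoint $v$. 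This is exactly what ``iteratively pulled back towards $v$'' means.

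The only non-formal step is the containment $\RR^\new\subseteq \Fjord([v',w'])$ in step (i); step (ii) is a direct invocation of Lemma~\ref{cor:pullback}. Since the $1$-buffer was precisely calibrated to absorb the discrepancy between a wide external rectangle and the geodesic fjord containing it, no essential difficulty is expected.
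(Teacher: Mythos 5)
Your proposal takes the same route the paper intends: the text right before the corollary states it is ``a reformulation of Lemma~\ref{cor:pullback} with simplified notations,'' and you correctly reduce to that lemma. Step~(ii) is a clean and correct invocation of the second assertion of Lemma~\ref{cor:pullback}: choosing $J=\lfloor\partial^{h,0}\RR^\new,\partial^{h,1}\RR^\new\rfloor\subset T'$ and iterating $F_T^{-\qq_{m+1}}$ until the shifted interval reaches $v$ is exactly the ``iterative pullback towards $v$.''

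Step~(i) is where your reasoning is loose, and the looseness is in a spot that matters. The parenthetical argument --- ``a wide rectangle whose buffer has been stripped cannot cross the bounding geodesic without incurring a large modular cost'' --- is not the right mechanism. In the annulus $V\setminus\overline Z$ a rectangle based on $T'$ can in principle carry a wide family that winds around $\overline Z$ and exits $\Fjord([v',w'])$, and removing a single outer $1$-buffer would not ``pay a modular cost'' that forbids this; winding costs nothing in width. The actual ingredient that makes the containment work is precisely the hypothesis that $\RR$ is \emph{external}: by \S\ref{subsubsec:efVvsU}, an external rectangle lifts (up to $O(1)$ of width) under $\iota^{\qq_{m+1}}$ into the coastal zone $\CC^{\qq_{m+1}}_{\qq_{m+1}}$, which is a disk, so the lifted curves are non-winding. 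In a disk one can then pass (again via an $O(1)$-buffer, cf.~\cite[Lemma~A.5]{DL22}) to the geodesic rectangle, and the nesting of hyperbolic geodesics then gives $\RR^\new\subseteq\Fjord(J)\subseteq\Fjord([v',w'])$ as you claim. You cite ``external parabolic'' in the first sentence but then never use the external property to justify non-winding, which is the crux; your homotopy/width heuristic does not substitute for it. Once that ingredient is inserted, the rest of your argument (and in particular the reduction in step~(ii)) goes through.
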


\begin{figure}
    \centering
    \includegraphics[width=0.9\linewidth]{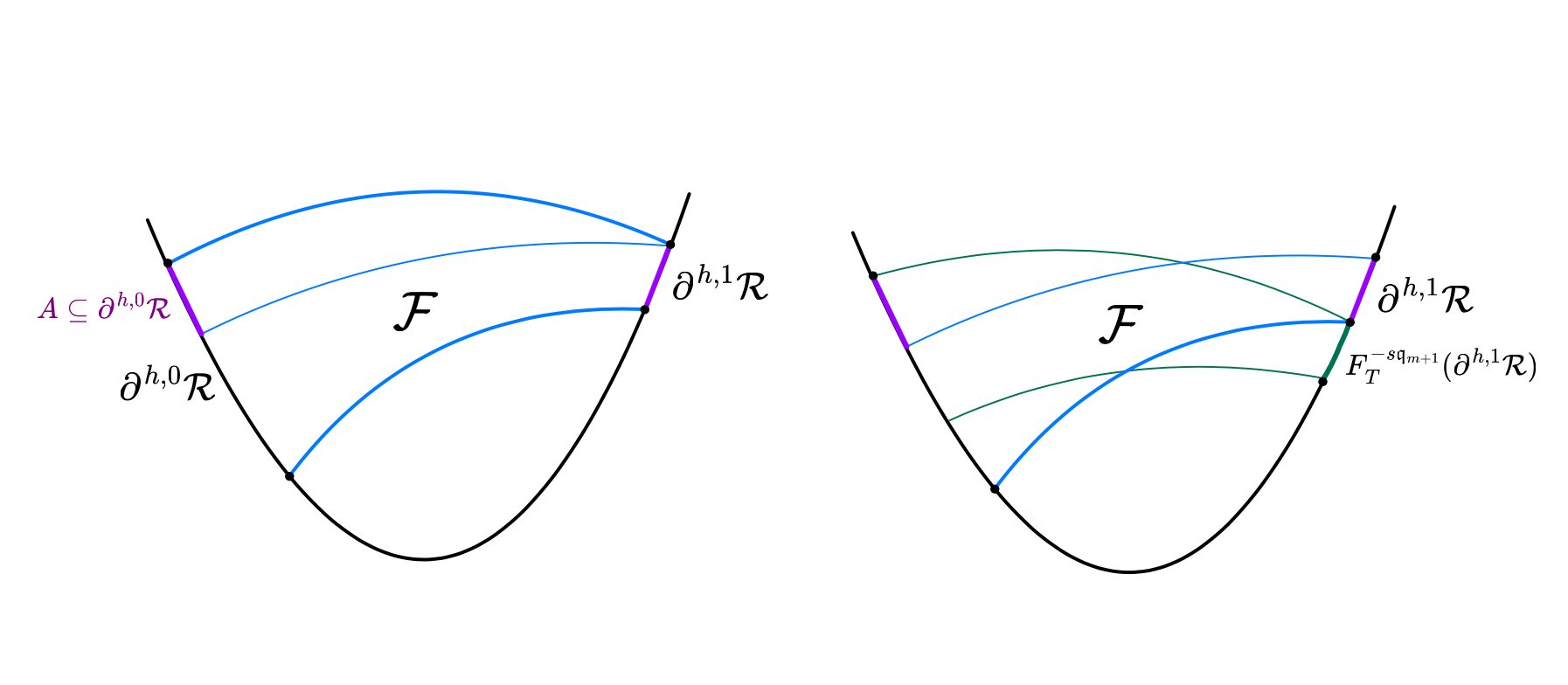}
    \caption{Illustration of the Shift Argument for Property~\ref{cl:2:bpar_rect}. Suppose $|\partial^{h,0} \RR|> |\partial^{h,1} \RR|$. Let $A \subseteq \partial^{h, 0} \RR$ be the interval indicated in the left figure with $|A| = |\partial^{h,1} \RR|$. Consider any rectangle $\RR'$ in $V \setminus Z$ with vertical boundaries $\partial^{h,0} \RR \setminus A$ and $\partial^{h, 0} \RR$. Then we can consecutively pull back by $F^{-\qq_{m+1}}_T$, and obtain a configuration on the right figure. This implies that $\RR'$ has width at most $2$ by \cite[\S A.3, Lemma A.11]{DL22}. Thus, by removing $O(1)$-buffers, we may assume Property~\ref{cl:2:bpar_rect} holds.}
    \label{Fig:ShiftArgument}
\end{figure}

\subsection{Balanced subrectangle}\label{ss:BalSubrect}

It follows from the existence of the shifts (i.e., from Items~\ref{shift:to:v} and~\ref{shift:to:w} in the proof of Lemma~\ref{lem:Balanced subrectangle}) that every wide parabolic rectangle $\RR$ is balanced: $|\partial^{h,0} \RR^\new| = |\partial^{h,1} \RR^\new|$ after removing $O(1)$ buffers; see Items~\ref{cl:2:bpar_rect} and~\ref{cl:4:bpar_rect} in the lemma below.

\begin{lem}\label{lem:Balanced subrectangle}
    Let $\RR$ be a wide external parabolic rectangle. Then $\RR$ contains a geodesic balanced subrectangle $\RR^\new$ with 
    $$
    \Width(\RR^\new) = \Width(\RR) - O(1)
    $$ 
    satisfying the upper Log-Rule. More precisely, we have
    \begin{enumerate}[label=(\arabic*)]
    \item\label{cl:1:bpar_rect} $\RR^\new$ is geodesic in $V - \overline{Z}$;
 \end{enumerate} 
 \begin{enumerate}[label=(\arabic*a),start=2]
    \item\label{cl:2:bpar_rect} $|\partial^{h,0} \RR^\new|\le |\partial^{h,1} \RR^\new|$;
  \end{enumerate} 
 \begin{enumerate}[label=(\arabic*b),start=2]   
    \item\label{cl:4:bpar_rect} $|\partial^{h,0} \RR^\new|\ge |\partial^{h,1} \RR^\new|$;
\end{enumerate} 
 \begin{enumerate}[label=(\arabic*),start=3]
    \item\label{cl:3:bpar_rect} $\RR$ obeys upper Log-Rule: $\Width(\RR^\new) \preceq \log \frac{|\partial^{h,0} \RR^\new|}{\dist(v, \partial^{h,0} \RR^\new)} + 1$.
\end{enumerate}
\end{lem}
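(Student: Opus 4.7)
The plan is to follow the four claims in order, relying on the pullback/shift toolkit just set up in Corollary~\ref{cor:pullbacks for psi bullet}; the arguments are then essentially the same as in the quadratic case of~\cite[\S4]{DL22}, with the $\psi^\bullet$ upgrade absorbed entirely into the statement of that corollary.

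First, for Property~\ref{cl:1:bpar_rect}, I would replace $\RR$ by its geodesic enclosure: let $\RR^\new$ be the (sub)rectangle in $V\setminus\overline Z$ whose vertical sides are the hyperbolic geodesics of $V\setminus\overline Z$ joining corresponding endpoints of horizontal sides obtained from $\partial^{h,0}\RR,\partial^{h,1}\RR$ after removing a $1$-buffer on each side. Since $\RR$ sits in the coastal zone and wide rectangles stay close to geodesic rectangles up to bounded width (cf.\ \cite[App.~A]{DL22}), this costs only $O(1)$ of width and produces a geodesic rectangle based on intervals contained in $T$ and $f^{\qq_{m+1}}(T)$ respectively.

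Next, for the balance Properties~\ref{cl:2:bpar_rect} and~\ref{cl:4:bpar_rect}, I would run the two shift arguments illustrated in Figure~\ref{Fig:ShiftArgument}. Suppose towards contradiction that $|\partial^{h,0}\RR^\new|>|\partial^{h,1}\RR^\new|+C$ even after discarding $O(1)$-buffers, and let $A\subseteq\partial^{h,0}\RR^\new$ be the subinterval of length $|\partial^{h,1}\RR^\new|$ adjacent to the gap. Corollary~\ref{cor:pullbacks for psi bullet} permits the iterated pullback of $\RR^\new$ by $F_T^{-k\qq_{m+1}}$ inside $\Fjord_m(T)$; concatenating the original rectangle with these pullbacks produces an auxiliary winding rectangle $\RR'\subset V\setminus \overline Z$ whose horizontal sides lie in $\partial^{h,0}\RR^\new$ and $\partial^{h,0}\RR^\new\setminus A$ and whose width is at least that of $\RR^\new$, contradicting the elementary bound $\Width(\RR')\le 2$ from~\cite[Lemma~A.11]{DL22}. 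This yields~\ref{cl:2:bpar_rect}. The opposite inequality~\ref{cl:4:bpar_rect} is symmetric: assume $|\partial^{h,1}\RR^\new|>|\partial^{h,0}\RR^\new|+C$, and run the same pullback construction from the $\partial^{h,1}$-side of $\RR^\new$ (equivalently, shift towards $w$), using that Corollary~\ref{cor:pullbacks for psi bullet} provides enough iterates $F_T^{-t\qq_{m+1}}$ keeping the image in $T'$. After removing $O(1)$-buffers one more time, both inequalities hold, so $\RR^\new$ is balanced.

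Finally, for the upper Log-Rule~\ref{cl:3:bpar_rect}, I would again stack disjoint pullbacks $\RR^\new_k:=F_T^{-k\qq_{m+1}}(\RR^\new)$ inside $\Fjord_m(T)$, but now all the way down to $v$. Since $F_T^{\qq_{m+1}}$ is parabolic-like at $v$ (its Fatou coordinate is available on the coastal zone via Lemma~\ref{lem:iota|CC_k}), the total number $N$ of admissible pullbacks obeys
\[
N\ \asymp\ \log\!\frac{|\partial^{h,0}\RR^\new|}{\dist(v,\partial^{h,0}\RR^\new)}+1,
\]
as in the classical Ecalle--Voronin picture. Transferring $\RR^\new$ to the Ecalle cylinder of $v$ turns it into a rectangle whose horizontal side has length comparable to $N$ and whose height is bounded by the depth of the coastal zone; hence $\Width(\RR^\new)\preceq N$, which is exactly the claimed bound.

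The main obstacle is making sure that the shift and Fatou-coordinate arguments, which are completely dynamical in~\cite{DL22}, survive the passage through the immersion $\iota$; this is precisely what Corollary~\ref{cor:pullbacks for psi bullet} and Lemma~\ref{lem:iota|CC_k} were designed for, since both the winding-rectangle contradiction and the parabolic uniformization only require univalent branches of $F_T^{-\qq_{m+1}}$ in the fjord, which the coastal-zone embeddedness supplies.
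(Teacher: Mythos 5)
Your proposal follows the paper for Property~\ref{cl:1:bpar_rect} (geodesic replacement at $O(1)$ cost) and for Property~\ref{cl:2:bpar_rect} (shift towards $v$ via the pullback $F_T^{-\qq_{m+1}}$, producing a winding rectangle of width $\le 2$ and a contradiction). But it goes off-track at two points.

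For Property~\ref{cl:4:bpar_rect}, you claim the argument is ``symmetric'' and that one ``runs the same pullback construction from the $\partial^{h,1}$-side, using $F_T^{-t\qq_{m+1}}$.'' This misses the geometry: $F_T^{-\qq_{m+1}}$ always moves points towards $v$, so a pullback cannot implement a shift towards $w$. The paper's shift towards $w$ (Item~\ref{shift:to:w}) is a \emph{univalent pushforward} under $F^{\qq_{m+1}}$, which is a fundamentally different and more delicate operation: the pushed-forward lamination a priori splits into left/right/vertical components (cf.~\S\ref{subsubsec:univalentpush} and \cite[Figure~12]{DL22}), and one needs to first extract an outer buffer $\RR_+$ of width $\asymp 1$ as a protection so that the remaining curves can be shifted below it with $O(1)$ loss. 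Your proposal omits the outer protection entirely and uses the wrong direction of iteration, so the argument as written does not establish~\ref{cl:4:bpar_rect}.

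For Property~\ref{cl:3:bpar_rect}, the Fatou-coordinate/Ecalle-cylinder argument is not justified here: $v$ is an endpoint of a diffeo-tiling interval on $\partial Z$, not a (near-)parabolic fixed point, and the fjord is called ``parabolic'' for analogy only — there is no uniformizing Fatou coordinate available, and the paper deliberately keeps the whole Section~\ref{sec:parabolicfjords} non-dynamical. The paper's actual proof of~\ref{cl:3:bpar_rect} is elementary: iterate Property~\ref{cl:2:bpar_rect} to split $\RR^\new$ into a controlled collection of small rectangles, each of width $\preceq 1$, and count. Your argument would therefore need to be replaced by the combinatorial decomposition argument (as in \cite[Lemma~4.5]{DL22}); as it stands, it rests on a structure (an Ecalle cylinder at $v$) that does not exist in this setting.
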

\begin{proof}
    The proof is the same as a series of lemmas (more precisely, \cite[Lemma~4.4, Lemma~4.5, Lemma~4.10]{DL22}) in \cite[\S 4]{DL22} and relies on shifts
\begin{itemize}
   \item[\setword{(v)}{shift:to:v}] towards $v$ via the pullback $F^{-\qq_{m+1}}_T$, see~\eqref{eq:0:cor:pullbacks for psi bullet} in  Corollary~\ref{cor:pullbacks for psi bullet};
  \item[\setword{(w)}{shift:to:w}] towards $w$ using univalent push-forward $F^{\qq_{m+1}}$, see \S~\ref{subsubsec:univalentpush}.
\end{itemize}
We remark that shifts towards $v$ are relatively easy, while shifts towards $w$ are more delicate. In general, the shift operation towards $w$ produces three types of curves illustrated \cite[Figure~12]{DL22}. However, when an outer protection by an external family is constructed, wide families can be shifted below the protection towards $w$ with uniformly bounded loss of width, see \cite[Lemma 4.9]{DL22}.

    Let us sketch the main steps of the proof.
    First, we note that by removing $O(1)$-buffers from $\RR$, we may assume that $\RR$ satisfies Property~\ref{cl:1:bpar_rect}, \ref{cl:2:bpar_rect}, \ref{cl:3:bpar_rect}. 
    Indeed, Property~\ref{cl:1:bpar_rect} is general; see~\cite[Lemma A.5]{DL22}. Property~\ref{cl:2:bpar_rect} relies on the Shift Argument towards $v$ illustrated on Figure~\ref{Fig:ShiftArgument}; compare with~\cite[Lemmas 4.4]{DL22}. Property~\ref{cl:3:bpar_rect} follows from iterating~\ref{cl:2:bpar_rect}: we can split $\RR^\new$ into at most $\frac{|\partial^{h,0} \RR^\new|}{\dist(v, \partial^{h,0} \RR^\new)} + 1$ small rectangles and use~\ref{cl:2:bpar_rect} to claim that each small rectangle has width $\preceq 1$, see details~\cite[Lemmas 4.5]{DL22}. 

    Using shifts towards $w$ (push-forward, Item~\ref{shift:to:w}), we can additionally impose Property~\ref{cl:4:bpar_rect} (see~\cite[Lemmas 4.10]{DL22}).
    The argument relies on extracting first an outer buffer $\RR_+$ with width $\asymp 1$. Most of width of $\RR$ is in $\RR^\new\coloneqq \RR\setminus \RR_+$. Since curves in $\RR^\new$ are protected by $\RR_+$, the Shift Argument towards $w$ is applicable to and Properties~\ref{cl:1:bpar_rect}, \ref{cl:2:bpar_rect}, \ref{cl:3:bpar_rect} can be claimed (together with~\ref{cl:4:bpar_rect}) for $\RR^\new$.
\end{proof}

\begin{rem}
It follows from Lemma~\ref{lem:Balanced subrectangle} that a rectangle $\RR$, up to $O(1)$, can be replaced by a family $\Fam^{+}_{\ext,m}(I, J )$ connecting two intervals $I,J$ with $|I|=|J|$:
\[\Width(\RR) = \ \Width^{+}_{\ext,m}(I, J )\qquad \text{where } I = \partial^{h,0}\RR^\new, J=\partial ^{h,1} \RR^\new.\]
Note that $\partial^{h,0}\RR, \partial^{h,0}\RR$ can be much bigger than $\partial^{h,0}\RR^\new, \partial^{h,0}\RR^\new$.
We will not distinguish much between wide parabolic rectangles $\RR$ and families $\Fam^{+}_{\ext,m}(I, J )$. In particular, we formulate Theorem~\ref{thm:parabolicfjords} using pairs of intervals. 
\end{rem}

\subsection{Log-Rule} \label{ss:Log-Rule}
By the discussion in \S~\ref{subsubsec:efVvsU}, we consider a pair of intervals $I, J \subset T = [v,w] \in \Dbb_m$.
Let us also assume that
\begin{align}
\label{eqn:regassumption}
    \text{There is an interval $I \subseteq T$ with } \Width^{+}_{5, \ext, m}(I) \geq C\gg 1.
\end{align}
We remark that if the assumption~\eqref{eqn:regassumption} does not hold, then we do not do any regularization of the Siegel disk on this level.

\begin{figure}
   \includegraphics[width=10cm]{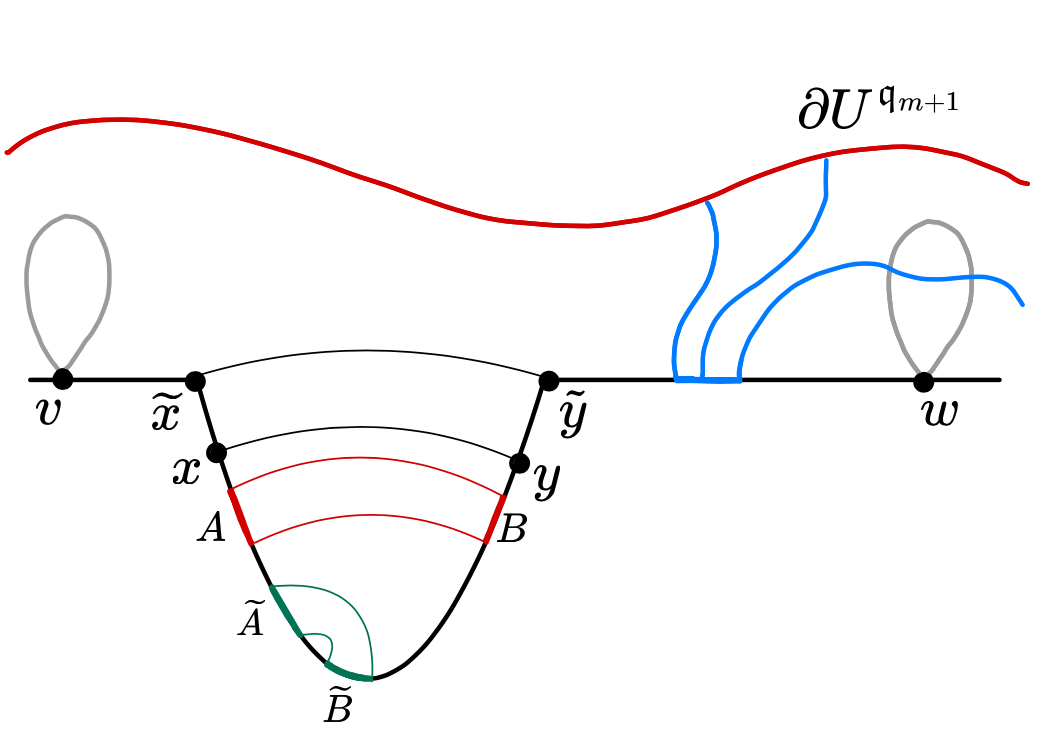}
   \caption{Illustration to the geometry of a parabolic fjord based on $T=[v,w]$. Wide families based on $[x,y]$ are external: the red rectangle between $A_1, B_1$ represents Case \ref{item:2:1:thm:parabolicfjords} in Theorem \ref{thm:parabolicfjords}, while the green rectangle between $A_2, B_2$ represents Case \ref{item:2:2:thm:parabolicfjords}. Wide families outside of $[\tilde x,\tilde y]$ (in blue) are either diving or vertical.}
   \label{Fig:ParabolicF}
\end{figure}

\begin{thm}
\label{thm:parabolicfjords}
Under the assumption~\eqref{eqn:regassumption}, we have the following structure in the parabolic fjords that encodes wide external families.
\begin{enumerate}[label=(\arabic*)]
    \item \label{item:1:thm:parabolicfjords} There is an outer protection: there are $[\widetilde{x}, x], [y, \widetilde{y}]$ with \begin{equation}\label{eq:dfn:M} M\length_{m+1} \coloneqq \dist(\widetilde{x}, x) = \dist(\widetilde{y}, y) \leq \dist(x, y)/100\end{equation} and $\dist(v, \widetilde{x}) < M \length_{m+1}/100$
such that    
    $$
     \Width^{+}_{\ext, m}([\widetilde{x},x], [y, \widetilde{y}]) = C \gg 1.
    $$
    \item \label{item:2:thm:parabolicfjords}  The external families in $[x, y]$ satisfies the Log-Rule. More precisely,
    \begin{enumerate}[label=(\roman*)]
        \item\label{item:2:1:thm:parabolicfjords} If $x < I < J < y$ are two intervals with 
        $$
        \dist(x, I) \asymp \dist(J, y) \preceq |I| \asymp |J| \preceq \dist(I, J)
        $$
        and $|I|, |J|, \dist(I, J) \geq \length_{m+1}$, then 
         \begin{equation}
           \label{eq:1:thm:parabolicfjords}
        \Width^+(I,J) - O(1) = \Width^+_{\ext, m}(I, J) \asymp \log^+ \frac{\min\{|I|, |J|\}}{\dist(v, I)} + 1.
        \end{equation}
        \item \label{item:2:2:thm:parabolicfjords} If $x < I < J < y$ are two intervals with 
        $$
        |\lfloor I, J \rfloor| \leq \frac{1}{2}|[x,y]| \text{ and } \min\{|I|, |J|\} \succeq \dist(I, J) \geq 3 \length_{m+1}
        $$
        and $|I|, |J| \geq \length_{m+1}$, then 
        \begin{equation}
           \label{eq:2:thm:parabolicfjords}         
        \Width^+(I,J) - O(1) = \Width^+_{\ext, m}(I, J) \asymp \log^+ \frac{\min\{|I|, |J|\}}{\dist(I, J)} + 1.
        \end{equation}
    \end{enumerate}
    \item \label{item:3:thm:parabolicfjords} For any interval $I \subseteq T\setminus [x,y]$, we have
    $$
    \Width^{+}_{5, \ext, m}(I) = O(C)\equiv O(1).
    $$
    \item\label{item:4:thm:parabolicfjords} Set 
    \begin{equation}\label{eq:dfn:N}
    N \coloneqq \frac{\dist(\widetilde{y}, w)}{\length_{m+1}}.
\end{equation}
If $\frac{N}{\lambda M} \gg 1$, then there exists an interval $E \subseteq [\widetilde{y}, w]$ so that
\[\Width^{+}_\lambda(E) = \Width^{+, \ver}(E)+\Width^{+, \per}_{\lambda,\div,m}(E) + O(1) \succeq \frac{N}{\lambda M}.\]
\end{enumerate}
\end{thm}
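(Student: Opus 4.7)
My plan is to adapt the quadratic-polynomial arguments of \cite[\S4]{DL22} to the $\psi^\bullet$-setting using the shift operations just established in Lemma~\ref{lem:Balanced subrectangle} and Corollary~\ref{cor:pullbacks for psi bullet}; the principal new ingredient is Item~\ref{item:4:thm:parabolicfjords}, which I expect to be the main obstacle.

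For Item~\ref{item:1:thm:parabolicfjords}, I would start with the wide external family supplied by \eqref{eqn:regassumption}, extract a balanced rectangle via Lemma~\ref{lem:Balanced subrectangle}, and iterate the pullback shift $F^{-\qq_{m+1}}_T$ to find the \emph{outermost} balanced rectangle. Its horizontal sides provide $[\widetilde x, x]$ and $[y, \widetilde y]$, hence the value of $M$. The estimate $\dist(v, \widetilde x) < M\length_{m+1}/100$ comes from the outermost property: if the rectangle sat too far from $v$, a further pullback along $F^{-\qq_{m+1}}_T$ would produce an even more outer independent balanced rectangle, contradicting the outermost choice. For Item~\ref{item:2:thm:parabolicfjords}, the upper log bounds in \eqref{eq:1:thm:parabolicfjords} and \eqref{eq:2:thm:parabolicfjords} follow from Property~\ref{cl:3:bpar_rect} of Lemma~\ref{lem:Balanced subrectangle} applied to a balanced rectangle between $I$ and $J$, which splits into $\asymp \log^+$-many sub-rectangles each of width $\preceq 1$ via the Shift Argument inside the outer protection; the matching lower bounds are realized by explicit geodesic rectangles in the fjord. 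For Item~\ref{item:3:thm:parabolicfjords}, any wide external family based on an interval $\subset T\setminus [x,y]$ would, after balancing and iterated pullback, furnish a second independent balanced rectangle at the outermost level, contradicting Item~\ref{item:1:thm:parabolicfjords}.

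The hard step is Item~\ref{item:4:thm:parabolicfjords}. Combining the decomposition \eqref{eqn:decomposition} with Item~\ref{item:3:thm:parabolicfjords} reduces the goal to producing a non-external family of width $\succeq N/(\lambda M)$ based on some $E \subseteq [\widetilde y, w]$. My plan is to iterate the univalent push-forward $F^{\qq_{m+1}}_*$ of \S\ref{subsubsec:univalentpush} applied to the outer protection rectangle of Item~\ref{item:1:thm:parabolicfjords}: each application translates the horizontal base by $\length_{m+1}$ towards $w$, and because the protection has combinatorial horizontal extent $\asymp M\length_{m+1}$ on each side, I can stack $\asymp N/(\lambda M)$ essentially non-overlapping push-forward rectangles whose bases fit inside $[\widetilde y, w]$ with the $\lambda$-peripheral slack required by the definition of $\Width^+_\lambda$. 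Each push-forward may split, as in \S\ref{subsubsec:univalentpush}, into up to three sub-rectangles (left, right, vertical); Item~\ref{item:3:thm:parabolicfjords} forces the external (left/right returning to a fjord on the $v$-side) contribution to be only $O(1)$, so the remaining width must be absorbed by the vertical and diving components. A pigeonhole selection of $E$ among the bases of the $\asymp N/(\lambda M)$ stacks then yields $\Width^{+,\ver}(E) + \Width^{+,\per}_{\lambda,\div,m}(E) \succeq N/(\lambda M)$, with $O(1)$ error from absorbed buffers.
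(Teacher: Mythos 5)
Your proposal follows the same route as the paper's proof for all four items: extract the outermost balanced parabolic rectangle via Lemma~\ref{lem:Balanced subrectangle} to obtain the outer protection $[\widetilde x, x], [y, \widetilde y]$ in Item~\ref{item:1:thm:parabolicfjords}, derive the Log-Rule in Item~\ref{item:2:thm:parabolicfjords} from the balancing/shift properties, use the outermost choice for Item~\ref{item:3:thm:parabolicfjords}, and shift the protection towards $w$ to produce $\asymp N/M$ non-external families for Item~\ref{item:4:thm:parabolicfjords}. The only caveat is that your ``pigeonhole selection of $E$ among the bases'' should really be read as choosing $E$ large enough (of combinatorial length $\asymp N\length_{m+1}/\lambda$, so $\lambda E \subseteq [\widetilde y, w]$) to span $\asymp N/(\lambda M)$ disjoint shifted bases and aggregate their non-external widths by the parallel law, rather than selecting a single base, which would only give width $\asymp C$ rather than $\succeq N/(\lambda M)$.
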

\begin{proof}
Justification of the theorem is the same as for~\cite[Theorem~4.1]{DL22} and \cite[Corollary~4.11]{DL22}.

Let $\RR$ be the outermost external parabolic rectangle; see the choice of $S$ in~\cite[(4.15)]{DL22}. By Lemma~\ref{lem:Balanced subrectangle}, there exists a subrectangle $\RR^\new$ in $\RR$ satisfying Properties~\ref{cl:1:bpar_rect}--\ref{cl:3:bpar_rect}.
We set $[\tilde x, x]$ and $[y,\tilde y]$ to be horizontal sides of $\RR^\new$, see Figure~\ref{Fig:ParabolicF}. Below $\RR^\new$, both left and right shifts are justified and they imply Items~\ref{item:1:thm:parabolicfjords} -- \ref{item:3:thm:parabolicfjords} in the same way as the proof of~\cite[Theorem~4.1]{DL22}.

Item~\ref{item:4:thm:parabolicfjords} is relevant only if $N$ is much bigger than $M$, see~\eqref{eq:dfn:N} and~\eqref{eq:dfn:M}. If this is the case, then move rectangle $\RR^\new$ towards $w$ as shown on \cite[Figure~14]{DL22}; the result is a numerous (i.e., $\asymp  N/M$) collection of families emerging from $[\tilde y, w]$. Only $O(1)$ curves in this collection are external because of the outermost choice of $\RR$. By selecting $E$ well inside (in terms of $\lambda$) $[\tilde y, w]$, we justify Item~\ref{item:4:thm:parabolicfjords}.
\end{proof}

\subsection{Central rectangles} As in~\cite[\S4.5]{DL22}, we say that a parabolic rectangle $\RR$ based on $T'$ is \emph{central} if 
\begin{equation}
    \label{eq:dfn:CentrRect}
0.9< \frac{\dist_T(v, \lfloor \partial^h \RR \rfloor)}{\dist_T( \lfloor \partial^h \RR \rfloor,w)}<1.1;
\end{equation}
i.e.~if the distances from $\partial^h \RR $ to $v$ and $w$ are essentially the same.

Consider a sufficiently big $K\gg 1$ that dominates $O(1)$ in~\eqref{eq:1:thm:parabolicfjords} but with $\log \frac{\dist(x,y)}{M\length_{m+1}}\gg K$. Then we can split the interval $[x,y]$ as a concatenation 
\begin{equation}
    \label{eq:depth:K}
[x,y]=A\cup Q \cup B,\qquad \text{with $ |A|=|B|$ and} \ \Width^+(A,B)=K.
\end{equation}
We say that the interval $Q$ and the associated fjord $\Fjord(Q)$ (see~\S\ref{sss:F^*}) are obtained by submerging into depth $K$.

\begin{lem}[Essentially {\cite[Lemma 4.12]{DL22}}]
\label{thm:center:rect}
Let us fix any $K\gg L \gg 1$. We claim that there are three possibilities, 
\begin{enumerate}[label=(\alph*)]
    \item\label{item:1:ParabolicF} the interval $[x,y]$ does not support external families of width $\geq K$ with combinatorial separation: there are no $I, J \subset [x,y]$ such that
    \[\Width^+(I,J)\ge K, \hspace{1cm} \text{ and }\ |I|=|J|\le \frac 12\dist(I,J).\]
    If this possibility occurs, then we have $\log \frac{\dist(x,y)}{M\length_{m+1}}\preceq K$.
\end{enumerate}
Conversely, if $\log \frac{\dist(x,y)}{M\length_{m+1}}\gg K$, then we have another two possibilities: 
\begin{enumerate}[label=(\alph*),start=2]
    \item\label{item:2:ParabolicF} 
      by submerging into depth $K$ as in~\eqref{eq:depth:K}, we obtain a balanced central geodesic rectangle with width $L$; namely: there is a decomposition $[x,y]=A\cup I \cup P \cup J \cup B$ into concatenation of intervals such that 
      \[\Width^+(A,B)=K, \qquad |A|=|B|,\]
      \[\Width^+(I,J)=L, \qquad |I|=|J|,\]
     \[|P|\ge 6(|A|+|I|+|J|+|B|)=12(|A|+|I|),\]
     and geodesic rectangle $\RR=\RR(I,J)$ connecting $I$ and $J$ is central~\eqref{eq:dfn:CentrRect}; 
    \item\label{item:3:ParabolicF} there exists an interval $E$ with the exponential boost (see~\S\ref{sss:ExponBoost}): \[\log \left(\Width^{+,\ver}(E)+\Width^{+,\per}_{\lambda,\div, m}(E)\right)\succeq K - \log(\lambda).\]
\end{enumerate}
\end{lem}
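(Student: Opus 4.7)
My plan is to follow the strategy of the analogous Lemma~4.12 in~\cite{DL22}, which becomes directly available once Theorem~\ref{thm:parabolicfjords} supplies the Log-Rule for $\psi^\bullet$-ql Siegel maps. The genuinely $\psi^\bullet$-specific ingredient is part~\ref{item:4:thm:parabolicfjords} of that theorem, the asymmetry alternative, which converts the off-center portion of the outer protection into vertical and diving degeneration.

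For case~\ref{item:1:ParabolicF}, I argue by contraposition. If a pair $I,J\subset[x,y]$ with $|I|=|J|\le\tfrac12\dist(I,J)$ and $\Width^+(I,J)\ge K$ exists, then the upper Log-Rule of Theorem~\ref{thm:parabolicfjords} forces $\min\{|I|,|J|\}/\dist(I,J)\succeq e^{K}$; propagated through the outer protection of part~\ref{item:1:thm:parabolicfjords}, this yields $\log^+\tfrac{\dist(x,y)}{M\length_{m+1}}\succeq K$. Negating delivers the claimed bound.

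Assume now $\log\tfrac{\dist(x,y)}{M\length_{m+1}}\gg K$. I will perform a single \emph{submerging} to depth $K$. The lower Log-Rule produces symmetric intervals $A,B\subset[x,y]$ with $|A|=|B|$ and $\Width^+(A,B)=K$, forced to satisfy $|A|\asymp e^{K}M\length_{m+1}$; the depth hypothesis then guarantees $|Q|\gg|A|$ for $Q:=[x,y]\setminus(A\cup B)$. The dichotomy between~\ref{item:2:ParabolicF} and~\ref{item:3:ParabolicF} is governed by the asymmetry ratio $N/M$ of the outer protection. When $N/M\preceq e^{K}/\lambda^{2}$, the interval $[x,y]$ sits essentially centrally in $T=[v,w]$: I locate intervals $I,J\subset Q$ with $|I|=|J|$ and $\Width^+(I,J)=L$ by a second application of the Log-Rule, invoke Lemma~\ref{lem:Balanced subrectangle} to upgrade the associated parabolic rectangle to a balanced geodesic one, and use the slack $|Q|\gg|A|\gg|I|$ to slide $I,J$ so that centrality~\eqref{eq:dfn:CentrRect} with respect to $v,w$ and $|P|\ge 12(|A|+|I|)$ hold simultaneously. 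This yields~\ref{item:2:ParabolicF}.

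In the complementary regime $N/M\succeq e^{K}/\lambda^{2}$ the hypothesis $N/(\lambda M)\gg 1$ holds automatically, so part~\ref{item:4:thm:parabolicfjords} of Theorem~\ref{thm:parabolicfjords} immediately produces $E\subseteq[\widetilde{y},w]$ with
\[
\Width^{+,\ver}(E)+\Width^{+,\per}_{\lambda,\div,m}(E)\succeq \frac{N}{\lambda M}\succeq \frac{e^{K}}{\lambda^{3}};
\]
taking logarithms delivers the exponential boost of~\ref{item:3:ParabolicF}. The main obstacle will be the constant-tracking in the first alternative: reconciling centrality of $\RR(I,J)$ relative to the whole of $T=[v,w]$ (rather than merely $[x,y]$) with the length inequality $|P|\ge 12(|A|+|I|)$, given that $[x,y]$ is shifted towards $v$ by $M\length_{m+1}$ and away from $w$ by $N\length_{m+1}$. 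The cutoff $e^{K}/\lambda^{2}$ on $N/M$ is precisely the threshold that buys this reconciliation; beyond this point the argument is local and non-dynamical, so it transfers verbatim from the quadratic setting of~\cite[\S 4]{DL22} once the Log-Rule has been established.
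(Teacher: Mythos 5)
Your (b)/(c) dichotomy matches the paper's proof: cut on $\log(N/M)$ relative to $K$, build the balanced central rectangle via the Log-Rule (Theorem~\ref{thm:parabolicfjords}, Item~\ref{item:2:thm:parabolicfjords}) in the moderate-asymmetry regime, and invoke Theorem~\ref{thm:parabolicfjords}, Item~\ref{item:4:thm:parabolicfjords} in the large-asymmetry regime. Your threshold $e^K/\lambda^2$ (versus the paper's $e^K$), the ``slide'' maneuver, and the extra invocation of Lemma~\ref{lem:Balanced subrectangle} are harmless cosmetic variations; the paper simply writes down $I=[x+e^K M\length_{m+1},\, x+e^{K+L}M\length_{m+1}]$ and $J=[y-e^{K+L}M\length_{m+1},\, y-e^K M\length_{m+1}]$ and reads off everything from the Log-Rule.

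Your argument for the auxiliary implication in (a) is off, however. The displayed inequality ``$\min\{|I|,|J|\}/\dist(I,J)\succeq e^K$'' is impossible under (a)'s own hypothesis $|I|=|J|\le\tfrac12\dist(I,J)$, which forces this ratio to be $\le\tfrac12$; what the upper Log-Rule actually yields is $\min\{|I|,|J|\}/\dist(v,I)\succeq e^K$, whence $\dist(x,y)\ge|I|\succeq e^K\dist(v,I)\succeq e^K M\length_{m+1}$. More substantively, the implication you establish --- ``such $(I,J)$ exist $\Rightarrow\ \log\tfrac{\dist(x,y)}{M\length_{m+1}}\succeq K$'' --- is the \emph{converse} of the claim in (a), not its contrapositive. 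The contrapositive one must show is: if $\log\tfrac{\dist(x,y)}{M\length_{m+1}}\gg K$ then a wide combinatorially separated pair exists; and that is exactly what the $(A,B)$ constructed in alternative~\ref{item:2:ParabolicF} provides (note $\dist(A,B)\ge|P|\ge 12(|A|+|I|)\ge 2|A|$), so the (a) sub-claim is best obtained as a by-product of the ``conversely'' branch rather than by a separate argument, which is how the paper treats it implicitly.
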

\begin{proof}
Suppose case~\ref{item:1:ParabolicF} does not occur.
If $\log \frac{N}{M} \le K$, then the interval \[I = [x+e^K M\length_{m+1}, x+e^{K+L}M\length_{m+1}] \sp \text{ and } \sp J= [y-e^{K+L}M\length_{m+1}, y-e^KM\length_{m+1}]\] are balanced central intervals, and the corresponding geodesic rectangle has width $ L+O(1)$ by Theorem~\ref{thm:parabolicfjords} (Log-Rule), Claim  \ref{item:2:thm:parabolicfjords}.
Therefore, we are in case~\ref{item:2:ParabolicF}.

On the other hand, if $\log \frac{N}{M} \ge K$, then applying the $\log$ to the main estimate in Claim \ref{item:4:thm:parabolicfjords} of Theorem~\ref{thm:parabolicfjords}, we obtain 
\begin{multline*}    
\qquad \log \left(\Width^{+,\ver}(E)+\Width^{+,\per}_{\lambda,\div, m}(E)\right)\ge  \log \left(\frac{N}{\lambda M}\right)-O(1)\ \succeq \\ \log\left(\frac{N}{ M}\right)-O(\log \lambda)\ge  K -O( \log(\lambda)),\qquad\end{multline*}    
 and we are in case~\ref{item:3:ParabolicF}.
\end{proof}

\subsection{Bistable part $\Xi^k_T$ under protection}\label{ss:bistablepartunderprotection}
It follows immediately from the Log-Rule that the Fjord obtained by submerging into depth $K$ has large stability.
\begin{lem}[{$\mathfrak F(Q)) \subset \Xi^{k}_T$}]\label{lem:geodesicpseudoSiegel} There is an exponent $a>1$ with the following property. If $K\gg 1$ is sufficiently big, then the Fjord $\mathfrak F(Q))$ obtained by submerging into depth $K$ as in~\eqref{eq:depth:K} is within bistable part $\Xi^{k}_T$ of the fjord $\mathfrak F(T)$ (see~\S\ref{sss:F:bistab}), where $k=\lfloor a^K\rfloor$.
\end{lem}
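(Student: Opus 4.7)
My plan is to derive the claim from the exponential-growth content of the Log-Rule (Theorem~\ref{thm:parabolicfjords}) combined with the pullback characterization of the bistable part $\Xi^k_T$ from~\S\ref{sss:F:bistab}.

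First I apply Theorem~\ref{thm:parabolicfjords}~\ref{item:2:thm:parabolicfjords} to the submerging data $[x,y]=A\cup Q\cup B$ with $|A|=|B|$ and $\Width^+(A,B)=K$. In either Case~\ref{item:2:1:thm:parabolicfjords} or Case~\ref{item:2:2:thm:parabolicfjords}, the Log-Rule has the form $K\asymp\log(|A|/\delta)+1$ with $\delta\in\{\dist(v,A),\dist(A,B)\}$ and $\delta\geq\length_{m+1}$. Solving yields $|A|=|B|\succeq e^{cK}\length_{m+1}$ for some absolute $c\in(0,1]$. Consequently both $\dist(v,Q)\geq|A|$ and $\dist(Q,w)\geq|B|$ exceed $e^{cK}\length_{m+1}$. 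Setting $a:=e^{c/2}>1$ and $k:=\lfloor a^K\rfloor$, for $K\gg 1$ one has $k\length_{m+1}\leq\tfrac12 e^{cK}\length_{m+1}$, so $Q\subset[v_k,w_k]$ with combinatorial slack $\succeq e^{cK}\length_{m+1}\gg k\length_{m+1}$ at each end. Equivalently, the shift $Q':=Q\boxplus k\length_{m+1}$ lies in $[v_{2k},w]$, which is the base of $\mathfrak F^{2k}_{\stab}(T)$, and the iterates $Q'\boxminus t\length_{m+1}$ for $t=0,\dots,k-1$ all lie in $T'$.

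Next I invoke the fjord-pullback machinery. By Lemma~\ref{cor:pullback} iterated $k$ times, $F_T^{-k\qq_{m+1}}$ extends to a conformal injection on $\mathfrak F([v_{2k},w])$ whose image equals $\Xi^k_T$ by the definition in~\S\ref{sss:F:bistab}. Since $\mathfrak F(Q')\subset\mathfrak F([v_{2k},w])$ by the monotonicity of fjords within $V\setminus\overline Z$, the image $F_T^{-k\qq_{m+1}}(\mathfrak F(Q'))$ is contained in $\Xi^k_T$. The final containment $\mathfrak F(Q)\subset\Xi^k_T$ then follows by combining the combinatorial depth of $Q$ with the geodesic comparison in~\S\ref{sss:F:stab}: the pulled-back upper boundary $F_T^{-k\qq_{m+1}}(\gamma_{Q'})$ is a geodesic of $\iota^{k\qq_{m+1}}(U^{k\qq_{m+1}}\setminus K_{k\qq_{m+1}})$ connecting $\partial Q$, and the large gap between $k\length_{m+1}$ and the depth of $Q$ inside $[v_k,w_k]$ forces $\gamma_Q$ to remain strictly below the outer boundary $\gamma$ of $\Xi^k_T$.

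The main technical hurdle is this last geometric comparison. Although the Log-Rule gives exponentially large combinatorial slack, one must verify that it translates into a hyperbolic-geometric separation between $\gamma_Q$ and $\gamma$: $\gamma_Q$ is the $V\setminus\overline Z$-geodesic with modest endpoints $\partial Q\subset[v_k,w_k]$, while $\gamma$ is the geodesic of the smaller space with endpoints $v_k, w_k$ (already seated below $\gamma_{[v_k,w_k]}$ by the comparison in~\S\ref{sss:F:stab}). The exponential gap $e^{cK}\gg a^K=k$ between the slack of $Q$ and the shift $k\length_{m+1}$ is what ensures that $\gamma_Q$ fits beneath $\gamma$, so $\mathfrak F(Q)$ sits inside the bistable region $\Xi^k_T$.
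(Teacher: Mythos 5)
Your preparatory steps match the paper's: you invoke the Log-Rule to get $|A|=|B|\succeq e^{cK}\length_{m+1}$, set $a=e^{c/2}$ so that $k=\lfloor a^K\rfloor$ is exponentially smaller than the depth of $Q$, and observe that $\mathfrak F(Q')\subset\mathfrak F^{2k}_\stab(T)$ and hence $F_T^{-k\qq_{m+1}}(\mathfrak F(Q'))\subset\Xi^k_T$. But this last containment does not give the lemma. The set $F_T^{-k\qq_{m+1}}(\mathfrak F(Q'))$ is the fjord over $Q$ bounded by the geodesic $\gamma_Q^{(k)}$ of the smaller space $\iota^{k\qq_{m+1}}(U^{k\qq_{m+1}}\setminus K_{k\qq_{m+1}})$; by the comparison in~\S\ref{sss:F:stab}, $\gamma_Q^{(k)}$ lies \emph{below} $\gamma_Q$, so $F_T^{-k\qq_{m+1}}(\mathfrak F(Q'))\subsetneq\mathfrak F(Q)$. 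You have shown that $\gamma_Q^{(k)}$ is below $\gamma$ and that $\gamma_Q^{(k)}$ is below $\gamma_Q$; neither order implies $\gamma_Q$ below $\gamma$, which is what $\mathfrak F(Q)\subset\Xi^k_T$ requires. Here $\gamma$ is a geodesic of the smaller space but with the \emph{wider} endpoints $v_k,w_k$, whereas $\gamma_Q$ is a geodesic of the larger space with the \emph{narrower} endpoints $\partial Q$: the two effects pull in opposite directions, and the exponential combinatorial gap you point to does not, by itself, decide the competition. You correctly flag this as the ``main technical hurdle,'' but then assert rather than prove that the gap forces $\gamma_Q$ under $\gamma$.

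The paper closes exactly this gap by a different mechanism: after securing $|I^2|,|J^2|\ge 2k$ from the Log-Rule, it builds a nest of four geodesic rectangles $\RR(I^i,J^i)$, each of definite width, with $\RR(I^3,J^3),\RR(I^4,J^4)\subset\mathfrak F^{2k}_\stab(T)$. If $\mathfrak F(Q)\not\subset\Xi^k_T$, then the pulled-back wide rectangle $F_T^{-k\qq_{m+1}}(\RR(I^3,J^3))$ (which lives inside $\Xi^k_T$) would be forced to cross the wide rectangle $\RR(I^4,J^4)$, contradicting the fact that wide rectangles cannot cross. You should replace your final assertion with this (or an equivalent) crossing argument; note that it uses an extra nesting layer precisely so that both rectangles entering the contradiction have width $\ge 4$ after geodesicizing.
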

\begin{proof}
We select $a>1$ to be sufficiently small depending on the comparison ``$\asymp$'' in the Log-Rule of Theorem~\ref{thm:parabolicfjords} and assume that $K\gg 1$ is sufficiently big to dominate $O(1)$ in~\eqref{eq:1:thm:parabolicfjords}. Then the family $\Fam^+(I,J)$ representing $\Width^+(I,J)$ in~\eqref{eq:depth:K} contains a geodesic rectangle $\RR$ with width $K-O(1)$.

    Divide the rectangle $\RR$ into four rectangles $\RR^i, i = 1, 2, 3, 4$, with \[\Width(\RR^1)=K-O(1)\quad \text{ and }\Width(\RR^2), \Width(\RR^3), \Width(\RR^4)\ge 5\] so that $\RR^{i+1}$ is nested inside of $\RR^i$, $i = 1, 2, 3$. 
    We assume that $\RR^i$ has horizontal boundary $I^i, J^i$. By the Log-Rule and potentially making $a$ smaller, we have that $|I^2|,|J^2|\ge (a+\varepsilon)^{K-O(1)}\ge 2k$. 

    Let $\RR(I^i, J^i), i =3, 4$ be the geodesic rectangle in $V\setminus\overline{Z}$ connecting $I^i$ and $J^i$. Then $\Width(\RR(I^i, J^i)) \geq \Width(\RR^i) - 1 \geq 4$ (see \cite[Lemma A.5]{DL22}).
    Since $|I^2|,|J^2|\ge 2k$ $\RR(I^i, J^i) \subseteq \mathfrak F^{2k}_\stab(T)$ for $i=3, 4$.
    
    Suppose for contradiction that $\Fjord(Q) \not\subset \Xi^{k}_T = F^{-k\qq_{m+1}}_T(\mathfrak F^{2k}_\stab(T))$.
    Then the pullback $F^{-k\qq_{m+1}}_T(\RR(I^3, J^3))$ must cross $\RR(I^4, J^4)$. This is not possible as wide rectangles cannot cross.
\end{proof}

\subsection{Welding of $\wZ^{m+1}$ and parabolic fjords into $\wZ^m$}
\label{ss:WeildingLmm} 

As in~\cite[\S~5.1.9]{DL22}, we say that a regularization $\wZ^{m}=\wZ^{m+1} \cup \bigcup_{I} \widehat{\mathfrak F}_I$ is within $\orb_{-\qq_{m+1}+1} \RR$ if all relevant objects are within the backward orbit of a rectangle $\RR$. In particular, we require that $\beta_I$ together with protection $A_I\setminus \wZ^{m+1}$ and extra protection $\XX_I$ are in $\RR$. Then $\wZ^m$ is obtained from $\wZ^{m+1}$ by spreading around the parabolic fjord $\widehat{\mathfrak F}_I$ below $\beta_I$.

The following regularization is a restatement of \cite[Corollary~7.3]{DL22} with simplified notations. The proof is the same: it is non-dynamical and relies only on the \cite[Welding Lemma~7.1]{DL22}, which itself relies only on
the Log-Rule in $\wZ^{m+1}$ and in parabolic fjords.  
For convenience, we comment on how the Welding Lemma implies the regularization $\wZ^{m+1}\leadsto \wZ^m$ in~\S\ref{sss:comments:thm:regul}. 

\begin{figure}
   \includegraphics[width=10cm]{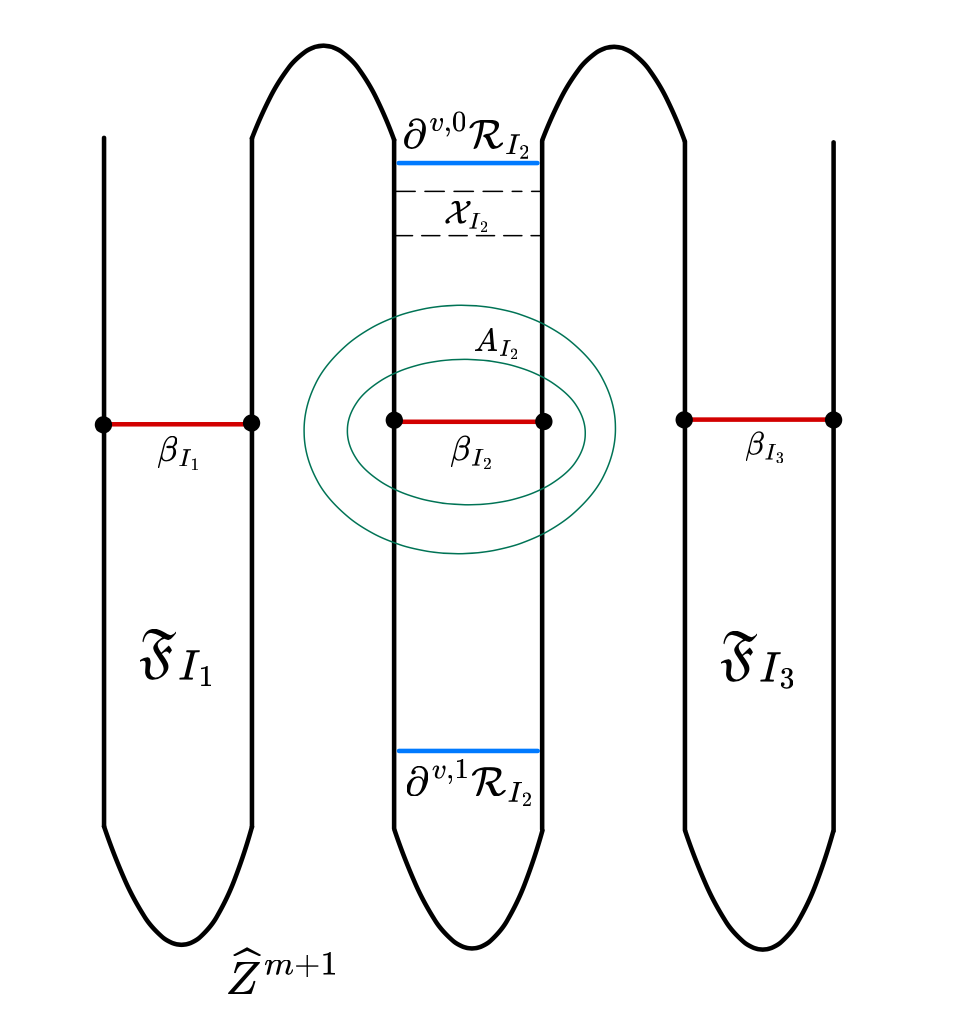}
   \caption{An illustration of the regularization $\wZ^{m+1}\leadsto \wZ^m$ in the rectangle $\RR$.}
   \label{Fig:Welding}
\end{figure}

\begin{thm}[Regularization $\wZ^{m+1}\leadsto \wZ^{m}$]
\label{thm:regul} 
Let $\RR$ be an external parabolic rectangle based on $T'$ with $\Width(\RR)\gg 1$. Let $\wZ^{m+1}$ be a geodesic pseudo-Siegel disk. 
Then:
\begin{enumerate}
\item either there is a geodesic pseudo-Siegel disk $\wZ^{m}=\wZ^{m+1} \cup \bigcup_{I} \widehat{\mathfrak F}_I$ with its level-$m$ regularization within $\orb_{-\qq_{m+1}+1} \RR$;
\label{case:1:thm:regul}
\item\label{case:2:thm:regul} or there is an interval \begin{multline*}
\qquad\quad    I\subset T',\sp\sp   |I|> \length_{m+1} \sp\sp \text{such that}\sp\sp \\
\log\Big(\Width^{+,\ver}(I)+ \Width^{+,\per}_{\lambda,\div, m}(I)\Big)\succeq \Width(\RR);\qquad\qquad
\end{multline*}

\item or there is a grounded rel $\wZ^{m+1}$ interval \[I\subset \partial Z\sp\sp \text{ with }\sp |I|\le \length_{m+1} \sp \text{ such that }\sp\log \Width^+_\lambda(I)\succeq  \Width(\RR).\]
\label{case:3:thm:regul}
\end{enumerate}  
\end{thm}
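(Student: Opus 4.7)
The plan is to extract from the parabolic rectangle $\RR$ a central balanced geodesic sub-configuration, and then weld this configuration (and its backward orbit) into $\wZ^{m+1}$. First, since $\Width(\RR)\gg 1$, the hypothesis~\eqref{eqn:regassumption} of Theorem~\ref{thm:parabolicfjords} holds, producing the interval $[x,y]\subset T'$ with its outer protection $[\widetilde x,x]\cup[y,\widetilde y]$. I then apply Lemma~\ref{thm:center:rect} to $[x,y]$ with two thresholds chosen so that $K\gg L\gg 1$ while $\Width(\RR)\gg K$; the exact calibration is to take $L$ large enough that the Welding Lemma applies and $K$ so that $e^{-K}\Width(\RR)$ still dominates $O(1)$. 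This trichotomy triages the three possible conclusions.

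If Lemma~\ref{thm:center:rect}\ref{item:3:ParabolicF} occurs, the exponential boost gives an interval $E\subset T'$ with $|E|>\length_{m+1}$ witnessing case~\ref{case:2:thm:regul}. If Lemma~\ref{thm:center:rect}\ref{item:1:ParabolicF} occurs, then $\log\tfrac{\dist(x,y)}{M\length_{m+1}}\preceq K$; but on the other hand the outer protection $[\widetilde x,x]$ has combinatorial width at least $M\length_{m+1}$ carrying the degeneration $\Width^+\asymp\Width(\RR)$, so applying Theorem~\ref{thm:parabolicfjords}\ref{item:2:1:thm:parabolicfjords} downward to a sub-scale of size $\length_{m+1}$ (or an even smaller grounded interval if $M=1$) extracts an interval $I$ of size $\le \length_{m+1}$ with $\log\Width^+_\lambda(I)\succeq\Width(\RR)$, yielding case~\ref{case:3:thm:regul}. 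Here I use the conversion between outer families rel $\overline Z$ and rel $\wZ^{m+1}$ from~\S\ref{sss:Outer:wZ} to ensure groundedness rel $\wZ^{m+1}$.

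The substantive case is Lemma~\ref{thm:center:rect}\ref{item:2:ParabolicF}: I obtain a central, balanced, geodesic rectangle of width $L$ based on intervals $I,J$ inside $[x,y]$, flanked by an outer buffer $A\cup B$ of width $K$. To this configuration I apply the Welding Lemma~[DL22, Lemma~7.1], which produces a parabolic fjord $\widehat{\mathfrak F}_I$ with geodesic dam $\beta_I$, collar, protection, and an outer geodesic protecting rectangle $\XX_I$, all contained in $\RR$, and satisfying the seven compatibility axioms of \cite[\S5.1]{DL22}; the large outer buffer of width $K$ is precisely what guarantees that $\XX_I$ does not collide with the existing protections of $\wZ^{m+1}$. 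Next I spread this welded structure through the backward orbit $\orb_{-\qq_{m+1}+1}\RR$. By Lemma~\ref{lem:geodesicpseudoSiegel}, having submerged into depth $K$ places $\Fjord(Q)$ (and hence the welded objects) inside $\Xi^{k}_T$ with $k\asymp a^K\gg\qq_{m+1}$, so the branches $F^{-i}_T$ act conformally on this structure for all $0\le i<\qq_{m+1}$ by Lemma~\ref{cor:pullback} and Lemma~\ref{lem:iota|CC_k}, producing for each $I_j\in\mathfrak D_m$ the fjord $\widehat{\mathfrak F}_{I_j}$ and protection $\XX_{I_j}$ as in Definition~\ref{defn:pseudo-Siegel disks}, with geodesicity preserved since coastal pullback is conformal.

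The main obstacle is verifying that the resulting $\wZ^m=\wZ^{m+1}\cup\bigcup_j\widehat{\mathfrak F}_{I_j}$ satisfies all seven compatibility conditions simultaneously at level $m$: in particular, that the new protections $\XX_{I_j}$ coexist with the level-$(m+1)$ protections without overlap conflicts, and that the welded dams $\beta_{I_j}$ remain geodesic in $V\setminus\overline Z$ after pullback. Both issues dissolve because (i) the backward orbit of $\RR$ under the bistable pullback $F^{i}_{\Xi_m^k}$ stays inside the stable fjords $\Fjord_m(T_{-i})$, disjoint from the level-$(m+1)$ regularization which lives over the diffeo-tiling $\Dbb_{m+1}$, and (ii) the bistability $\ge k\gg 1$ of $\wZ^{m+1}$ ensures these pullbacks are isometries of the relevant hyperbolic metrics, so geodesicity of $\beta_I$ and $\XX_I$ is preserved. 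This yields $\wZ^m$ as required, completing case~\ref{case:1:thm:regul}.
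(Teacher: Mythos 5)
There is a genuine gap in the parameter calibration, which then propagates into how Cases~\ref{case:2:thm:regul} and~\ref{case:3:thm:regul} are produced.

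You choose the submergence depth $K$ so that $\Width(\RR)\gg K$ and, in fact, $K\preceq\log\Width(\RR)$ (since you require $e^{-K}\Width(\RR)\gg 1$). But the conclusion of Case~\ref{case:2:thm:regul} needs an \emph{exponential} lower bound, $\log\bigl(\Width^{+,\ver}(I)+\Width^{+,\per}_{\lambda,\div,m}(I)\bigr)\succeq\Width(\RR)$, and the exponential boost in Lemma~\ref{thm:center:rect}\ref{item:3:ParabolicF} only produces $\log(\cdot)\succeq K-\log\lambda$. With your choice this gives $\log(\cdot)\preceq\log\Width(\RR)$, which is off by an exponential from what is required. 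The correct calibration — as in the paper's sketch in \S\ref{sss:comments:thm:regul} — is the opposite: remove an $O(1)$ outer buffer, and then an outer buffer of width $\tfrac12\Width(\RR^\new)$, so that $K\asymp\Width(\RR)$. Then $K-\log\lambda\asymp\Width(\RR)$ and Cases~\ref{case:2:thm:regul},~\ref{case:3:thm:regul} come out with the right exponent. The submergence still makes combinatorial sense: the Log-Rule~\eqref{eq:1:thm:parabolicfjords} guarantees $\log\frac{\dist(x,y)}{M\length_{m+1}}\succeq\Width(\RR^\new)$ whenever $\RR^\new$ is wide and external, so there is combinatorial room to submerge to depth $\asymp\Width(\RR)$.

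A second, dependent issue: with the correct $K\asymp\Width(\RR)$, Alternative~\ref{item:1:ParabolicF} of Lemma~\ref{thm:center:rect} cannot occur, because after removing the $O(1)$ outer buffer, the balanced subrectangle of Lemma~\ref{lem:Balanced subrectangle} \emph{is} an external family of width $\geq K$ on $[x,y]$ with combinatorial separation. Your treatment of Case~\ref{item:1:ParabolicF} is therefore unnecessary — and the argument you give for it is also incorrect: the outer protection $[\widetilde x,x]\cup[y,\widetilde y]$ of Theorem~\ref{thm:parabolicfjords}\ref{item:1:thm:parabolicfjords} carries only the fixed width $C\asymp 1$, not $\Width^+\asymp\Width(\RR)$, and the Log-Rule~\eqref{eq:1:thm:parabolicfjords} is a logarithmic, not exponential, amplification, so it cannot produce $\log\Width^+_\lambda(I)\succeq\Width(\RR)$ at a sub-scale. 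Your treatment of Case~\ref{item:2:ParabolicF} (welding the central balanced rectangle and spreading it along the backward orbit using bistability) is the right idea and matches the paper, modulo the overstated claim $a^K\gg\qq_{m+1}$, which is neither guaranteed nor needed: bistability $\geq 10$ already suffices for the $\qq_{m+1}$ pullbacks.
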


\begin{rem}[Parameter $\bdelta$] We remark that Theorem~\ref{thm:regul} implicitly fixes a parameter $\bdelta$, see \cite[Remark~7.5]{DL22}; this is a key parameter controlling the non-invariance of $\wZ^m$ see \cite[Assumption 2 in~\S5.1]{DL22}. 
To simplify the exposition, an explicit reference to $\bdelta$ is omitted in this paper. 
\end{rem}

\subsubsection{Comments to Theorem~\ref{thm:regul}} \label{sss:comments:thm:regul}

As we have already mentioned, \cite[Welding Lemma~7.1]{DL22} relies only on the Log-Rule in $\wZ^{m+1}$ and in parabolic fjords in the form of~\eqref{eq:2:thm:parabolicfjords}; therefore, as a consequence of Theorem~\ref{thm:parabolicfjords}, the Welding Lemma is also applicable in our case of $\psi^\bullet$-ql maps. It ultimately implies a fundamental property of $\wZ^m$ that annuli $A_I$ have positive moduli: $\mod(A_I)\ge \delta$; see~\cite[Assumption 2 in \S5]{DL22}. Theorem~\ref{thm:regul} follows from the Welding Lemma by implementing the following steps, as in~\cite[\S7.2]{DL22}: 
\begin{enumerate}
    \item First, we can remove $\asymp 1$ outer buffer from $\RR$ to obtain $\RR^\new$ and claim Log-Rule below the buffer by Theorem \ref{thm:parabolicfjords}. In particular, the $\RR^\new$ obeys the Log-Rule stated in~\eqref{eq:1:thm:parabolicfjords}.
    \item We further remove from $\RR^\new$ outer-buffer with width $\frac 12 \Width(\RR^\new)$. If $\RR^\New$ is not central, then Case~\ref{case:2:thm:regul} occurs by Lemma~\ref{thm:center:rect}, where $I\coloneqq E$.
    \item By the Welding Lemma (as in~\cite[Lemma~7.1]{DL22}), either we have regularization $\wZ^m=Z^m\cup \wZ^{m+1}$ by constructing dams and protecting annuli in $\RR^\New$, and thus in Case~\ref{case:1:thm:regul}, or we have Case~\ref{case:3:thm:regul}.
\end{enumerate}

\subsubsection{Exponential Boosts}\label{sss:ExponBoost} 
We refer to Case~\ref{item:3:ParabolicF} of Lemma~\ref{thm:center:rect} and to Cases~\eqref{case:2:thm:regul}, \eqref{case:3:thm:regul}   of Theorem~\ref{thm:regul} as \emph{Exponential Boosts}. Roughly, if a pathology in one of these cases occurs, then the Log-Rule implies an exponential amplification of the degeneration.

\section{Covering and Lair lemmas}\label{sss:AppC:proof:Sect8} 

In this section, we state Amplification Theorem~\ref{thm:SpreadingAround}, a modification of \cite[Amplification Theorem 8.1]{DL22}. The main difference is that the degeneration for $\psi^\bullet$-maps can be vertical; this leads to Alternative~\ref{item:2:thm:SpreadingAround} in Theorem~\ref{thm:SpreadingAround}.

Amplification is achieved in two steps: spreading around using the Covering Lemma, followed by the Snake-Lair Lemma to detect a large amount of submergence and amplify the degeneration at a deeper level. The Covering Lemma step is stated as Lemma~\ref{lem:SpredAroundWidth}; it is almost the same as for quadratic polynomials and relies on push-forwards that were discussed in \S\ref{ss:push forw curves},~\S\ref{sss:CL:wZ^m}.

The Snake Lemma step is stated as Lemma \ref{lem:Hive Lemma}, where  Alternative~\ref{SLconclusion:2} is new compared to the case of quadratic polynomials. The strategy of the proof is to show if \ref{SLconclusion:2} does not hold, then most degenerations are non-vertical, and we can reduce to the same argument as in the quadratic polynomial case.

Let $F=(f,\iota)\colon \sp ( U, \overline Z_U)  \rightrightarrows (V, \overline Z )$ be $\psi^\bullet$-ql Siegel map with rotation number $\theta_0$.
Recall that the width of $F$ is 
$$
K_F = \Width^\bullet(F)\coloneqq \Width(V\setminus \overline Z).
$$

\begin{ampthm}
\label{thm:SpreadingAround}
There are increasing functions \[\blambda_\bbt,\sp  \bK_\bbt \sp\sp \text{ for }\tt>1\sp\sp\text{ with }\sp \blambda_\bbt,  \ \bK_\bbt\ \underset{t\to \infty}\longrightarrow \ +\infty \] such that the following holds. Suppose that there is a combinatorial interval
\[ I\subset \partial Z\sp\sp \text{ such that }\sp  \Width^+_{\blambda_\bbt}(I) \eqqcolon K \ge \bK_\bbt \sp \text{ and }\sp |I|\le |\theta_0|/(2\blambda_\bbt). 
\]
Consider a geodesic pseudo-Siegel disk $\wZ^m$, where $m$ is the level of $I$.  Then 
\begin{enumerate}[label=(\roman*)]
\item either there is a grounded rel $\wZ^m$ interval \[J\subset \partial Z \sp\sp \text{ such that } \sp \Width^+_{\blambda_\bbt} (J) \ge \bbt K \sp \text{ and }\sp |J|\le |I|.
\]
\item \label{item:2:thm:SpreadingAround} or $K_F\succeq_{\blambda_\bbt} K \qq_{m+1}$. 
\end{enumerate}
\end{ampthm}
\begin{proof}
   As in~\cite[\S8]{DL22}, the proof follows from Lemma~\ref{lem:SpredAroundWidth} (application of the Covering Lemma) followed by Snake-Lair Lemma~\ref{lem:Hive Lemma}.  

   For convenience, as in~\cite[end of \S8.2]{DL22}, we can assume that one of the endpoints of $I$ is in $\CP_m$. Indeed, we cover $I$ with two combinatorial intervals $I_a\cup I_b\supset I$ satisfying such a property and observe that $ \Width^+_{\blambda_\bbt}(I) =K$ implies that either 
 \begin{equation}
     \label{eq:trick:I:I_a}
        \Width^+_{\blambda_\bbt-1}(I_a) \ge K/2\qquad \text{ or }\qquad    \Width^+_{\blambda_\bbt-1}(I_b) \ge K/2 
 \end{equation}  
 holds.
\end{proof}
\subsection{Applying the Covering Lemma}\label{ss:ApplyingCoveringLemma} We will denote by $I^m$ the projection of a regular interval $I\subset \partial Z$ onto $\partial \wZ^m$.
The same argument as in \cite[Lemma 8.2]{DL22} allows us to apply the Covering Lemma (see \cite{KL} or \cite[Lemma 8.5]{DL22}) and obtain the following theorem for $\psi^\bullet$-maps.
See \S\ref{subsubsec:simplepush},~\S\ref{subsubsec:near-degeneratepush}, ~\S\ref{sss:CL:wZ^m} and Figure~\ref{Fig:psi:PushForward} for the preparation to apply the covering lemma.

\begin{lem}
\label{lem:SpredAroundWidth}
For every $\bkappa>1$ and $\lambda>10$, there is $\bK_{\lambda,\bkappa}>1$ and $C_\bkappa$ (independent of $\lambda$) such that the following holds. Suppose that there is a combinatorial interval \[I\subset\partial Z\sp\sp \text{ such that }
\sp\Width^+_{\lambda+2}(I)=K \ge \bK_{\lambda,\bkappa},\sp\sp |I|\le |\theta_0| / (2\lambda+4),\sp\sp m =\Level(I)\]
and such that one of the endpoints of $I$ is in $\CP_m$. Let $\wZ^m$ be a geodesic pseudo-Siegel disk, and \[ I_s\subset \partial Z,\sp\sp I_s=f^{i_s}(I),\sp\sp s\in \{0,1,\dots, \qq_{m+1}-1\}\] be the intervals obtained by spreading around $I=I_0$ (see \cite[\S 2.1.5]{DL22}). Then every interval $I_s$ is well-grounded rel $\wZ^{m}$ and its projection $I^m_s\subset \partial \wZ^m$ is  
\begin{enumerate}
\item\label{item:1:lem:SpredAroundWidth} either $[\bkappa K,10]$-wide: $\Width_{10}(I^m_s)\ge \bkappa K$ 
\item\label{item:2:lem:SpredAroundWidth} or $[ C_{\bkappa} K,\lambda]$-wide: $\Width_{\lambda}(I^m_s)\ge C_{\bkappa} K$ 
\end{enumerate} 
\end{lem}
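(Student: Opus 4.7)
The approach follows the template of \cite[Lemma 8.2]{DL22}, with modifications to accommodate the immersion $\iota$ through the machinery of \S\ref{ss:push forw curves}--\S\ref{sss:SpreadAround:rel:wZ}. The plan is to transport the outer family witnessing $\Width^+_{\lambda+2}(I) = K$ along the spreading orbit $I \mapsto I_s = f^{i_s}(I)$, $0 \le i_s < \qq_{m+1}$, using the pushforward scheme \eqref{eq:transformation_pattern:wZ}. First I would project $\Fam^+_{\lambda+2, Z}(I)$ into $(V, \wZ^m_{-i_s*})$ via the conversion \eqref{eq:conver:I_lambda:2}, losing only $O(1)$ of width; the disk $\wZ^m_{-i_s*}$ is well-defined because a geodesic pseudo-Siegel disk has bistability $\ge 10$ (\S\ref{sss:geodes:wZ}) and iterating the induced dynamics \eqref{eq:F_biXi} produces admissible pullbacks for $i_s < \qq_{m+1}$.

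Next, I would push forward via $(F^{i_s})_* = (f^{i_s})_*\circ(\hookrightarrow)_*\circ(\iota^{i_s})^*$ along \eqref{eq:transformation_pattern:wZ}. This requires a cutting arc $\gamma_{i_s}$ connecting $\partial V$ to $\partial \wZ^m \setminus (\lambda+2) I_s^m$ satisfying the separation property in Condition~\eqref{cond:gamma_k}; the combinatorial hypothesis $|I| \le |\theta_0|/(2\lambda+4)$ guarantees $f((\lambda+2) I_s) \cap (\lambda+2) I_s = \emptyset$, so a valid $\gamma_{i_s}$ exists by Remark~\ref{rem:gamma_k}. The $(\iota^{i_s})^*$-step costs only a multiplicative $(1-\varepsilon)$ plus additive $O(1)$ by the bound \eqref{eqn:psuedoannulusbound} of Lemma~\ref{lem:liftingviaiota}. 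The remaining $(f^{i_s})_*$-step is a branched covering $f^{i_s}\colon X \to Y$ between nested disks of some degree $D$.

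Now I would set up the dichotomy. Fix a threshold $D_\bkappa$ (depending on $\bkappa$ and the KL--Covering Lemma; cf.\ \cite[Lemma~8.5]{DL22}) such that any near-degenerate input of covering degree $\ge D_\bkappa$ amplifies the width by at least a factor $\bkappa$. If $D \le D_\bkappa$, the simple transformation rule of \S\ref{subsubsec:simplepush} gives
\[
\Width_\lambda(I_s^m) \ge \frac{(1-\varepsilon) K}{D_\bkappa} - O(1) \ge C_\bkappa K
\]
once $\bK_{\lambda,\bkappa}$ is chosen large enough to absorb the additive error, yielding Case~\ref{item:2:lem:SpredAroundWidth}. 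Otherwise $D > D_\bkappa$ and I would invoke the near-degenerate rule of \S\ref{subsubsec:near-degeneratepush} with $B = V \setminus (\gamma_{i_s} \cup (\partial \wZ^m \setminus 10\, I_s^m))$; the Covering Lemma then trades the parameter $\lambda$ for $10$ and produces $\Width_{10}(I_s^m) \ge \bkappa K$, giving Case~\ref{item:1:lem:SpredAroundWidth}. Well-groundedness of each $I_s^m$ rel $\wZ^m$ follows because one endpoint of $I$ lies in $\CP_m$ and the $f|_{\partial Z}$-orbit preserves $\CP_m$; points of $\CP_m$ are uniformly away from all inner buffers of $\wZ^m$ (cf.\ \cite[\S5.2]{DL22}), so the projections remain well-grounded after spreading.

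The main obstacle is bookkeeping rather than a new geometric idea: one must confirm that each of the three stages of \eqref{eq:transformation_pattern:wZ} degrades the width by a factor independent of $s$ and $m$. Uniformity of the $\iota$-step comes from the universal $\varepsilon$ in \eqref{eqn:psuedoannulusbound}, which depends only on the fixed dam/protection thresholds built into the geodesic pseudo-Siegel disk (\S\ref{sss:dfn:bistable}); uniformity of the projection step comes from \eqref{eq:conver:I_lambda:2}; and uniformity of the $(f^{i_s})_*$-step comes from the standard KL machinery, exactly as in the polynomial case. With these in hand, the entire argument reduces to a single Covering-Lemma dichotomy applied to the branched cover $f^{i_s}\colon X \to Y$.
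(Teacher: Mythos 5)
Your proposal follows the same route as the paper, which in turn just cites \cite[Lemma~8.2]{DL22} and points to the $\psi^\bullet$-specific preparation in~\S\ref{subsubsec:simplepush}--\S\ref{subsubsec:near-degeneratepush},~\S\ref{sss:CL:wZ^m}, and Figure~\ref{Fig:psi:PushForward}. You correctly identify the three $\psi^\bullet$-specific adjustments: the projection to $\wZ^m_{-i_s*}$ via~\eqref{eq:conver:I_lambda:2}, the $(1-\varepsilon)$ multiplicative loss from the $\iota^{i_s}$-lift via Lemma~\ref{lem:liftingviaiota} and~\eqref{eqn:psuedoannulusbound}, and the choice of a cutting arc $\gamma_{i_s}$ guaranteed by $|I|\le |\theta_0|/(2\lambda+4)$ and Remark~\ref{rem:gamma_k}. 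The well-groundedness argument via $\CP_m$ is also in the spirit of~\S\ref{sss:SpreadAround:rel:wZ}, which handles exactly this case.

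The one place I would push back is the mechanism of your dichotomy. You frame it as a threshold on the outer degree $D$ of $f^{i_s}\colon X\to Y$, with $D\le D_\bkappa$ triggering the simple rule and $D>D_\bkappa$ triggering the Covering Lemma with a $\bkappa$-fold gain. Two issues: first, $D$ is bounded in terms of $\tau=\lambda+2$ (Condition~\eqref{cond:degreebound}) and generically grows with $\lambda$, so requiring $D\le D_\bkappa$ with $D_\bkappa$ independent of $\lambda$ would rarely trigger Case~\ref{item:2:lem:SpredAroundWidth}; second, the Kahn--Lyubich Covering Lemma does not amplify \emph{because} $D$ is large --- it replaces the naive degree-$D$ loss by a loss in the bounded inner degree $d$ (over the $10$-annulus), and the $\bkappa$-gain in Case~\ref{item:1:lem:SpredAroundWidth} comes from the quasi-additivity dichotomy built into its conclusion (degeneration concentrates either into the $10$-separated collar or into the $\lambda$-separated outer family), not from a pre-chosen threshold on $D$. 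In \cite[Lemma~8.2]{DL22} both alternatives come out of a single Covering Lemma application; the branch is internal to that lemma's conclusion rather than a pre-split by degree. This is a matter of how the dichotomy is packaged rather than a fatal gap, but as written your Case~\ref{item:2:lem:SpredAroundWidth} branch is unlikely to fire for large $\lambda$, which would make the proof incomplete.
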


\subsection{Lair of snakes}
For our convenience, we enumerate intervals clockwise in the following lemma. We remark that the Snake-Lair lemma for $\psi^\bullet$-maps needs to be modified. In particular, we have Alternative~\ref{SLconclusion:2} in the conclusion (c.f. \cite[Lemma 8.6]{DL22}). 
\begin{lairlmm}
\label{lem:Hive Lemma}
For every $\bbt>2$ there are $\bkappa,\blambda,\bK\gg_{\bdelta}1$ such that the following holds. Suppose that $\wZ^m$ is a pseudo-Siegel disk with $\length_m< \blambda/4$. Let \[I_{n+1}=I_0, I_1,\dots , I_n\subset \partial \wZ^m, \sp\sp\sp |I_k| = \length_m,\sp  \dist(I_k, I_{k+1})\le \length_m\]
be a sequence of well-grounded intervals enumerated clockwise such that every $I_s$ is one of the following two {\bf types}
\begin{enumerate}
\item either $I_s$ is $[\bkappa K,10]$-wide, \label{case:I_k:1}
\item or $I_s$ is $[ C_{\bkappa} K,\blambda]$-wide,\label{case:I_k:2}
\end{enumerate} 
where $K\ge \bK$ and $C_\bkappa$ is a constant (from Lemma~\ref{lem:SpredAroundWidth}) independent of $\blambda$. Then 
   \begin{enumerate}[label=(\roman*)]
    \item either there is a grounded rel $\wZ^m$ interval \[J\subset \partial Z \sp\sp \text{ such that } \sp \Width^+_{\blambda_\bbt} (J) \ge \bbt K \sp \text{ and }\sp |J|\le |I|.\label{SLconclusion:1}
\]
\item or $K_F \succeq_\blambda K \qq_{m+1}$. \label{SLconclusion:2}
\end{enumerate}
\end{lairlmm}

\noindent The proof below follows the strategy of \cite[Lemma 8.6]{DL22}. In the case of quadratic polynomials, degeneration can not go to infinity \cite[Lemma~6.14]{DL22}. Claim~\ref {claim:1} is a substitution for \cite[Lemma~6.14]{DL22}: if Alternative~\ref{SLconclusion:2} does not hold, then there is a long sequence of intervals from where the degeneration does not go to the outer boundary $\partial V$.  With respect to the notion of ``relatively peripheral'' curves, Items~\ref{case:I_k:1} and~\ref{case:I_k:2} of Lemma~\ref{lem:Hive Lemma} take a form similar to that in \cite[Lemma 8.6]{DL22}, see Claim~\ref{claim:2} below. After Claim~\ref{claim:2}, the proof is similar to the case of quadratic polynomials.

\begin{proof}
    Let us assume that the alternative \ref{SLconclusion:2} of Lemma \ref{lem:Hive Lemma} does not hold. We will show that the alternative \ref{SLconclusion:1} of Lemma \ref{lem:Hive Lemma} must hold.

    Let us enlarge every $I_i$ into a well-grounded interval $\wI_i\subset \partial \wZ^m$ by adding to $I_i$ the interval between $I_i$ and $I_{i+1}$ if $I_i$ and $I_{i+1}$ are disjoint. Since the distances between the $I_i$ and $I_{i+1}$ are $\le \length_{m}$, \footnote{In many applications such as spreading around $I$, the distances between $I_i$ and $I_{i+1}$ are $\le \length_{m+1}$, see \cite[\S 2.1.4]{DL22}.} we have $|\wI_i|\le 2 \length_m$.

    Let $\mathcal{I}$ consist of all intervals $\wI_i$ so that $\Width^{+, ver}(\wI_i) \geq \frac{K}{\blambda^3}$.
    Suppose $\#\mathcal{I} \geq \frac{n}{\blambda}$. Since $n \asymp \qq_{m+1}$, we have 
    $$
    K_F \succeq \frac{K}{\blambda^4} \qq_{m+1}.
    $$
    Therefore, the alternative \ref{SLconclusion:2} of Lemma \ref{lem:Hive Lemma} holds.
    Thus, we may assume that $\#\mathcal{I} \leq \frac{n}{\blambda}$, i.e., not a lot of intervals have large vertical degenerations.

    Let us assume that $\#\mathcal{I} \geq 2$. The case $\#\mathcal{I} = 1$ can be treated similarly by allowing $\wI_k=\wI_{k+l}$ in Claim~\ref{claim:1} below. The case $\#\mathcal{I} = 0$ is essentially the case of quadratic polynomials -- the outer boundary $\partial V$ is well separated from $\wZ^m$. 
    \begin{claim}[Substitution of {\cite[Lemma~6.14]{DL22}}; see Figure~\ref{Fig:RelativeV}]
    \label{claim:1}
        There is a sub-sequence 
    \begin{equation}
    \label{eq:seq:I_k  I_k+l}
    \wI_{k}, \wI_{k+1},\dots, \wI_{k+l}\subset\partial \wZ^m, \sp\sp\sp |\wI|\le 2 \length_m
    \end{equation}
    such that 
    \begin{enumerate}
    \item $l \geq \blambda/2$,
    \item $\Width^{+, ver}(\wI_k), \Width^{+, ver}(\wI_{k+l}) \geq \frac{K}{\blambda^3}$,
    \item $\Width^{+, ver}(\wI_j) \leq \frac{K}{\blambda^3}$ for all $k< j < k+l$.
    \end{enumerate} 
    \end{claim}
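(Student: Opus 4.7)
The plan is to extract the required subsequence by a straightforward pigeonhole argument using the cyclic arrangement of the intervals $\wI_0, \wI_1, \dots, \wI_n = \wI_0$ together with the upper bound on $\#\mathcal{I}$.

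First, I would view the indices $\{0, 1, \dots, n-1\}$ cyclically. Let $N \coloneqq \#\mathcal{I}$, and label the intervals in $\mathcal{I}$ as $\wI_{k_1}, \wI_{k_2}, \dots, \wI_{k_N}$ with $k_1 < k_2 < \dots < k_N$. Define the cyclic gaps
\[
g_j \coloneqq k_{j+1} - k_j \quad (j = 1, \dots, N-1), \qquad g_N \coloneqq n - k_N + k_1.
\]
Then $\sum_{j=1}^N g_j = n$. Since we are in the remaining case $\#\mathcal{I} \leq n/\blambda$, we have $N \leq n/\blambda$, and so the average gap satisfies $n / N \geq \blambda$. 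By pigeonhole, there exists some index $j_0$ with $g_{j_0} \geq \blambda$.

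Setting $k \coloneqq k_{j_0}$ and $l \coloneqq g_{j_0} \geq \blambda \geq \blambda/2$, the pair $\wI_k, \wI_{k+l}$ both lie in $\mathcal{I}$ (so their vertical widths are $\geq K/\blambda^3$), and by the choice of $k_{j_0}, k_{j_0+1}$ as \emph{consecutive} elements of $\mathcal{I}$, every $\wI_j$ with $k < j < k+l$ fails to lie in $\mathcal{I}$, i.e.\ $\Width^{+,\ver}(\wI_j) \le K/\blambda^3$. This gives the sequence~\eqref{eq:seq:I_k  I_k+l} with the three required properties.

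The only subtlety is the boundary case $\#\mathcal{I} = 1$ (treated separately in the statement), in which one takes $\wI_k = \wI_{k+l}$ to be the single element of $\mathcal{I}$ and lets $l$ be the full cyclic length $n \geq \blambda/2$, which is allowed by the parenthetical remark in the claim. No further geometry is used; the statement is purely combinatorial given the a priori bound $\#\mathcal{I} \leq n/\blambda$ secured above.
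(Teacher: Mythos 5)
Your pigeonhole argument is exactly the one the paper has in mind: its entire proof of this claim is the one-line remark that it ``follows from $2 \le \#\mathcal{I} \le n/\blambda$,'' and your cyclic-gap counting (sum of gaps $= n+1 \asymp n$, so some gap $\ge n/\#\mathcal{I} \ge \blambda \ge \blambda/2$, with the two endpoints of that gap lying in $\mathcal{I}$ and the interior lying outside it) is the intended unpacking of that remark. The only cosmetic point is that the cyclic length is $n+1$ (indices $0,\dots,n$ with $I_{n+1}=I_0$) rather than $n$, which if anything only strengthens the estimate.
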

    \begin{proof}
        This claim follows from $2\leq \# \mathcal{I} \leq \frac{n}{\blambda}$.
    \end{proof}

Let $L = \lfloor \wI_{k+1}, \wI_{k+l-1}\rfloor$.
Let $I \subseteq L$. We say an arc $\alpha \in \mathcal{F}_\lambda(I)$ is {\em relatively peripheral} with respect to $L$ if it is either peripheral, or it is vertical and the last time $\alpha$ intersects $\wZ^m$ is outside of $L$.
We denote by $\Width^{per/L}_{\lambda}(I)$ the extremal width of relatively peripheral arcs.
Since relatively peripheral arc outside of $\wZ^m$ is in fact peripheral, we have
$$
\Width^{+, per/L}_{\lambda}(I) = \Width^{+, per}_{\lambda}(I).
$$
Let us also denote by $\mathcal{R}_k$ and $\mathcal{R}_{k+l}$ the rectangles connecting $I_k$ and $\partial V$ with width $\Width(\mathcal{R}_k), \Width(\mathcal{R}_{k+l}) \succeq \frac{K}{\blambda^3}$.
\begin{figure}
   \includegraphics[width=10cm]{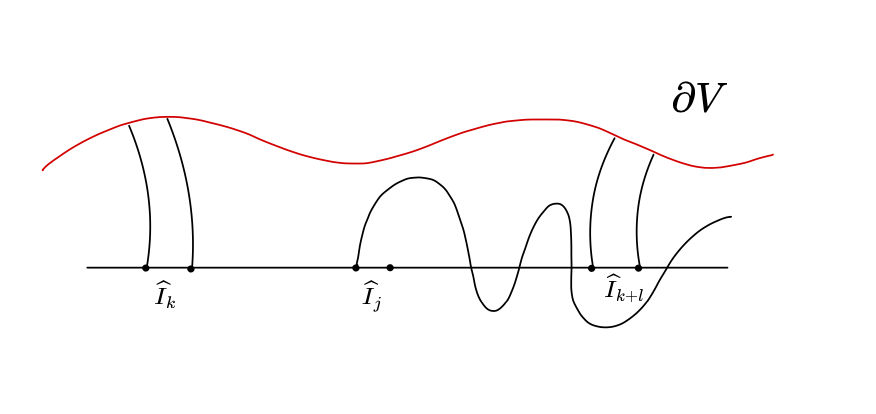}
   \caption{Illustration to the intervals $\widehat I_j$ from Claim~\ref{claim:1} and to families emerging from the $\widehat I_J$.}
   \label{Fig:RelativeV}
\end{figure}

\begin{claim}[c.f. {\cite[(1) and (2) of Lemma 8.6]{DL22}}]    \label{claim:2}

    For any $k < i < k+l$, we have that $\wI_i$ is one of the following two types
    \begin{enumerate}[label=(\Roman*)]
        \item \label{case:relI:1} either $\Width^{per/L}_{10}(\wI_i) \geq \bkappa K/2$,
        \item \label{case:relI:2} or $\Width^{per/L}_{\blambda}(\wI_i) \geq C_\bkappa K/2$
    \end{enumerate}
\end{claim}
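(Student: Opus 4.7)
For each $i$ with $k < i < k + l$, the hypothesis on $\wI_i$ gives $\Width_{10}(\wI_i) \geq \bkappa K$ if $\wI_i$ is of Type~\ref{case:I_k:1}, or $\Width_{\blambda}(\wI_i) \geq C_\bkappa K$ if $\wI_i$ is of Type~\ref{case:I_k:2}. I will show that the arcs of $\Fam_{\lambda}(\wI_i)$ that fail to be relatively peripheral with respect to $L$ carry width at most $\bkappa K/2$ (resp.\ $C_\bkappa K/2$); the two conclusions then follow by subtraction.

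\textbf{Reducing bad arcs to outer vertical arcs from $L$.} An arc $\alpha \in \Fam_\lambda(\wI_i)$ fails to be relatively peripheral exactly when $\alpha$ is vertical and its last intersection point $p$ with $\partial\wZ^m$ lies in $L$. I cut $\alpha$ at $p$: the terminal segment from $p$ to $\partial V$ lies entirely in $V \setminus \wZ^m$ and emerges from $p \in L$, so it is an outer vertical arc in $\Fam^{+,\ver}(L)$. By the series law of extremal width, the total width of bad arcs is at most $\Width^{+,\ver}(L)$.

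\textbf{Bounding $\Width^{+,\ver}(L)$.} Since the enlargement at the outset of the proof makes $\wI_{k+1}, \ldots, \wI_{k+l-1}$ a partition of $L$ with no residual gaps, additivity of extremal width for outer vertical families with disjoint starting intervals gives
\[
\Width^{+,\ver}(L) \;=\; \sum_{j=k+1}^{k+l-1} \Width^{+,\ver}(\wI_j) \;\leq\; \frac{(l-1)\,K}{\blambda^3},
\]
by Claim~\ref{claim:1}. Calibrating $\blambda$ large enough relative to $\bkappa$ and $C_\bkappa$ (so that this quantity is at most $\min(\bkappa,C_\bkappa)\,K/2$) then completes the argument.

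\textbf{Main obstacle: controlling $l$ from above.} Claim~\ref{claim:1} supplies only $l \geq \blambda/2$; in principle $l$ could be as large as the gap between two consecutive elements of $\mathcal{I}$, which a priori is unbounded. I plan to circumvent this via a pigeonhole refinement of Claim~\ref{claim:1}: since $\#\mathcal{I} \leq n/\blambda$, either there is a pair of consecutive elements of $\mathcal{I}$ at combinatorial distance $l \in [\blambda/2,\,\blambda^{3/2}]$ (directly yielding the desired upper bound), or every such gap exceeds $\blambda^{3/2}$, in which case one may select a sub-range of length $\blambda$ centered at any chosen $\wI_i$ inside such a large gap---the endpoints of this sub-range need not belong to $\mathcal{I}$, because only the vertical bounds on the intermediate intervals enter the estimate above. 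In either alternative, taking $\blambda \gg \bkappa,\,C_\bkappa$ gives $\Width^{+,\ver}(L) \leq \min(\bkappa,C_\bkappa)\,K/2$, which is the required bound.
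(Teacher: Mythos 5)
Your reduction to outer vertical tails from $L$ is correct, but you have missed the key localization that makes the estimate go through without any upper bound on $l$. An arc $\alpha \in \Fam_\lambda(\wI_i)$ is required, by the very definition of $\Fam_\lambda$, to avoid $\partial \wZ^m \setminus \lambda\wI_i$ (except at its endpoints). In particular, a vertical arc in this family can only meet $\partial \wZ^m$ inside $\lambda\wI_i$, so its last intersection point $p$ lies in $\lambda\wI_i \cap L$ — not in all of $L$. Since $|\wI_i| \leq 2\length_m$ and each $\wI_j$ has length at most $2\length_m$, the interval $\lambda\wI_i$ meets only $O(\lambda)$ of the $\wI_j$'s. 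The non-relatively-peripheral width of $\Fam_\lambda(\wI_i)$ is therefore bounded by the sum of $\Width^{+,\ver}(\wI_j)$ over those $O(\lambda)$ indices, i.e.\ by $O(\lambda) \cdot K/\blambda^3$. For $\lambda = \blambda$ this is $K/\blambda^2$, and for $\lambda = 10$ it is $10K/\blambda^3$; both are $\ll K$, with no dependence on $l$ whatsoever. This is exactly what the paper does.

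Your "main obstacle" therefore does not exist, and your workaround introduces a genuine problem: the pigeonhole refinement would replace $L$ by a shorter interval depending on $i$, but the notion of a relatively peripheral arc — and all of Claims~3--6 in the proof of the Snake-Lair Lemma — are formulated with respect to the single fixed $L = \lfloor \wI_{k+1}, \wI_{k+l-1}\rfloor$ and rely on the endpoints of this range carrying large vertical width (the rectangles $\mathcal{R}_k$ and $\mathcal{R}_{k+l}$). Swapping in a sub-range whose endpoints are not necessarily in $\mathcal{I}$, or whose endpoints vary with $i$, would destroy that structure. Replace your final paragraph with the observation that $p \in \lambda\wI_i$ forces the tail of $\alpha$ to emerge from $\lambda\wI_i \cap L$, and the proof is complete.
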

\begin{proof}
    By Claim \ref{claim:1}, $\Width^{+, ver}(\wI_j) \leq \frac{K}{\blambda^3}$ for all $k< j < k+l$. Therefore, the non relatively peripheral degeneration
    $$
    \Width_{\blambda}(\wI_i) - \Width^{per/L}_{\blambda}(\wI_i) \leq \blambda \frac{K}{\blambda^3} = \frac{K}{\blambda^2}, \text { and }
    $$
    $$
    \Width_{10}(\wI_i) - \Width^{per/L}_{10}(\wI_i) \leq 10 \frac{K}{\blambda^3} = \frac{10K}{\blambda^3}.
    $$
    Since $1/ \blambda \ll 1$, the claim follows.
\end{proof}

\begin{claim}(c.f. Claim 1 in \cite[\S 8.2]{DL22})\label{cl:seq:Type1}
    Let $k+20 < i < k+l - 20$. Lemma \ref{lem:Hive Lemma} holds if there is a Type~\ref{case:relI:1} interval $\wI_i$ such that
    $$
    \Width^{per/L}_{10}(\wI_i) - \Width^{+, per}_{10}(\wI_i) \geq \bkappa K/4.
    $$
\end{claim}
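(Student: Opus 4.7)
The plan is to convert the excess relatively-peripheral width into a concentrated outer degeneration on a shorter subinterval, via the snake theory of \cite[\S6]{DL22} adapted to the $\psi^\bullet$-setting developed in \S\ref{ss:push forw curves}. This is the analogue of the quadratic-polynomial argument cited as Claim~1 in \cite[\S 8.2]{DL22}, the novelty being that arcs in the excess family must traverse $\wZ^m$.

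First I would identify the excess family. Let $\AA \subseteq \Fam^{per/L}_{10}(\wI_i)$ realize the assumed difference, so $\Width(\AA) \ge \bkappa K/4$. Each $\alpha \in \AA$ necessarily enters $\wZ^m$: if $\alpha$ stayed in $V \setminus \wZ^m$, then being relatively peripheral would force it to be either outer peripheral (whence $\alpha \in \Fam^{+, per}_{10}(\wI_i)$, contradiction), or outer vertical with last contact to $\wZ^m$ at $\wI_i \subset L$, contradicting the relative-peripherality condition. Let $p_\alpha \in \partial \wZ^m$ denote the first entry point. Since $\alpha \in \Fam_{10}(\wI_i)$ is not allowed to touch $\partial \wZ^m \setminus 10 \wI_i$ at intermediate points, and $p_\alpha$ is not the endpoint (the arc continues past it into $\wZ^m$), we must have $p_\alpha \in 10 \wI_i$.

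Next I would pigeonhole on the location of $p_\alpha$ and apply the snake lemma. The initial outer segment $\alpha_{\mathrm{in}}$ terminates at $p_\alpha \in 10\wI_i$. Partition $10 \wI_i \cap \partial \wZ^m$ into $N = O(1)$ combinatorial intervals $E_1, \ldots, E_N$ of size $\asymp |\wI_i|$ (using that $\wI_i$ is well-grounded and $|\wI_i| \asymp \length_m$). Some $E_\ast$ captures a family of entry segments with $\Width^+(\wI_i, E_\ast) \succeq \bkappa K$. I would then invoke the Simple Snake Lemma of \cite[\S6]{DL22}, together with the $\psi^\bullet$-transformation rules of \S\ref{ss:push forw curves}, to slide this wide peripheral family onto a short source $J \subseteq \wI_i$ satisfying $|J| \le |I|$ and such that $E_\ast$ lies outside $\blambda_\bbt J$. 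The sliding preserves width up to an additive $O(1)$, yielding $\Width^+_{\blambda_\bbt}(J) \succeq \bkappa K$; taking $\bkappa \gg_{\bbt, \blambda_\bbt} 1$ then gives $\Width^+_{\blambda_\bbt}(J) \ge \bbt K$, which is conclusion~\ref{SLconclusion:1} of Lemma~\ref{lem:Hive Lemma}.

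The main obstacle lies in executing the snake-sliding step correctly: since $E_\ast$ is at combinatorially bounded distance from $\wI_i$, one must carefully choose $J$ so as to simultaneously satisfy the length bound $|J| \le |I|$ and the scale requirement that $E_\ast$ lie outside $\blambda_\bbt J$, all while respecting the $\psi^\bullet$-structure on the tail portions of arcs in $\AA$ that pass through $\wZ^m$. The buffer hypothesis $k+20 < i < k+l-20$ provides ample room within $L$ so that the snake manipulations do not interfere with the extreme intervals $\wI_k, \wI_{k+l}$ carrying the large vertical degenerations detected in Claim~\ref{claim:1}.
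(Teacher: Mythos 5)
Your plan has the correct skeleton, and your step~2 argument is a slightly verbose restatement of the paper's one-line key reduction: the buffer hypothesis $k+20 < i < k+l-20$ gives $10\wI_i \subseteq L$ (not merely $\wI_i \subset L$, which is what you write and which would not suffice on its own), so every vertical arc in $\Fam_{10}(\wI_i)$ staying outside $\wZ^m$ has its last $\wZ^m$-contact in $10\wI_i \subseteq L$ and hence is \emph{not} relatively peripheral; therefore $\Width^{per/L}_{10}(\wI_i) = \Width^{per}_{10}(\wI_i)$, and the hypothesis becomes $\Width^{per}_{10}(\wI_i) - \Width^{+, per}_{10}(\wI_i) \geq \bkappa K/4$. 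At this point the paper simply invokes \cite[Lemma~6.8]{DL22} and stops --- the snake theory of \cite[\S6]{DL22} is non-dynamical (it depends only on the Log-Rule and the pseudo-Siegel geometry), so it transfers to the $\psi^\bullet$-setting without modification, and the $\psi^\bullet$-transformation rules you invoke are not actually needed. You instead attempt to rederive that lemma.

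The rederivation has a gap. You truncate each diving arc $\alpha \in \AA$ to its initial outer segment $\alpha_{\mathrm{in}}$ ending at the first entry point $p_\alpha$, then pigeonhole on $p_\alpha$ to obtain a wide outer family $\Fam^+(\wI_i, E_*)$ with $E_* \subset 10\wI_i$. But that truncation discards exactly the submerging tail of $\alpha$ --- the portion that travels through $\wZ^m$ and re-emerges at a deeper scale --- which is the part the snake argument actually exploits: arcs diving through $\wZ^m$ are forced by the fjord geometry and the Gr\"otzsch inequality to concentrate onto a short grounded interval $J$ with $\Width^+_{\blambda_\bbt}(J) \gg K$. After truncation you are holding a purely outer family between two intervals of combinatorial size $\asymp\length_m$ inside $10\wI_i$; this is not the input format of the Simple Snake Lemma (which acts on families that pass through $\wZ^m$), and it carries no submersion information from which to manufacture a $\lambda$-separated wide family on a shorter base --- attempting to ``slide'' the source by bare pigeonhole would lose a multiplicative factor $\asymp |\wI_i|/|J|$ rather than $O(1)$. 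The fix is not to truncate: apply \cite[Lemma~6.8]{DL22} to the full diving excess $\Width^{per}_{10}(\wI_i) - \Width^{+, per}_{10}(\wI_i) \geq \bkappa K/4$ directly, as the paper does.
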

\begin{proof}
    Since $k+20 < i < k+l - 20$, we conclude that $10 \wI_i \subseteq L$. Thus, $\Width^{per/L}_{10}(\wI_i) = \Width^{per}_{10}(\wI_i)$. Therefore, the claim follows immediately from \cite[Lemma 6.8]{DL22}.
\end{proof}
Therefore, we may assume that any Type~\ref{case:relI:1} interval $\wI_i$ for $k+20 < i < k+l - 20$ satisfies 
$$
\Width^{+, per}_{10}(\wI_i) \geq \bkappa K/4.
$$

\begin{claim}(c.f. Claim 2 in \cite[\S 8.2]{DL22})
\label{cl:seq:I_i}
There is a sub-sequence \begin{equation}
\label{eq:seq:I_i  I_i+lambda}
\wI_{i}, \wI_{i+1},\dots, \wI_{i+\blambda/20}\subset\partial \wZ^m,
\end{equation}
such that $[i, i+\blambda/20] \subseteq [k+20, k+l-20]$, and for
every interval $\wI_j$ in~\eqref{eq:seq:I_i  I_i+lambda}, we have $\Width^{+,\per}_{\blambda/4}(\wI_j)\leq 5$.
\end{claim}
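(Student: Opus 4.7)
The plan is to adapt the quadratic-case proof from Claim~2 in \cite[\S 8.2]{DL22}. The strategy is to reduce the claim to a bound on the number $N$ of \emph{bad} indices
\[j \in [k+20,\, k+l-20] \qquad \text{with} \qquad \Width^{+,\per}_{\blambda/4}(\wI_j) > 5\]
by an absolute constant $C_0$ (independent of $\blambda$, $K$, and $K_F$). Once $N \le C_0$ is established, a pigeonhole argument produces the sub-sequence: the window $[k+20, k+l-20]$ contains $\ge l-40 \ge \blambda/2 - 40$ indices, and the $C_0$ bad indices partition the good ones into at most $C_0+1$ maximal consecutive runs, so some run has length
\[\ge \frac{\blambda/2 - 40 - C_0}{C_0+1} \;\ge\; \blambda/20\]
provided $\blambda \gg C_0$. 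This run is the required sub-sequence.

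First I would verify that for $j$ in the inner range, the enlargement $(\blambda/4)\wI_j$ is contained in $L = \lfloor \wI_{k+1}, \wI_{k+l-1}\rfloor$ (automatic once $\blambda$ is large compared to the margin $20$), so that $\Width^{+,\per/L}_{\blambda/4}(\wI_j) = \Width^{+,\per}_{\blambda/4}(\wI_j)$ and one can work with absolute peripheral widths. Next, each bad index $j$ yields a geodesic rectangle $\RR_j \subset V \setminus \wZ^m$ of width $\ge 5$ connecting $\wI_j$ to an interval of $\partial \wZ^m$ at combinatorial distance $\ge (\blambda/4)\length_m$ along $\partial \wZ^m$. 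Two such rectangles $\RR_{j_1}, \RR_{j_2}$ with interleaved endpoints on $\partial \wZ^m$ would be forced to cross, and wide crossing rectangles have combined width bounded by $O(1)$ (see \cite[\S A.3, Lemma A.11]{DL22}); hence $\{\RR_j\}_{j\text{ bad}}$ forms a nested configuration on the outside of $\wZ^m$.

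The hard part will be bounding the nesting depth of this nested family by an absolute constant $C_0$, uniformly in $K_F$. In the quadratic setting this bound follows from the bounded modulus of $V \setminus \wZ^m$, but here $K_F$ is not assumed bounded, so the argument must be topological/combinatorial rather than metric. The key leverage is Claim~\ref{claim:1}: every inner $\wI_j$ satisfies $\Width^{+,\ver}(\wI_j) \le K/\blambda^3$, which prevents the peripheral arcs realizing $\Width^{+,\per}_{\blambda/4}(\wI_j) > 5$ from ``escaping'' through $\partial V$ and confines the rectangles $\RR_j$ to a neighborhood of $\partial \wZ^m$. Combined with Lemma~\ref{lem:liftingviaiota}, which allows lifting peripheral families from $V$ to $U^k$ (where the $\psi$-style crossing arguments from the quadratic case apply directly to the $\iota$-proper realization $\widehat U^k$), the nesting-depth bound is then established on a level independent of the global degeneration, completing the proof of the claim.
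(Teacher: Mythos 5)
Your plan reduces the claim to the assertion that the total number $N$ of bad indices $j \in [k+20,\,k+l-20]$ is bounded by an absolute constant $C_0$, and then applies pigeonhole. That reduction is a dead end: $l$ is only bounded \emph{below} (by $\blambda/2$), not above — it may be comparable to $\qq_{m+1}$. Two bad intervals $\wI_{j_1},\wI_{j_2}$ whose combinatorial distance along $\partial\wZ^m$ exceeds $(\blambda/4)\length_m$ need not interfere at all: the peripheral families $\Fam^{+,\per}_{\blambda/4}(\wI_{j_1})$ and $\Fam^{+,\per}_{\blambda/4}(\wI_{j_2})$ can occupy disjoint sub-quadrilaterals of $V\setminus\wZ^m$. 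So when $l\gg\blambda$, one can have a bad interval every $\sim\blambda$ positions, giving $N\gg 1$, and the claim nevertheless holds because the good runs between them have length $\gg\blambda/20$. Thus $N\le C_0$ is not merely unproved — it is false in the regime where $l$ is large — and the subsequent pigeonhole estimate $(\blambda/2 - 40 - C_0)/(C_0+1) \ge \blambda/20$ (which already forces $C_0 \lesssim 10$) is computed from the wrong quantity. Bounding the global count of bad indices is simply the wrong invariant; what the claim needs is a single gap, not a global bound.

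The paper obtains the gap by contradiction without ever counting bad indices globally. If there were no run of $\blambda/20$ consecutive good intervals in $[k+20,\,k+l-20]$, one could build a chain $L_0=\wI_k,\,L_1,\dots,L_{t-1},\,L_t=\wI_{k+l}$ with consecutive gaps at most $\blambda/10$ in which every intermediate $L_j$ is bad. The two wide vertical rectangles $\mathcal{R}_k,\mathcal{R}_{k+l}$ of width $\succeq K/\blambda^3 \ge 5$ then confine the peripheral families $\mathcal{F}^{+,\per}_{\blambda/4}(L_j)$, $j=1,\dots,t-1$, between them; \cite[Lemma~6.14]{DL22} (a blocking lemma for chained, confined, wide, $\blambda/4$-separated peripheral families) yields the contradiction. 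This blocking lemma is exactly the machinery your proposal is missing: your substitute, a uniform nesting-depth bound for the rectangles $\RR_j$, runs into the difficulty you yourself flag (no modulus bound on $V\setminus\wZ^m$) and is never resolved. Finally, the appeal to Lemma~\ref{lem:liftingviaiota} is misdirected — the present claim is entirely about the geometry of $V\setminus\wZ^m$, with no lift along $\iota$ involved; that lemma is relevant to the transformation rules of \S\ref{sss:CL:wZ^m}, not here.
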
 
\begin{proof}
Suppose for contradiction that $(\wI_j)$ has a sub-sequence $(L_j), j = 0,..., t$ with 
\begin{itemize}
    \item $L_0 = \wI_k$, $L_t = \wI_{k+l}$, and 
    \item $L_j=\wI_{\ell(j)}$ such that $\ell(j)<\ell(j+1)\le \ell(j)+\blambda/10$, and
    \item $\Width_{\blambda/4}^{+, per}(L_j) \geq 5$ for $j = 1,..., t-1$.
\end{itemize}
Since $\Width(\mathcal{R}_k), \Width(\mathcal{R}_{k+l}) \succeq \frac{K}{\blambda^3} \geq 5$, most of the curves in $\mathcal{F}^{+, per}_{\blambda/4}(\wI_j)$ do not cross the rectangles $\mathcal{R}_k$ and $\mathcal{R}_{k+l}$.
Thus, by \cite[Lemma 6.14]{DL22}, such families $\mathcal{F}^{+, per}_{\blambda/4}(\wI_j), j = 1,..., t-1$ would block each other, which is a contradiction and the claim follows.
\end{proof}

\begin{claim}(c.f. Claim 3 in \cite[\S 8.2]{DL22})
There is $\mathbf{k}$ depending on $\mathbf{t}$ and $\bkappa$ but not on $\blambda$ such that the alternative \ref{SLconclusion:1} of Lemma \ref{lem:Hive Lemma} holds if there are $\mathbf{k}$ consecutive Type~\eqref{case:relI:1} intervals in~\eqref{eq:seq:I_i  I_i+lambda}
\end{claim}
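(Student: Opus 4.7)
The plan is to adapt Claim~3 of \cite[\S 8.2]{DL22} to the $\psi^\bullet$-setting. Suppose $\wI_{j_0}, \wI_{j_0+1}, \ldots, \wI_{j_0+\bbk-1}$ are consecutive Type~\ref{case:relI:1} intervals inside the sub-sequence~\eqref{eq:seq:I_i  I_i+lambda}. Since Claim~\ref{cl:seq:Type1} is assumed not to trigger Alternative~\ref{SLconclusion:1}, each $\wI_i$ in this run satisfies
\[
\Width^{+,\per}_{10}(\wI_i)\ \ge\ \bkappa K/4,
\]
while the defining property of~\eqref{eq:seq:I_i  I_i+lambda} gives the complementary upper bound $\Width^{+,\per}_{\blambda/4}(\wI_i)\le 5$. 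Hence essentially all of the peripheral weight $\bkappa K/4$ at $\wI_i$ is carried by curves that land on $\partial \wZ^m$ within combinatorial distance $\asymp \blambda\,|\wI_i|$ of $\wI_i$.

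The first step of the proof is to extract, for each $i\in\{j_0,\dots,j_0+\bbk-1\}$, a concrete geodesic peripheral rectangle $\RR_i\subset V\setminus\wZ^m$ of width $\succeq \bkappa K$ whose horizontal sides are grounded rel $\wZ^m$, one contained in (a sub-interval of) $\wI_i$ and the other in $\partial\wZ^m$ within combinatorial distance $\asymp \blambda|\wI_i|$ of $\wI_i$. Next I would align these $\bbk$ rectangles into a snake configuration in the sense of \cite[\S 6]{DL22}: the indices being consecutive, the right landing endpoint of $\RR_i$ is combinatorially close to the left base endpoint of $\RR_{i+r}$ for some uniformly bounded shift $r$, so after discarding at most $O(1)$ links the rectangles $\RR_i$ become the consecutive links of a peripheral snake of rank $\bbk$ anchored on $L'=\wI_{j_0}\cup\cdots\cup\wI_{j_0+\bbk-1}\subset\partial\wZ^m$.

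The second step is to apply the snake amplification mechanism from \cite[\S 6]{DL22} to this rank-$\bbk$ snake: it converts the $\bbk$ overlapping links of width $\bkappa K/4$ into a single outer wide family based on a strictly shorter grounded interval $J\subsetneq L'$ with $|J|\le\length_m=|I|$ and
\[
\Width^+_{\blambda_\bbt}(J)\ \succeq_{\bkappa}\ \bbk\,K.
\]
Choosing $\bbk=\bbk(\bbt,\bkappa)$ large enough relative to the implicit constant produces $\Width^+_{\blambda_\bbt}(J)\ge\bbt K$, which is Alternative~\ref{SLconclusion:1}. Since the amplification factor per link and the number of admissible shifts depend only on $\bkappa$ and $\bbt$, the required $\bbk$ is indeed independent of $\blambda$.

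The main obstacle will be to verify that the snake assembly goes through despite the $\psi^\bullet$-features. Two points need attention: first, the peripheral rectangles $\RR_i$ must be correctly lifted/pushed through iterates when the snake lemma is invoked, and Section~\ref{subsec:coastal} (coastal-zone univalence of $\iota^k$) together with the transformation rules of~\S\ref{sss:CL:wZ^m} provides this for free, since all curves involved are peripheral and hence remain inside the coastal zone $\CC^{\qq_{m+1}}_{\qq_{m+1}}$; second, one must rule out that ``missing'' width comes from a hidden vertical contribution. This is handled by Claim~\ref{claim:1}: inside the sub-sequence~\eqref{eq:seq:I_i  I_i+lambda} the vertical width at every $\wI_j$ is bounded by $K/\blambda^3$, so the non-peripheral and the non-relatively-peripheral corrections introduced in Claim~\ref{claim:2} are of order $K/\blambda^2\ll \bkappa K$ and can be absorbed into the $O(1)$ losses of the snake lemma. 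Consequently the quadratic-polynomial argument of \cite[\S 8.2, Claim~3]{DL22} applies verbatim after these replacements, completing the proof.
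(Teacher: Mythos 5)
The paper's proof of this claim does not build the snake by chaining individual rectangles end-to-end. Instead, it observes that the $\bbk$ peripheral families $\mathcal{F}^{+,\per}_{10}(\wI_i)$ all have width $\geq\bkappa K/4$ and therefore must have small mutual overlap (two wide families cannot cross much), so their widths are essentially additive: there is a rectangle $\mathcal{R}\subseteq\bigcup_{i=a}^{b}\mathcal{F}^{+,\per}_{10}(\wI_i)$ with $\Width(\mathcal{R})\succeq\mathbf{k}\bkappa K$. The second ingredient is that $W^+(I_{a,b},J_{a,b})=O(\mathbf{k})$ for $I_{a,b}=\lfloor \wI_a,\wI_b\rfloor$ and $J_{a,b}=\bigcap_i(\tfrac{\blambda}{4}\wI_i)^c$, which follows from Claim~\ref{cl:seq:I_i}; since $\mathbf{k}\bkappa K\gg\mathbf{k}$, the wide rectangle $\mathcal{R}$ cannot escape this $\blambda/4$-tube and is thus forced into the snake configuration that is then amplified as in \cite[\S 8.2, Claim 3]{DL22}. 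No control on the landing positions of individual $\RR_i$ relative to subsequent $\wI_j$ is needed for this.

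Your approach has a gap exactly at the chaining step. You assert that ``the right landing endpoint of $\RR_i$ is combinatorially close to the left base endpoint of $\RR_{i+r}$ for some uniformly bounded shift $r$,'' but nothing forces this: the only landing constraint comes from $\Width^{+,\per}_{\blambda/4}(\wI_i)\leq 5$ (Claim~\ref{cl:seq:I_i}), which confines the landing to $\tfrac{\blambda}{4}\wI_i$, i.e.\ up to $\asymp\blambda$ combinatorial intervals away. Since $\bbk$ must be chosen independently of $\blambda$ (that is the whole point of the claim), a single $\RR_i$ may overfly the entire block of $\bbk$ intervals, or the $\RR_i$ may nest rather than chain. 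In those configurations your ``consecutive links'' picture fails, even after discarding $O(1)$ links. The correct route is to bypass the link structure entirely: use blocking to form the single wide rectangle $\mathcal{R}$ first, use $W^+(I_{a,b},J_{a,b})=O(\mathbf{k})$ to trap it, and only then invoke snake localization. Your handling of the $\psi^\bullet$-specific concerns (coastal-zone univalence for lifts, the $K/\blambda^3$ bound on vertical width from Claim~\ref{claim:1} absorbing the Claim~\ref{claim:2} corrections) is correct and matches the modifications the paper needs relative to \cite{DL22}.
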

\begin{proof}
    Suppose that there are $\mathbf{k}$ consecutive intervals of Type~\eqref{case:relI:1}, denoted by $\wI_a,..., \wI_b$ with $b-a = \mathbf{k}-1$. Denote by $$
    I_{a,b}\coloneqq \lfloor \wI_{a}, \wI_{b}\rfloor, \text{ and }
    J_{a,b}\coloneqq \bigcap_{i=a}^b (\frac{\blambda}{4}\wI_i)^c.
    $$
    Then we have $W^+(I_{a,b}, J_{a,b}) = O(\mathbf{k})$ by Claim~\ref{cl:seq:I_i}.
    Since families $\mathcal{F}^{+, per}_{10}(\wI_i)$ have small overlaps (they block each other), by Claim~\ref{cl:seq:Type1}, there is a rectangle
    $$
    \mathcal{R} \subseteq \bigcup_{i=a}^b\mathcal{F}^{+, per}_{10}(\wI_i) \text{ with } \Width(\mathcal{R}) \succeq \mathbf{k}\bkappa K.
    $$
    The proof now proceeds exactly the same as the proof of Claim 3 in \cite[\S 8.2]{DL22}.
\end{proof}

We now assume that among $\mathbf{k}$ consecutive intervals in~\eqref{eq:seq:I_i  I_i+lambda}, there is at least one Type~\eqref{case:relI:2} interval.
Now we can perform the similar construction as in the quadratic polynomial case, and enumerate Type~\eqref{case:relI:2} intervals in~\eqref{eq:seq:I_i  I_i+lambda} as
$$
\wI_{i_0},..., \wI_{i_{s}}, i_j < i_{j+1} < i_j + \mathbf{k},
$$
where $s \geq \blambda/(22\mathbf{k})$.

We enlarge each $\wI_{i_t}$ to well grounded intervals $\widetilde I_{i_t} \supseteq \wI_{i_t}$ such that
\begin{itemize}
    \item $\widetilde I_{i_t}$ ends where $\wI_{i_{t+1}}$ starts; and
    \item either $\widetilde I_{i_0}$ (respectively $\widetilde I_{i_s}$) contains $\wI_k$ (respectively $\wI_{k+l}$) or $\widetilde I_{i_0}$ (respectively $\widetilde I_{i_s}$) has length between $\frac{\blambda}{2} \length_m$ and $(\frac{\blambda}{2} +2) \length_m$;
\end{itemize}

Let $B\coloneqq \lfloor \widetilde I_{i_0}, \widetilde I_{i_s}\rfloor$.
Note that $\Width(\mathcal{R}_k), \Width(\mathcal{R}_{k+l}) \succeq \frac{K}{\blambda^3} \geq 5$, no wide family of curves cross these two rectangles.
Together with Claim \ref{cl:seq:I_i}, we have
$$
\Width^+(\wI_j, B^c) = O(1)
$$
if $\wI_j \subseteq \widetilde I_{i_t}$ for some $t \in \{1,..., s-1\}$.
Since $\widetilde I_{i_t}$ contains at most $\mathbf{k}$ consecutive intervals $\wI_i$,
we have the following claim.
\begin{claim}(c.f. Claim 4 in \cite[\S 8.2]{DL22})
For any $t \in \{1,..., s-1\}$, we have
$$
\Width^+(\widetilde I_{i_t}, B^c) = O(\mathbf{k}).
$$
\end{claim}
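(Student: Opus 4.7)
The plan is to deduce this claim as a direct bookkeeping consequence of the preceding display $\Width^+(\wI_j, B^c) = O(1)$, combined with subadditivity of extremal width over a decomposition of the source interval.

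First, I would invoke the construction of $\widetilde I_{i_t}$ described immediately before the claim: $\widetilde I_{i_t}$ starts where $\wI_{i_t}$ begins and ends where $\wI_{i_{t+1}}$ starts (with the endpoint conventions given). Since consecutive Type~\ref{case:relI:2} indices satisfy $i_{j+1} < i_j + \mathbf{k}$ by the enumeration made just above, each $\widetilde I_{i_t}$ (with $t \in \{1, \dots, s-1\}$) is the concatenation of at most $\mathbf{k}$ adjacent intervals $\wI_j$ from the sequence~\eqref{eq:seq:I_k  I_k+l}. This is the combinatorial input.

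Second, I would assemble the width bound. For each of these $\wI_j \subseteq \widetilde I_{i_t}$, the previous display gives $\Width^+(\wI_j, B^c) = O(1)$; the bound is uniform over $j$ because it comes from Claim~\ref{cl:seq:I_i} (which controls all $\wI_j$ in the sub-sequence~\eqref{eq:seq:I_i  I_i+lambda}) together with the blocking provided by the rectangles $\mathcal R_k$ and $\mathcal R_{k+l}$. Since a curve in $\Fam^+(\widetilde I_{i_t}, B^c)$ emerges from exactly one of the sub-intervals $\wI_j$, we have the disjoint decomposition of families
\[
\Fam^+(\widetilde I_{i_t}, B^c) \;=\; \bigsqcup_{\wI_j \subseteq \widetilde I_{i_t}} \Fam^+(\wI_j, B^c),
\]
and subadditivity of extremal width (cf. \cite[\S A.1]{DL22}) yields
\[
\Width^+(\widetilde I_{i_t}, B^c) \;\le\; \sum_{\wI_j \subseteq \widetilde I_{i_t}} \Width^+(\wI_j, B^c) \;\le\; \mathbf{k}\cdot O(1) \;=\; O(\mathbf{k}),
\]
which is the desired conclusion.

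There is essentially no obstacle here: the claim is a routine packaging step, preparing the stage for the subsequent amplification argument (as in Claim~4 of \cite[\S 8.2]{DL22}), where one trades the $O(\mathbf{k})$ bound on each $\widetilde I_{i_t}$ against the strong Type~\ref{case:relI:2} wide families $\Width_{\blambda}(\wI_{i_t}^m) \geq C_\bkappa K$ to produce the amplified grounded interval $J$ of Alternative~\ref{SLconclusion:1}. The only minor care required is verifying uniformity of the $O(1)$ constant across the sub-intervals and ensuring that the combinatorial bound $|\widetilde I_{i_t}| \le \mathbf{k} \cdot 2\length_m$ remains compatible with the well-groundedness needed later, both of which are immediate from the construction.
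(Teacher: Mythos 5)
Your proof is correct and is essentially the paper's argument: the paper derives the claim in one sentence from the preceding display $\Width^+(\wI_j, B^c) = O(1)$ and the observation that $\widetilde I_{i_t}$ is covered by at most $\mathbf{k}$ of the intervals $\wI_j$, which is precisely the decomposition and subadditivity step you spell out.
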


With the modifications above, the argument proceeds the same as in \cite[Lemma 8.6]{DL22} (see, more precisely, Claim 5 and Claim 6 in \cite[\S 8.2]{DL22}), and we conclude that the alternative \ref{SLconclusion:1} of Lemma \ref{lem:Hive Lemma} must hold.
\end{proof}

\section{Calibration Lemma and Combinatorial Localization}\label{sec:calibration}
In this section, we explain how the Calibration Lemma \cite{DL22} for quadratic polynomials is modified for $\psi^\bullet$ maps, see \S\ref{ss:calibrationlem}. We will then introduce the Combinatorial Localization Lemma in \S\ref{ss:comblocal}, which will be an essential ingredient for the equidistribution at shallow levels \S\ref{s:Proof}.

Let us remark that the proof of \cite[Calibration Lemma 9.1]{DL22} relies on the following steps:
\begin{enumerate}[label=\text{(\Alph*)},font=\normalfont,leftmargin=*]
    \item\label{cal_lmm:step:1} univalent pushforwards to spread around $\Fam_{\lambda,\div, m}^+(I)\cup \Fam^{+,\ver}(I)$;
    \item \label{cal_lmm:step:2} removing combinatorial buffers of width $O(K)$ to produce almost invariant families; 
    \item \label{cal_lmm:step:3}  the ``$1\le 1\oplus 1=0.5$" argument illustrated in \cite[Figure~16]{DL22}; it implies that an almost invariant family traverses a bubble in a localized pattern; this leads to a boost in degeneration towards a deeper scale~\cite[Claim 6 in \S9]{DL22}.
\end{enumerate}
In the case of quadratic polynomials, spreading around in~\ref{cal_lmm:step:1} does not create wide vertical families. For $\psi^\bullet$-ql maps, spreading around in~\ref{cal_lmm:step:1} leads to Conclusion \ref{Cal:Lmm:Concl:a} of Calibration Lemma~\ref{lmm:CalibrLmm}. Such a conclusion can only occur at shallow levels. On deep levels, Calibration Lemma~\ref{lmm:CalibrLmm} takes a form similar to the quadratic case, see Remark~\ref{rem:CalLmm}. 

Arguments~\ref{cal_lmm:step:2} and~\ref{cal_lmm:step:3} follow the lines of the proof of \cite[Calibration Lemma 9.1]{DL22} and roughly lead to Conclusions~\ref{Cal:Lmm:Concl:b} and~\ref{Cal:Lmm:Concl:c} of Calibration Lemma~\ref{lmm:CalibrLmm} respectively.

\subsection{Calibration Lemma}\label{ss:calibrationlem}
We follow the same notation as in \cite[\S 9]{DL22}. As we have mentioned, the semi-equidistribution Alternative~\ref{Cal:Lmm:Concl:a} is a new addition compared to the case of quadratic polynomials; see also Reamrk~\ref{rem:CalLmm}. We stress that this semi-equidistribution is valid for \emph{all} combinatorial level $m$ intervals.

\begin{caliblem}
\label{lmm:CalibrLmm}
There is an absolute constant $\bchi> 1$ such that the following holds for every $\lambda\ge 10$. Let $\wZ^{m+1}$ be a geodesic pseudo-Siegel disk and consider an interval $T\subset\partial Z$ in the diffeo-tiling $\Dbb_m$.

Assume that there exists an interval \[I\subset T\sp\sp 
\text{ with }\sp\sp \length_{m+1} \le |I|\le \length_{m}
\]
such that
\begin{multline}\label{eq:CalLemm:chiK}
\hspace{2cm}\bchi K := \Width^{+,\per}_{\lambda, \div,m}(I)+\Width^{+,\ver}(I) = \\ =O(1)+\Width^{+}_{\lambda}(I)-\Width^{+,\per}_{\lambda, \ext, m}(I)\gg_{\lambda} 1.\hspace{2cm}\end{multline}
Then 
\begin{enumerate}[label=\text{(\Roman*)},font=\normalfont,leftmargin=*]

\item \label{Cal:Lmm:Concl:a} either the following {\bf~semi-equidistribution} of vertical degeneration holds: for every level $m$ combinatorial interval $J\subset\partial Z$, we have \[\Width^{+,\ver}(J)\ge  K;\] in particular, we have \[K_F\ge K \qq_{m+1}; \text{ or}\] 

\item \label{Cal:Lmm:Concl:b} there is a level-$(m+1)$ combinatorial interval $I$ with $\Width^+_{\lambda}(I) \geq K$; or

\item \label{Cal:Lmm:Concl:c} there is an interval $I'\subset \partial Z,\sp |I'|<\length_{m+1}$ grounded rel $\wZ^{m+1}$ with $\Width^+_{\lambda}(I') \geq \bchi^{1.5} K$.
\end{enumerate}
\end{caliblem}

\begin{rem}\label{rem:CalLmm} If $K < K_F/\qq_{m+1}$, then the statement takes form similar to the \cite[Calibration Lemma 9.1]{DL:sector bounds} for quadratic polynomials: only Item~\ref{Cal:Lmm:Concl:b} and Item~\ref{Cal:Lmm:Concl:c} appear in the outcome.
\end{rem}

\begin{rem}[Almost Invariance in Item~\ref{Cal:Lmm:Concl:a}]\label{rem:CalLmm:AI}
In Item~\ref{Cal:Lmm:Concl:a}, the proof provides a stronger conclusion, see~\S\ref{sss:prf:CL:semi:Equid}: for every interval $T_s\in \Dbb_m$ in the level $m$ diffeo-tiling, there is a subinterval  $I^\new_s\subset T_s$ with $|I^\new_s|\gg \length_{m+1}$ and a vertical lamination $\RR_{I^\new_s}$ emerging from $I^\new_s$ with $\Width(\RR_{I_s^\new})=\bchi K -O(\bchi^{0.9} K)$ such that the $\RR_{I^\new_s}$ are almost invariant up to $O(K)$  under $f^i$ for all $i\le \qq_{m+1}$.
\end{rem}

\begin{proof} As we have mentioned at the beginning of this section, the main ingredients are~\ref{cal_lmm:step:1},~\ref{cal_lmm:step:2}, and~\ref{cal_lmm:step:3}; they roughly lead to Conclusions~\ref{Cal:Lmm:Concl:a},~\ref{Cal:Lmm:Concl:b}, and~\ref{Cal:Lmm:Concl:c} respectively.

The proof below is organized as a sequence of items; each item represents either a notion or a step in the proof. Conclusion~\ref {Cal:Lmm:Concl:b} is encoded in Step~\ref{proofCali:enumer:4}.  Conclusion~\ref {Cal:Lmm:Concl:a} and Conclusions~\ref {Cal:Lmm:Concl:c} are justified in \S\ref{sss:prf:CL:semi:Equid}  and \S\ref{sss:prf:CL:concl:3} respectively.

Overall, the proof follows the case of quadratic polynomials; in many items, we provide a general outline together with a reference to \cite[\S 9]{DL22} for detailed calculation estimates.

\subsubsection{Lamination $\RR$}\label{sss:prf:CL} In Item~\ref{proofCali:enumer1} below, we specify a lamination~$\RR$ representing the degeneration~\eqref{eq:CalLemm:chiK}. The lamination $\RR$ can be viewed as a union of at most $3$ rectangles, see Item~\ref{proofCali:enumer2}.   In Item~\ref{proofCali:enumer:4}, we assume that Conclusion~\ref{Cal:Lmm:Concl:b} of the Calibration Lemma does not hold. Consequently, the sublaminations $\RR_J$ of $\RR$ emerging from $J\subset I$ are almost invariant up to $O(K)$-error, see Item~\ref{proofCali:enumer:AlmInvar}.

\begin{enumerate}[label=\text{(\arabic*)},font=\normalfont,leftmargin=*]
     \item\label{proofCali:enumer1} We select a lamination $\RR \subset \Fam^{+,\per}_{\lambda, \div,m}(I)\cup \Fam^{+,\ver}(I)$ representing most of the width in the selected families:
    \[\Width(\RR)=\Width^{+,\per}_{\lambda, \div,m}(I)+\Width^{+,\ver}(I)- O(1)=\bchi K  -O(1) .\]

    \item\label{proofCali:enumer2} Here and later, by choosing regions bounded by the left- and right-most curves in the same homotopy class, we can replace $\RR$ by a union of at most three rectangles \[\RR^{\per,-}, \RR^\ver, \RR^{\per,+}\ \subset \ \Fam^{+,\per}_{\lambda, \div,m}(I)\cup \Fam^{+,\ver}(I)\] with
   \begin{equation}
       \label{eq:proofCali:enumer2} \Width(\RR^{\per,-})+\Width(\RR^{\ver})+ \Width(\RR^{\per,+})\ge \Width(\RR) -O(1),
   \end{equation} 
    where $\RR^{\per,-}$ represents peripheral curves going left, $\RR^{\ver}$ represents vertical curves, and $\RR^{\per,+}$ represents peripheral curves going right. See \cite[A.1.8]{DL22} and the references within for more details. 

   \item Below we assume that $m>-1$; i.e., there are $\qq_{m+1}>1$ intervals in the diffeo-tiling $\Dbb_m$. The case $m=-1$ is similar and only requires minor adjustments in notation -- there is only one interval in $\Dbb_{-1}$. We enumerate all intervals $T_s\in \Dbb_m$ from left to right so that $T_{s-1}$ is on the left of $T_s$. We write $T=T_0$.

\item\label{proofCali:enumer:RR_J} Following~\cite[\S~9.1.2]{DL22}, for an interval $J\subset I$, we denote by $\RR_J$ the sublamination of $\RR$ consisting of curves in $\RR$ emerging from $J$.
    
    \item\label{proofCali:enumer:4} As in~\cite[\S9.1.2]{DL22}, we assume that Item~\ref{Cal:Lmm:Concl:b} does not hold:
     \begin{equation}
         \label{eq:assump:proofCali:enumer:4} \Width(\RR_J)\le K\qquad\qquad \text{ for every $J\subset I$ with $|J|=\length_{m+1}$}.
     \end{equation}
     
    \item\label{proofCali:enumer:O(K)-buff} By a \emph{combinatorially substantial $O(K)$-buffer} of $\RR$, we mean a buffer of the form $\RR_J$ (i.e., $I\setminus J$ is connected) with \[ \Width(\RR_J)\ge 1, \qquad \Width(\RR_J)=O(K),\qquad \text{ and } \qquad |J|\ge \length_{m+1}.\] By removing combinatorially substantial $O(K)$-buffers, we can assume that the (new) interval $I$ is combinatorially separated from the endpoints of $T$. 
    
    \item\label{proofCali:enumer:AlmInvar}  By the same argument as \cite[Claim 3 in \S~9]{DL22}, the lamination $\RR_J$ with $J\subset I$ is \emph{almost invariant} up to $O(K)$ under $f^{\qq_{m+1}}$: the univalent pushforward of $\RR_J-O(K)$ under $F^{\qq_{m+1}}$ overflows $\RR_J$, where the latter is replaced by three rectangles as in Item~\ref{proofCali:enumer2}. 
    \\ More precisely, there is a sublamination \[\RR^\new_J\subset \RR_J\qquad\text{ with } \qquad \Width(\RR_J^\new)\ge \Width(\RR_J)-O(K),\]
    and there are three rectangles 
    \[\RR_J^{\per,-},\ \RR_J^\ver,\ \RR_J^{\per,+}\ \subset \ \Fam^{+,\per}_{\lambda, \div,m}(I)\cup \Fam^{+,\ver}(I)\] emerging from $J$ and obtained from (and replacing) $\RR_J$ by selecting appropriate left- and right-most curves as in Item~\ref{proofCali:enumer2}
   \[
       \Width(\RR^{\per,-}_J)+\Width(\RR^{\ver}_J)+ \Width(\RR^{\per,+}_J)\ge \Width(\RR_J) -O(1),
   \] 
        such that the univalent pushforward $F^{\qq_{m+1}}_* \RR^\new_J$ overflows their union $\RR_J^{\per,-}\sqcup \RR_J^\ver\sqcup \RR_J^{\per,+}$.
       \end{enumerate}

\begin{proof}[Comment to Item~\ref{proofCali:enumer:AlmInvar}] The required lamination $\RR^\new_J$ is constructed in two steps. First, we remove from $\RR_J$ two combinatorially substantial $O(K)$-buffers, see Item~\ref{proofCali:enumer:O(K)-buff}; the result is a lamination $\RR_{J^\new}$ with $J^\new\subset J$ such that $f^{\qq_{m+1}}(J^\new)\Subset J$ and such that there are still substantial families (of width $\ge 1$) emerging from both components of $J\setminus J^\new$. Therefore, the pushfoward $F^{\qq_{m+1}}_* \RR_{J^\new}$ of $\RR_{J^\new}$ (see~\eqref{eq:dfn:push:RR:J:new} below) must follow the lamination $\RR_J$: by removing $O(1)$-buffers from $F^{\qq_{m+1}}_* \RR_{J^\new}$, the remaining lamination $F^{\qq_{m+1}}_* \RR_{J^\new}-O(1)$ overflows the rectangles $\RR_J^{\per,-}\sqcup \RR_J^\ver\sqcup \RR_J^{\per,+}$. We can now remove preimages of these $O(1)$ buffers in $\RR_{J^\new}$ and construct $\RR_J^\new$ as $\RR_{J^\new}-O(1)$. We refer to \cite[Claim 3 in \S~9]{DL22} for routine details.

Let us briefly recall the definition of the univalent pushforward $F^{\qq_{m+1}}_* \RR_{J^\new}$ from~\S\ref{subsubsec:univalentpush}. Every curve $\gamma\in \RR_{J^\new}$, has a front-subcurve $\gamma'$ such that $\gamma'$ lifts under $\iota^{\qq_{m+1}}$ to $\tilde \gamma'\subset U^{\qq_{m+1}}$. By construction, 
\[\Width(\{\tilde \gamma'\})=\Width(\{\gamma'\})\ge \Width(\{\gamma\})=\Width(\RR_{J^\new}).\]
By removing $O(1)$ curves from the family $\{\tilde \gamma'\}$, the remaining curves are mapped univalently:
\begin{multline}
\label{eq:dfn:push:RR:J:new}\qquad\qquad
F^{\qq_{m+1}}_* \RR_{J^\new} \ \coloneqq \ F^{\qq_{m+1}}\big(\{\tilde \gamma'\} -O(1)\big), \\   \Width\Big(F^{\qq_{m+1}}_* \RR_{J^\new}\Big)=\Width\big(\{\tilde \gamma'\}\big)-O(1).\qquad\qquad
\end{multline}   
\end{proof}

\subsubsection{Spreading around $\RR-O(K)$ into $\RR_i$} In \S\ref{sss:prf:CL}, we studied the invariance properties of the lamination $\RR$ under the first return map $F^{\qq_{m+1}}$. Below, we spread around $\RR-O(K)$ and restate items from~\S\ref{sss:prf:CL} for the respective images $\RR_s$. The almost $F^\qq_{m+1}$-invariance of $\RR_J$ up to $O(K)$-error (see Item~\ref{proofCali:enumer:AlmInvar}) is refined in Item~\ref{proofCali:enumer:AI:refine} to the respective almost invariance under any $F^i$ with $i\le \qq_{m+1}$.

     \begin{enumerate}[label=\text{(\arabic*)},font=\normalfont,leftmargin=*,start=8]
    \item We spread $\RR-O(K)$ around by $f^i$ with $i<\qq_{m+1}$ to construct laminations $\RR_s$ emerging from every interval $T_{s}\in \Dbb_m$ in the diffeo-tiling (see \S~\ref{ss:push forw curves}), where $s=s(i)$. Here $-O(K)$ represents combinatorially substential buffers (Item~\ref{proofCali:enumer:O(K)-buff}) so that the lamination $\RR_s$ can be sent back to $\RR$ (via $F^{\qq_{m+1}-i}$) using Item~\ref{proofCali:enumer:O(K)-buff}. We have 
    \begin{equation}
        \label{eq:RR_s_vs_RR} \Width(\RR_s) = \Width(\RR)-O(K)=\bchi K-O(K).
    \end{equation}
We write $I_s\coloneqq f^i(I)$; all curves in $\RR_s$ are originated on $I_s$.
    
       \item\label{proofCali:enumer:rect:RR_s} As in Item \ref{proofCali:enumer2}, we can replace $\RR_s$ with the union of at most $3$ rectangles $\RR_s^{\per,-}\sqcup\RR^\ver\sqcup \RR^{\per,+}$ with 
    \[\Width(\RR_s^{\per,-})+\Width(\RR^\ver)+\Width(\RR_s^{\per,-})=\Width(\RR_s)-O(1)= \bchi K -O(K).\]

  \item  As in Item~\ref{proofCali:enumer:RR_J}, for an interval $J\subset I_s$, we denote by $\RR_{J}$ the sublamination of $\RR_s$ consisting of curves starting at $J$. 

 \item Since families emerging from subintervals in $I_s$ can be univalently be brought back to $\RR$, the assumption~\eqref{eq:assump:proofCali:enumer:4} in Item~\ref{proofCali:enumer:4} implies
     \begin{equation}
         \label{eq:assump:proofCali:enumer:4:2} \Width(\RR_J)\le K+O(1)\qquad\qquad \text{ for every $J\subset I_0$ with $|J|=\length_{m+1}$}.
     \end{equation}

\item \label{proofCali:enumer:AI:refine} Item~\ref{proofCali:enumer:AlmInvar} can be now refined as follows. Consider $J\subset I_s$ and its image $f^{i}(J)$ under $i\le \qq_{m+1}$. Then most of the interval $f^{i}(J)$ is in some $I_q$; i.e., $f^{i}(J)\setminus I_q$ has length $\le \length_{m+1}$. We write $J_q\coloneqq f^{i}(J)\cap I_q$.

\noindent Then the univalent pushforward of $\RR_J-O(K)$ under $F^{i}$ is $\RR_{J_q}-O(K)$. In particular, $F^{i}_*(\RR_J-O(K))$ overflows the three rectangles representing $\RR_{J_q}-O(K)$; compare with Item~\ref{proofCali:enumer2}.

\noindent The justification is the same as for Item~\ref{proofCali:enumer:AlmInvar} by factorizing 
\begin{multline*}
\qquad\qquad  \RR_J-O(K)\ \overset{F^{\qq_{m+1}}_*}\leadsto\  \RR_J-O(K) \qquad \text{into}\\ \RR_J-O(K)  \overset{F^{i}_*}\leadsto\  \RR_{J_q}-O(K) \ \overset{F^{\qq_{m+1}-i}_*}\leadsto\   \RR_J-O(K). \qquad 
\end{multline*} 
\end{enumerate}

\subsubsection{Conclusion~\ref{Cal:Lmm:Concl:a}: semi-equidistribution of the vertical degeneration}\label{sss:prf:CL:semi:Equid} In Item~\ref{proofCali:enumer:most:vertic} below, we will assume that the vertical part in every $\RR_s$ is $\bchi K-O(\bchi^{0.9} K)$.  Then, up to $O(\bchi^{0.9} K)$, the vertical part is almost invariant under any $f^i$; this leads to a required semi-equidistribution. 

\begin{enumerate}[label=\text{(\arabic*)},font=\normalfont,leftmargin=*,start=13]
  
 \item \label{proofCali:enumer:most:vertic} For all items in~\S\ref{sss:prf:CL:semi:Equid}, we assume that the vertical part of every $\RR_s$ is at least $\bchi K-O(\bchi^{0.9} K)$; i.e., for rectangles $\RR^\ver_s$ as in Item~\ref{proofCali:enumer:rect:RR_s}, we have:
 \[\Width(\RR^\ver_s)= \bchi K-O(\bchi^{0.9} K)\qquad\qquad \text{ for every $s$}.\]
The opposite assumption is in Item~\ref{proofCali:enumer:many:per}.

\item \label{proofCali:enumer:most:vertic:2} From every $\RR_s\equiv \RR_{I_s}$, we remove buffers of width $\sim \bchi^{0.9}K$ to dominate the $O(\bchi^{0.9}K)$ in Item~\ref{proofCali:enumer:most:vertic} so that the remaining laminations $\RR_{I_s^\new}$ are vertical. 

\noindent By removing additional $O(K)$ buffers and using Item~\ref{proofCali:enumer:AI:refine}, we can assume that the $\RR_{I^\new_s}$ are almost invariant up to $O(K)$: for every $i\le \qq_{m+1}$ and $s$, there is a $q$ such that
\begin{itemize}
    \item the symmetric difference between $f^{i}(I^\new_s)$ and $I_q^\new$ has length $\le \length_{m+1}$;
    \item the univalent pushforward of $\RR_{I_s^\new}-O(K)$ under $F^i_*$ is $\RR_{I_q^\new}-O(K)$.
\end{itemize}

\item \label{proofCali:enumer:most:vertic:3}Consider a level $m$ combinatorial interval $J$. It intersects one or two neighboring $I^\new_s, I_{s+1}^\new$. Because of almost invariance (Item~\ref{proofCali:enumer:most:vertic:2} ), we have
\begin{multline*}
   \qquad \Width^\ver(J)\ge\Width\Big(\RR_{J\cap I_s^\new}\Big)+\Width\Big(\RR_{J\cap I_{s+1}^\new}\Big)=\\ \Width\Big(\RR_{J\cap I_s^\new}\Big)+ \Width\Big(\RR_{ I_{s+1}^\new}\Big) - \Width\Big(\RR_{I_{s+1}^\new\setminus  J}\Big) - O(K) =\\
   \Width\Big(\RR_{J\cap I_s^\new}\Big) +\bchi K -O(\bchi^{0.9} K)- \Width\Big(\RR_{I_{s+1}^{\new}\setminus J}\Big)= \\ =\bchi K -O(\bchi^{0.9} K),\qquad
\end{multline*}
because $\Width\Big(\RR_{J\cap I_s^\new}\Big)=\Width\Big(\RR_{ I_{s+1}^{\new}\setminus J}\Big)+O(K)$ by Item~\ref{proofCali:enumer:AI:refine}.
This justifies a required semi-equidistribution.
\end{enumerate}

\subsubsection{The `$1 \leq 1\oplus 1 = 0.5$' argument} From now on, the remaining arguments are the same
as in the quadratic polynomial setting. In the items below, we will justify the existence of rectangle $\mathcal P$ submerging into the pseudo-bubble $\wZ_{s+1}$ as shown on Figure~\ref{Fig:Fig_B_Bstar} such that $\Width(\mathcal P)\succeq \bchi^{0.9} K$ and the vertical boundary $B=\partial ^{h,1}\mathcal P$ of $\mathcal P$ is combinatorially small.

\begin{enumerate}[label=\text{(\arabic*)},font=\normalfont,leftmargin=*,start=16]
 \item \label{proofCali:enumer:many:per}  We assume that Item~\ref{proofCali:enumer:most:vertic} does not hold. Then the width of at least one of the rectangles in $\big\{\RR^{\per,\pm}_s\big\}_s\equiv \big\{\RR^{\per,+}_s, \RR^{\per,-}_s\big\}_s$ is at least $\bchi^{0.9} K$.
  
    \item  Since wide rectangles do not intersect, the rectangles $\RR^{\per,\pm}_s$ block each other. Moreover, if a substantial part of $\RR^{\per,+}_s$ goes above $I_{s+1},I_{s+2},\dots I_{s+k}$, then the rectangles $\RR^\ver_{s+1},\RR^\ver_{s+2},\dots \RR^\ver_{s+k}$ are empty. Similar statement holds if the ``$+$'' is replaced with ``$-$''. Thus, we can find a rectangle $\RR^{\per,\pm}_s$ 
   that goes to $T_{s\pm 1 }\cup f^{\qq_{m+1}}\big(T_{s\pm 1 }\big)$ and has width $\bchi^{0.9} K$.
   
   \noindent For the definiteness, we assume that the $\RR^{\per,+}_s$ goes to $T_{s+ 1 }\cup f^{\qq_{m+1}}\big(T_{s+ 1 }\big)$ and has width $\bchi^{0.9} K$.

   \item\label{proofCali:enumer:11} Now by the same `$1 \leq 1\oplus 1 = 0.5$' argument as in \cite[Claim 5 in \S~9]{DL22}, there is a subrectangle $\mathcal P\subset \RR_s^{\per,+}$ with $\Width(\mathcal P)\succeq \bchi^{0.9} K$ such that  $B=  \partial^{1,h}\mathcal P$ has length $|B| \leq \frac{\length_{m+1}}{5}$, see \cite[Figure 26]{DL22} for illustration.
\end{enumerate}
\begin{proof}[Comments to Item~\ref{proofCali:enumer:11}]
  Write $\mathcal P\coloneqq \RR_s^{\per,+}$, and consider the lifts $\mathcal P_-,\mathcal P_+$ of $\mathcal P$ with respect to $\partial^{h,0} \mathcal P$ and $\partial^{h,1} \mathcal P$ respectively as illustrated on \cite[Figure 26]{DL22}. More precisely, since $\mathcal P-O(1)$ is within the coastal zone of $\overline Z$, the rectangle $\mathcal P$ can be lifted by $f^{\qq_{m+1}}$ and then mapped by $\iota^{\qq_{m+1}}$ back to $V$, see Lemma~\ref{lem:iota|CC_k}. This justifies $\mathcal P_-,\mathcal P_+ \subset V$.

  By Item~\ref{proofCali:enumer:AI:refine}, $\mathcal P- O(K)$ overflows its lift $\mathcal P_-$.  Item~\ref{proofCali:enumer:11} states that a substantial part of $\mathcal P$ lands at a subinterval of $\partial^{h,1}\mathcal P$ with length $\sim \length_{m+1}$. Assume this assertion is incorrect. Then for a small $\kappa>0$, we can remove combinatorially substantial $O(\kappa\ \Width(\PP))$-buffers from $\mathcal P$ (cf., Item~\ref{proofCali:enumer:O(K)-buff}) so that the remaining curves in $\PP$ also overflow $\mathcal P_+$. This contradicts the Gr\"otzsch inequality: most of the $\mathcal P$ can not overflow both of its preimages $\mathcal P_-,\mathcal P_+$.
    
\end{proof}

\subsubsection{Conclusion~\ref{Cal:Lmm:Concl:c}: amplified degeneration on deeper scales} \label{sss:prf:CL:concl:3}  Finally, we will detect a lamination $\mathcal L$ in $\mathcal P\setminus \wZ_{s+1}$ with an amplified degeneration and apply $F^{\qq_{m+1}}_*$ to $\mathcal L$ to finish the proof of the Calibration Lemma.

\begin{figure}
   \includegraphics[width=12cm]{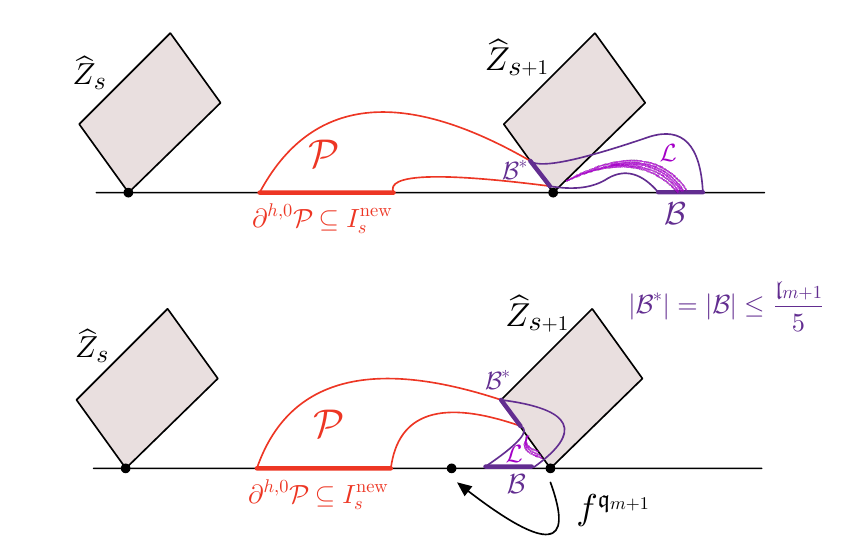}
   \caption{Illustration to the construction of a lamination $\mathcal L$. The rectangle $\mathcal P$ (red and purple) overflows its lift (the red part) before submerging into the pseudo-bubble $\wZ_{s+1}$. Localizing the submergence, we construct a lamination $\mathcal L\subset \mathcal P$ (light purple) outside of $\wZ_{s+1}$. There might be several configurations. Observe that if $\mathcal B$ intersects $T_s$ (bottom figure), then the intersection is combinatorially close to $\wZ_{s+1}$.}
   \label{Fig:Fig_B_Bstar}
\end{figure}

\begin{enumerate}[label=\text{(\arabic*)},font=\normalfont,leftmargin=*,start=19]

   \item\label{proofCali:enumer:12} Let $\filled_m\equiv K_{\qq_{m+1}}=f^{-\qq_{m+1}}(\overline Z)$, which we identify as a subset of $V$ via \ref{dfn:psib-ql:4:4} in the definition of $\psi^\bullet$-ql Siegel map (see \S \ref{ss:psi bullet:defn}). Let $Z_{s+1}$ be the bubble at $a \in T_s \cap T_{s+1}$, i.e., the component of $\filled_m - \overline{Z}$ that attached to $a$. Then there exists an interval $B^*$ on $\partial Z_{s+1}$ so that $f^{\qq_{m+1}}(B^*) = B$, and up to $O(K)$, any vertical leaves in $\mathcal{P}$ passes through $B^*$. 
   \item We now replace $\overline Z_{s+1}$ with the associated pseudo-bubble $\wZ_{s+1}$ and replace $B^*$ with its grounded proejction  $(B^*)^\grnd $ onto $\partial \wZ_{s+1}$. We remove $O(1)$ curves in $\mathcal P$ that do not enter $\wZ_{s+1}$ through $(B^*)^\grnd $.\\
   \noindent Let $Q \subseteq (B^*)^\grnd$ be the interval bounded by the first intersection of the two left- and right-most vertical leaves in $\mathcal{P}$. Let $\gamma$ be a vertical leaf of $\mathcal{P}$. We define $\gamma'$ as the shortest subarc connecting $Q$ to $\partial^{h,1}\mathcal{P}$, and set 
   $$
   \FF = \{\gamma': \gamma \in \FF(\mathcal{P})\}.
   $$

   \item Since $\mathcal P$ with $\Width(\mathcal P)\succeq \bchi^{0.9} K$ efficiently (up to $O(K)$) overflows its lift before $\wZ_{s+1}$, the family $ \FF$ (the part of $\mathcal P$ after $Q$) is extremely wide. We apply the argument in  \cite[Claim 6 in \S~9]{DL22} (i.e.,~ the Gr\"otzsch inequality) to conclude that \[\Width(\FF) \geq \bchi^{1.7}K.\]

   \item As in~\cite[proof after Claim 6 in \S~9]{DL22}, we replace $\mathcal F$ with a rectangle and apply \cite[Lemma 6.9]{DL22} to $\mathcal F$ to construct the lamination $\mathcal L \subset \mathcal P$ outside of $\wZ_{s+1}$ such that $\mathcal L$ emerges from a very small (of length $\ll \frac{\length_{m+1}}{\lambda}$) interval very close to $Q$ and such that 
   \begin{itemize}
   \item either every leaf of $\mathcal{L}$ lands back at $\partial \wZ_{s+1}$ with the $\lambda$-separation from the interval where $\mathcal L$ starts; or
   \item or every leaf of $\mathcal{L}$ lands at $\partial^{h,1}\mathcal P\subset \partial Z$. Since the combinatorial distance between $B$ and $B^*$ is $\ge \frac{2\length_{m+1}}{5}$, we also have the $\lambda$-separation in this case. (Also, $\partial^{h,1}\mathcal P$ can be replaced with its grounded projection on $\wZ^{m}$.)
   \end{itemize}
    \item Finally, we apply the univalent pushforward $F^{\qq_{m+1}}_*$ to $\mathcal L$. The result is a required degeneration for Conclusion~\ref{Cal:Lmm:Concl:c}.
\end{enumerate}
\end{proof}

\subsection{Combinatorial Localization Property} \label{ss:comblocal} 
The name ``Combinatorial Localization" is referred to Item~\ref{item:repli:3} of Lemma~\ref{lem:replication} below, where the original degeneration $K=\Width_3^+(I) \gg 1 $ is localized on a grounded interval $J$ with $|J|<\length_{j}<\frac{|I|
}{T}$.

Lemma~\ref{ss:comblocal} is meant to be used near the transitional level where there is a level $m$ combinatorial interval $I$ witnessing a big degeneration $K=\Width_3^+(I)\gg 1$ with $K\succeq \ K_F/\qq_{m+1}$. Such degeneration is spread around and then, if applicable, localized and calibrated. The result is either Equidistribution (Item~\ref{item:repli:1}), or Combinatorial Localization (Item~\ref{item:repli:3}), or a big external family (Item~\ref{item:repli:2}).

We remark that the equidistribution property in Item~\ref{item:repli:1} can only be ruled out with a ``global external input'', see~\cite{DL:HypComp}. We also remark that if $\wZ^m$ were constructed in the ``optimal way'' (to consume most of the $\Width^{+,\per}_{\lambda, \ext, m}$-degeneration), then Item~\ref{item:repli:2} does not occur, which justifies the name of the lemma.

\begin{lem}[Equidistribution or Combinatorial Localization]\label{lem:replication}
    For every $\lambda, T \geq 1$, there exists $K_{T, \lambda} \gg 1$ so that the following holds.

    Let $I \subseteq \partial Z$ be a level $m$ combinatorial interval with $\Width^{+}_5(I) = K \geq K_{T, \lambda}$. Assume that $\wZ^m$ is a pseudo-Siegel disk. Then 
    \begin{enumerate}[label=\text{(\roman*)},font=\normalfont,leftmargin=*]
        \item\label{item:repli:1} either there is an {\bf~equidistribution} of the degeneration: for every combinatorial  interval $J \subset \partial Z$ of level $m$, we have
        $$
        \Width^{+, \ver}(J) \asymp K;
        $$
        \item\label{item:repli:2} or there is a big {\bf~external degeneration}: there exists an interval $J \subseteq \partial Z$ grounded rel $\wZ^{j+1}$ with $\length_{j+1} \le |J| < \length_j\le \length_{m}$ and
        $$
        \Width^{+,\per}_{\lambda, \ext, j}(J) \succeq K,  
        $$
        \item\label{item:repli:3} or there is a {\bf~combinatorial localization}: there exists an interval $J \subseteq \partial Z$ grounded rel $\wZ^j$ with $|J| < \length_j \leq \frac{\length_m}{T}$ and 
        $$
        \Width^+_\lambda(J) \succeq K.
        $$
    \end{enumerate}
\end{lem}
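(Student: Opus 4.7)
The plan is to iterate the Calibration Lemma~\ref{lmm:CalibrLmm}, pushing the degeneration to progressively deeper levels, with the ``external dominates'' case handled by the Log-Rule of Theorem~\ref{thm:parabolicfjords}. I choose $K_{T,\lambda}$ large enough for the iteration to sustain through $O(\log T)$ levels of possible width decay.

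\emph{Step 1 (decomposition).} I begin by splitting
\[
  \Width^+_\lambda(I) = \Width^{+,\ver}(I) + \Width^{+,\per}_{\lambda,\ext,m}(I) + \Width^{+,\per}_{\lambda,\div,m}(I) + O(1),
\]
so that $\Width^+_\lambda(I) \succeq K$ forces at least one summand to carry a definite fraction of $K$. If the external summand $\Width^{+,\per}_{\lambda,\ext,m}(I)$ does so, then Theorem~\ref{thm:parabolicfjords} applied inside the parabolic fjord containing $I$ produces a subinterval $J$ with $\length_{m+1} \leq |J| < \length_m$ whose external $(\lambda,\ext,m)$-width is comparable to $K$, realizing conclusion~(ii) at level $j = m$. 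Otherwise the hypothesis of the Calibration Lemma is satisfied (with $\bchi K \succeq K$, up to a constant).

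\emph{Step 2 (iterative Calibration).} Setting $(m_0, I^{(0)}, K_0) := (m, I, K)$, I apply the Calibration Lemma to $I^{(k)}$ at level $m_k$, handling its three outcomes as follows:
\begin{itemize}
  \item Outcome~\ref{Cal:Lmm:Concl:a} yields semi-equidistribution at level $m_k$: every level-$m_k$ combinatorial interval carries $\Width^{+,\ver} \succeq K_k$. Since $m_k \geq m$ and $\Width^{+,\ver}$ is monotone in the base interval, this upgrades to conclusion~(i) at level $m$.
  \item Outcome~\ref{Cal:Lmm:Concl:b} produces a level-$(m_k{+}1)$ combinatorial interval with $\Width^+_\lambda \succeq K_k$. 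I set $(m_{k+1}, I^{(k+1)}, K_{k+1})$ accordingly and iterate, re-running Step~1 on $I^{(k+1)}$ (which may now trigger the ``external dominates'' branch at the deeper level).
  \item Outcome~\ref{Cal:Lmm:Concl:c} produces a grounded-rel-$\wZ^{m_k+1}$ interval $I^{(k+1)}$ with $|I^{(k+1)}| < \length_{m_k+1}$ and $\Width^+_\lambda(I^{(k+1)}) \succeq \bchi^{1.5} K_k$. If $\length_{m_k+1} \leq \length_m/T$, this is conclusion~(iii). Otherwise I pick $m_{k+1}$ with $\length_{m_{k+1}+1} \leq |I^{(k+1)}| < \length_{m_{k+1}}$ and iterate.
\end{itemize}

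\emph{Step 3 (termination).} Each iteration strictly increases the level index. After at most $\lceil \log_2 T \rceil + O(1)$ iterations, the current level is deep enough that $\length_{m_k+1} \leq \length_m/T$, so any subsequent outcome~\ref{Cal:Lmm:Concl:c} yields~(iii) directly. A persistent chain of outcomes~\ref{Cal:Lmm:Concl:b} cannot avoid one of the other conclusions forever: by Amplification Theorem~\ref{thm:SpreadingAround}, a long chain of combinatorial intervals of total width $\geq K$ at consecutive levels forces $K_F \succeq K \cdot (\text{depth})$, which in turn triggers outcome~\ref{Cal:Lmm:Concl:a}, or else amplifies the width and produces outcome~\ref{Cal:Lmm:Concl:c}.

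\emph{Main obstacle.} The main technical difficulty is tracking $K_k$ across the iteration. Outcome~\ref{Cal:Lmm:Concl:b} merely preserves $K_k$ up to bounded multiplicative factors (coming from the removal of combinatorially substantial buffers), while outcome~\ref{Cal:Lmm:Concl:c} amplifies it by $\bchi^{1.5}$. To ensure $K_k$ stays above the Calibration threshold through a run of $\sim \log_2 T$ iterations dominated by outcome~\ref{Cal:Lmm:Concl:b}, $K_{T,\lambda}$ must be chosen to grow (roughly geometrically) in $T$. A secondary subtlety is that the combinatorial status of $I^{(k)}$ can toggle between ``level-$m_k$ combinatorial'' and ``grounded rel $\wZ^{m_k+1}$ with $|I^{(k)}| < \length_{m_k+1}$'' from one step to the next, so the Calibration Lemma's size hypothesis $\length_{m_k+1} \leq |I^{(k)}| \leq \length_{m_k}$ must be re-verified, with $m_k$ re-indexed when necessary, at the start of each iteration.
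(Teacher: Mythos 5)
Your proposal takes a genuinely different route from the paper, and it has concrete gaps that I don't think can be closed as written.

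The paper's actual proof does \emph{not} iterate the Calibration Lemma. It instead (a) spreads the full family $\Fam_{3,Z}(I)$ around to all $I_s = F^s(I)$, $0\le s\le 2\qq_{m+1}$, via the simple transformation rule of \S\ref{sss:SpreadAround:rel:wZ}, obtaining $\Width$ of the projected full family $\succeq K$ on every $I_s$; (b) observes that if equidistribution~(i') fails, some $I_s$ must carry a rectangle of width $\succeq K$ that \emph{submerges} back toward $\partial Z$; (c) applies the Snake Lemma (not Calibration) to localize that submerging rectangle in one shot onto a grounded interval $I'$ with $|I'|\le\length_m/T$; and only then (d) applies the Calibration Lemma a single time at the level of $I'$. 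Your iteration replaces both the spreading-around step and the Snake Lemma with repeated Calibration, and this is where the argument leaks.

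The most serious gap is in your handling of Outcome~\ref{Cal:Lmm:Concl:a}. You claim semi-equidistribution at level $m_k\ge m$ ``upgrades to conclusion~(i) at level $m$'' by monotonicity. Monotonicity gives a lower bound only; worse, the parallel law gives $\Width^{+,\ver}(J)\succeq(\length_m/\length_{m_k})K_k$ on level-$m$ intervals $J$, which is $\gg K$ once $m_k$ is even moderately deeper than $m$ (and in your iteration you have pushed down by $\sim\log T$ levels, so the factor is $\succeq T$). A lower bound of order $TK$ is not the asserted $\asymp K$; in fact it would contradict $\Width^+_5(I)=K$. So this branch needs to be ruled out or rerouted, not absorbed into~(i). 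In the paper's structure this issue does not arise in the same form because conclusion~(i) is \emph{already} established by the spreading-around step before Calibration is ever invoked; Calibration there is only used to dispose of the localized submergence.

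Two further issues: (1) In Step~1, if $\Width^{+,\per}_{\lambda,\ext,m}(I)\succeq K$ for a level-$m$ combinatorial interval $I$ (so $|I|=\length_m$), it is not automatic that Theorem~\ref{thm:parabolicfjords} produces a strictly smaller $J$ with $|J|<\length_m$ still carrying $\Width^{+,\per}_{\lambda,\ext,j}(J)\succeq K$ — the Log-Rule describes how the external width is spread logarithmically across scales, and extracting a single subinterval of the right size with comparable width needs an argument. (2) Your termination via Amplification is gestural: Amplification Theorem~\ref{thm:SpreadingAround} concerns a single interval at one level (with a $|\theta_0|$-smallness hypothesis you do not check) and its dichotomy does not convert a chain of Outcome~\ref{Cal:Lmm:Concl:b}'s into $K_F\succeq K\cdot(\text{depth})$, nor does $K_F$ being large by itself produce the Calibration Alternative~\ref{Cal:Lmm:Concl:a}. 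The paper's use of the Snake Lemma is precisely what makes the jump to depth $\length_m/T$ a single, quantitatively clean step and avoids having to track $K_k$ through a variable-length iteration.
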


\begin{proof} Let us first prove the following slightly weaker version of the lemma:
\vspace{0.2cm}

{\bf  Claim.} Assume that $\Width_3^+(I)\ge K_{T,\lambda}/2$ holds for a level $m$ combinatorial interval $I \subseteq \partial Z$. Then either
   \begin{enumerate}[label=\text{(\roman*')},font=\normalfont,leftmargin=*]
        \item\label{item:repli:1:weaker} for every  interval $J = F^i(I) \subset \partial Z$, with $0 \leq i \leq 2\qq_{m+1}$, we have
        $$
        \Width^{+, \ver}(J) \asymp K;
        $$    
\end{enumerate}
or Conclusion~\ref{item:repli:2} or Conclusion~\ref{item:repli:3} of the Lemma~\ref{lem:replication} hold.
    \begin{proof}[Proof of the claim]
     We project the interval $I$ into the pseudo-Sigel disks $\wZ^m_{-k*}$ and use the simple transformation rule to spread around $\wZ^m$; see~\S\ref{sss:SpreadAround:rel:wZ} and~\eqref{eq:SpreadAround:wZm}. We obtain that \[\proj^\grnd_{V,\wZ^m} \big(\Fam_{3, Z}(I_k)\big)\succeq K\qquad\qquad \text{ for all }J\equiv I_i=F^i(I).\]   Suppose Item~\ref{item:repli:1:weaker} does not hold. Then there exists $I_s = F^s(I)$ and a rectangle $\RR$ of width $\Width(\RR) \succeq K$ connecting the projections $I^{\grnd, m}_s$ and $\big((3 I_s)^c\big)^{\grnd, m}\cup V$ that submerges between $I^{\grnd, m}_s$ and $\big((3 I_s)^c\big)^{\grnd, m}$. By \cite[Snake Lemma 6.1]{DL22} and the fact $K \gg T, \lambda$, we obtain some grounded interval $I'\subset \partial Z$ with $|I'| \leq \frac{\length_m}{T}$ so that $\Width^+_\lambda(I') \succeq K$. Let $k$ be the integer so that $\length_{k+1} \leq |I'| < \length_k$. Suppose that Item~\ref{item:repli:2} does not hold, then we can apply the Calibration Lemma~\ref{lmm:CalibrLmm}. In the case of Item~\ref{Cal:Lmm:Concl:a} of the Calibration Lemma~\ref{lmm:CalibrLmm}, we have Item~\ref{item:repli:1}. Otherwise, we have Item~\ref{item:repli:3}.
    \end{proof}
   Let $J \subset \partial Z$ be an arbitrary combinatorial interval. Then there exists $J_1 = F|_{\partial Z}^{-i_1}(J), J_2 = |_{\partial Z}^{-i_2}(J)$ with $0\leq i_1< i_2 \leq 2 \qq_{m+1}$ so that $I \subset J_1 \cup J_2$. Note that $J_1, J_2$ are in general not disjoint. Since $\Width^{+}_5(I) = K \geq K_{T, \lambda}$, we conclude that $\max\{\Width^{+}_3(J_1), \Width^{+}_3(J_2)\} \geq K_{T, \lambda}/2$. Without loss of generality, we assume that $\Width^{+}_3(J_1) \geq K_{T, \lambda}/2$. The lemma follows by applying the claim to $J_1$. 
\end{proof}

\section{A priori-bounds for $\psi^\bullet$-ql Siegel maps}\label{s:Proof}  In this section, we prove Theorem~\ref{thm:main:psi*-ql maps} restated as Theorems~\ref{thm:main:psi-ql maps} and~\ref{thm:main:psi-ql maps:extra}. The central induction is in the proof of Theorem~\ref{thm:main:psi-ql maps} claiming the equidistribution property. Theorem~\ref{thm:main:psi-ql maps:extra} establishes explicit combinatorial thresholds for regularization $\wZ^{m+1}\leadsto \wZ^m$; the proof of Theorem~\ref{thm:main:psi-ql maps:extra} is in the \emph{a posteriori} regime relative to Theorem~\ref{thm:main:psi-ql maps}.

The outline of the section is presented in~\S\ref{ss:Proof:outline}. Let us briefly recall the main notations.

Let $F$ be an eventually-golden-mean $\psi^\bullet$-ql map as in~\S\ref{ss:psi bullet:defn}. 
Recall that the definition of pseudo-Siegel disks for quadratic polynomials is introduced in \cite[Definition 5.1]{DL22}. The adjustments for $\psi^\bullet$-ql Siegel maps is introduced in \S\ref{sss:iota peripheral}.

Let $K_F\coloneqq \Width^\bullet(F)$ be the width of $F$.
Let $\bK\gg 1$ be a sufficiently big threshold.
Recall that the {\em special transition level} $\bbm_F$ for $F$ with respect to the threshold $\bK$ is defined as follows.
\begin{itemize}
    \item If $K_F\le \bK$, we set $\bbm_F\coloneqq -2$;
    \item Otherwise, we set $\bbm_F$ to be the level satisfying 
\[ \frac{1}{\length_{\bbm_F}} < \frac{K_F}{\bK} \le \frac{1}{\length_{\bbm_F\ +1}} \sp\sp\sp  \begin{array}{c}\text{ or, }\\ \text{equivalently,}\end{array}\sp\sp\sp \sp   \begin{array}{c}
\length_{\bbm_F} K_F>\bK, \text{ and}\vspace{0.2cm}\\
\length_{\bbm_F\ +1} K_F\le \bK.
\end{array} \]
\end{itemize} 

In this section, we first prove the following a priori-bound for $\psi^\bullet$-ql map $F$, which is the first part of Theorem~\ref{thm:main:psi*-ql maps} and implies also the first part of Theorem~\ref{thm:main:ql maps} for quadratic-like maps as a special case.
\begin{thm}[Theorem~\ref{thm:main:psi*-ql maps}: Equidistribution]\label{thm:main:psi-ql maps}
There exists an absolute constant $\bK\gg 1$ so that the following holds.

Consider an eventually-golden-mean $\psi^\bullet$-ql map $F$ (see~\S\ref{ss:psi bullet:defn}) of width $K_F= \Width^\bullet(F)$ and the special transition level $\bbm_F$ with respect to $\bK$. Then there is a nested sequence of pseudo-Siegel disks $\wZ^m,\  m\ge -1$ such that for every grounded interval $J\subset \partial Z$ with $\length_{m+1}< |J|\le \length_{m}$ the following holds for the projection $J^m$ of $J$ to $\wZ^m$:
\begin{enumerate}[ label=(\Alph*)]
    \item \label{thm:apbs:A} if $m> \bbm_F$, then \[\Width_{\wZ^m}^{+,\ver} (J^m)=O(1) \sp\sp\text{ and }\sp\sp \Width_{3,\wZ^m}^{+,\per} (J^m)\asymp 1,\]
    \item\label{thm:apbs:B} if $m= \bbm_F$, then 
    \[\Width_{\wZ^m}^{+,\ver} (J^m)= O(\length_m K_F)  \sp\sp\text{ and }\sp\sp \Width_{3,\wZ^m}^{+,\per} (J^m)=O\big(\sqrt{\length_m K_F }\big),\]
        \item \label{thm:apbs:C} if $m< \bbm_F$, then 
    \[\Width_{\wZ^m}^{+,\ver} (J^m)\asymp |J|K_F  \sp\sp\text{ and }\sp\sp \Width_{3,\wZ^m}^{+,\per} (J^m)=O(1).\]
\end{enumerate}
\end{thm}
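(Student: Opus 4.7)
The plan is to establish the claim by a downward induction on $m$, following the pattern developed for quadratic polynomials but tracking the additional vertical degeneration specific to the $\psi^\bullet$-setting. For $m$ large enough that $\length_m$ lies below any threshold relevant to $K_F$, we set $\wZ^m := \overline Z$; the bounds in Case~\ref{thm:apbs:A} then hold trivially from the estimate $\Width^{+,\ver}(J)\le |J|K_F + O(1)$ and the inclusion $\overline Z \subset V$ with $\Width(V\setminus \overline Z)=K_F$. At each inductive step we build $\wZ^m$ from $\wZ^{m+1}$ either by regularizing via Theorem~\ref{thm:regul} (adding parabolic fjords) or by taking $\wZ^m := \wZ^{m+1}$. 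The critical combinatorial level $\bbm_F$ marks the transition between the regime where the width budget is still ``local'' ($\length_m K_F\le \bK$) and the regime where it has become ``global.''

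For deep levels $m>\bbm_F$ the argument mirrors the quadratic case. Suppose for contradiction there is a level-$m$ combinatorial $I$ with $\Width^+_3(I)\gg 1$. Amplification Theorem~\ref{thm:SpreadingAround} yields either a smaller-scale amplifying interval $J$ -- forbidden by the already-established Case~\ref{thm:apbs:A} at deeper levels -- or Alternative~\ref{item:2:thm:SpreadingAround}, giving $K_F\succeq_{\blambda_\bbt}\Width^+_3(I)\cdot \qq_{m+1}$. Since $m>\bbm_F$ forces $\qq_{m+1}\gtrsim K_F/\bK$, this confines $\Width^+_3(I)\preceq \bK$, i.e., the $O(1)$ target. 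The regularization $\wZ^{m+1}\leadsto \wZ^m$ is then carried out via Theorem~\ref{thm:regul}; its Exponential Boost alternatives~\ref{case:2:thm:regul},~\ref{case:3:thm:regul} produce widths of order $\exp(\Width(\RR))$, and the budget $K_F$ kills them, forcing the successful Alternative~\ref{case:1:thm:regul} whenever regularization is needed.

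Below $\bbm_F$ we freeze the construction by setting $\wZ^{\bbm_F}=\dots=\wZ^{-1}:=\wZ^{\bbm_F+1}$. For Case~\ref{thm:apbs:C} ($m<\bbm_F$) the vertical equidistribution $\Width^{+,\ver}(J^m)\asymp |J|K_F$ follows from Lemma~\ref{lem:replication}: the trivial vertical family from $\overline Z$ to $\partial V$ supplies a level-$m$ interval $I$ with $\Width^+_5(I)\succeq \length_m K_F\gg \bK$, and Alternatives~\ref{item:repli:2},~\ref{item:repli:3} would force deeper-level violations incompatible with Case~\ref{thm:apbs:A}. The $O(1)$ peripheral bound is then extracted from Calibration Lemma~\ref{lmm:CalibrLmm}: Alternative~\ref{Cal:Lmm:Concl:a} coincides with the equidistribution just established (no peripheral mass), while~\ref{Cal:Lmm:Concl:b},~\ref{Cal:Lmm:Concl:c} amplify to smaller scales and violate the deeper Case~\ref{thm:apbs:A}. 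The transitional Case~\ref{thm:apbs:B} interpolates: the vertical bound $O(\length_m K_F)$ is the budget estimate, and the square-root peripheral bound arises because a peripheral family of width $W$ at the transition level consumes vertical budget of order $W^2/(\length_m K_F)$ -- a Gr\"otzsch-type estimate combined with the Log-Rule of Theorem~\ref{thm:parabolicfjords} applied to the central rectangle produced by Lemma~\ref{thm:center:rect} -- so $W\preceq \sqrt{\length_m K_F}$.

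The hardest point is the ``contradiction box $\bA_m\gg \bA_m$'' at and just above the transition: we must rule out the possibility that a large vertical degeneration secretly develops at some $m\ge \bbm_F+1$ and then contaminates the inductive hypothesis downwards. This is handled by using Alternative~\ref{item:2:thm:SpreadingAround} of the Amplification Theorem as a two-way ledger -- each triggering consumes a definite fraction of $K_F$, so it can fire only boundedly often across all levels combined and can be absorbed into the $O(\bK)$ constant after an appropriate choice of $\bK$. A secondary obstacle is that the square-root bound in Case~\ref{thm:apbs:B} is genuinely intermediate: ruling out wider peripheral families requires a careful coupling of the central-rectangle analysis (Lemma~\ref{thm:center:rect}) with the Exponential Boost alternatives of Theorem~\ref{thm:regul}, since a matching lower bound on the peripheral width would require ``global external input'' of the kind discussed in~\S\ref{ss:comblocal} and is not expected to hold in this generality.
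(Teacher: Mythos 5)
Your high-level plan -- downward induction on $m$, regularizing via Theorem~\ref{thm:regul}, ruling out wider degenerations via Amplification Theorem~\ref{thm:SpreadingAround}, Calibration Lemma~\ref{lmm:CalibrLmm}, and Lemma~\ref{lem:replication} -- is the right one, and it matches the paper's architecture (Proposition~\ref{prop:deepandtransitional} plus Propositions~\ref{prop:equidcombinbatorial} and~\ref{prop:equidcombinbatorial:shallow_levels}). However, three key mechanisms are described differently from what is needed, and at least two of them are genuine gaps rather than alternative routes.

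First, your treatment of the ``contradiction box'' is wrong. You claim Alternative~\ref{item:2:thm:SpreadingAround} ``consumes a definite fraction of $K_F$, so it can fire only boundedly often across all levels and can be absorbed into the $O(\bK)$ constant.'' The actual mechanism is pointwise, not cumulative: the threshold $\bK_m = \bK/\bT + \bL\cdot\bA_m$ is \emph{designed} so that whenever Alternative~\ref{item:2:thm:SpreadingAround} fires against an interval violating Statement~\ref{eq:case3:main prf}, one gets $\bA_m \gg K/\bL > \bK_m/\bL \ge \bA_m$ -- an immediate contradiction for that level. With the choice of $\bL$ in~\eqref{eqn:choiceL}, the box never fires at all above the transition; there is no ``budget ledger'' and no reason to believe firings remain bounded if one merely tracked a fraction of $K_F$ per firing across $\qq_{\bbm_F+1}\gg 1$ levels.

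Second, your heuristic for the square-root bound in Case~\ref{thm:apbs:B} -- that a peripheral family of width $W$ consumes vertical budget $W^2/(\length_m K_F)$ by a Gr\"otzsch estimate -- does not appear in, and does not reproduce, the paper's argument. The $O(\sqrt{\bK_m})$ bound on the external peripheral width comes from the regularization statement (Proposition~\ref{prop:deepandtransitional}\ref{eq:case1:main prf}): if $\Width^{+,\per}_{\blambda,\ext,m}\gg\sqrt{\bK_m}$, the Exponential Boost (Theorem~\ref{thm:regul}, via Lemma~\ref{thm:center:rect}) produces a degeneration of size $a^{\sqrt{\bK_m}}\gg\bK_m\ge(2\bchi)\bK_j$ at a deeper level $j$, contradicting the inductive hypothesis~\ref{eq:case2:main prf}. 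It is an exponential amplification trading combinatorial depth for width; there is no quadratic budget tradeoff. Your heuristic would need a separate justification that does not exist in this framework.

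Third, your handling of shallow levels $m<\bbm_F$ applies Lemma~\ref{lem:replication} directly at every such $m$. The paper establishes equidistribution at the single transitional level $\bbm_F$ via Lemma~\ref{lem:replication} (Proposition~\ref{prop:equidcombinbatorial}), then transfers it to $m<\bbm_F$ purely via the parallel law (Proposition~\ref{prop:equidcombinbatorial:shallow_levels}), using that $J$ at level $m$ is sandwiched between unions of $N$ and $N+2$ level-$\bbm_F$ combinatorial intervals, each carrying width $\asymp\bA_{\bbm_F}$. Your route could perhaps be made to work if one verifies carefully that the localization produced by Lemma~\ref{lem:replication} lands \emph{below} the transition level (the lemma's depth parameter $T$ must beat $\length_m/\length_{\bbm_F+1}$, which depends on $m$), but this is a genuine gap you have not addressed: for $m$ far above the transition, the localization depth $\length_m/T$ may not reach $\bbm_F+1$, and then alternatives~\ref{item:repli:2},~\ref{item:repli:3} land where you have no control. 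The parallel-law shortcut also delivers the $O(1)$ peripheral bound cleanly: the surrounding level-$\bbm_F$ intervals $I_\pm$ each carry vertical width $\asymp\bA_{\bbm_F}\gg 1$, and any peripheral leaf crossing $I_\pm^m$ is blocked. Your appeal to Calibration Lemma~\ref{lmm:CalibrLmm} Alternative~\ref{Cal:Lmm:Concl:a} for this step is not how the paper gets the $O(1)$ bound.
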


\subsubsection{Outline of the section}
\label{ss:Proof:outline} The the central induction of the proof of Theorem~\ref{thm:main:psi-ql maps} is encoded in Propositions~\ref{prop:deepandtransitional} and is illustrated on the diagram in Figure~\ref{fig:main pattern}. This induction is from deeper to shallower levels and is quite similar to the quadratic case; compare with the diagram on \cite[Figure 27]{DL22}. The key difference is the ``contradiction box $\bA_m \ge \dots \gg \bA_m$'' describing a possibility that an unexpected big vertical degeneration is developed. We also remark that Statements~\ref{eq:case1:main prf},~\ref{eq:case2:main prf},~\ref{eq:case3:main prf} of Proposition~\ref{prop:deepandtransitional} are similar to the respective statements in \cite[\S10.0.1]{DL22}.

 Proposition~\ref{prop:deepandtransitional} is stated with an explicit universal constant $\bK\asymp \bK_m$, see~\eqref{eq:dfn:bK_m}, \eqref{eq:bK_vs_bK_m}. (We find it convenient to slightly modify $\bK$ into $\bK_m$). Theorem~\ref{thm:main:psi-ql maps} suppresses $\bK$ within $O(1)\equiv O(\bK)$. The choice of constants in the proof of Proposition~\ref{prop:deepandtransitional} is summarized in~\S\ref{sss:choice of constants}. Constants $\bbt,\bchi,\blambda_\bbt$ are selected as in the quadratic case. An additional constant $\bL$ (tied to the ``contradiction box $\bA_m \ge \dots \gg \bA_m$'') is selected to accommodate Alternative~\ref{item:2:thm:SpreadingAround} of Amplification Theorem~\ref{thm:SpreadingAround} so that~\eqref{eqn:choiceL} and thus~\eqref{eq:bLvsbA} hold.

The proof of Theorem~\ref{thm:main:psi-ql maps} is completed in Propositions~\ref{prop:equidcombinbatorial} and~\ref{prop:equidcombinbatorial:shallow_levels}, where the equidistribution of the vertical families is justified. These propositions rely on Lemma~\ref{lem:replication}, ``Equidistribution or Combinatorial Localization''. Item~\ref{item:repli:3} of Lemma~\ref{lem:replication} leads to a contradiction by constructing a bigger degeneration. To accommodate such an argument, an additional constant $\bT\gg \bL$ is selected so that~\eqref{eq:dfn:bT} holds. (The constant $\bT$ does not directly influence the main induction of Proposition~\ref{prop:deepandtransitional}.) 

The constant $\bK$, the key geometric threshold, is selected last.

\subsection{Deep and transitional levels}\label{subsec:deeptransitional}
We will prove Theorem~\ref{thm:main:psi-ql maps} by induction from deep to shallow levels as follows. We will show that there are \[\blambda \gg 1,\quad \bL \gg_{\blambda} 1,\quad \bT \gg \bL,\quad\text{ and }\quad\bK\ \gg_{\blambda, \bT}\ \  1\] such that the following properties hold.

Consider an eventually-golden-mean $\psi^\bullet$-ql map $F$ of width $K_F= \Width^\bullet(F)$ and the transition level $\bbm_F$ with respect to $\bK$.
We define the {\em average vertical degeneration} at level $m$ by
$$
\bA_m:= \length_m K_F.
$$
We define the degeneration threshold for $m \geq \bbm_F$ by
\begin{equation}
\label{eq:dfn:bK_m}
    \bK_m:= \frac{\bK}{\bT} + \bL \cdot \bA_m
\end{equation}

It easily follows from definition that for $m> \bbm_F$, the constants $\bK_m$ and $\bK$ are comparable:
\begin{equation}
\label{eq:bK_vs_bK_m}
   \begin{matrix} \bK_m\ \asymp _{\bT} \  \bK/\bT ,\qquad \quad& \text{ i.e., }\quad \bK/\bT \le  \bK_m \le 3\bL \bK \ll \bT \bK,\\ \bK_m\ \to \ \bK/\bT \qquad\quad & \text{ monotonically as }\quad m\to \infty.\end{matrix}
\end{equation}

By design, $\bK\ \asymp_\bT\ \bK_m\ \gg_{\blambda, \bT}\ \  1$ allows arbitrary large (but still fixed)  dependence on all combinatorial constants; see~\S\ref{sss:choice of constants}. 

We now state a quantified version of Theorem~\ref{thm:main:psi-ql maps} at the deep and transitional levels.
\begin{prop}\label{prop:deepandtransitional}
For all $m \geq \bbm_F$,
\begin{enumerate}[label=\text{(\alph*)},font=\normalfont,leftmargin=*]
\item there exists a geodesic pseudo-Siegel disk $\wZ^m$ so that
\[\Width^{+, \per}_{\blambda,\ext, m} (I) =O(\sqrt{\bK_m})\]
for every interval $I$ grounded rel $\wZ^m$ with $\length_{m+1}\le |I|\le \length_m$. (In other words, $\wZ^m$ consumes all but $O(\sqrt{\bK_m})$-external $\lambda$-separated rel $m$ families.)
\label{eq:case1:main prf}
\item $\Width_\blambda^+(I)\le (2\bchi)\bK_m$ for every grounded rel $\wZ^m$ interval $m$ with $|I|\le \length_m$, where $\bchi$ is the constant from Calibration Lemma~\ref{lmm:CalibrLmm}.\label{eq:case2:main prf}
\item $\Width_\blambda^+(I)\le  \bK_m$ for every combinatorial interval $I$ of level $\ge m$.\label{eq:case3:main prf}
\end{enumerate}
\end{prop}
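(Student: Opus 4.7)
The plan is to proceed by induction on decreasing $m$, starting from a base case $m = m_0 \gg 1$ chosen large enough that $\bA_{m_0} \le 1$ and $\wZ^{m_0} := \overline Z$ trivially satisfies \ref{eq:case1:main prf}--\ref{eq:case3:main prf} (since $\bK_{m_0} \asymp \bK/\bT$ dominates the a priori bound $\Width^+_\blambda(I) = O(1)$ on any interval of length $\le \length_{m_0}$ coming from the modulus of $V \setminus \overline Z$ and the fact that $F$ is eventually-golden-mean). The inductive step establishes \ref{eq:case3:main prf}, then \ref{eq:case1:main prf}, then \ref{eq:case2:main prf} at level $m$, assuming all three hold at level $m+1$.

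\textbf{Step 1: Width bound \ref{eq:case3:main prf} at level $m$.} Suppose for contradiction some combinatorial interval $I$ of level $m' \ge m$ satisfies $\Width^+_\blambda(I) > \bK_m$. By the inductive hypothesis \ref{eq:case3:main prf} at level $m+1$, we may take $m' = m$. Apply Amplification Theorem~\ref{thm:SpreadingAround} with $\bbt = 2$ and $\blambda = \blambda_\bbt$ to the interval $I$ and the pseudo-Siegel disk $\wZ^m$ built at level $m+1$. This produces one of two alternatives. In the first alternative, we obtain a grounded interval $J$ of level $\ge m$ with $\Width^+_\blambda(J) \ge 2\bK_m$; iterating Amplification until $K$ cannot be doubled produces a level-$(m+1)$ combinatorial sub-interval violating the inductive \ref{eq:case3:main prf}, a contradiction. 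In the second alternative, we get $K_F \succeq_{\blambda_\bbt} \bK_m \qq_{m+1} \succeq \bL \bA_m \qq_{m+1} \asymp \bL K_F$ (using $\length_m \qq_{m+1} \asymp 1$ for the eventually-golden-mean case), which contradicts the choice
\begin{equation}\label{eqn:choiceL}
\bL \gg_{\blambda_\bbt} 1.
\end{equation}
This is the ``contradiction box $\bA_m \gg \bA_m$'' displayed in Figure~\ref{fig:main pattern}.

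\textbf{Step 2: Regularization \ref{eq:case1:main prf} at level $m$.} Starting from $\wZ^{m+1}$ supplied by induction, search for an external parabolic rectangle $\RR$ based on each $T' \in f^{\qq_{m+1}}\Dbb_m$ of width $\Width(\RR) \asymp \sqrt{\bK_m}$. If no such $\RR$ exists for some $T$, no welding is performed along the orbit of $T$ and \ref{eq:case1:main prf} holds there trivially. Otherwise apply Theorem~\ref{thm:regul} to $\RR$ and $\wZ^{m+1}$. Alternative~\eqref{case:1:thm:regul} of Theorem~\ref{thm:regul} directly produces the required geodesic regularization $\wZ^m$; alternatives~\eqref{case:2:thm:regul} and~\eqref{case:3:thm:regul} yield (via the exponential boost discussed in \S\ref{sss:ExponBoost}) an interval $I$ with $\log \Width^+_\blambda(I) \succeq \sqrt{\bK_m}$, i.e.\ $\Width^+_\blambda(I) \ge e^{c\sqrt{\bK_m}} > \bK_m$ once $\bK$ is chosen sufficiently large. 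This contradicts \ref{eq:case3:main prf} just proved in Step~1.

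\textbf{Step 3: Overall width bound \ref{eq:case2:main prf} at level $m$.} For a grounded rel $\wZ^m$ interval $I$ with $|I| \le \length_m$, decompose
\[
\Width^+_\blambda(I) = \Width^{+,\per}_{\blambda, \ext, m}(I) + \Width^{+,\per}_{\blambda, \div, m}(I) + \Width^{+,\ver}(I) + O(1).
\]
The external summand is $O(\sqrt{\bK_m})$ by \ref{eq:case1:main prf}. If the remaining (diving + vertical) summand exceeds $\bchi \bK_m$, apply Calibration Lemma~\ref{lmm:CalibrLmm}: Alternative~\ref{Cal:Lmm:Concl:b} produces a level-$(m+1)$ combinatorial interval with width $\ge \bK_m \ge \bK_{m+1}/(1+\varepsilon)$, contradicting the inductive \ref{eq:case3:main prf} after a small rescaling; Alternative~\ref{Cal:Lmm:Concl:c} amplifies the width to $\bchi^{1.5}\bK_m$ at a deeper scale, contradicting \ref{eq:case3:main prf} at the corresponding deeper level; Alternative~\ref{Cal:Lmm:Concl:a} gives the semi-equidistribution $K_F \ge \bK_m \qq_{m+1} \succeq \bL K_F$, again contradicting~\eqref{eqn:choiceL}. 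Hence the diving + vertical part is $\le \bchi \bK_m$, giving \ref{eq:case2:main prf}.

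\textbf{Choice of constants and main obstacle.} In order of choice: $\bbt = 2$ and $\bchi$ are absolute (from Lemmas~\ref{lmm:CalibrLmm} and Theorem~\ref{thm:SpreadingAround}); then $\blambda = \blambda_\bbt$ is fixed by Theorem~\ref{thm:SpreadingAround}; then $\bL \gg_\blambda 1$ is chosen large enough so the implicit constants in Amplification and Calibration satisfy
\begin{equation}\label{eq:bLvsbA}
c(\blambda) \bL > 1,
\end{equation}
forcing the ``contradiction box'' to close; then $\bT \gg \bL$ is fixed for use in Lemma~\ref{lem:replication} when passing to equidistribution in Proposition~\ref{prop:equidcombinbatorial}
\begin{equation}\label{eq:dfn:bT}
\bT \gg \bL;
\end{equation}
finally $\bK \gg_{\blambda, \bT} 1$ is chosen large enough that the exponential boost $e^{c\sqrt{\bK_m}} > \bK_m$ holds and the base case is valid. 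The main obstacle is the possibility of a large vertical degeneration developing unexpectedly during the inductive step: this is the case absent in the quadratic polynomial treatment of \cite{DL22}, and it is exactly this possibility that forces the new Alternative~\ref{item:2:thm:SpreadingAround} in Amplification and Alternative~\ref{Cal:Lmm:Concl:a} in Calibration. The delicate point is that both alternatives produce a lower bound on $K_F$ proportional to $\bL K_F$, so the contradiction is quantitative and requires the precise choice of $\bL$ in~\eqref{eqn:choiceL}.
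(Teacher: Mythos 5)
Your overall strategy --- downward induction on levels driven by Amplification, Regularization, and Calibration, with the vertical-degeneration ``contradiction box'' closed by the choice of $\bL$ via~\eqref{eqn:choiceL} --- is the paper's argument, and your ordering of the constants $\blambda, \bL, \bT, \bK$ is correct. However, there is a genuine gap in the choice and role of the amplification parameter $\bbt$, and a secondary mechanical issue in Steps~1 and~2.

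The parameter $\bbt$ in Amplification Theorem~\ref{thm:SpreadingAround} is \emph{not} supplied by the theorem; it is a free parameter you choose, and the theorem then supplies $\blambda_\bbt$ and $\bK_\bbt$. You set $\bbt = 2$, and this cannot close the contradiction chain. After Amplification you obtain a grounded interval $J$ with $|J| < \length_m$ living at some level $j \ge m+1$, carrying $\Width^+_{\blambda}(J) \ge \bbt K > \bbt\bK_m$. You need this to contradict Statement~\ref{eq:case2:main prf} at level $j$, which permits widths up to $(2\bchi)\bK_j$; since $\bK_j$ is comparable to $\bK_m$ (it only decreases by a bounded factor between consecutive levels), this forces $\bbt > 2\bchi$ and in fact $\bbt \gg \bchi$ to absorb implicit constants. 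As $\bchi > 1$ is a fixed constant from Calibration Lemma~\ref{lmm:CalibrLmm}, $\bbt = 2$ is too small. The paper chooses $\bbt \gg \bchi$ \emph{first}, and only then $\blambda := \blambda_\bbt$, $\bL$, $\bT$, $\bK$.

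Your fallback, ``iterating Amplification until $K$ cannot be doubled,'' also does not work as stated: the output $J$ of Amplification is a grounded, generally non-combinatorial interval, while the hypothesis of Amplification requires a combinatorial input interval, so the theorem cannot be iterated directly. The correct closure is the one above: with $\bbt \gg \bchi$ a \emph{single} amplification step violates the inductive Statement~\ref{eq:case2:main prf} at a deeper level (not Statement~\ref{eq:case3:main prf}). Relatedly, in Step~2 the alternative~\eqref{case:2:thm:regul} of Theorem~\ref{thm:regul} gives an interval $I$ with $|I| > \length_{m+1}$ whose \emph{diving-plus-vertical} degeneration is large; this lives at half-level $m + 1/2$ and does not directly contradict Statement~\ref{eq:case3:main prf}. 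You must first run it through Calibration Lemma~\ref{lmm:CalibrLmm} --- exactly as the paper does --- to obtain either a genuinely deeper combinatorial violation or the contradiction box. Finally, Amplification requires $\length_m \le |\theta_0|/(2\blambda_\bbt)$, which fails for a uniformly bounded number (at most $2\log_2(2\blambda+4)$) of the shallowest levels; the paper accounts for these by enlarging the constants, a step your plan omits.
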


\subsubsection{The choice of constants in the proof of Proposition~\ref{prop:deepandtransitional}} \label{sss:choice of constants}
We first choose $\bbt \gg \bchi$, where $\bchi$ is the constant in the Calibration Lemma~\ref{lmm:CalibrLmm}. We choose $\blambda := \blambda_{\bbt}$ as in the Amplification Theorem~\ref{thm:SpreadingAround}.

Recall that $\length_m \asymp \frac{1}{\qq_{m+1}}$.  Let $\bL \gg_\blambda 1$ be chosen so that second alternative in Amplification Theorem~\ref{thm:SpreadingAround} $K_F\succeq_{\blambda_\bbt} K \qq_{m+1}$ gives
\begin{equation}\label{eqn:choiceL}
    \bA_m = \length_m K_F \gg \frac{K}{\bL},
\end{equation}
see~\eqref{eq:bLvsbA} for the application of~\eqref{eqn:choiceL}.

The constant $\bT$ (the input for Item~\ref{item:repli:3} of Lemma~\ref{lem:replication}) is chosen so that 
\begin{equation}
\label{eq:dfn:bT} \bT \gg \bL.    
\end{equation}
Finally, the constant $\bK$ is chosen so that 
\begin{equation}
\label{eq:dfn:bT}\bK\ \gg_{\bT}\ \  \bK_\bbt,
\end{equation}
 where $\bK_\bbt$ is the constant in the Amplification Theorem~\ref{thm:SpreadingAround}.

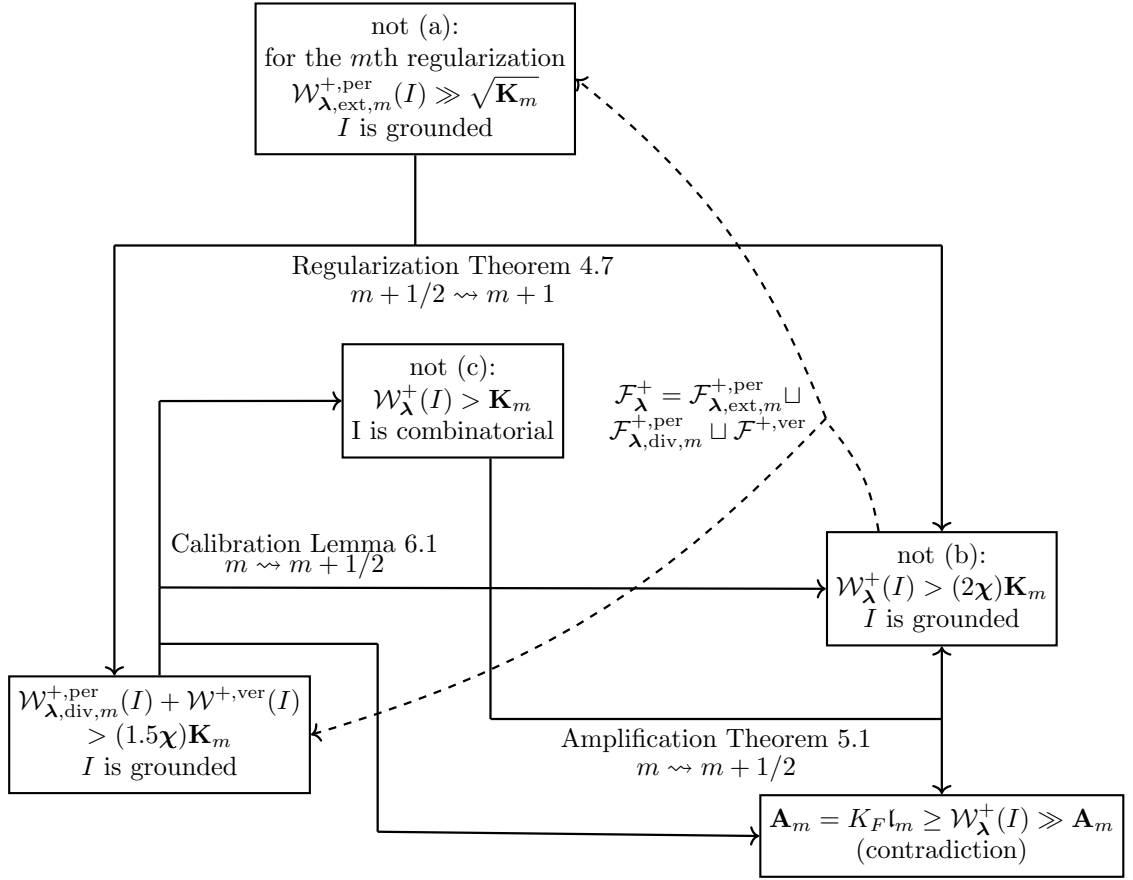
\begin{figure}[t!]

\[
\begin{tikzpicture}[line width=0.3mm]

\coordinate (Comb) at (5-1.5,2.5);
\draw
         node[below] at (Comb)[block] (comb) {$\begin{matrix}
                 \text{not~\ref{eq:case3:main prf}:}
               \\ \Width^+_\blambda (I)> \bK_m
         \\
        $I$\text{ is combinatorial}
        \end{matrix}$};

               \coordinate (grnd) at (10,0);

     \draw
         node[below](Gr) at (grnd)[block]  {$\begin{matrix}
         \text{not~\ref{eq:case2:main prf}:}
         \\
          \text{$\Width_\blambda^+(I)> (2\bchi)\bK_m$} 
         \\
          \text{$I$ is grounded} 
        \end{matrix}$};

        \coordinate (ver) at (10,-3.5);

     \draw
         node[below](Ver) at (ver)[block]  {$\begin{matrix}
         \bA_m = K_F \length_m \geq \Width^+_\blambda(I) \gg \bA_{m} \\  \text{(contradiction)}
        \end{matrix}$};

       \coordinate (regul) at (3,5);
        \draw
         node[above]  at (regul) [block] (Reg) {$\begin{matrix}
         \text{not~\ref{eq:case1:main prf}:}\\
        \text{for the $m$th regularization} 
         \\ \Width^{+,\per}_{\blambda, \ext,m} (I)\gg \sqrt{\bK_m}
         \\
          \text{$I$ is grounded}             
          \end{matrix}$};

       \coordinate (diving) at (-0.4,-3.5);
        \draw
         node[above]  at (diving) [block] (Div) {$\begin{matrix}
           \text{$\Width_{\blambda,\div,m}^{+, \per}(I) + \Width^{+, \ver}(I)$}\\
           \text{$>(1.5\bchi)\bK_m$} 
         \\
          \text{$I$ is grounded} 
          \end{matrix}$};

       \draw  (Reg.south) -- (3,3.8);
       \draw (3,3.8) edge node[below, shift={(-3,0)}]{Regularization Theorem~\ref{thm:regul}} node[below,shift={(-3,-0.35)}]{$m+1/2 \leadsto m+1$}   (10,3.8);
       \draw  (10,3.8) edge[->] (grnd.north);
       \draw  (3,3.8) edge (-1, 3.8);
       \draw (-1, 3.8) edge[->] ([shift={(-0.6,0)}]Div.north);

    \draw  ([shift={(0.5,0)}]comb.south) -- (4,-2.5);
     
    \draw (4,-2.5) edge node[below]{\text{Amplification Theorem~\ref{thm:SpreadingAround}}}
    node[below,shift={(0,-0.35)}]{$m \leadsto m+1/2$} (10,-2.5); 
          
     \draw (10,-2.5) edge[->] (Gr.south);
     \draw (10,-2.5) edge[->] (Ver.north);

 \draw ([shift={(0.7,0)}]Gr.north west) edge[dashed, bend right=15] node[left, shift={(-0.6,0.8)}]{$\begin{matrix} \Fam_\blambda^+=\Fam^{+,\per}_{\blambda,\ext,m}\sqcup\\ \Fam^{+,\per}_{\blambda,\div,m}\sqcup \Fam^{+,\ver}\end{matrix}$} ([shift={(0.05,1.4)}]Gr.north west);     
 
 \draw ([shift={(0,1.5)}]Gr.north west) edge[dashed, ->, bend right = 15] (Reg.east);
 \draw ([shift={(0,1.5)}]Gr.north west) edge[dashed, ->, bend left = 15] (Div.east);

    \draw  (Div.north) -- (-0.4,-0.75);
\draw (-0.4,-0.75) edge[->]node[above,shift={(-2.5,0.35)}]{Calibration Lemma~\ref{lmm:CalibrLmm}}node[above,shift={(-2.5,0)}]{$m \leadsto m+1/2$}(Gr.west);    

    \draw  (-0.4,-1.5) -- (2.5, -1.5);
    \draw  (2.5,-1.5) -- (2.5, -4);
    \draw  (2.5, -4) edge[->]  (Ver.west);
    
\draw  (-0.4,-0.75) -- (-0.4,1.73);
\draw  (-0.4,1.73) edge[->]  (comb.west);

\end{tikzpicture}
\]

\caption{Statements~\ref{eq:case1:main prf},~\ref{eq:case2:main prf}, and~\ref{eq:case3:main prf} are proved by contradiction: if one of the statements is violated on levels $m$ or $m+1/2$, then there will be even bigger violation on deeper scales. Here ``$m$'' indicates level $m$ combinatorial intervals, ``$m+1/2$'' indicates intervals with $\length_m\ge |I|\ge \length_{m+1}$, and ``$m+1$'' indicates intervals with $|I|\le \length_{m+1}$. The dashed arrows illustrate the decomposition $\Fam_\blambda^+=\Fam^{+,\per}_{\blambda,\ext,m}\sqcup \Fam^{+,\per}_{\blambda,\div,m}\sqcup \Fam^{+,\ver}$.} 
\label{fig:main pattern}
\end{figure}

\begin{proof}[Proof of Proposition~\ref{prop:deepandtransitional}]
The proof is similar in the quadratic polynomial setting, with only minor modifications. We summarize the induction steps as follows, see Figure~\ref{fig:main pattern} for an illustration.

First, consider $m \geq \bbm_F$ with $\length_m \leq |\theta_0|/(2\blambda + 4)$, where $\theta_0$ is the rotation number for $F$.
\begin{enumerate}[label=\text{(\roman*)},font=\normalfont,leftmargin=*]
\item Suppose Statement~\ref{eq:case3:main prf} is violated at level $m$, i.e., there exists a level $m$ combinatorial interval $I$ with $K :=\Width^+_{\blambda}(I) > \bK_m$. Then by Amplification Theorem~\ref{thm:SpreadingAround} and \eqref{eqn:choiceL},
\begin{itemize}
    \item either 
    \begin{equation}\label{eq:bLvsbA}
    \bA_m \gg \frac{K}{\bL} > \frac{\bK_m}{\bL} =  \frac{\bK/\bT + \bL \cdot \bA_m}{\bL} \geq \bA_m    
    \end{equation}
     which is a contradiction; or
    \item there exists an interval $J$ with $|J| < \length_m$ and 
    $$
    \Width^+_{\blambda}(J) \gg (2\bchi)\bK_m \geq (2\bchi)\bK_{j} \text{ for any } j \geq m+1.
    $$
    Thus, Statement~\ref{eq:case2:main prf} is violated for some interval $J$ at level $\ge m+1$.
\end{itemize}

\item Suppose Statement~\ref{eq:case1:main prf} is violated at level $m$, i.e., a pseudo-Siegel disk $\wZ$ can not be constructed to absorb all but $O(\sqrt{\bK_m})$-external rel $m$ families. Then by the exponential boost in Theorem~\ref{thm:regul}, 
\begin{itemize}
    \item either there exists some interval $J$ with $|J| \leq \length_{m+1}$ and 
    $$
    \Width^+_{\blambda}(J) \geq a^{\sqrt{\bK_m}} \gg (2\bchi)\bK_m \geq (2\bchi)\bK_{j}
    $$ 
    for any $j \geq m+1$, and for some fixed $a>1$. Thus Statement~\ref{eq:case2:main prf} is violated for some interval $J$ at level $\ge m+1$; or
    \item there exists some interval $J$ with $\length_{m+1} < |J| \leq \length_{m}$ and 
    $$
    \Width^{+,\per}_{\blambda, \div, m}(J) + \Width^{+, \ver}(J) \geq a^{\sqrt{\bK_m}} \gg (2\bchi)\bK_m
    $$
    for some $a>1$ is fixed.  Then by Calibration Lemma~\ref{lmm:CalibrLmm}, either Statement~\ref{eq:case3:main prf} or Statement~\ref{eq:case2:main prf}  is violated on levels $\ge m+1$; or $\bA_m = K_F \length_m \geq \Width^+_\blambda(J) \geq \bK_m \gg \bA_m$, which is not possible.
\end{itemize}

\item Suppose Statement~\ref{eq:case2:main prf} is violated at level $m$. By the decomposition 
$$
\Fam_\blambda^+(I)={\Fam^{+, \per}_{\blambda,\ext,m}(I)\sqcup \Fam^{+, \per}_{\blambda,\div,m}(I)\sqcup \Fam^{+, ver}(I)},
$$ 
we have the following two cases.
\begin{itemize}
    \item $\Width^{+, \per}_{\blambda,\ext,m}(I) \geq \bK_m \gg \sqrt{\bK_m}$. Thus Statement~\ref{eq:case1:main prf} is violated at level $m$; or
    \item $\Width^{+, \per}_{\blambda,\div,m}(I) + \Width^{+, ver}(I) > (1.5\bchi)\bK_m$. Then by Calibration Lemma~\ref{lmm:CalibrLmm}, either Statement~\ref{eq:case3:main prf} or Statement~\ref{eq:case2:main prf}  is violated on levels $\ge m+1$; or $\bA_m = K_F \length_m \geq \Width^+_\blambda(I) \geq \bK_m \gg \bA_m$, which is not possible.
\end{itemize} 
\end{enumerate}

Let $\bbs_{\theta_0}$ be the smallest integer so that $\length_\bbs \leq |\theta_0|/(2\blambda + 4)$. Then $\bbs_{\theta_0} \le 2 \log_2(2\blambda+4)$, i.e., there is a uniform bound on the number of levels $m$ with $\length_m \leq |\theta_0|/(2\blambda + 4)$. Thus, by increasing the constants, Proposition~\ref{prop:deepandtransitional} follows (see \cite[Lemma 10.4]{DL22} for more details).
\end{proof}

Finally, note that Proposition~\ref{prop:deepandtransitional} gives the bound for $\Width^{+, \ver}_{\wZ^m}$ and $\Width_{\blambda,\wZ^m}^{+,\per}$. To obtain the bound for $\Width_{3,\wZ^m}^{+,\per}$, we apply exactly the same argument as in \cite[\S 10.1.1]{DL22}, and obtain Theorem~\ref{thm:main:psi-ql maps} for $m \geq \bbm_F$.

\subsection{Equidistribution on trnasitional and shallow levels}\label{subsec:shallow}

The next proposition is an application of Proposition~\ref{prop:deepandtransitional} and Lemma~\ref{lem:replication}, ``Equidistribution or Combinatorial Localization''.

\begin{prop}[Equidistribution: the transitional level]\label{prop:equidcombinbatorial}
    Vertical degenerations on level $\bbm_F$ combinatorial intervals are equidistributed, i.e., for any interval $I \in \mathfrak{D}_{\bbm_F}$ in the $\bbm_F$-th diffeo-tiling, we have
    $$
    \Width^{+, \ver}(I) \asymp \bA_{\bbm_F},
    $$
    where $\bA_{\bbm_F} = K_F\length_{\bbm_F}$ is the average vertical degeneration at level $\bbm_F$.
\end{prop}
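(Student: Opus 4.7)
The plan is to apply the Equidistribution-or-Combinatorial-Localization Lemma~\ref{lem:replication} to a level-$\bbm_F$ combinatorial interval already carrying substantial vertical width, and to rule out its two non-equidistribution alternatives using the \emph{a priori} bounds of Proposition~\ref{prop:deepandtransitional} together with the hierarchy $\bK \gg \bT \gg \bchi\bL$ reserved in \S\ref{sss:choice of constants}.

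First I would collect two preliminary facts. Since $\theta_0$ is eventually golden-mean, consecutive lengths $\length_{\bbm_F}$ and $\length_{\bbm_F+1}$ are comparable, so the very definition of the transitional level gives
\begin{equation*}
\bK \;<\; \bA_{\bbm_F} \;=\; \length_{\bbm_F} K_F \;\asymp\; \length_{\bbm_F+1} K_F \;\le\; \bK,
\end{equation*}
i.e., $\bA_{\bbm_F}\asymp \bK$. Next, by a pigeonhole/parallel-rule argument applied to the vertical family realizing $K_F=\Width(V\setminus \overline Z)$ and to the partition of $\partial Z$ into the $\qq_{\bbm_F+1}\asymp 1/\length_{\bbm_F}$ intervals of $\mathfrak D_{\bbm_F}$, at least one $I_0\in \mathfrak D_{\bbm_F}$ satisfies
\begin{equation*}
\Width^{+,\ver}(I_0)\;\succeq\; K_F/\qq_{\bbm_F+1}\;\asymp\;\bA_{\bbm_F}\;\asymp\;\bK.
\end{equation*}
Choosing $\bK\gg K_{\bT,\blambda}$, the hypothesis $\Width^+_5(I_0)\ge K_{\bT,\blambda}$ of Lemma~\ref{lem:replication} is met with parameters $T=\bT$ and $\lambda=\blambda$.

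The remaining task is to eliminate Alternatives~\ref{item:repli:2} and~\ref{item:repli:3} of that lemma. In the external alternative \ref{item:repli:2}, one would obtain a grounded (rel $\wZ^{j+1}$) interval $J$ at some level $j\ge \bbm_F$ with $\Width^{+,\per}_{\blambda,\ext,j}(J)\succeq K\asymp \bK$; however Proposition~\ref{prop:deepandtransitional}\ref{eq:case1:main prf} forces $\Width^{+,\per}_{\blambda,\ext,j}(J)=O(\sqrt{\bK_j})=O(\sqrt{\bL\bK})$, which is incompatible with $\succeq \bK$ once $\bK\gg \bL$. In the localization alternative \ref{item:repli:3}, one would obtain a grounded (rel $\wZ^{j}$) interval $J$ at some level $j$ with $\length_j\le \length_{\bbm_F}/\bT$ and $\Width^+_\blambda(J)\succeq \bK$; but $\length_j\le \length_{\bbm_F}/\bT$ yields $\bA_j\le \bA_{\bbm_F}/\bT\preceq \bK/\bT$, hence $\bK_j=\bK/\bT+\bL\bA_j\preceq \bL\bK/\bT$, and Proposition~\ref{prop:deepandtransitional}\ref{eq:case2:main prf} gives $\Width^+_\blambda(J)\le 2\bchi\bK_j\preceq \bchi\bL\bK/\bT$, again incompatible with $\succeq \bK$ by the choice $\bT\gg \bchi\bL$.

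Therefore only Alternative~\ref{item:repli:1} of Lemma~\ref{lem:replication} survives, producing $\Width^{+,\ver}(J)\asymp K\asymp \bA_{\bbm_F}$ for every $J\in \mathfrak D_{\bbm_F}$. The matching upper bound is immediate from Proposition~\ref{prop:deepandtransitional}\ref{eq:case3:main prf}:
\begin{equation*}
\Width^{+,\ver}(J)\;\le\; \Width^+_\blambda(J)\;\le\; \bK_{\bbm_F}\;\asymp\; \bL\bK\;\asymp\; \bA_{\bbm_F}.
\end{equation*}
The main obstacle is the careful bookkeeping of thresholds: both obstruction alternatives of Lemma~\ref{lem:replication} are defeated only thanks to the strict hierarchy $\bK\gg \bT\gg \bchi\bL$, and the delicate step is verifying that the genuinely transitional-scale width $\bK$ dominates the shallower-scale bounds $O(\sqrt{\bL\bK})$ and $O(\bchi\bL\bK/\bT)$ supplied by the already-established parts of Proposition~\ref{prop:deepandtransitional}.
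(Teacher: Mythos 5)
Your overall strategy coincides with the paper's: pick $I_0\in\mathfrak D_{\bbm_F}$ carrying at least average vertical width $\succeq\bA_{\bbm_F}$, feed it into Lemma~\ref{lem:replication}, eliminate Alternatives~\ref{item:repli:2} and~\ref{item:repli:3} by playing the outputs against Proposition~\ref{prop:deepandtransitional} and the hierarchy $\bK\gg\bT\gg\bchi\bL$, and conclude Alternative~\ref{item:repli:1}. The paper phrases this as a proof by contradiction but the content is the same. There is, however, a genuine error in your first preliminary fact, and it propagates through your calculations.

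The assertion that ``since $\theta_0$ is eventually golden-mean, consecutive lengths $\length_{\bbm_F}$ and $\length_{\bbm_F+1}$ are comparable'' is false. Eventually-golden-mean controls only the tail of the continued fraction; the transitional level $\bbm_F$ is determined solely by the magnitude of $K_F$ relative to $\bK$ and can be arbitrarily shallow (for instance $\bbm_F$ close to $-1$ when $K_F$ barely exceeds $\bK$). At such a level the corresponding coefficient $a_{\bbm_F}$ can be huge, so $\length_{\bbm_F}/\length_{\bbm_F+1}$ is unbounded, and the only information the definition of $\bbm_F$ gives is $\bA_{\bbm_F}>\bK$ together with $\bA_{\bbm_F+1}\le\bK$, \emph{not} $\bA_{\bbm_F}\asymp\bK$. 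This falsehood infects your later lines: ``$O(\sqrt{\bK_j})=O(\sqrt{\bL\bK})$'' is not valid (only $O(\sqrt{\bL\bA_{\bbm_F}})$ is), ``$\bA_j\le\bA_{\bbm_F}/\bT\preceq\bK/\bT$'' is not valid (the correct statement stops at $\bA_j\le\bA_{\bbm_F}/\bT$), and ``$\bK_{\bbm_F}\asymp\bL\bK$'' is not valid (only $\bK_{\bbm_F}\asymp_\bL\bA_{\bbm_F}$ is).

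The repair is straightforward and amounts to running the same estimates with $\bA_{\bbm_F}$ as the benchmark instead of $\bK$, which is exactly what the paper does. One has $K\coloneqq\Width^+_5(I_0)\ge\Width^{+,\ver}(I_0)\succeq\bA_{\bbm_F}$ and, by Proposition~\ref{prop:deepandtransitional}\ref{eq:case3:main prf}, $K\le\bK_{\bbm_F}\le(1/\bT+\bL)\bA_{\bbm_F}$, so $K\asymp_\bL\bA_{\bbm_F}$. To rule out Alternative~\ref{item:repli:2}: for $j\ge\bbm_F$ one has $\bK_j\le(1/\bT+\bL)\bA_{\bbm_F}$ (using $\bK<\bA_{\bbm_F}$ and $\bA_j\le\bA_{\bbm_F}$), hence $O(\sqrt{\bK_j})=O(\sqrt{\bL\bA_{\bbm_F}})\ll\bA_{\bbm_F}\asymp K$ because $\bA_{\bbm_F}>\bK\gg\bL$. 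To rule out Alternative~\ref{item:repli:3}: $\length_j\le\length_{\bbm_F}/\bT$ gives $\bK_j\le\bK/\bT+\bL\bA_{\bbm_F}/\bT<(1+\bL)\bA_{\bbm_F}/\bT$, so $2\bchi\bK_j\ll\bA_{\bbm_F}\asymp K$ since $\bT\gg\bchi\bL$. After these substitutions your argument is correct and matches the paper's.
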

\begin{proof}
The proof is by contradiction. Suppose that vertical degenerations on level $\bbm_F$ combinatorial intervals are not equidistributed.

Since the average vertical degeneration is $\bA_{\bbm_F}$, there exists $I \in \mathfrak{D}_{\bbm_F}$ in the $\bbm_F$-th diffeo-tiling with 
$$
\Width^{+, \ver}(I) = K \succeq \bA_{\bbm_F}.
$$

By Proposition~\ref{prop:deepandtransitional}, for any $m \geq \bbm_F$, $\Width^{+, \per}_{\blambda,\ext, m} (I) =O(\sqrt{\bK_m}) \ll K$. 
Thus by Combinatorial Localization Lemma~\ref{lem:replication}, there exists a grounded interval $J \subseteq \partial \wZ^j$ with $|J| < \length_j \leq \frac{\length_{\bbm_F}}{\bT}$ and $\Width^+_\lambda(J) \succeq K$.
Since $\bT \gg \bchi, \bL$ and $K_F\length_{\bbm_F} \geq \bK \gg \bchi \bK/\bT$, we conclude that 
\begin{align*}
    \bA_{\bbm_F} &= K_F\length_{\bbm_F} \gg (2\bchi) (\bK/\bT + \frac{\bL K_F\length_{\bbm_F}}{\bT}) \\
    &\geq (2\bchi) (\bK/\bT + \bL K_F\length_{j}) = (2\bchi) \bK_j.
\end{align*}
Therefore, $\Width^+_\blambda(J) \gg (2\bchi) \bK_j$, which is a contradiction to Statement~\ref{eq:case2:main prf} in Proposition~\ref{prop:deepandtransitional} at level $j \geq \bbm_F+1$.
\end{proof}

Finally, applying Proposition~\ref{prop:equidcombinbatorial} and parallel law, we obtain: 

\begin{prop}[Equidistribution: shallows levels]
\label{prop:equidcombinbatorial:shallow_levels}
    Let $m < \bbm_F$, and $\wZ^m = \wZ^{\bbm_F}$. Let $\length_{m+1} < J\leq \length_m$ be a grounded interval.
    Then
    \[\Width_{\wZ^m}^{+,\ver} (J^m)\asymp |J|K_F  \sp\sp\text{ and }\sp\sp \Width_{3,\wZ^m}^{+,\per} (J^m)=O(1).\]
\end{prop}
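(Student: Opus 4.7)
The approach, per the hint, combines Proposition~\ref{prop:equidcombinbatorial} with the parallel law on the conformal annulus $A \coloneqq V \setminus \wZ^{\bbm_F}$, which has total vertical width $K_F + O(1)$. Since $m+1 \leq \bbm_F$ and $|J| > \length_{m+1} \geq \length_{\bbm_F}$, the projection $J^m$ contains $N \asymp |J|/\length_{\bbm_F}$ consecutive level-$\bbm_F$ combinatorial intervals $I_1, \ldots, I_N$; by Proposition~\ref{prop:equidcombinbatorial} together with the projection estimates from~\S\ref{sss:Outer:wZ}, each satisfies $\Width^{+,\ver}_{\wZ^m}(I_k^m) \asymp \bA_{\bbm_F} = K_F \length_{\bbm_F}$.

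For the lower bound, the parallel law applied to disjoint sources $I_1, \ldots, I_N \subset J^m$ (with common target $\partial V$) gives immediately
\[
\Width^{+,\ver}_{\wZ^m}(J^m) \ \geq \ \sum_{k=1}^N \Width^{+,\ver}_{\wZ^m}(I_k^m) \ \asymp \ N\, K_F \length_{\bbm_F} \ \asymp \ |J| K_F.
\]

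For the upper bound, let $J^{m,+} \supseteq J^m$ be the smallest union of consecutive level-$\bbm_F$ intervals containing $J^m$; enlargement adds at most one interval on each side, so $|J^{m,+}| = |J| + O(\length_{\bbm_F}) \asymp |J|$. The key point — and the main technical content — is that on a conformal annulus the parallel law holds \emph{with equality} on any partition of a sub-arc of the boundary into sub-arcs: this is manifest on the round-annulus model (where vertical width is exactly linear in arc length) and transfers by conformal invariance of extremal width. Applied to the partition of $J^{m,+}$ into its level-$\bbm_F$ pieces,
\[
\Width^{+,\ver}_{\wZ^m}(J^{m,+}) = \sum_{I_k \subseteq J^{m,+}} \Width^{+,\ver}_{\wZ^m}(I_k^m) \ \asymp \ N\, K_F \length_{\bbm_F} \ \asymp \ |J| K_F,
\]
and monotonicity yields $\Width^{+,\ver}_{\wZ^m}(J^m) \leq \Width^{+,\ver}_{\wZ^m}(J^{m,+}) \lesssim |J| K_F$.

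Finally, for the peripheral bound $\Width^{+,\per}_{3,\wZ^m}(J^m) = O(1)$: the same conformal model applies, since curves in this family must wind inside the thin annulus $A$ (of reciprocal modulus $\asymp K_F$) while traversing combinatorial distance $\geq 2|J|$ along $\partial \wZ^m$. On the round model such peripheral extremal width is uniformly bounded independently of $|J|$ and $K_F$, and this transfers back to $A$ by conformal invariance. The main obstacle in the proof is the equality form of the parallel law on sub-arc partitions of a conformal annulus boundary; this is a standard consequence of conformal invariance plus the symmetry of the round model, but it is what converts the one-sided equidistribution bounds at level $\bbm_F$ into a two-sided estimate at all shallower levels.
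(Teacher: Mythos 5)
Your lower bound is correct and matches the paper's: sandwich $J$ between unions of level-$\bbm_F$ combinatorial intervals, apply Proposition~\ref{prop:equidcombinbatorial} to each, and use the (one-sided) parallel law.

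The upper bound and the peripheral bound, however, rest on a claim that is false. The parallel law is a one-sided inequality, not an equality, and there is no ``equality form on sub-arc partitions of a conformal annulus.'' The statement that ``on the round-annulus model vertical width is exactly linear in arc length and this transfers by conformal invariance'' confuses two different things: conformal invariance preserves the \emph{total} modulus of $V\setminus\wZ^m$, but it does not preserve arc-length measure on $\partial\wZ^m$. A uniformizing map to a round annulus will send the combinatorial pieces $I_k^m$ to arcs of wildly varying angular length --- that discrepancy between combinatorial measure and harmonic measure is precisely the degeneration being controlled --- so linearity on the round model tells you nothing about the original partition. Your upper-bound step is in effect asserting the equidistribution conclusion, not proving it.

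The paper closes the upper bound by a much simpler observation: apply the \emph{lower} bound you already proved to the complement $\partial Z\setminus J$, obtaining $\Width^{+,\ver}_{\wZ^m}\bigl((\partial Z\setminus J)^m\bigr)\succeq (1-|J|)K_F$; since the two vertical families together fill the annulus $V\setminus\wZ^m$ of total width $K_F+O(1)$, this forces $\Width^{+,\ver}_{\wZ^m}(J^m)\preceq |J|K_F$. Similarly, for the peripheral bound the paper does not invoke any round-annulus model: each component of $3J\setminus J$ contains a level-$\bbm_F$ combinatorial interval $I_\pm$ whose vertical family has width $\asymp\bA_{\bbm_F}\gg 1$ by Proposition~\ref{prop:equidcombinbatorial}; any curve of $\Fam^{+,\per}_{3,\wZ^m}(J^m)$ must cross one of these wide vertical families, and crossing a family of large width forces the crossing family to have width $O(1)$. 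You should replace both of your transfer arguments with these two concrete steps.
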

\begin{proof}
    Since $m < \bbm_F$, there exists $N = N(J)$ so that $I \subset J\subset I'$, where $I$ is a union of $N$ level-$\bbm_F$ combinatorial intervals, and $I'$ is a union of $N+2$ level-$\bbm_F$ combinatorial intervals.
    By Proposition~\ref{prop:equidcombinbatorial}, the vertical degeneration on a level-$\bbm_f$ combinatorial interval is comparable to $\bA_{\bbm_F}$. Thus, by parallel law (see, for e.g. \cite[A.1.4]{DL22}),  $\Width_{\wZ^m}^{+,\ver} (J^m) \succeq N \bA_{\bbm_F} \asymp |J|K_F$.
    Since the same argument also holds for $\partial Z \setminus J$, we conclude that $\Width_{\wZ^m}^{+,\ver} (J^m) \asymp |J|K_F$.

    On the other hand, since each component of $3 J \setminus J$ contains a level-$\bbm_F$ combinatorial interval $I_\pm$, any arc in $\Fam^{+, \per}_{3,\wZ^m}(J^m)$ intersects an arc in $\Fam^{+, \ver}_{\wZ^m}(I_\pm^m)$. Since $\Width^{+, \ver}_{\wZ^m}(I_\pm^m) = \Width^{+, \ver}(I_{\pm}) + O(1) \asymp \bA_{\bbm_F} \gg 1$, we conclude that $\Width^{+, \per}_{3,\wZ^m}(J^m) = O(1)$.
\end{proof}
\subsubsection{Proof of Theorem~\ref{thm:main:psi-ql maps}}
    The theorem follows from Propositions~\ref{prop:deepandtransitional},~\ref{prop:equidcombinbatorial}, and~\ref{prop:equidcombinbatorial}. \qed

\subsection{Pseudo-Siegel disks with explicit combinatorial thresholds} The next theorem provides a construction of pseudo-Siegel disks with explicit combinatorial thresholds on all levels except transitional:

\begin{thm}[Theorem~\ref{thm:main:psi*-ql maps}: combinatorial thresholds]
\label{thm:main:psi-ql maps:extra}
Let $\bbm_F$ be the transitional level of $F$ as in Theorem~\ref{thm:main:psi-ql maps}. There exist a sufficiently big $\bM\gg 1$ and an increasing function $H\colon \R_{>0}\to \R_{\ge \bM}$ such that the following holds. We set
\begin{align}
    \label{eq:dfn:bM} 
    \begin{split}
    \bM_m &\coloneqq \begin{cases}
    \bM, \,\, \text{ if $m>\bbm_F$,}\\
     H(\length _m K_F) \,\, \text{ if $m = \bbm_F$,}\\
    \infty, \,\, \text{ if $m < \bbm_F$.}
\end{cases}      
    \end{split}
\end{align} 

Then geodesic pseudo-Siegel disks $\wZ^m$ for Theorem~\ref{thm:main:psi-ql maps} can be constructed explicitly as follows. For every (near-parabolic) $m$ with $\length_{m}> \bM_m \length_{m+1}$, add geodesic parabolic fjords $\widehat{\mathfrak F}_I$ at depth $\left\lfloor e^{\sqrt{ \ln \bM_{m}}}\right\rfloor$, see for details~\S\ref{sss:dfn:regular}.

Consequently, $\wZ^{-1}$ is $\mu(K_F)$-qc disk; i.e.~the dilatation of the qc disk $\wZ^{-1}$ is bounded in terms of $K_F$.
\end{thm}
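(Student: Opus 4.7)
The plan is to work in the \emph{a posteriori} regime relative to Theorem~\ref{thm:main:psi-ql maps}: the pseudo-Siegel disks $\wZ^m$ constructed in that theorem are revealed to admit the explicit description via the thresholds $\bM_m$ of~\eqref{eq:dfn:bM}. The starting point is to combine the outer-width estimates of~\ref{thm:apbs:A}--\ref{thm:apbs:C} with the non-dynamical Regularization Theorem~\ref{thm:regul} and the Log-Rule description of parabolic fjords in Theorem~\ref{thm:parabolicfjords}, reading these results in reverse: given the \emph{a priori} bounds, I will show that the explicit construction respects all compatibility conditions of Definition~\ref{defn:pseudo-Siegel disks} and yields the same family of $\wZ^m$.

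First I would handle the deep levels $m>\bbm_F$. By Case~\ref{thm:apbs:A}, the outer geometry at each grounded combinatorial interval is bounded by $O(1)$, so the analysis proceeds essentially as in the quadratic case~\cite{DL22}. Fixing $\bM$ sufficiently large relative to the thresholds appearing in the Log-Rule and the Welding Lemma, every interval $T\in\Dbb_m$ with $\length_m>\bM\length_{m+1}$ admits an outermost parabolic rectangle of width at least $\sqrt{\ln \bM}-O(1)$; submerging it to depth $\lfloor e^{\sqrt{\ln\bM}}\rfloor$ in the sense of~\eqref{eq:depth:K} yields the geodesic fjord via Theorem~\ref{thm:regul}, since the exponential-boost alternatives~\ref{case:2:thm:regul}--\ref{case:3:thm:regul} are ruled out by the already-established bounds in Theorem~\ref{thm:main:psi-ql maps}. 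Shallow levels $m<\bbm_F$ are automatic: $\bM_m=\infty$ prevents any regularization, which matches Case~\ref{thm:apbs:C} where $\Width^{+,\per}_{3,\wZ^m}(J^m)=O(1)$ leaves no external peripheral degeneration for a fjord to consume, so $\wZ^m=\wZ^{m+1}$ all the way down to $m=\bbm_F$.

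The main obstacle is the transitional level $m=\bbm_F$, where $\length_m K_F$ can be arbitrarily large and Case~\ref{thm:apbs:B} gives the mixed bounds $\Width^{+,\ver}=O(\length_m K_F)$ and $\Width^{+,\per}_{3}=O\big(\sqrt{\length_m K_F}\big)$. The function $H$ must be selected so that $\bM_{\bbm_F}=H(\length_{\bbm_F}K_F)$ is large enough for the outermost parabolic rectangle of Theorem~\ref{thm:parabolicfjords}\,\ref{item:1:thm:parabolicfjords} both to exist and to carry enough width for submergence to depth $\lfloor e^{\sqrt{\ln \bM_{\bbm_F}}}\rfloor$, while the submerging procedure remains compatible with the ambient vertical lamination of width $O(\length_m K_F)$. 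The a priori estimate on $\Width^{+,\per}_{3}$ in Case~\ref{thm:apbs:B} serves as the consistency check: if $H$ were chosen too small, the Exponential Boost from case~\ref{case:2:thm:regul} of Theorem~\ref{thm:regul} would force a degeneration exceeding $K_F$, contradicting the definition of $K_F$ at the transitional level. Tracing the constants from the Welding Lemma and Log-Rule explicitly determines a concrete $H$, which one then only needs to verify is increasing and tends to infinity.

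Finally, bistability and the qc bound on $\wZ^{-1}$ follow by combining Lemma~\ref{lem:geodesicpseudoSiegel}, which yields bistability at least $a^{e^{\sqrt{\ln \bM_m}}}$ for some universal $a>1$ at each fjord, with control of the total welding distortion. At deep levels the construction uses the universal threshold $\bM$, so contributes bounded qc distortion independent of $F$. The entire $K_F$-dependence of the dilatation is concentrated at the single transitional step, where the explicit threshold $H(\length_{\bbm_F}K_F)$ and associated submergence depth translate, via the welding estimates of \S\ref{ss:WeildingLmm}, into a dilatation bound depending only on $K_F$. Since $\wZ^m=\wZ^{\bbm_F}$ for all $m<\bbm_F$, no further distortion is introduced on shallow levels, giving $\wZ^{-1}$ the desired $\mu(K_F)$-qc structure.
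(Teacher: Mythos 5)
Your proposal correctly identifies the overall a posteriori strategy, and the Exponential Boost contradiction via Theorem~\ref{thm:regul} is the right mechanism for ruling out failures of the explicit regularization. However, there is a genuine gap: you claim that for a near-parabolic level $m>\bbm_F$, "every interval $T\in\Dbb_m$ with $\length_m>\bM\length_{m+1}$ admits an outermost parabolic rectangle of width at least $\sqrt{\ln\bM}-O(1)$," and you then submerge it to the explicit depth. But this assertion is precisely what needs to be proven and is not supplied by Theorem~\ref{thm:main:psi-ql maps} alone. Theorem~\ref{thm:parabolicfjords} locates the outer protection $[\tilde x,x],[y,\tilde y]$ \emph{geometrically} -- as the outermost external parabolic rectangle -- not at a combinatorially specified position. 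To carry out the construction of \S\ref{sss:dfn:regular}, where the dam endpoints $a',b'$ sit at an \emph{explicit} combinatorial distance $\lfloor e^{\sqrt{\ln\bM_m}}\rfloor\length_{m+1}$ from $\partial I$, one must first show that $\dist(v,x)\asymp\dist(y,w)\asymp\length_{m+1}$ (with constants depending on $\length_{m+1}K_F$ at the transitional level). This is the content of the paper's Lemma~\ref{lem:treshold:xy}, proved by doubling concatenations $X^1\cup\cdots\cup X^K$ and $Y^k\cup\cdots\cup Y^1$ of $[v,x]$ and $[y,w]$ and estimating the dual family; without it, Theorem~\ref{thm:regul} only gives you \emph{some} regularization inside $\orb_{-\qq_{m+1}+1}\RR$, whose combinatorial position is tied to the (geometrically determined) rectangle $\RR$, so the "explicit combinatorial threshold" in the statement is not actually established.

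Secondly, your qc argument via "control of the total welding distortion" and "distortion concentrated at the transitional step" is too vague to close the claim. The paper's route introduces a nest of tilings $\TT(\partial\wZ^m)$ by projecting the diffeo-tiling onto $\partial\wZ^{-1}$ and a geometric tiling on dams, then invokes the bounded-geometry criterion of \cite[Lemma~11.3]{DL22}. You should make explicit that the $K_F$-dependence of the dilatation comes through the choice $H(\length_{\bbm_F}K_F)$ of the depth at the transitional level, and that at all $m>\bbm_F$ the tiling has bounded geometry with constants that are absolute because $\bM_m=\bM$ there; otherwise "bounded qc distortion independent of $F$" is just a heuristic, not a proof.
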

 
As it is indicated in Theorem~\ref{thm:main:psi-ql maps:extra}, we say a level $m$ is \emph{near-parabolic} if and only if $\length_{m}> \bM_m \length_{m+1}$; otherwise $m$ is \emph{non-parabolic.} Since $F$ is eventually-golden-mean, all sufficiently deep levels $m\gg_F\  1$ are non-parabolic.
 In short, $\bM_m$ will be a combinatorial threshold for regularization: if $\frac{\length_{m}}{\length_{m+1}}\ge \bM_m$, then $\wZ^{m+1}$ is regularized into $\wZ^m$ at depth $e^{\sqrt{ \ln \bM_{m}}}$, see~\S\ref{sss:dfn:regular}; otherwise $\wZ^m:=\wZ^{m+1}$. 
 We remark that by our definition of $\bM_m$, if $m<\bbm_F$, then we always set $\wZ^m := \wZ^{m+1}$.

\hide{In short, $\bM_m$ will be a combinatorial threshold for regularization: if $\frac{\length_{m}}{\length_{m+1}}\ge \bM$ and $m> \bbm_F$, where $\bbm_F$ is the transition level defined in~\S\ref{subsec:aprioribounds}, then $\wZ^{m+1}$ is regularized into $\wZ^m$ at depth $\bM_{\sqrt{\ln}}$, see~\S\ref{sss:dfn:regular}; otherwise $\wZ^m:=\wZ^{m+1}$. If $m<\bbm_F$, then we always set $\wZ^m := \wZ^{m+1}$. The case $m=\bbm_F$ is discussed in~\S\ref{sss:dfn:regular:m_G}.}

\begin{proof}[Proof of Theorem~\ref{thm:main:psi-ql maps:extra}] The explicit threshold on the regularization follows from Lemma~\ref{lem:treshold:xy} below. It implies that the Log-Rule of Theorem~\ref{thm:parabolicfjords} kicks off at an explicit threshold. Therefore, the regularization  $\wZ^{m+1}\leadsto \wZ^{m}$ can also be achieved at the explicit threshold as required by Theorem~\ref{thm:regul}: if the regularization fails, then, for a universal $a>1$, either
\begin{itemize}
    \item there is a degeneration $\succeq a^{\sqrt{\bM_m}}$ on a deeper level, 
    \item or the vertical degeneration is $\succeq a^{\sqrt{\bM_m}} \qq_{m+1}$;
\end{itemize}
compare with Item ``not~\ref{eq:case1:main prf}'' on Figure~\ref{fig:main pattern}.

The quasiconformality claim is a non-dynamical statement that follows in the same way as in \cite[Section 11]{DL22}. More precisely, we introduce a nest of tilings $\TT(\partial \wZ^m)$ on $\partial \wZ^m$ by projecting the diffeo-tiling of $\partial \Z$ onto $\partial \wZ^{-1}$. On dams, the tiling is introduced geometrically in the same way as in~\cite[\S11.3.]{DL22}. Applying~\cite[Lemma~11.3]{DL22} and using the main estimates, we obtain the quasiconformality claim.  
\end{proof}

\subsubsection{Combinatorial threshold for Log-Rule} Since Theorem~\ref{thm:main:psi-ql maps} is established, we are now in the \emph{a posteriori} setting where the quantities $\bK_m,\  m>\bbm_F$, in Proposition~\ref{prop:deepandtransitional} are absolute; we write $\bK_m=O(1)$.

For the following lemma, recall that the notations of $x, y, v, w$ are introduced in \S~\ref{ss:Log-Rule}; see Figure~\ref{Fig:ParabolicF}.

\begin{lem}\label{lem:treshold:xy}
  For levels $m>\bbm_F$, the interval $[x,y]$ satisfies
  \begin{equation}
    \label{eq:1:lem:treshold:xy}
  \dist(v,x)\ \asymp \ \dist(y,w) \ \asymp \ \length_{m+1}.
  \end{equation}
If $\dist(v,w)\gg 1$, then $[x,y]$ is non-empty and satisfies~\eqref{eq:1:lem:treshold:xy}.

  For $m=\bbm_F$, the interval  $[x,y]$ satisfies 
   \begin{equation}
    \label{eq:2:lem:treshold:xy}\dist(v,x)\ \asymp_{(\length_{m+1}K_F)} \ \dist(y,w) \  \asymp_{(\length_{m+1}K_F)} \ \length_{m+1}.
    \end{equation}
\end{lem}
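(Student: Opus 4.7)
My plan is to work in the \emph{a posteriori} regime, so Theorem~\ref{thm:main:psi-ql maps} is available: for $m>\bbm_F$, $\Width^+_\blambda(J)=O(1)$ on every grounded interval $J$ of level $\ge m$, while for $m=\bbm_F$ the same quantity is $O(\length_mK_F)$, which by the transitional-level condition $\length_{\bbm_F+1}K_F\le\bK$ is effectively absolute. Recall from Item~\ref{item:1:thm:parabolicfjords} of Theorem~\ref{thm:parabolicfjords} that $\dist(v,\tilde x)<M\length_{m+1}/100$, so $\dist(v,x)\asymp M\length_{m+1}$, while $\dist(y,w)=N\length_{m+1}$. Hence the claim~\eqref{eq:1:lem:treshold:xy}--\eqref{eq:2:lem:treshold:xy} reduces to the two bounds $M=O(1)$ and $N=O(1)$ (up to the $\length_{m+1}K_F$-factor in the transitional case).

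For the bound on $N$, I would apply Item~\ref{item:4:thm:parabolicfjords} of Theorem~\ref{thm:parabolicfjords} with $\lambda=\blambda$: if $N/(\blambda M)\gg 1$, then there exists an interval $E\subseteq[\tilde y,w]$ with $\Width^+_\blambda(E)\succeq N/(\blambda M)$. Using the projection~\eqref{eq:conver:I_lambda} to pass to the grounded projection on $\partial\wZ^m$, this width is $O(1)$ (resp.\ $O(\length_mK_F)$) by Theorem~\ref{thm:main:psi-ql maps}, yielding $N=O(\blambda M)=O(M)$ (resp.\ $N=O(M\length_mK_F)$).

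For the bound on $M$, the Log-Rule of Theorem~\ref{thm:parabolicfjords}\ref{item:2:1:thm:parabolicfjords} applied to the horizontal sides of $\RR^\new$ gives
\[
C_0\ \asymp\ \log\!\Big(\tfrac{M\length_{m+1}}{\dist(v,\tilde x)}\Big)+1,
\]
where $C_0$ is the fixed absolute width-threshold used to define the outermost balanced wide parabolic rectangle in Lemma~\ref{lem:Balanced subrectangle}. The combinatorial structure of the diffeo-tiling forces $\tilde x$ to lie on the level-$(m+1)$ grid (since $[\tilde x,x]$ is a union of level-$(m+1)$ combinatorial intervals by construction), giving the lower bound $\dist(v,\tilde x)\succeq\length_{m+1}$. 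Substituting into the Log-Rule equation yields $\log M\asymp C_0$, so $M\asymp e^{C_0}=O(1)$; in the case $m=\bbm_F$, the same argument with the weaker a posteriori bound produces the $\asymp_{(\length_{m+1}K_F)}$ form of~\eqref{eq:2:lem:treshold:xy}. The last assertion of the lemma (that $\dist(v,w)\gg 1$ implies $[x,y]\neq\emptyset$) follows from the same analysis, since a fjord of combinatorial depth $\gg 1$ produces an external parabolic rectangle of width $\gtrsim \log(\length_m/\length_{m+1})$ by the Log-Rule, which exceeds the threshold $C_0$.

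The hard part will be justifying that the outermost choice of $\RR^\new$ actually saturates the combinatorial lower bound $\dist(v,\tilde x)\asymp\length_{m+1}$ rather than sitting deeper inside the fjord. I expect this to follow from the shift-to-$v$ argument (Item~\ref{shift:to:v} in \S\ref{ss:BalSubrect}): starting from any wide balanced parabolic rectangle, iterating the pullback $F^{-\qq_{m+1}}_T$ transports the configuration towards $v$ without loss of width, and the process terminates when $\tilde x$ reaches the grid scale $\length_{m+1}$. Coupling this stabilization with the Log-Rule equation above closes the upper bound $M=O(1)$, while the matching lower bound $M\succeq 1$ is automatic from $M\length_{m+1}=|\partial^{h,0}\RR^\new|\ge\length_{m+1}$.
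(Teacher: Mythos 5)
Your argument for bounding $N$ via Item~\ref{item:4:thm:parabolicfjords} of Theorem~\ref{thm:parabolicfjords} together with the a~posteriori bound of Theorem~\ref{thm:main:psi-ql maps} is sound and in the right spirit. However, your argument for bounding $M$ has a genuine gap. The Log-Rule you invoke is either Lemma~\ref{lem:Balanced subrectangle}~\ref{cl:3:bpar_rect}, which is an \emph{upper} Log-Rule ($\Width(\RR^\new)\preceq\log\frac{M\length_{m+1}}{\dist(v,\tilde x)}+1$) and thus, combined with any lower bound on $\dist(v,\tilde x)$, produces a \emph{lower} bound on $M$, the opposite of what you need; or it is Theorem~\ref{thm:parabolicfjords}~\ref{item:2:1:thm:parabolicfjords}, which is two-sided but holds only for intervals $I,J$ strictly interior to $[x,y]$ with $\dist(x,I)\asymp\dist(J,y)\preceq|I|$ — it simply does not apply to the boundary sides $[\tilde x,x],[y,\tilde y]$ themselves. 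To get $M\preceq e^{C_0}$ you would need a \emph{lower} Log-Rule for the degeneration carried by $[v,x]$ and $[y,w]$, and that is not a consequence of the parabolic-fjord analysis of \S\ref{sec:parabolicfjords}; it is precisely what has to be established. In addition, your assertion that $\dist(v,\tilde x)\succeq\length_{m+1}$ because $\tilde x$ is ``forced onto the level-$(m+1)$ grid'' is unjustified: $\tilde x$ is a vertex of a geodesic rectangle in $V\setminus\overline Z$, and nothing in the construction puts it on the diffeo-tiling.

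The paper closes exactly this gap by a different mechanism: split $[v,x]$ and $[y,w]$ into dyadic concatenations $\bigcup_k X^k$ and $\bigcup_k Y^k$ with $|X^1|=|Y^1|=10\length_{m+1}$, and prove the \emph{Claim} that each matched pair $(X^k,Y^k)$ already carries external width $\succeq 1$ (for $m>\bbm_F$; the $\succeq_{\length_{m+1}K_F}$ version at the transitional level). That claim is obtained by estimating the dual family from above — the vertical part via Theorem~\ref{thm:main:psi-ql maps}, the peripheral part by the $f^{\qq_{m+1}}$-pushforward followed by the Shift Argument and the Calibration Lemma, as in \cite[Lemma~10.5]{DL22}. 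The parallel law then forces $\Width^{+,\per}_{\ext,m}([v,x],[y,w])\succeq\log M$, which together with the a~posteriori $O(1)$ bound on external widths of grounded intervals pins down $M=O(1)$. So the crux is a manufactured lower Log-Rule for the outer annular region via dual-family estimates, not a direct application of the existing Log-Rule to the outermost rectangle.
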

\begin{proof} The argument is the same as in  \cite[\S 10.2]{DL22}. We assume that $[v,x]\asymp [y,w]$ are long, we split these intervals into the $\bigcup_k X^k$ and $\bigcup_k Y^k$ and estimate the dual family to deduce~\eqref{eq:X^k:Y^k} and~\eqref{eq:X^k:Y^k:m_F}. For $m=\bbm_F$, the vertical family affects ``$\succeq$''.

Assume that $x$ is sufficiently far from $v$. As in~\cite[\S 10.2]{DL22}, we present $[v,x]$ a concatenation of $X^1 \cup X^2\cup \dots \cup X^K$, where $|X^1|= 10 \length_{m+1}$ and $|X^{i+1}|=2|X^{i}|$. Similarly, present $[y,w]$ as a concatenation of $Y^k\cup Y^{k-1}\cup \dots \cup Y^1$ satisfying the same properties: $|Y^1|= 10 \length_{m+1}$ and $|Y^{i+1}|=2|Y^{i}|$. 

As in \cite[Lemma 10.5]{DL22}, we {\bf claim} that 
\begin{equation}
    \label{eq:X^k:Y^k}\Width^{+,\per}_{\ext, m}(X^k, Y^k)\ \succeq 1\qquad \qquad\text{ if }m>\bbm_F,
\end{equation}
and 
\begin{equation}
    \label{eq:X^k:Y^k:m_F}\Width^{+,\per}_{\ext, m}(X^k, Y^k)\ \succeq_{(\length_{m+1}K_F)}\ \  1\qquad \qquad\text{ if }m=\bbm_F,
\end{equation}
\begin{proof}[Proof of the Claim] Following notations of \cite[Lemma 10.5]{DL22}, we denote by $T^{k}$ the interval between $X^k$ and $Y^k$ and we have $T^{k}=X^{k+1}\cup T^{k+1}\cup Y^{k+1}$. To deduce~\eqref{eq:X^k:Y^k} and~\eqref{eq:X^k:Y^k:m_F}, we need to estimate from above the width of the dual family:
\[\Width^{+,\ver}(T^k)+\Width^{+,\per}_{\filled_{m}}(T^k,\partial \filled_{m}\setminus T^{k-1}).\]
For $m>\bbm_F$, we have $\Width^{+,\ver}(T^k)=O(1)$. For $m=\bbm_F$, we have $\Width^{+,\ver}(T^k)\asymp K_F\length_m+ O(1)$.

The argument to estimate $\Width^{+,\per}_{\filled_{m}}(T^k,\partial \filled_{m}\setminus T^{k-1})$ is the same as in~\cite[Lemma 10.5]{DL22}: apply a univalent pushforward $f^{\qq_{m+1}}$ to replace $\filled_{m}$ with $\overline Z$, and then use the Shift Argument to bound the $\Width^{+,\per}_{\ext,m}$-component and the Calibration Lemma to bound the $\Width^{+,\per}_{\div,m}$-component.   
\end{proof}

Applying the parallel law, we deduce that $\Width^{+,\per}_{\ext,m} ([v,x],[y,w])$ is sufficiently wide if $x,y$ are sufficiently far from $v,w$ as required. 
\end{proof}

\subsubsection{Construction of parabolic fjord $\widehat{\mathfrak F}_I$ and $S^\inn _I$}\label{sss:dfn:regular} Consider a near-parabolic level $m$. Let $I=[a,b]\in \mathfrak{D}_m$ be an interval in the $m$th diffeo-tiling of $\partial Z$. Choose $a', b'\in I$ with $a<a'<b'<b$ such that \[\dist(a,a')=\dist(b',b) =\left\lfloor e^{\sqrt{ \ln \bM_{m}}}\right\rfloor \  \length_{m+1} \]
and set $\beta_I$ to be the hyperbolic geodesic of $V\setminus \overline Z$ connecting $a',b'$. This defines the parabolic fjord $\widehat{\mathfrak F}_I$.
We remark that since the level $m$ is near-parabolic, we have
$$
|I| = \dist(a,b) \asymp \length_m \geq \bM_m \length_{m+1} \gg \left\lfloor e^{\sqrt{ \ln \bM_{m}}}\right\rfloor \  \length_{m+1}.
$$

To construct $S^\inn _I$, we choose a sufficiently big $\bbv\gg 1$ with the understanding that $e^{\sqrt{ \ln \bM_{m}}} \gg \bbv$.
Choose $a''\in [a,a']$ and $b''\in [b',b]$ such that
\begin{equation}
\label{eq:dfn:S^inn}
    \dist(a,a'')=\dist(b'',b) =\left\lfloor \frac{e^{\sqrt{ \ln \bM_{m}}}}{ \bbv}\right\rfloor \length_{m+1} \gg \length_{m+1},
\end{equation} 
this defines $S^\inn _I$.

\subsubsection{Construction of $A_I$ and $\upbullet \XX_I$}
Let $\mathfrak G_I$ be the rectangle from $[a,a'']$ to $[b'', b]$ bounded by hyperbolic geodesics in $V\setminus \overline Z$; i.e., 
\[ \partial^{h,0} \mathfrak G_I =[a,a''],\sp\sp \partial^{h,1} \mathfrak G_I =[b'',b]\]
and $\partial^{v}\mathfrak G_I $ is the pair of hyperbolic geodesics. The condition that $e^{\sqrt{ \ln \bM_{m}}} \gg \bbv$ implies that (see \cite[\S 4]{DL22})\[ \Width (\mathfrak G_I) \ \asymp\  \ln \left(e^{\sqrt{ \ln \bM_{m}}}/\bbv \right) \ \gg \ 1.\]
We can now select a required rectangle $\XX_I$ conformally deep in $\mathfrak G_I$ (i.e., conformally close to $\beta_I$) so that its width is of the size $\Delta \ll  \sqrt{ \ln \bM_{m}}$. We can also select $A_I$ separating $\XX_I$ from $S^\inn_I$. In particular, we can assume that the interval \[[x_a, x_b]\coloneqq \partial \upbullet \XX_I \cap I\sp\sp\sp\text{ with }\sp x_a\in[a,a''], \sp x_b\in [b'', b]\]
is defined similar to \eqref{eq:dfn:S^inn}:
\[     \dist(a, x_a)=\dist(x_b,b) =\left\lfloor \frac{e^{\sqrt{ \ln \bM_{m}}}}{ \bbw}\right\rfloor \length_{m+1}, \]
where $\bbw \gg \bbv\gg 1$ with the understanding that we still have $e^{\sqrt{ \ln \bM_{m}}}\gg \bbw$.

\subsubsection{Bistability of ${\wZ^m}$} \label{sss:big:bistability}
We recall the definition of bistability in \S~\ref{subsec:pullback} and \S~\ref{sss:F:bistab}. Since $e^{\sqrt{ \ln \bM_{m}}}\gg \bbw$, the construction guarantees that 
$$
\dist(\partial^h \XX_I, \partial I) = \left\lfloor \frac{e^{\sqrt{ \ln \bM_{m}}}}{ \bbw}\right\rfloor \length_{m+1} \gg \length_{m+1}.
$$
Thus, we see that the Fjords are obtained by submerging into depth $\geq \bM$; see \eqref{eq:depth:K}.
Thus, by the discussion in \S~\ref{ss:bistablepartunderprotection} and Lemma~\ref{lem:geodesicpseudoSiegel}, we see that ${\wZ^m}$ can be assumed to be $T$-stable for arbitrarily large $T$; see \S~\ref{subsec:pullback}.

\subsubsection{Proof of Theorem~\ref{thm:combinatorial main}}
We now apply Theorems~\ref{thm:main:psi-ql maps} and~\ref{thm:main:psi-ql maps:extra} to prove Theorem~\ref{thm:combinatorial main}.

    Tor every $\theta=[0;a_1,a_2,a_3, \dots]$, $|a_i|\le M_\theta$ of bounded type, define the approximating sequence
\[\theta_{n}\coloneqq [0;a_1,a_2,\dots, a_n,1,1,1,\dots]\to \theta.\]
    Note that $\length_{m,n}$ for $\theta_n$ equals $\length_m$ for all sufficiently large $n$.
    We can construct a sequence $f_n \to f$ of Siegel quadratic like maps with rotation number $\theta_n \to \theta$.
    Since $Z_{\theta_n}\to Z_{\theta}$ as qc disks (with dilatation depending on $M_\theta$), the estimate $\Width^+_3(I) =O(\length_m K_F+1)$ in Theorem~\ref{thm:main:psi-ql maps} also holds for $Z_\theta$, where the constant is independent of $M_\theta$. The moreover part corresponds to the shallow levels and follows by a similar argument.\qed

\subsection{Strategy to apply the main Theorem} \label{ss:Application} A strategy to apply Theorem~\ref{thm:main:psi-ql maps} to rational maps has been designed in~\cite{DL:HypComp}. Assume that we have a class of rational maps $g$ where we wish to establish uniform \emph{a priori} bounds. If the dynamical plane of $g$ has a periodic eventually-golden-mean Siegel disk $\overline Z$, then often the analysis requires understanding how wide laminations $\RR$ cross its preperiodic preimages $\overline Z'$. Here we can assume that $\Width(\RR)\asymp \Width_\text{total}(g)$, where $\Width_\text{total}(g)$ is the total degeneration of $g$. The goal is to produce a degeneration bigger than $\Width_\text{total}(g)$.

Assuming there is a partial self-covering structure~\eqref{eq:part:self-cover}, we can apply the inflation around $\overline Z$, see Proposition~\ref{prop:g:unwind} and Figure~\ref{Fig:UnwindingSiegelFull}, and then regularize $\overline Z\leadsto \wZ^{-1}$. Lifting the regularization, we obtain a preperiodic pseudo-Siegel disk $\overline Z'\leadsto \wZ'$.  

Now we wish to combinatorially localize the $\RR$ rel $\wZ'$ on a sufficiently deep level (cf., Lemma~\ref{lem:replication}), then move the localized degeneration onto $\wZ^{-1}$ and spread around $\wZ^{-1}$ to obtain a value bigger than $\Width_\text{total}(g)$. The current paper prepares the analysis that can be carried out locally at $\wZ^{-1}$ in the $\psi^\bullet$-setting. However, most significantly, properties of a global map $g$ (a ``global input'') are required to handle Alternative~\ref{Cal:Lmm:Concl:a} of Calibration Lemma~\ref{lmm:CalibrLmm}. By Remark~\ref{rem:CalLmm:AI}, one needs to establish a variant of Calibration Lemma in the dynamical plane of $g$ to handle almost invariant families associated with this Alternative~\ref{Cal:Lmm:Concl:a}.


\begin{thebibliography}{*****}

\bibitem[A]{A} L. Ahlfors. Conformal Invariants: Topics in Geometric Function Theory, McGraw Hill Book Co., New York, 1973.






\bibitem[ACh]{ACh} A. Avila and D. Cheraghi.
  Statistical properties of quadratic polynomials with a neutral fixed point.
  JEMS, v. 20 (2018), 2005--2062.

 


\bibitem[AL2]{AL-posmeas} A. Avila and M. Lyubich. 
Lebesgue measure of Feigenbaum Julia sets. arXiv:1504.02986. 



\bibitem[BBCO]{BBCO} A. Blokh, X. Buff, A.Chéritat, L. Oversteegen. The solar Julia sets of basic quadratic Cremer polynomials. Ergodic Theory and Dynamical Systems, 30(1), 51-65, 2010.


\bibitem[BC]{BC} X. Buff and A. Cheritat. Quadratic Julia sets of
  positive area. Annals Math., v. 176 (2012), 673--746.

\bibitem[BD]{BD} B. Branner and A. Douady. Surgery on complex polynomials. Holomorphic dynamics, {M}exico, 1986.

 \bibitem [Ch1] {Ch1} D. Cheraghi. Typical orbits of quadratic polynomials with a neutral fixed point: non-Brjuno type. Ann. Sci. \'Ec. Norm. Sup., v. 52 (2019), 59--138.

 \bibitem [Ch2] {Ch2} D. Cheraghi.  Topology of irrationally indifferent attractors. arXiv:1706.02678.
   
 \bibitem[ChC] {ChC}  D. Cheraghi and A. Cheritat. A proof of the Marmi-Moussa-Yoccoz conjecture for rotation numbers of high type. Invent. Math. 202, no. 2, pp. 677–742, 2015.

 \bibitem[Che]{Che:nearpar} A. Cheritat. Near parabolic renormalization for unicritical holomorphic maps. arXiv:1404.4735.


   
 \bibitem[Chi]{Ch} D. K.~Childers. Are there critical points on the boundaries of mother hedgehogs? Holomorphic dynamics and renormalization, Fields Inst. Commun., 53 (2008), 75–87.  


\bibitem[CS]{CS} D. Cheraghi and M. Shishikura. Satellite renormalization of quadratic polynomials. arXiv:1509.0784


\bibitem[D1]{D-Siegel} A. Douady. Disques de Siegel et anneaux de Herman. In S\'eminaire Bourbaki,
  Ast\'erisque, v. 152--153 (1987), 151--172. 



\bibitem[DH1]{DH:Orsay} A. Douady and J. H. Hubbard.
  \'{E}tude dynamique des polyn\^{o}mes complexes.
 Publication Mathematiques d'Orsay,
84-02 and 85-04.

\bibitem[DH2]{DH}
 A. Douady and J. H. Hubbard. On the dynamics of polynomial-like maps.
   Ann. Sc. \'{E}c. Norm. Sup., v. 18 (1985), 287 -- 343.




\bibitem[DL1]{DL} D. Dudko and M. Lyubich. Local connectivity of the Mandelbrot set at some satellite parameters of bounded type. arXiv:1808.10425

\bibitem[DL2]{DL:Feigen} D. Dudko and M. Lyubich. MLC at Feigenbaum points. arXiv:2309.02107.

\bibitem[DL3]{DL22} D. Dudko and M. Lyubich. Uniform a priori bounds for neutral renormalization. arXiv:2210.09280


\bibitem[DL4]{DL:sector bounds} D. Dudko and M. Lyubich. Uniform \emph{a priori} bounds for neutral renormalization. Variation~\RN{1}:  Sector Renormalization. Manuscript $2024$.




\bibitem[DLu]{DL:HypComp} D. Dudko and Y. Luo. Sierpinski Carpet Hyperbolic Components of Disjoint Type are Bounded. arXiv:2509.25658.


\bibitem[DLS]{DLS} D. Dudko, M. Lyubich, and N. Selinger. Pacman renormalization and self-similarity of the Mandelbrot set near Siegel parameters. Journal of the AMS, 33 (2020), 653-733.



\bibitem[dF]{dF} E. de Faria.  Asymptotic rigidity of scaling ratios
  for critical circle maps. Erg. Th. and Dyn. Syst., v. 19 (1999),
  995--1035. 
  

\bibitem[F]{F} P. Fatou. Sur les équations fonctionnelles. Bull. Soc. Math. Fr., 48:208–314, 1920.

  
\bibitem[GY]{GY} D. Gaidashev and M. Yampolsky. Renormalization of almost commuting pairs. arXiv:1604.00719.


\bibitem[Her]{H} M. Herman. Conjugaison quasi symm\'etrique des diff\'eomorphisms du cercle \`a des rotations
et applications aux disques singuliers de Siegel. Manuscript (1986).


\bibitem[Hub]{Hub:Yoccoz} J.H. Hubbard.
   Local connectivity of Julia sets and bifurcation
     loci: three theorems of J.-C. Yoccoz. In:
    ``Topological Methods in Modern Mathematics, A Symposium in
     Honor of John Milnor's 60th Birthday", Publish or Perish, 1993.




\bibitem[IS]{IS} H. Inou and M. Shishikura.
  The renormalization for parabolic fixed points and their
  perturbations. Manuscript 2008. 

\bibitem[K]{K}  J. Kahn. A priori bounds for some infinitely
  renormalizable quadratics: I. Bounded primitive
  combinatorics. Preprint IMS at Stony Brook, \# 5 (2006). 


\bibitem[KL1]{KL} J. Kahn and M. Lyubich. The Quasi-Additivity Law in conformal geometry. Annals Math., v. 169 (2009), 561--593.

 \bibitem[KL2]{KL2}  J. Kahn and M. Lyubich. A priori bounds for some infinitely renormalizable quadratics: \RN{2}. Decorations. Annals Sci. Ecole Norm. Sup., v. 41 (2008), 57--84.


\bibitem[KL3]{molecules} J. Kahn \& M. Lyubich. {\it A priori bounds} for some infinitely
renormalizable quadratics: III.  Molecules.  In: ``Complex Dynamics: Families and Friends''.
    Proceeding of the conference dedicated to Hubbard's 60th birthday (ed.:  D. Schleicher). 
    Peters, A K, Limited, 2009. 


\bibitem[L2]{L-book} M. Lyubich. Conformal Geometry and Dynamics of
  Quadratic Polynomials. Book in preparation, www.math.sunysb.edu/~mlyubich/book.pdf. 


\bibitem[Lim]{Lim} Willie Rush Lim. A priori bounds and degeneration of Herman rings with bounded type rotation number. arXiv:2302.07794.

\noindent Thesis 2024, Stony Brook University, in preparation.



\bibitem[McM]{McM3} C. McMullen. Self-similarity of Siegel disks and Hausdorff dimension of Julia sets.
  Acta Math., v. 180 (1998), 247--292.  

\bibitem[M1]{Mil} J. Milnor.
     Local connectivity of Julia sets: expository lectures.
     In: ``The Mandelbrot Set, Themes and Variations'',  67-116, ed. Tan Lei.  
     Cambridge University Press, 2000.




\bibitem[Pe]{Pe} C. Petersen. Local connectivity of some Julia sets containing a circle with an irrational rotation, Acta. Math.,177, 1996, 163-224.


\bibitem[PM]{PM}  R. P\'erez-Marco. Fixed points and circle maps, Acta Math., 179 (1997), 243-294.


\bibitem [ShY]{ShY} M. Shishikura and Fey Yang. The high type quadratic Siegel disks are Jordan domains. arXiv:1608.04106.

 


\bibitem[Sw]{Sw} G. Swiatek. On critical circle homeomorphisms. Bol. Soc. Bras. Mat., v. 29 (1998), 329--351.

 




\bibitem[Ya1]{Ya-posmeas} M. Yampolsky.  Siegel Disks and Renormalization Fixed Points.
  Fields Inst. Comm., v. 53 (2008), 377--393.  


\bibitem[Ya2]{Ya} M. Yampolsky. Complex bounds for
  renormalization of critical circle maps. Erg. Th. and Dyn. Syst.,
  v. 19 (1999), 227--257. 
   

\bibitem[Yo]{Y} J.-C. Yoccoz. Petits diviseurs en dimension 1. Ast\'erisque, v. 231 (1995).


\bibitem[Zha11]{Zha11} G. Zhang. All bounded type Siegel disks of rational maps are quasi-disks. Invent. Math., 185(2):421–466, 2011.


\end{thebibliography}
\end{document}